\newtheorem{theorem}{Theorem}%[section]
\newtheorem{lemma}[theorem]{Lemma}
\newtheorem{proposition}[theorem]{Proposition}
\newtheorem{corollary}[theorem]{Corollary}
\theoremstyle{remark}
\newtheorem*{remark}{Remark}
\theoremstyle{remark}
\newtheorem*{remarks}{Remarks}
\theoremstyle{definition}
\newtheorem*{definition}{Definition}
\theoremstyle{definition}
\newtheorem*{example}{Example}
\theoremstyle{definition}
\newtheorem*{examples}{Examples}
\def\thm@space@setup{%
\thm@preskip=0.8em
\thm@postskip=\thm@preskip % or whatever, if you don't want them to be equal
}
\newcommand{\IU}{I\nhp(\nhp\mathbf{u}\hspace{-0.2pt})} % I(u) in normal script, tighter
\newcommand{\Iu}{\scriptscriptstyle{I\nhp(\nhp\mathbf{u}\hspace{-0.25pt})}} % I(u) in very small script, for superscripts
\newcommand{\Io}{{\scriptscriptstyle I_0}}
\newcommand{\0}{{\scriptstyle 0'}} % makes the script smaller
\newcommand{\1}{{\scriptstyle 1'}} 
\newcommand{\A}{{\scriptscriptstyle A}} % makes the script smaller
\newcommand{\B}{{\scriptscriptstyle B}} 
\newcommand{\C}{{\scriptscriptstyle C}} 
\newcommand{\D}{{\scriptscriptstyle D}}
\newcommand{\I}{{\scriptscriptstyle I}} % makes the script smaller
\newcommand{\pt}{\hspace{1pt}} % adds a small space of 1 pt
\newcommand{\hp}{\hspace{0.5pt}} % adds a small space of 1/2 pt
\newcommand{\npt}{\hspace{-1pt}} % subtracts a small space of 1 pt
\newcommand{\nhp}{\hspace{-0.5pt}} % subtracts a small space of 1/2 pt
\newcommand{\Li}{\hp\mbox{Li}\hp}
\renewcommand{\Re}{\hp\mbox{Re}\pt} % redefine symbol for the real and imaginary parts
\renewcommand{\Im }{\hp\mbox{Im}\hp}
\newcommand{\psitilde}{\raisebox{0.3pt}{\smash{$\tilde{\smash{\mathrlap{\psi}} \vphantom{\rule[6.5pt]{0pt}{0pt}} \hphantom{\hspace{10.5pt}}}$}} \hspace{-4.1pt}}
\newcommand{\psitildeindx}{\raisebox{-0.8pt}{\smash{$\scriptstyle\tilde{\smash{\mathrlap{\psi}} \vphantom{\rule[4.7pt]{0pt}{0pt}} \hphantom{\hspace{8pt}}}$}} \hspace{-2.4pt}}
\newcommand{\plus}{\mathord{\begin{tikzpicture}[baseline=0ex, line width=0.3, scale=0.08]
\draw (1.2,0) -- (1.2,0.8); 	\draw (1.2,1.2) -- (1.2,2);
\draw (0.8,0) -- (0.8,0.8); 	\draw (0.8,1.2) -- (0.8,2);
\draw (0,1.2) -- (0.8,1.2); 	\draw (1.2,1.2) -- (2,1.2);
\draw (0,0.8) -- (0.8,0.8); 	\draw (1.2,0.8) -- (2,0.8);
\draw (0,0.8) -- (0,1.2);
\draw (2,0.8) -- (2,1.2);
\draw (0.8,0) -- (1.2,0);
\draw (0.8,2) -- (1.2,2);
\end{tikzpicture}}}
\newcommand{\minus}{\mathord{\begin{tikzpicture}[baseline=0ex, line width=0.3, scale=0.08]
\draw (0,1.2) -- (2,1.2); 
\draw (0,0.8) -- (2,0.8);
\draw (0,1.2) -- (0,0.8);
\draw (2,1.2) -- (2,0.8);
\end{tikzpicture}}}
\newcommand{\noarrow}{\mathord{\begin{tikzpicture}[baseline=-0.03ex, line width=0.3, scale=0.1]
\draw (0,1) -- (9.54,1);
\end{tikzpicture}}}
\newcommand{\Hom}{\text{Hom}}
\newcommand{\Ext}{\text{Ext}}
\newcommand\pgfmathsinandcos[3]{%
  \pgfmathsetmacro#1{sin(#3)}%
  \pgfmathsetmacro#2{cos(#3)}%
}
\newcommand\LongitudePlane[3][current plane]{%
  \pgfmathsinandcos\sinEl\cosEl{#2} % elevation
  \pgfmathsinandcos\sint\cost{#3} % azimuth
  \tikzset{#1/.style={cm={\cost,\sint*\sinEl,0,\cosEl,(0,0)}}}
}
\newcommand\LatitudePlane[3][current plane]{%
  \pgfmathsinandcos\sinEl\cosEl{#2} % elevation
  \pgfmathsinandcos\sint\cost{#3} % latitude
  \pgfmathsetmacro\yshift{\cosEl*\sint}
  \tikzset{#1/.style={cm={\cost,0,0,\cost*\sinEl,(0,\yshift)}}} %
}
\newcommand\DrawLongitudeCircle[2][1]{
  \LongitudePlane{\angEl}{#2}
  \tikzset{current plane/.prefix style={scale=#1}}
   % angle of "visibility"
  \pgfmathsetmacro\angVis{atan(sin(#2)*cos(\angEl)/sin(\angEl))} %
  \draw[current plane] (\angVis:1) arc (\angVis:\angVis+180:1);
  \draw[current plane,dashed] (\angVis-180:1) arc (\angVis-180:\angVis:1);
}
\newcommand\DrawLatitudeCircle[2][2]{
  \LatitudePlane{\angEl}{#2}
  \tikzset{current plane/.prefix style={scale=#1}}
  \pgfmathsetmacro\sinVis{sin(#2)/cos(#2)*sin(\angEl)/cos(\angEl)}
  % angle of "visibility"
  \pgfmathsetmacro\angVis{asin(min(1,max(\sinVis,-1)))}
  \draw[current plane] (\angVis:1) arc (\angVis:-\angVis-180:1);
  \draw[current plane,dashed] (180-\angVis:1) arc (180-\angVis:\angVis:1);
}
\pretocmd{\chapter}{\addtocontents{toc}{\protect\addvspace{15\p@}}}{}{}
\pretocmd{\section}{\addtocontents{toc}{\protect\addvspace{5\p@}}}{}{}
\pretocmd{\subsection}{\addtocontents{toc}{\protect\addvspace{1\p@}}}{}{}
\renewcommand{\tocsection}[3]{%
  \indentlabel{\@ifnotempty{#2}{\bfseries\ignorespaces#1 #2\quad}}\bfseries#3}
\renewcommand{\tocsubsection}[3]{%
  \indentlabel{\@ifnotempty{#2}{\ignorespaces#1 #2\quad}}#3}
\newcommand\@dotsep{4.5}
\def\@tocline#1#2#3#4#5#6#7{\relax
  \ifnum #1>\c@tocdepth % then omit
  \else
    \par \addpenalty\@secpenalty\addvspace{#2}%
    \begingroup \hyphenpenalty\@M
    \@ifempty{#4}{%
      \@tempdima\csname r@tocindent\number#1\endcsname\relax
    }{%
      \@tempdima#4\relax
    }%
    \parindent\z@ \leftskip#3\relax \advance\leftskip\@tempdima\relax
    \rightskip\@pnumwidth plus1em \parfillskip-\@pnumwidth
    #5\leavevmode\hskip-\@tempdima{#6}\nobreak
    \leaders\hbox{$\m@th\mkern \@dotsep mu\hbox{.}\mkern \@dotsep mu$}\hfill
    \nobreak
    \hbox to\@pnumwidth{\@tocpagenum{\ifnum#1=1\bfseries\fi#7}}\par% <-- \bfseries for \section page
    \nobreak
    \endgroup
  \fi}
\renewcommand\csname r@tocindent0\endcsname{0pt}
\def\l@subsection{\@tocline{2}{0pt}{2.5pc}{5pc}{}}
\title{Twisted hyperk\"ahler symmetries and hyperholomorphic line bundles}
\author{Radu A. Iona\c{s}}
\date{}                                           % Activate to display a given date or no date
\newcommand{\Address}{{% additional braces for segregating the font style
  \bigskip\bigskip 
  \sffamily
            Radu A. Iona\c{s} \\[3pt]
\indent C.\,N. Yang Institute for Theoretical Physics \\
\indent Stony Brook University, Stony Brook, NY 11794, U.S.A.

}}
\begin{document}

\begin{abstract}
In this paper we propose and investigate in full generality new notions of (continuous, non-isometric) symmetry on hyperk\"ahler spaces. These can be grouped into two categories, corresponding to the two basic types of continuous hyperk\"ahler isometries which they deform: tri-Hamiltonian isometries, on one hand, and rotational isometries, on the other. The first category of deformations gives rise to Killing spinors and generate what are known as hidden hyperk\"ahler symmetries. The second category gives rise to hyperholomorphic line bundles over the hyperk\"ahler manifolds on which they are defined and, by way of the Atiyah-Ward correspondence, to holomorphic line bundles over their twistor spaces endowed with meromorphic connections, generalizing similar structures found in the purely rotational case by Haydys and Hitchin. Examples of hyperk\"ahler metrics with this type of symmetry include the c-map metrics on cotangent bundles of affine special K\"ahler manifolds with generic prepotential function, and the hyperk\"ahler constructions on the total spaces of certain integrable systems proposed by Gaiotto, Moore and Neitzke in connection with the wall-crossing formulas of Kontsevich and Soibelman, to which our investigations add a new layer of geometric understanding.
\end{abstract}

\maketitle

\thispagestyle{fancy}\rhead{YITP-SB-17-52}

\vspace{10pt}

\tableofcontents

\section{Introduction}

Killing vector fields on hyperk\"ahler manifolds come in two flavors:
\begin{itemize}
\setlength{\itemsep}{1pt}

\item[1.] \textit{tri-Hamiltonian} Killing vector fields, whose Lie actions separately preserve each one of the three elements of a standard global frame of the bundle of hyperk\"ahler symplectic forms, that is,
\begin{align}
& \mathcal{L}_X\omega_1 = 0 
&& \mathcal{L}_X\omega_2 = 0 
&& \mathcal{L}_X\omega_3 = 0  \mathrlap{\hp ;} \\
\intertext{\item[2.] \textit{rotational} Killing vector fields, which preserve only one hyperk\"ahler symplectic form while rotating the transversal ones. For example} 
& \mathcal{L}_X \omega_1 = \omega_2
&& \mathcal{L}_X \omega_2 = - \, \omega_1 \hspace{-12pt}
&& \mathcal{L}_X \omega_3 = 0 \rlap{.}
\end{align}
\end{itemize}

%\noindent \textcolor{blue}{Both of these types of actions extend naturally to holomorphic actions on the twistor space of the hyperk\"ahler manifold on which they are defined. }

Other non-discrete notions of symmetries on hyperk\"ahler spaces besides the ones generated by Killing vector fields also exist in the literature. A generalization of the concept of continuous tri-Hamiltonian symmetry under the name of \textit{hidden symmetries} was proposed by Dunajski and Mason, with tri-Hamiltonian Killing vector fields replaced by \textit{Killing spinors} \cite{MR1785432, MR2006758}. A twistor space version of Dunajski and Mason's construction has been explored independently in the physical language of projective superspace by Lindstr\"om and Ro\v{c}ek \cite{MR929144, Lindstrom:2008gs}. Closely related to this is also Bielawski's notion of \textit{twistor group action} \cite{MR1848654}. On the other hand, in the rotational Killing case, Haydys \cite{MR2394039} has shown that the presence of such a symmetry implies automatically the existence of a hyperholomorphic line bundle over the hyperk\"ahler manifold endowed with a hyperhermitian connection, that is, a connection whose curvature 2-form is of \mbox{$(1,1)$} type with respect to all hyperk\"ahler complex structures of the manifold simultaneously (here we will refer to a 2-form with this property as being \textit{of hyper \mbox{$(1,1)$} type}; in four dimensions, this is equivalent to requiring that the form is \textit{self-dual}). On the twistor space\,---\,Hitchin then shows in \cite{MR3116317}\,---\,by the hyperk\"ahler version of the Atiyah-Ward correspondence, one has a corresponding holomorphic line bundle, trivial on twistor lines, equipped with a meromorphic connection with simple poles on the twistor fibers over 
%\mbox{$\zeta = 0$} and $\infty$ 
the antipodal pair of points of the twistor sphere corresponding to the two complex structures preserved by the action (in the example above, $I_3$ and $-I_3$) 
and globally-defined residues uniquely determined by the action. Hyperholomorphic line bundles with similar characteristics have also been encountered by Neitzke in \cite{Neitzke:2011za} in relation to a highly non-trivial deformation of the so-called semi-flat metric proposed previously by Gaiotto, Moore and Neitzke in \cite{MR2672801}, although, puzzlingly, in a context \textit{lacking} any rotational symmetries (the initial semi-flat metric, however, does have one such symmetry). Other examples exhibiting this feature were later studied by Korman \cite{MR3625762}. 

In this paper we develop a geometric framework in which these seemingly very different extensions of the two fundamental notions of continuous Killing symmetry can be understood in a unified manner as manifestations of the presence of what we will call \textit{twisted hyperk\"ahler symmetries}. To define these, it helps if we reformulate first the above action conditions in twistor space language. Consider an open covering of the sphere of hyperk\"ahler complex structures\,---\,\textit{i.e.}~of $\smash{ S^2 = \{x_1 I_1 + x_2 I_2 + x_3 I_3 \, | \, x_1,x_2,x_3 \in \mathbb{R}, \, x_1^2 + x_2^2 + x_3^2 = 1 \} }$\,---\,with two open sets obtained by removing the points on the sphere corresponding to $-I_3$~and~$I_3$, respectively. In what follows we will refer to the elements of this cover as the \textit{polar} regions (\textit{northern} and \textit{southern}), and to their intersection as the \textit{tropical} region of the twistor sphere. Regarding the sphere as a complex projective line, let also $\zeta$ be a complex affine parameter for the northern chart of the holomorphic coordinate atlas associated to this covering, chosen such that the two removed points are labeled by \mbox{$\zeta = 0$} and \mbox{$\zeta = \infty$}, respectively. On the twistor space, one has a globally-defined holomorphic 2-form supported on the twistor fibers and twisted by the $\mathcal{O}(2)$ bundle over the twistor projective line, whose tropical component takes in a standard local trivialization the form
\begin{equation}
\omega(\zeta) = \frac{\omega_+}{\zeta\ } + \omega_0 + \zeta \hp \omega_-
\end{equation}
where, by definition,
$\smash{ \omega_{\pm} = \pm \pt \frac{1}{2}(\omega_1 \pm i \hp \omega_2) }$ and
\mbox{$\omega_0 = \omega_3 $}. The two Killing conditions can then be equivalently expressed in a condensed form as follows:
\begin{itemize}
\setlength{\itemsep}{0pt}

\item[1.] in the tri-Hamiltonian case:
\begin{equation}
\mathcal{L}_X\omega(\zeta) = 0 \mathrlap{\hp ;}
\end{equation}

\item[2.] in the rotational case:
\begin{equation}
\Big( \! - i \hp \zeta \frac{\partial}{\partial \zeta} + \mathcal{L}_{X} \Big) \omega(\zeta) = 0 \rlap{.}
\end{equation}
\end{itemize}

\noindent The Lie derivatives are assumed to be taken fiberwise. Furthermore, one can show that both actions extend naturally to holomorphic actions on the twistor space. 

The \textit{twisted} versions of these actions will be defined for any finite integer \mbox{$j > 1$} by the conditions which result from replacing in these formulas the generating vector field $X$ with an $\mathcal{O}(2j-2)$-twisted vector field
\begin{equation} 
X(\zeta) = \sum_{n =1-j}^{j-1} X_n \hp \zeta^{-n}
\end{equation}
while still preserving the fiberwise assumption about the action of the Lie derivatives. In other words, we allow the vector field action to depend on the twistor fibers in a holomorphic way controlled by a complex line bundle of finite positive even degree over the twistor $\smash{ \mathbb{CP}^1 }$. We will call these twisted actions \textit{trans-tri-Hamiltonian} and \textit{trans-rotational}, respectively. 

At first sight such generalizations look problematic. For one thing, even though the resulting actions admit natural lifts to the twistor space, these are not holomorphic any more. Worse, one can show that the generators of such an action, should they exist, are not uniquely defined. However, a closer analysis reveals that while there are no obviously preserved quantities\,---\,like a metric, a complex structure, and so on\,---\,in the usual sense one associates to a symmetry, these conditions give nevertheless rise to interesting holomorphic objects on the twistor space. 

Thus, in the trans-tri-Hamiltonian case, assuming that the hyperk\"ahler manifold has vanishing first cohomology group, one obtains a globally-defined holomorphic section of the (pullback) $\mathcal{O}(2j)$ bundle over the twistor space, encoding the components of a symmetric Killing spinor. Extrapolating this picture to \mbox{$j=1$}, we retrieve the tri-Hamiltonian case proper, with the spinor morphing into an $\mathbb{R}^3$-vector-valued function, the image of the hyperk\"ahler moment map of the untwisted action. On the other hand, in the trans-rotational case what we get is a holomorphic line bundle over the twistor space,
%(that is, provided that a certain hyper $(1,1)$ form is integral; otherwise, we get a \textit{holomorphic Lie algebroid extension})
trivial on  twistor lines, equipped with a meromorphic connection with poles of order $j$ on the twistor fibers over \mbox{$\zeta = 0$} and $\infty$ and globally-defined residues of orders 1 through $j$ uniquely determined by the action. On the hyperk\"ahler manifold itself we have a corresponding hyperholomorphic line bundle endowed with a hyperhermitian connection. Its curvature 2-form is part of a triplet of hyper \mbox{$(1,1)$} forms naturally induced on the manifold by the twisted action, with the other two elements of the triplet having vanishing cohomology classes. (One should note that the existence of these line bundles is in fact also predicated on the cohomologically non-trivial hyper $(1,1)$ form satisfying an integrality condition; otherwise, for instance, instead of the holomorphic line bundle over the twistor space we get a \textit{holomorphic Lie algebroid extension}.) Clearly, these structures are generalizations of the ones found by  Hitchin and Haydys in the case of a purely rotational $S^1$-action, to which they reduce when we specialize to \mbox{$j=1$}.

The main body of the paper is organized into seven sections. In \textbf{section~\ref{sec:HK-sp}} we develop a new, twistor-optimal algebraic approach to describing the family of Cauchy-Riemann problems associated to the sphere of complex structures of a hyperk\"ahler manifold. Given a standard two-set complex coordinate chart on the sphere, viewed as $\smash{ \mathbb{CP}^1 }$, we notice that generic complex subspace projectors admit three different factorizations\,---\,depending on whether the corresponding hyperk\"ahler complex structure belongs to the northern, southern, or tropical region of the sphere\,---\,with two algebraic factors, one depending holomorphically and the other depending anti-holomorphically on the local complex affine coordinate. This prompts us to associate to each of the two polar regions a family of idempotent, and to the tropical region one of nilpotent endomorphisms of the tangent bundle of the hyperk\"ahler manifold holomorphically parametrized by the local complex affine coordinate, with the polar endomorphisms interchanged by antipodal conjugation and the tropical one left invariant. In terms of these holomorphically-parametrized families of endomorphisms, the hyperk\"ahler Cauchy-Riemann equations admit a natural formulation, specified in Proposition~\ref{CR_M}.

In the first part of \textbf{section~\ref{sec:Tw-sp}} we review basic facts about twistor spaces of hyperk\"ahler \pagebreak manifolds through the lens of the formalism developed in the previous section. This is intended mostly as a practice exercise and can be safely skipped by readers familiar with the subject. The formulation of the Cauchy-Riemann equations on the twistor space given in Proposition~\ref{CR_Z} underscores why this formalism is optimally suited for the twistor approach. The remaining part of the section is dedicated to proving an important technical result, namely Lemma~\ref{fiberwise-lemma}, which will play a crucial role in the proof of Lemma~\ref{A_V_mero} from section~\ref{sec:Trans-rot} concerning the meromorphicity of the connection associated to a trans-rotational action.

In \textbf{section~\ref{sec:Hyp-1,1}} we study the general properties of \emph{closed} hyper \mbox{$(1,1)$} forms on hyperk\"ahler manifolds and of their K\"ahler potentials. By the local $\partial\bar{\partial}$\pt-\hp lemma, closed hyper $(1,1)$ forms can be locally derived from a potential in each hyperk\"ahler complex structure. We call such potentials \textit{hyperpotentials} with respect to the corresponding complex structure. This notion plays a prominent role in our investigations. A set of five criterions for a local function to be a hyperpotential is collected in Proposition~\ref{criterion_2}. A remarkable feature of hyperpotentials is that they always come in infinite families. A recursive argument based on the \mbox{$\bar{\partial}$\pt-\hp Poincar\'e} lemma shows that any given hyperpotential generates automatically, although not uniquely, a \textit{chain of hyper\-potentials}\,---\,that is, an infinite sequence of hyperpotentials with respect to the \textit{same} hyperk\"ahler complex structure, related by certain first-order recursion relations (\mbox{\S\,\ref{ssec:rec_chains}}). As a consequence, one has an analogue of the local $\partial\bar{\partial}$\pt-\hp lemma for closed hyper \mbox{$(1,1)$} forms locally associating to any such form and to any hyperk\"ahler complex structure a corresponding chain of hyperpotentials (Lemma~\ref{exp_ddbar_lem}). Given a closed hyper \mbox{$(1,1)$} form, and fixing a complex structure which we label conventionally by \mbox{$\zeta = 0$}, then the chain of hyperpotentials with respect to this complex structure can be used to construct hyperpotentials with respect to other complex structures labeled by some small enough $\zeta$ in the form of an infinite series with one part holomorphic and another one anti-holomorphic in~$\zeta$; conversely, any hyperpotential for the closed hyper \mbox{$(1,1)$} form with respect to a complex structure sufficiently close to the one labeled by \mbox{$\zeta = 0$} admits a decomposition of this type (Proposition~ \ref{lulu}). Chains of hyperpotentials are also intimately related to holomorphic functions on the twistor space. The Laurent coefficients of the $\zeta$-\pt expansion of a twistor space holomorphic function always form a chain of hyperpotentials with respect to the  complex structure labeled by \mbox{$\zeta = 0$}; conversely, given a chain of hyperpotentials with respect to the complex structure labeled by \mbox{$\zeta = 0$}, a $\zeta$-\pt series with these coefficients defines a holomorphic function on the twistor space domain on which it converges, if such a domain exists (Proposition~\ref{hol-hyper(1,1)}).

In \textbf{section~\ref{sec:Tri-Ham}} we show how the presence of a symmetric Killing spinor on a simply-connected hyperk\"ahler manifold implies the existence of a trans-tri-Hamiltonian action. In a departure from the register of our previous discussions we frame the first part of our considerations in the language of Salamon's \mbox{$E \otimes\npt H$} formalism, which we review in \mbox{\S\,\ref{ssec:EH}}. The first thing to observe is that any valence \mbox{$(0,2j)$} Killing tensor, which is how a symmetric Killing spinor is termed in this approach, automatically gives rise to a valence \mbox{$(1,2j-1)$}  Killing tensor (Proposition~\ref{higher_K_tens}). These two tensors satisfy two important properties, which we formulate in Lemma~\ref{pic} and Lemma~\ref{poc}. To make the junction with the concepts developed in the previous section we choose a certain frame in which the valence \mbox{$(0,2j)$} Killing tensor can be equivalently viewed as a \mbox{$2j+1$\pt-\pt component} function, and the valence \mbox{$(1,2j-1)$} one as a \mbox{$2j-1$\pt-\pt component} vector field. The two properties can then be interpreted to mean that the components of the function form what we call a bounded chain of hyperpotentials (Proposition~\ref{Kill_tens_chain}), and that they furthermore play the role of generalized moment map functions to the \mbox{$\mathcal{O}(2j-2)$-twisted} trans-tri-Hamiltonian action generated by the multi-component vector field. The corresponding twistor space picture is given in Theorem~\ref{Kill_spins_tw_sp}. Independently from this, we also show that the vector fields defining a trans-tri-Ha\-mil\-to\-ni\-an action are not unique\,---\,rather, they form an equivalence class, with a canonical representative (Lemma~\ref{canon_pres}). The results of this section are not needed to understand the remainder of the paper; readers interested in other aspects may skip it without losing the thread of the arguments.

In \textbf{section~\ref{sec:Trans-rot}} we define and study twisted hyperk\"ahler actions of trans-rotational type. The first notion we define in this pursuit is, however, that of a \textit{quasi-rotational vector field}, formalizing a certain deformation of the usual definition of a rotational vector field. Quasi-rotational vector fields have the remarkable property that they naturally give rise on the hyperk\"ahler manifold to three closed hyper \mbox{$(1,1)$} forms (Proposition~\ref{quasi_rot_hyperpot}). Similarly to trans-tri-Hamiltonian actions, the generators of trans-rotational actions are not uniquely defined, but rather form an equivalence class with a canonical representative (Lemma~\ref{canon_pres_rot}). The middle generator in the canonical presentation, in particular, is a quasi-rotational vector field (Proposition~\ref{middle_q-rot}). Thus, trans-rotational actions always give rise to three closed hyper \mbox{$(1,1)$} forms, two of which can be shown now to have trivial cohomology classes. Incidentally, one should note that, unlike trans-tri-Hamiltonian actions but similarly to purely rotational ones, trans-rotational actions single out a privileged (conjugated) pair of hyperk\"ahler complex structures, which in this paper we conventionally label with \mbox{$\zeta = 0$} and \mbox{$\zeta = \infty$}. In particular, this gives us a natural geography for the twistor sphere, with notions such as the poles and the equator clearly defined. The existence of the closed hyper \mbox{$(1,1)$} forms associated to a trans-rotational action allows us to use in its study the hyperpotential machinery developed in section~\ref{sec:Hyp-1,1}. The picture which emerges from our subsequent analysis is a vigorous generalization of the properties and structures one encounters in the purely rotational case. For example, in this latter case it is known from \cite{MR877637} that the moment map for the action with respect to a polar hyperk\"ahler K\"ahler form (\textit{i.e.}~one of the two preserved by the action) is essentially a K\"ahler potential for all equatorial hyperk\"ahler K\"ahler forms. For a trans-rotational action, we show, equatorial K\"ahler potentials take more generally the form of a Fourier superposition of generalized moment map components with longitude angle-dependent harmonic phase factors (Corollary~\ref{poles-equator}). Moreover, as we have already revealed earlier, just like in the purely rotational case, in the trans-rotational ones we continue to have a hyperholomorphic line bundle over the hyperk\"ahler manifold, with a hyperhermitian connection with curvature given by the cohomologically non-trivial hyper \mbox{$(1,1)$} form. On the twistor space there is a corresponding holomorphic line bundle endowed with a meromorphic connection with poles on the twistor fibers over \mbox{$\zeta = 0$} and $\infty$\,---\,of order $j$, however, rather than the simple poles one has in the purely rotational case (Lemma~\ref{A_V_mero}, Theorem~\ref{main_th}). Of particular practical importance, we should also note, are the equation \eqref{princ_part_N}, detailing the pole structure of the connection at \mbox{$\zeta = 0$}, and the equation \eqref{A_V-manif_mero}, giving us a manifestly meromorphic formula for the connection in certain special twistor coordinates. 
%in terms of some locally defined meromorphic functions one can construct from the generalized moment maps for the trans-rotational action. 

In \textbf{section~\ref{sec:SymmGens}} we return to these coordinates to examine them in closer detail. By fixing a hyperk\"ahler complex structure $I_0$ labeled by \mbox{$\zeta=0$}, choosing a set of local Darboux coordinates for the canonical 2-form on the twistor space around the fiber over \mbox{$\zeta=0$}, and then looking at these coordinate's Taylor expansions in $\zeta$, one can naturally define two systems of local coordinates on any hyperk\"ahler manifold: one holomorphic with respect to $I_0$, and one which we shall call a system of \textit{special coordinates anchored at $I_0$}. Depending on whether a certain Taylor expansion coefficient is real or not, we differentiate between two types of special coordinates, one of which signals the presence of tri-Hamiltonian symmetries. A function $L$ can be also defined, which plays in special coordinates the role of local potential rather similarly to the way a K\"ahler potential does in holomorphic coordinates. In fact, the two potentials are related by a flipped-sign Legendre transform. Local expressions for the hyperk\"ahler metric and 2-forms can be given in terms of the second derivatives of  $L$ (equations \eqref{HK-sym-L} and \eqref{HK-sym-tor}), which must satisfy a set of differential constraints. This generalizes in some sense the Gibbons-Hawking picture, although mostly formally, since in the Gibbons-Hawking case the differential constraints are, rather very specially, linear. However, our interest in these coordinates stems primarily from the fact that the Cauchy-Riemann equations for chains of hyperpotentials with respect to the complex structure $I_0$ may be expressed in terms of them as a generalized moment map-like condition for a certain symplectic gradient-like vector field acting on only half the coordinates (Proposition~\ref{gmm-hyperps}, with $\smash{ X_f }$ defined for any function $f$ by either the formula \eqref{X_f_1} or the formula \eqref{X_f_2}, depending on the type of special coordinates one uses). By exploiting this property we then show that for any twisted action of either trans-tri-Ha\-mil\-to\-ni\-an or trans-rotational type, the equivalence class of the generating vector fields admits a representative whose elements are gradients of the generalized moment maps associated to the action (Theorem~\ref{t-3Ham_grad} and Theorem~\ref{t-rot_grad}, respectively).

In \textbf{section \ref{sec:Trans-rot-Ex}} we deploy the theoretical approach that we have developed in the previous sections to examine in detail several examples of non-compact hyperk\"ahler metrics with rotational and trans-rotational symmetry, and work out explicitly, in particular, the associated  twistor meromorphic connections in the form of some \textit{clutching} constructions over twistor lines. The simplest examples of hyperk\"ahler metrics with non-trivial trans-rotational symmetries are given by the so-called \textit{affine c-map}, or \textit{semi-flat}, metrics, which are determined by a certain holomorphic function called \textit{prepotential}. Higher on the complexity scale, we show that the class of hyperk\"ahler constructions proposed by Gaiotto, Moore, and Neitzke in \cite{MR2672801} in relation to BPS wall-crossing phenomena in certain supersymmetric quantum field theories possess as well a trans-rotational symmetry with \mbox{$j=2$}. These constructions can be understood as quantum instanton corrections to a certain special semi-flat metric with a Seiberg-Witten-type prepotential. This metric's two tri-Hamiltonian Killing symmetries are essentially obliterated by the instanton corrections (with one partial exception, see the case of the Ooguri-Vafa metric in subsection~\ref{ssec:O-V}), while its rotational Killing symmetry only gets  deformed by them into a non-Killing one of trans-rotational type. By virtue of our earlier general considerations, the existence of this twisted symmetry immediately casts a new geometric light on\,---\,as well as allows for a fresh principial derivation of\,---\,a series of results obtained by Neitzke and Alexandrov, Moore, Neitzke, and Pioline in \cite{Neitzke:2011za} and \cite{Alexandrov:2014wca}, respectively.

\section{Hyperk\"ahler spaces} \label{sec:HK-sp}

\subsection{Generalities} \hfill \medskip

A hyperk\"ahler manifold $M$ is a smooth $4m$ real-dimensional manifold endowed with a triplet of symplectic 2-forms $\omega_1$, $\omega_2$, $\omega_3$ which reduce the structure group of the tangent bundle from $GL(4m,\mathbb{R})$ to $Sp(m)$ (or some non-compact form thereof{\,\textemdash\,}the considerations of this paper apply in equal measure to the pseudo-hyperk\"ahler case)~\cite{MR887284}. Regarding \mbox{$\omega_1$, $\omega_2$, $\omega_3$} as sections of \mbox{$\Lambda^2 \hp T^*M \subset \text{Hom}(TM,T^*M)$} we can define
\begin{equation} \label{i=oo}
I_1 = \omega_3^{-1} \omega_2, \ I_2 = \omega_1^{-1} \omega_3, \ I_3 = \omega_2^{-1} \omega_1 \in \text{End}(TM)
\end{equation}
and the structure group condition can be reformulated as the requirement that these satisfy the algebra of imaginary quaternions with respect to the composition law on $\text{End}(TM)$, that is \mbox{$I_1^2 = I_2^2 = I_3^2 = I_1I_2I_3 = - 1$}, with $1$ denoting the identity endomorphism. Complex structures are endomorphisms of the tangent bundle $TM$ and, by duality, for each complex structure we get a corresponding endomorphism on the cotangent bundle $T^*M$ which we continue to denote with the same symbol. To differentiate between its dual roles, in these notes we use the convention that complex structures act on vector fields from the left and on 1-forms from the right.

Any manifold with this structure is automatically Riemannian (or perhaps pseudo-Rie\-mannian), a hyper-Hermitian metric being induced by
\begin{equation}
g(X,Y) = - \pt \omega_1(X,I_1Y) = - \pt \omega_2(X,I_2Y) =  - \pt \omega_3(X,I_3Y)
\end{equation}
for any vector fields \mbox{$X,Y \in TM$}. This is known as the hyperk\"ahler metric.  Note that with this sign choice we have \mbox{$\omega_i(X,Y) = g(X,I_iY)$} for all $i=1,2,3$. 

Each of the three endomorphisms $I_1$, $I_2$, $I_3$ is covariantly constant with respect to the Levi-Civita connection corresponding to the metric $g$ and hence integrable in the sense of complex structures. In fact, one can associate to any point $\mathbf{u} = (x_1,x_2,x_3)$ on the unit 2-sphere in $\mathbb{R}^3$ an integrable complex structure \mbox{$\IU = x_1 I_1 + x_2 I_2 + x_3 I_3$} covariantly constant with respect to the Levi-Civita connection and satisfying $\IU^2 = - 1$. Hyperk\"ahler manifolds possess thus naturally a whole $S^2$ family of integrable complex structures compatible with the hyperk\"ahler metric. 

For later reference let us also record here the fact that for any vector field \mbox{$X \in TM$} the following Lie derivative formula holds:
\begin{equation} \label{HK_formula}
\mathcal{L}_{I_iX} \omega_j = - \, \varepsilon_{ijk} \pt \mathcal{L}_X \omega_k + \delta_{ij} \hp d(\iota_X g) \rlap{.}
\end{equation}
The indices $i,j,k$ run over the values $1,2,3$, $\varepsilon_{ijk}$ is the antisymmetric Levi-Civita symbol, and $\iota_X$ denotes the insertion operator.

\subsection{Holomorphic factorizations of complex subspace projectors}  \label{ssec:proj_hol_fact} \hfill \medskip

There are two basic ways to look at the sphere of complex structures, each emphasizing one of the sides of the isomorphism $S^2 \cong \mathbb{CP}^1$, both reflected in a choice of coordinates: extrinsic global Euclidean $\mathbb{R}^3$-coordinates on one hand, intrinsic local complex coordinates on the other. One way preserves the spherical symmetry but obscures the complex structure, the other breaks the spherical symmetry but renders the complex projective structure of the 2-sphere manifest. For the twistor-theoretic approach, where complex structures play a central role, the second description is the natural choice.

Consider an open covering of $S^2$ with two patches $N$ and $S$ obtained by removing the points $\mathbf{u}_S=(0,0,-1)$ and $\mathbf{u}_N=(0,0,1)$, respectively. On the components of this covering we define complex coordinate charts by means of the stereographic projection
\begin{equation} \label{stereo_map}
S^2 \longrightarrow \mathbb{CP}^1, 
\qquad
\mathbf{u} = (x_1, x_2, x_3) \mapsto
\begin{cases}
\displaystyle{\zeta = - \frac{x_1+ix_2}{1+x_3}} & \textrm{on $N$} \\[2ex]
\displaystyle{\tilde{\zeta} = - \frac{x_1-ix_2}{1-x_3}} & \textrm{on $S$} \rlap{.}
\end{cases}
\end{equation}
On the intersection $N \cap S$ the two complex coordinates are related by the biholomorphic transition relation $\tilde{\zeta} = 1/\zeta$. This exhibits $\mathbb{CP}^1$ as a complex manifold obtained by patching together two copies of $\mathbb{C}$. The antipodal map $\mathbf{u} \mapsto - \mathbf{u}$ on $S^2$ interchanges $N$ and $S$ and induces a fixed point-free anti-holomorphic involution $\zeta \mapsto \zeta^c \coloneqq - 1/\bar{\zeta}$ on $\mathbb{CP}^1$. 

Corresponding to this choice of complex atlas for the twistor 2-sphere we pick for the complexified bundle of hyperk\"ahler complex structures a mirror frame with generators \mbox{$\smash[t]{ I_+ = \frac{1}{2} (I_1 + iI_2) }$}, \mbox{$I_0 = I_3$}, \mbox{$\smash[t]{ I_- = - \frac{1}{2}(I_1 - iI_2) }$}, whose elements satisfy the property $\smash{ \bar{I}_m = (-)^m I_{-m} }$, which we will call an \textit{alternating reality condition}. To each of the two open charts we then associate a holomorphi\-cally-para\-me\-trized family of elements of $\text{End}_{\mathbb{C}}(TM)$ 
\begin{equation}
\begin{aligned}
P_N(\zeta) = P^{\hp 0,1}_{\Io} + i\hp \zeta I_- & \quad \textrm{for $\zeta \in N$} \\
P_S(\tilde{\zeta}) \hspace{1.5pt} = P^{\hp 1,0}_{\Io} - i \hp \tilde{\zeta} I_+ & \quad \textrm{for $\tilde{\zeta} \in S$} \mathrlap{.}
\end{aligned}
\end{equation}
where $\smash{ P^{\hp 1,0}_{\Io} = \frac{1}{2}(1-iI_0) }$ and $\smash{ P^{\hp 0,1}_{\Io} = \frac{1}{2}(1+iI_0) }$ are the complex subspace projectors for the complex structure $I_0$. These are related by \textit{antipodal conjugation}, which in these notes we define as the operation induced by \emph{antipodal mapping composed with complex conjugation}. That is to say, we have $\smash{ \overline{P_N(\zeta^c)} = P_S(\tilde{\zeta}) }$. The quaternionic properties of the complex structures imply that on their respective domains of definition they are idempotent 
\begin{equation}
P_N(\zeta)^2 = P_N(\zeta) 
\qquad\qquad
P_S(\tilde{\zeta})^2 = P_S(\tilde{\zeta}) \label{idempotence}
\end{equation}
(and therefore so are their complements $1 - P_N(\zeta)$ and $1 - P_S(\tilde{\zeta})$) and, in addition, on the intersection $N \cap S$ they satisfy the compatibility relations
\begin{equation} \label{PP=P}
[1 - P_S(\tilde{\zeta})] P_N(\zeta)  = 0  
\qquad\qquad 
[1 - P_N(\zeta)] P_S(\tilde{\zeta}) = 0 \mathrlap{.}
\end{equation}
Note that the equations on each line are interchanged by antipodal conjugation, so it suffices to verify only one in each case. 

Furthermore, for every point $\mathbf{u} \in S^2$ let 
\begin{equation}
P^{1,0}_{\Iu} = \frac{1}{2}[1 - i \IU]
\quad \text{and} \quad 
P^{0,1}_{\Iu} = \frac{1}{2}[1 + i\IU]
\end{equation}
be the $(1,0)$ respectively $(0,1)$ complex subspace projectors corresponding to the hyperk\"ahler complex structure $\IU$. With respect to their eigenvalues the complexified tangent and cotangent bundles admit the direct sum decompositions $T_{\mathbb{C}}M = T^{1,0}_{\Iu}M \oplus T^{0,1}_{\Iu}M$ and, dually, $\smash{ T^*_{\mathbb{C}}M = T^{*1,0}_{\Iu}M \oplus T^{*0,1}_{\Iu}M }$. 

The remarkable feature which arises and which sits at the core of the twistor space approach to hyperk\"ahler geometry is that the choice of complex atlas for $\smash{ \mathbb{CP}^1 }$ translates into certain algebraic decomposition properties of these projectors in terms of the holomorphically-parametrized ones. More precisely, we have
\begin{itemize}
\item[1.] linear decompositions
\begin{equation}
P^{0,1}_{\Iu} = \rho_NP_N(\zeta) + \rho_SP_S(\tilde{\zeta})= \rho_N [\overline{1-P_N(\zeta)\vphantom{\tilde{\zeta}}}] + \rho_S[\overline{1-P_S(\tilde{\zeta})}] \label{lin_decomp_proj}
\end{equation}
\item[2.] holomorphic factorizations
{\allowdisplaybreaks
\begin{equation}
P^{0,1}_{\Iu} = 
	\begin{cases} 
     	\rho_N P_N(\zeta)[\overline{1 - P_N(\zeta)\vphantom{\dot{i}}}] & \textrm{for $\mathbf{u} \in N$} \\[2pt]
	\hspace{0.5pt} \rho_S \hspace{2.8pt} P_S(\tilde{\zeta}) \hspace{1pt} [\overline{1 \hspace{1pt} - \hspace{1pt} P_S(\tilde{\zeta})}] & \textrm{for $\mathbf{u} \in S$} 
	\end{cases} \label{factorization_Ps}
\end{equation}
}
\end{itemize}
where, by definition, \mbox{$\rho_N = (1+|\zeta|^2)^{-1} = \frac{1}{2}(1+x_3)$} and \mbox{$\smash[t]{ \rho_S = (1+|\tilde{\zeta}|^2)^{-1} = \frac{1}{2}(1-x_3) }$}. The linear decomposition formulas follow directly from the definitions. The holomorphic factorization formulas, on the other hand, encode in addition quaternionic algebra properties of the hyperk\"ahler complex structures. 

Let us consider now the projector $P_N(\zeta)$ more closely. Observe that its list of algebraic properties includes the following items: 
{\allowdisplaybreaks
\begin{equation}
\begin{aligned}
P^{1,0}_{\Iu} P_N(\zeta) & = 0 &\qquad\qquad& [1-P_N(\zeta)] P^{0,1}_{\Iu} = 0 \\[0pt] 
P_N(\zeta) P^{1,0}_{\Io} & = 0 &\qquad\qquad& P^{0,1}_{\Io}[1-P_N(\zeta)] = 0 \mathrlap{.}
\end{aligned}
\end{equation}
}%
The ones in the first line are direct corollaries of the first factorization formula (and its complex conjugation), and the remaining ones follow easily from the definitions. Based on these properties and again, crucially, on the first factorization formula we can then see that  the actions of $P_N(\zeta)$ and its complementary projector \mbox{$1-P_N(\zeta)$} on the complexified tangent and cotangent bundles are described by the following two split short exact sequences 
\begin{equation*}
\begin{tikzcd}[column sep=3em]
0 \rar & T^{1,0}_{\Io}M \rar[shift left] \rar[shift right, leftarrow, "1-P_N(\zeta)"'] & T_{\mathbb{C}}M \rar[shift left, "P_N(\zeta)"] \rar[shift right, leftarrow] & T^{0,1}_{\Iu}M \rar & 0 \\[-16pt]
0 \rar & T^{*0,1}_{\Io}\npt M \rar[shift left] \rar[shift right, leftarrow, "P_N(\zeta)"'] & T^*_{\mathbb{C}}M \rar[shift left, "1-P_N(\zeta)"] \rar[shift right, leftarrow] & T^{*1,0}_{\Iu}M \rar & 0 
\end{tikzcd}
\end{equation*}
where, in accordance with our previously stated convention, the two linear operators are viewed alternatively as elements of
$\text{End}(T_{\mathbb{C}}M)$ and of $\text{End}(T^*_{\mathbb{C}}M)$, respectively. 

At the antipodes we have for $P_S(\tilde{\zeta})$ and its complementary projector \mbox{$1-P_S(\tilde{\zeta})$} the analogous pair of split short exact sequences
\begin{equation*}
\begin{tikzcd}[column sep=3em]
0 \rar & T^{0,1}_{\Io}M \rar[shift left] \rar[shift right, leftarrow, "1-P_S(\tilde{\zeta})"'] & T_{\mathbb{C}}M \rar[shift left, "P_S(\tilde{\zeta})"] \rar[shift right, leftarrow] & T^{0,1}_{\Iu}M \rar & 0 \\[-16pt]
0 \rar & T^{*1,0}_{\Io}M \rar[shift left] \rar[shift right, leftarrow, "P_S(\tilde{\zeta})"'] & T^*_{\mathbb{C}}M \rar[shift left, "1-P_S(\tilde{\zeta})"] \rar[shift right, leftarrow] & T^{*1,0}_{\Iu}M \rar & 0 \mathrlap{.}
\end{tikzcd}
\end{equation*}

In addition to the projectors $P_N(\zeta)$ and $P_S(\tilde{\zeta})$ defined on $N$ and $S$ we introduce also a transition element on the intersection $N \cap S$ by
\begin{align} \label{def_I(zeta)}
I(\zeta) & = i\hp[P_S(\tilde{\zeta}) - P_N(\zeta)] \\
& = \frac{I_+}{\zeta\ } + I_0 + \zeta I_- \rlap{.} \nonumber
\end{align}
By resorting for instance to the algebraic identities \eqref{idempotence} and \eqref{PP=P} we promptly see that the new operator is nilpotent, that is, 
\begin{equation}
I(\zeta)^2 = 0
\end{equation}
for any $\zeta \in \mathbb{C}^{\times}$. Remarkably, we can also claim for it a corresponding holomorphic factorization formula, namely,
\begin{equation} \label{factorization_Q}
P^{0,1}_{\Iu}  = - \, \rho \pt I(\zeta)\overline{I(\zeta)} \quad \textrm{for $\mathbf{u} \in N \cap S$}
\end{equation}
with $\rho = \rho_N\rho_S = (1+|\zeta|^2)^{-1}(1+|\tilde{\zeta}|^2)^{-1} = \frac{1}{4}(1-x_3^2)$. The dual actions of $I(\zeta)$ on the complexified tangent and cotangent bundles are described by the two short exact sequences 
\begin{equation*}
\begin{tikzcd}[column sep=normal]
0 \rar & T^{0,1}_{\scriptscriptstyle \Iu}M \rar & T_{\mathbb{C}}M \rar["I(\zeta)"] & T^{0,1}_{\Iu}M \rar & 0 \\[-20pt]
0 \rar & T^{*1,0}_{\scriptscriptstyle \Iu}M \rar & T^*_{\mathbb{C}}M \rar["I(\zeta)"] & T^{*1,0}_{\Iu}M \rar & 0 \mathrlap{.}
\end{tikzcd}
\end{equation*}
%The identity between kernels and images is a reflection of nilpotence. Surjectivity is again a consequence of the factorization formula. 

Finally, let us record also the following useful derivation formula:
\begin{equation} \label{deriv_formula}
\partial_{\hp\smash{\mathbb{CP}^1}} P^{0,1}_{\Iu} = -i \, \overline{I(\zeta)} \, \rho \hp \frac{d\zeta}{\zeta} \rlap{.}
\end{equation}
Observe in particular that $\rho \hp \frac{d\zeta}{\zeta}$ is the $(1,0)$ part of the real-valued 1-form $-\frac{1}{2}\pt dx_3$ on the twistor $\smash{ S^2 }$, with $x_3$ being the height function. The formula can be easily continued analytically to include the ``north" and ``south" poles of $S^2$.

\subsection{The Cauchy-Riemann equations for a generic complex structure} \hfill \medskip

To understand the significance of these factorizations consider now for a moment the Dolbeault operator on $M$ with respect to a generic complex structure $\IU$. Given a function $f \in \mathscr{A}^0(M,\mathbb{C})$, we have by definition $\bar{\partial}_{\Iu} f = df P^{0,1}_{\Iu}$. The function $f$ is holomorphic with respect to $\IU$ on a domain on $M$ if and only if $\bar{\partial}_{\Iu} f = 0$ on that domain. %We have the following 

\begin{proposition}[The Cauchy-Riemann equations on $M$ for the complex structure $\IU$] \label{CR_M}
Let $M$ be a hyperk\"ahler manifold. A function $f \in \mathscr{A}^0(M,\mathbb{C})$ is holomorphic on a domain on $M$ with respect to a complex structure $\IU$ with
\begin{flushleft}
\begin{tabular}{llcr}
\hspace{10pt} & $\mathbf{u} \in N$ & $\Leftrightarrow$ & $df P_N(\zeta) = 0$ \\[2pt]
& $\mathbf{u} \in S$ & $\Leftrightarrow$ & $df P_S(\tilde{\zeta}) = 0$ \\[2.5pt]
& $\mathbf{u} \in N \cap S$ & $\Leftrightarrow$ & $df I(\zeta) = 0$
\end{tabular}
\end{flushleft}
on that domain.
\end{proposition}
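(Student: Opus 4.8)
The plan is to reduce all three equivalences to a single statement about the \emph{image} of the relevant endomorphism on the tangent bundle. I would begin by recalling the paper's convention that each of these endomorphisms acts on vector fields from the left and on $1$-forms from the right, the two actions being mutually dual, $(\alpha A)(v) = \alpha(Av)$ for $\alpha \in T^*_{\mathbb{C}}M$ and $v \in T_{\mathbb{C}}M$. Consequently, for any such $A$ the right-kernel condition $\alpha A = 0$ holds if and only if $\alpha$ annihilates the image of $A$ acting on vectors; in other words, the kernel of right-multiplication by $A$ is the annihilator $\mathrm{Ann}\big(\mathrm{im}(A)\big)$ and depends on $A$ only through $\mathrm{im}(A)$.

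Next I would record that holomorphy of $f$ with respect to $\IU$ means precisely $\bar{\partial}_{\Iu} f = df\, P^{0,1}_{\Iu} = 0$, which by the observation above is equivalent to $df$ annihilating $\mathrm{im}(P^{0,1}_{\Iu}) = T^{0,1}_{\Iu}M$, i.e.\ to $df \in T^{*1,0}_{\Iu}M$. It therefore suffices to show that each of the three algebraic conditions is likewise equivalent to $df \in T^{*1,0}_{\Iu}M$, and for this I would invoke the tangent-bundle short exact sequences already established for $P_N(\zeta)$, $P_S(\tilde{\zeta})$ and $I(\zeta)$. In every one of them the endomorphism in question surjects onto $T^{0,1}_{\Iu}M$, so that $\mathrm{im}(P_N(\zeta)) = \mathrm{im}(P_S(\tilde{\zeta})) = \mathrm{im}(I(\zeta)) = T^{0,1}_{\Iu}M = \mathrm{im}(P^{0,1}_{\Iu})$ on the appropriate domains. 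Since the right-kernel depends only on this common image, the three conditions $df\,P_N(\zeta)=0$, $df\,P_S(\tilde{\zeta})=0$, $df\,I(\zeta)=0$ each collapse to the single requirement that $df$ annihilate $T^{0,1}_{\Iu}M$, establishing all three equivalences simultaneously.

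For readers who prefer the two implications separately, the ``if'' directions fall out immediately from the holomorphic factorization formulas: if $df\,P_N(\zeta)=0$ then, using $P^{0,1}_{\Iu} = \rho_N P_N(\zeta)\,\overline{[1-P_N(\zeta)]}$ together with associativity of the right-action, $df\,P^{0,1}_{\Iu} = \rho_N\,(df\,P_N(\zeta))\,\overline{[1-P_N(\zeta)]} = 0$, and the analogous computations with the factorizations of $P_S(\tilde{\zeta})$ and of $I(\zeta)$ dispose of the remaining two. The only substantive content is thus the converse ``only if'' direction, and here the main obstacle is exactly the surjectivity onto $T^{0,1}_{\Iu}M$ that pins down the images. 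This I would obtain either directly from the factorization formulas, as was already noted when the short exact sequences were written down, or equivalently from the operator identity $P^{0,1}_{\Iu}\,P_N(\zeta) = P_N(\zeta)$ (and its $S$- and $N \cap S$-analogues), which merely says that $P^{0,1}_{\Iu}$ restricts to the identity on $\mathrm{im}(P_N(\zeta)) = T^{0,1}_{\Iu}M$; granting it, $df\,P_N(\zeta) = df\,P^{0,1}_{\Iu}\,P_N(\zeta) = (df\,P^{0,1}_{\Iu})\,P_N(\zeta) = 0$ whenever $f$ is holomorphic. Once this image identification is in hand the proposition follows with no further calculation.
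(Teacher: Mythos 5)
Your proof is correct. It runs on the same raw ingredients as the paper's\,---\,the holomorphic factorization formulas and the algebraic properties recorded immediately after them\,---\,but it organizes them along a genuinely different route. The paper argues case by case and anchors its ``only if'' direction at $I_0$: from $\bar{\partial}_{\Iu} f = 0$ and the factorization it places $df\,P_N(\zeta)$ simultaneously in $\ker[\overline{1-P_N(\zeta)}] \cong T^{*1,0}_{\Io}M$ and in $\mathrm{im}[P_N(\zeta)] \cong T^{*0,1}_{\Io}M$, concluding by transversality of these two $I_0$-type subbundles of $T^*_{\mathbb{C}}M$; the $N \cap S$ case is then assembled from the two polar cases via $I(\zeta) = i\hp[P_S(\tilde{\zeta}) - P_N(\zeta)]$, with the converse there supplied by the factorization of $I(\zeta)$. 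You bypass the $I_0$-anchored cotangent decomposition entirely: the right-kernel of an endomorphism acting on $1$-forms is the annihilator of its image on vectors, and since the tangent-bundle exact sequences (whose surjectivity the paper indeed derives from the factorizations) identify $\mathrm{im}(P_N(\zeta)) = \mathrm{im}(P_S(\tilde{\zeta})) = \mathrm{im}(I(\zeta)) = T^{0,1}_{\Iu}M = \mathrm{im}(P^{0,1}_{\Iu})$, all three conditions collapse at once to $df \in T^{*1,0}_{\Iu}M$, proving both implications in all three cases simultaneously. Your alternative ``only if'' is also sound: $P^{0,1}_{\Iu}P_N(\zeta) = P_N(\zeta)$ is exactly the complement of the listed property $P^{1,0}_{\Iu}P_N(\zeta) = 0$, and the manipulation $df\,P_N(\zeta) = (df\,P^{0,1}_{\Iu})\hp P_N(\zeta)$ respects the right-action convention $\alpha(AB) = (\alpha A)B$. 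What your route buys is uniformity and economy\,---\,one argument covering any operator whose image on vectors is $T^{0,1}_{\Iu}M$, with a one-line converse; what the paper's route buys is an explicit display of the interplay between the $I_0$- and $\IU$-adapted splittings, the mixed-type structure that its short exact sequences are designed to exhibit and which recurs throughout the rest of the paper.
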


\begin{proof}

To prove the direct implication of the first statement consider a point $\mathbf{u} \in N$ and suppose $\bar{\partial}_{\Iu} f = 0$. By the first factorization formula \eqref{factorization_Ps} we have $df P_N(\zeta) \in \text{ker}\hp [\overline{1-P_N(\zeta)}] \allowbreak \cong T^{*1,0}_{\Io}$. On the other hand, by the properties of $P_N(\zeta)$ it is clear that ${df P_N(\zeta) \in \text{im} \hp [P_N(\zeta)]} \allowbreak \cong T^{*0,1}_{\Io}$. The only way in which these two conditions can be simultaneously satisfied is if $df P_N(\zeta) = 0$. The converse implication is an immediate consequence of the same factorization formula \eqref{factorization_Ps}. The second statement follows from a similar argument. 

Suppose now $\mathbf{u} \in N \cap S$. If $\bar{\partial}_{\Iu} f = 0$ then by the first two parts we must have both $df P_N(\zeta) = 0$ and $df P_S(\tilde{\zeta}) = 0$, and so $df I(\zeta) = 0$. Conversely, if this holds, then by the factorization formula \eqref{factorization_Q} $\bar{\partial}_{\Iu} f = 0$ must hold as well. 
\end{proof}

\subsection{Hyperk\"ahler integrability} \hfill \medskip

In a few situations that we will encounter we will find it convenient to work in a local coordinate frame on $M$ holomorphic with respect to $I_0$. The choice of $I_0$ as manifest complex structure must be understood in close connection with our  choice of complex coordinate atlas on $\mathbb{CP}^1$, and should be seen primarily as a practical device and not necessarily as an indication of privileged status among other hyperk\"ahler complex structures (although, as we shall see, in some instances this can well be the case). Our choice of complex atlas for $\mathbb{CP}^1$ favors in (the complexification of) the quaternionic subbundle $\text{Span}(1,I_1,I_2,I_3) \subset \text{End}(TM)$ not just a complex structure, but in fact an entire basis, namely the one generated by \mbox{$\smash{ P^{1,0}_{\Io} }$, $\smash{ P^{0,1}_{\Io} }$, $I_+$, $I_-$}. From the quaternionic algebra we have $\smash{ P^{1,0}_{\Io}I_+ = I_+P^{0,1}_{\Io} = 0 }$, so in such a coordinate frame the elements of this basis take the form
{\allowdisplaybreaks
\begin{equation}
\begin{aligned}
P^{1,0}_{\Io} & = \frac{\partial}{\partial x^{\mu}} \otimes dx^{\mu} &\qquad& I_+ = (I_+)^{\bar{\mu}}{}_{\nu} \pt \frac{\partial}{\partial x^{\bar{\mu}}} \otimes dx^{\nu}  \\
P^{0,1}_{\Io} & = \frac{\partial}{\partial x^{\bar{\mu}}} \otimes dx^{\bar{\mu}} && I_- = (I_-)^{\mu}{}_{\bar{\nu}} \pt \frac{\partial}{\partial x^{\mu}} \otimes dx^{\bar{\nu}} \mathrlap{.}
\end{aligned}
\end{equation}
}% 
From the identity $\smash[b]{ I_-I_+ = P^{1,0}_{\Io} }$ we get, moreover, the algebraic constraint
\begin{equation}
(I_-)^{\mu}{}_{\bar{\rho}} (I_+)^{\bar{\rho}}{}_{\nu} = \delta^{\mu}{}_{\nu} \mathrlap{.} \label{I+I-}
\end{equation}
The covariant constancy property of hyperk\"ahler complex structures imposes additional differential constraints which we can write as follows: 
\begin{equation} \label{HK_integrability}
\begin{aligned}
\partial_{\mu} (I_+)^{\bar{\kappa}}{}_{\nu} & = \partial_{\nu} (I_+)^{\bar{\kappa}}{}_{\mu} \\[3pt]
(I_+)^{\bar{\eta}}{}_{\mu} \pt \partial_{\bar{\eta}}(I_+)^{\bar{\kappa}}{}_{\nu} & = (I_+)^{\bar{\eta}}{}_{\nu} \pt \partial_{\bar{\eta}}(I_+)^{\bar{\kappa}}{}_{\mu} \mathrlap{.}
\end{aligned}
\end{equation}
Indeed, notice that if we replace by hand in these formulas the derivatives with Levi-Civita covariant derivatives $\nabla$ corresponding to the hyperk\"ahler metric we obtain in view of the fact that $\nabla I_+ = 0$ identically true equations. The Christoffel symbols can then be dropped out from these equations due to their Hermiticity and index symmetry properties, which leaves us with the expressions above. 

Given two conjugated holomorphic respectively antiholomorphic coordinate co\-frames on $T^{*1,0}_{\Io}M$ and $T^{*0,1}_{\Io}\npt M$, then for any $\mathbf{u} \in S^2$ each of their elements can be decomposed uniquely into $(1,0)$ and $(0,1)$ components with respect to the complex structure $\IU$: 
\begin{equation} \label{dx_splitting}
dx^{\mu} = \theta^{\mu}_{\plus} + \theta^{\mu}_{\minus}
\qquad\text{and}\qquad
dx^{\bar{\mu}} = \theta^{\bar{\mu}}_{\plus} + \theta^{\bar{\mu}}_{\minus} \mathrlap{.}
\end{equation}
The resulting forms can be interpreted as \textit{soldering forms} providing isomorphisms between the tangent subspaces holomorphic or anti-holomorphic with respect to $\IU$ on one hand, and with respect to $I_0$ on the other. 

To see this, consider for example $\theta^{\mu}_{\plus}$.  From the complex conjugate of the second decomposition formula \eqref{lin_decomp_proj} we have explicitly
\begin{equation} \label{vielbein}
\theta^{\mu}_{\plus} = \rho_N [\hp dx^{\mu} - i\hp \zeta \hp (I_-)^{\mu}{}_{\bar{\nu}} \hp dx^{\bar{\nu}}] \rlap{.}
\end{equation}
By the covariant constancy of the hyperk\"ahler complex structures it follows then that this satisfies the Cartan-Maurer equation
\begin{equation}
d\theta^{\mu}_{\plus} + \Gamma^{\mu}{}_{\nu} \!\wedge \theta^{\nu}_{\plus} = 0
\end{equation}
where $\Gamma^{\mu}{}_{\nu} = \Gamma^{\mu}{}_{\nu\rho} \pt dx^{\rho}$ is the $(1,0)$ part of the complexified Levi-Civita connection (since $M$ endowed with $I_0$ is K\"ahler, all Christoffel symbols of mixed type vanish). Thus, $\theta^{\mu}_{\plus}$ defines an isomorphism
\begin{equation*}
\begin{tikzcd}[row sep=-5pt, column sep=small]
T^{1,0}_{\Iu}M \vert_x \ar[r,"\sim"] & T^{1,0}_{\Io}M \vert_x \\
X \arrow[r,mapsto] & \displaystyle \iota_X\theta^{\mu}_{\plus} \frac{\partial}{\partial x^{\mu}} \rlap{.}
\end{tikzcd}
\end{equation*}
Similar arguments hold in turn for $\theta^{\mu}_{\minus}$, $\theta^{\bar{\mu}}_{\plus}$ and $\theta^{\bar{\mu}}_{\minus}$.

\section{Twistor spaces} \label{sec:Tw-sp}

\subsection{Generalities} \hfill \medskip

The origins of the twistor space theory of hyperk\"ahler manifolds go back to the work of Penrose, who famously showed that the geometry of anti-self-dual four-manifolds can be naturally encoded into the complex geometry of a twistor space of one complex dimension higher \cite{MR0439004}. Penrose's construction was subsequently extended by Salamon \cite{MR0439004,MR860810} and independently by B\'erard-Bergery (see Theorem 14.9 in \cite{MR867684}) to higher-dimensional analogues of anti-self-dual four-manifolds, which turned out to be manifolds with a quaternionic structure. Hyperk\"ahler manifolds form a subclass of these. The corresponding twistor construction was investigated \textit{per se} by Hitchin, Karlhede, Lindstr\"om and Ro\v{c}ek in \cite{MR877637}. 

Let $M$ continue to denote a hyperk\"ahler manifold. The central idea of the twistor approach is to ``unfurl" the $S^2$ family of complex structures of $M$ and incorporate them into a single holomorphic structure on a larger manifold, the twistor space $\mathcal{Z}$. From a purely differential geometric point of view, $\mathcal{Z} = M \times S^2$. An almost complex structure on $\mathcal{Z}$ is defined by combining diagonally on the tangent space $TM |_x \oplus TS^2 |_{\mathbf{u}}$ at any  point $(x,\mathbf{u}) \in \mathcal{Z}$ the action of the hyperk\"ahler complex structure $\IU$ with that of the natural complex structure on $S^2 \cong \mathbb{CP}^1$, $I_{\smash{\mathbb{CP}^1}}$. This almost complex structure can be shown to be integrable, and so $\mathcal{Z}$ is a complex manifold. The endomorphism $(x,\mathbf{u}) \mapsto (x,-\mathbf{u})$ induced by antipodal conjugation on $S^2$ defines moreover on $\mathcal{Z}$ an anti-holomorphic involution or, equivalently, a real structure as it simultaneously inverts the signs of both $\IU$ and $I_{\smash{\mathbb{CP}^1}}$\,---\,and consequently that of the complex structure of $\mathcal{Z}$.  

Let $\pi : \mathcal{Z} \rightarrow \mathbb{CP}^1$ and $p : \mathcal{Z} \rightarrow M$ be the natural projections. The projection onto the $\mathbb{CP}^1$ factor defines a holomorphic fibration whose fibers $\pi^{-1}(\mathbf{u})$ for any $\mathbf{u} \in S^2 \cong \mathbb{CP}^1$ are biholomorphic to copies of $M$ endowed with the complex structure $\IU$. In addition, each fiber carries a complex symplectic structure compatible with its complex structure. To see how that occurs consider first the fibers above the hyperk\"ahler complex structures $I_0$ and $-I_0$ (recall that $I_0$ is the same as $I_3$). If we define the complex-linear combinations
%\begin{equation}
%\omega_+ = \frac{1}{2}(\omega_1 + i \hp \omega_2)
%\qquad
%\omega_0 = \omega_3
%\qquad
%\omega_- = - \frac{1}{2}(\omega_1 - i \hp \omega_2)
%\end{equation}
\mbox{$\smash{ \omega_+ = \frac{1}{2}(\omega_1 + i \hp \omega_2) }$}, \mbox{$\omega_0 = \omega_3$}, \mbox{$\smash{ \omega_- = - \frac{1}{2}(\omega_1 - i \hp \omega_2) }$}
then a set of complex symplectic structures corresponding to these fibers is given by the transversal forms $\omega_+$ and $\omega_-$, respectively. The proof amounts in essence to showing that $\omega_+$ is a closed holomorphic type $(2,0)$ form with respect to $I_0$. Closure is evident. Then based on the quaternionic algebra we have \mbox{$\smash[b]{ \omega_+(X,P^{0,1}_{\Io}Y) = g(X,I_+P^{0,1}_{\Io}Y) = 0 }$} for any vector fields $X, Y \in TM$. Together with antisymmetry and closure this yields the remainder of the statement. The statement for the fiber over $-I_0$ follows by complex conjugation. Finally, for the remaining fibers above complex structures $\IU \neq I_0, -I_0$ such a complex symplectic structure is given by
\begin{equation}
\omega(\zeta) = \frac{\omega_+}{\zeta\ } + \omega_0 + \zeta \hp \omega_-
\end{equation}
where $\zeta \in \mathbb{C}^{\times}$ corresponds to $\mathbf{u} \in N \cap S \subset S^2$ through the stereographic map in the usual way. This is a closed 2-form on $M$ and, moreover, by the factorization formula \eqref{factorization_Q} and nilpotence of $I(\zeta)$ we have $\omega(\zeta)(X,P^{0,1}_{\Iu}Y) = g(X,I(\zeta)P^{0,1}_{\Iu}Y) = 0$ for any vector fields $X,Y \in TM$. Antisymmetry and closure eventually imply that $\omega(\zeta)$ is a holomorphic type $(2,0)$ form with respect to $\IU$. %On the twistor space these arguments imply the 
On the twistor space these various facts are summarized concisely by the statement of
existence of a canonical global holomorphic section of the bundle $\Lambda^2T_F^* \allowbreak \otimes \pi^*\mathcal{O}(2)$ over $\mathcal{Z}$, where $T_F =\text{ker}\pt (d\pi)$ is the holomorphic tangent bundle along the fibers and the second factor is the pullback on $\mathcal{Z}$ of the $\mathcal{O}(2)$ bundle over $\mathbb{CP}^1$. The fibers $\pi^{-1}(\mathbf{u})$ can then be viewed as copies of $M$ endowed with a complex structure $\IU$ and a compatible complex symplectic structure induced through restriction by this global section. 

Unlike the projection $\pi$, the projection $p$ onto the $M$ factor is not holomorphic in general. However, for every point $x \in M$, $p^{-1}(x)$ is a complex analytic submanifold of $\mathcal{Z}$ which is called the horizontal twistor line through $x$. The restricted projection \mbox{$\pi |_{p^{-1}(x)} : p^{-1}(x) \rightarrow \mathbb{CP}^1$} gives a canonical identification of $p^{-1}(x)$ with $\mathbb{CP}^1$. One also has the notion of real twistor line, designating a holomorphic section of $\pi$ which commutes with the real structure on $\mathcal{Z}$. The two notions coincide: a twistor line is horizontal if and only if it is real. The normal bundle to any of these twistor lines is isomorphic to $\mathbb{C}^{2m} \otimes \pi^*\mathcal{O}(1)$. Note, finally, that through every point of $\mathcal{Z}$ pass a unique fiber and horizontal twistor line. 

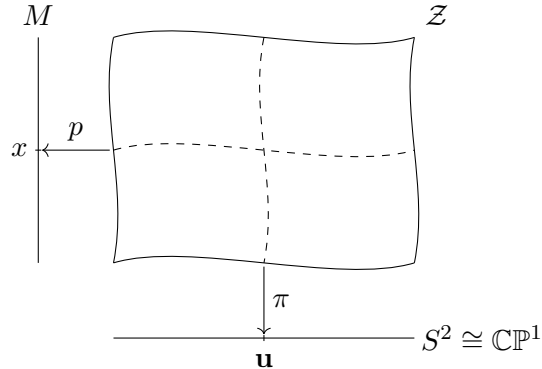
\begin{figure}[ht]
\begin{tikzpicture}[scale = 1]

\draw (-2,2) .. controls (-1,2.3) and (1,1.7) .. (2,2);
\draw (-2,-1) .. controls (-1,-0.7) and (1,-1.3) .. (2,-1);
\draw[dashed] (-2,0.5) .. controls (-1,0.8) and (1,0.2) .. (2,0.5);

\draw (-2,2) .. controls (-2.2,1) and (-1.8,0) .. (-2,-1);
\draw (2,2) .. controls (1.8,1) and (2.2,0) .. (2,-1);
\draw[dashed] (0,2) .. controls (-0.2,1) and (0.2,0) .. (0,-1);

\draw (-2,-2) -- (2,-2);
\draw  (-3,2) -- (-3,-1);

\draw[->] (-2.05,0.5) -- (-2.95,0.5);
\draw[->] (0,-1.05) -- (0,-1.95);

\draw (-3.04,0.5) -- (-2.96,0.5);
\draw (0,-1.96) -- (0,-2.04);

\node at (-3,2.3) {$M$};
\node at (2.9,-2) {$S^2 \cong \mathbb{CP}^1$};
\node at (2.3,2.3) {$\mathcal{Z}$};

\node at (0,-2.3) {$\mathbf{u}$};

\node at (0.23,-1.5) {$\pi$};

\node at (-3.25,0.5) {$x$};

\node at (-2.5,0.75) {$p$};

\end{tikzpicture}
\caption{A schematic representation of the twistor space projections, with the fiber over a point $\smash{ \mathbf{u} \in \mathbb{CP}^1 }$ and the horizontal twistor line through a point $x \in M$ depicted by dotted lines.}
\end{figure}

The complex structure, the real structure, the holomorphic fibration structure, the fiber\-wise-supported holomorphic $(2,0)$ form and the horizontal twistor line normal bundle data form together a complete set of twistor space data. From it, the hyperk\"ahler manifold can be retrieved as the parameter space of horizontal twistor lines. The precise formulation of this statement is given in Theorem 3.3 of \cite{MR877637}. 

The twistor space comes also equipped with a natural metric induced with respect to the product structure by the hyperk\"ahler metric on $M$ and the Fubini-Study metric on $\mathbb{CP}^1$. This combines with the complex structure on $\mathcal{Z}$ to give the $(1,1)$ form $\varpi = \omega_F + \omega_{\hp\smash{\mathbb{CP}^1}}$, where $\omega_F$ and $\omega_{\hp\smash{\mathbb{CP}^1}}$ are the pullbacks on $\mathcal{Z}$ of the 2-form $\omega(\mathbf{u}) = x_1\hp\omega_1 + x_2\hp \omega_2 + x_3\hp \omega_3$ on the fiber $F= \pi^{-1}(\mathbf{u})$ and the Fubini-Study symplectic form on $\mathbb{CP}^1$, respectively. As known since \cite{MR623721}, twistor spaces admit K\"ahler metrics only rather accidentally. Instead, Kaledin and Verbitsky point out in \cite[Proposition 4.5]{MR1669956}, the twistor metric is naturally \textit{balanced} in the sense of Michelsohn \cite{MR688351}. That is, 
\begin{equation}
d_{\mathcal{Z}} (\varpi^{2m}) = 0 
\end{equation}
where $2m = \dim_{\hp \mathbb{C}}\!M$ and $d_{\mathcal{Z}}$ is the exterior derivative on $\mathcal{Z}$. Indeed, in view of the decomposition $d_{\mathcal{Z}} = d_F + d_{\hp\smash{\mathbb{CP}^1}}$, where $d_F$ and $d_{\hp\smash{\mathbb{CP}^1}}$ stand for the exterior derivative along the local fiber (\mbox{$\pt \cong M$}) respectively the local horizontal twistor line (\mbox{$\pt \cong \mathbb{CP}^1$}), we have $d_{\mathcal{Z}}\varpi = d_F\omega_F + d_{\hp\smash{\mathbb{CP}^1}}\omega_F + d_F \omega_{\hp\smash{\mathbb{CP}^1}} + d_{\hp\smash{\mathbb{CP}^1}}\omega_{\hp\smash{\mathbb{CP}^1}}$.  The first term vanishes by the closure of the hyperk\"ahler 2-forms and the last two by the definition of $\omega_{\hp\smash{\mathbb{CP}^1}}$, and we are left with $d_{\mathcal{Z}}\varpi = d_{\hp\smash{\mathbb{CP}^1}}\omega_F$. Since $\omega(\mathbf{u})(X,Y) = g(X,\IU Y)$ for any $X,Y \in TM$, by resorting to the differentiation formula \eqref{deriv_formula} we get immediately
\begin{equation}
\partial_{\hp\smash{\mathbb{CP}^1}} \omega(\mathbf{u}) = - \hp 2 \, \overline{\omega(\zeta)} \wedge \rho \hp \frac{d\zeta}{\zeta}
\end{equation}
which shows in particular that $\smash[t]{ \partial_{\hp\smash{\mathbb{CP}^1}} \omega_F \in \pi^*\mathscr{A}^{1,0}(\mathbb{CP}^1) \otimes p^* \mathscr{A}^{0,2}_{\Iu}(M) }$ if $F=\pi^{-1}(\mathbf{u})$. On another hand, by the definitions, $\smash{ \omega_{\hp\smash{\mathbb{CP}^1}} \in \pi^* \mathscr{A}^{1,1}(\mathbb{CP}^1) }$ and $\smash{ \omega_F \in p^* \mathscr{A}^{1,1}_{\Iu}(M) }$ for $F=\pi^{-1}(\mathbf{u})$. Now let us write
\begin{equation}
d_{\mathcal{Z}} (\varpi^{2m}) = 2m \pt \varpi^{2m-1} \!\wedge d_{\mathcal{Z}}\varpi = 2m \hp (\hp \omega_F + \omega_{\hp\smash{\mathbb{CP}^1}} \npt)^{2m-1} \!\wedge {(\partial_{\hp\smash{\mathbb{CP}^1}} \omega_F + \bar{\partial}_{\hp\smash{\mathbb{CP}^1}} \omega_F)} \rlap{.}
\end{equation}
 % I put some {...} around the last parenthesis to prevent it from breaking
The terms in the binomial expansion of the first factor (not counting the numerical $2m$ factor) containing at least one $\omega_{\hp\smash{\mathbb{CP}^1}}$ vanish either by themselves or when wedged against the last factor due to the oversaturation of the $\mathbb{CP}^1$ degrees, while the $\omega_{\hp\smash{\mathbb{CP}^1}}$-free term, $\smash{ \omega_F^{\hp 2m-1} }$, vanishes when wedged against the last factor due to the oversaturation of the $M$ degrees, and so the claim is proved.

\subsection{Some properties of the twistor space Dolbeault operator} \hfill \medskip

The holomorphic tangent bundle along the fibers, $T_F$, is by definition the kernel of the differential map $d\pi$, that is
\begin{equation*}
\begin{tikzcd}[column sep=scriptsize]
0 \rar & T_F \rar & T_{\mathcal{Z}} \rar["d\pi",shorten >=-0.2em] & \pi^*T_{\hp\smash{\mathbb{CP}^1}} \rar & 0
\end{tikzcd}
\end{equation*}
is a short exact sequence of complex bundles over $\mathcal{Z}$, with dual sequence
\begin{equation*}
\begin{tikzcd}[column sep=scriptsize]
0 \rar & \pi^*T^*_{\hp\smash{\mathbb{CP}^1}} \rar & T^*_{\mathcal{Z}} \rar & T^*_F \rar & 0\mathrlap{.}
\end{tikzcd}
\end{equation*}
Even though $\mathcal{Z} = M \times \mathbb{CP}^1$, the holomorphic cotangent bundle $T^*_{\mathcal{Z}}$ is an extension rather than a direct sum of complex bundles, reflecting the fact that as a complex manifold $\mathcal{Z}$ is non-trivial. Only in a pointwise sense can the holomorphic cotangent space to $\mathcal{Z}$ be expressed as a direct sum,
\begin{equation}
T^*_{\mathcal{Z}} \big|_{(x,\mathbf{u})} \cong p^* T^{*1,0}_{\Iu} M \big|_x \oplus \pi^* T^*_{\hp\smash{\mathbb{CP}^1}} \big|_{\mathbf{u}},
\end{equation}
with the two terms corresponding to the local fiber and the local horizontal twistor line, respectively. Accordingly, any complex differential form $\alpha \in \mathscr{A}^{1,0}(\mathcal{Z})$ decomposes locally into a component along the local fiber and one along the local horizontal twistor line: $\alpha = \alpha_F + \alpha_{\hp\smash{\mathbb{CP}^1}}$. Let \mbox{$\mathscr{A}^{\hp r,s}_F(\mathcal{Z}) = \Gamma(\mathcal{Z};\Lambda^{r,s\hp}T^*_F)$} be the sheaf of $C^{\infty}$-{\hp}sections of forms of type $(r,s)$ supported on the fibers of $\mathcal{Z}$. For \mbox{$F = \pi^{-1}(\mathbf{u})$} we have \mbox{$T^*_F \cong p^* T^{*\smash[t]{1,0}}_{\Iu} M$} and $\mathscr{A}^{\hp r,s}_F(\mathcal{Z}) = p^*\mathscr{A}^{\hp r,s}_{\Iu}(M)$. That is, the component along any given fiber of a $(r,s)$ form on $\mathcal{Z}$ is a $(r,s)$ form on that fiber with respect to its specific hyperk\"ahler complex structure. The de Rham operator on $\mathcal{Z}$ splits also locally along the fibration structure into \mbox{$d_{\mathcal{Z}} = d_F + d_{\hp\smash{\mathbb{CP}^1}}$.}\footnote{\pt Since the fibers are isomorphic to $M$, we will usually denote $d_F$ simply by $d$.} Furthermore, in accordance with the respective complex structures on the local horizontal twistor line and fiber we can split further $d_{\hp\smash{\mathbb{CP}^1}} = \partial_{\hp\smash{\mathbb{CP}^1}} + \bar{\partial}_{\hp\smash{\mathbb{CP}^1}}$ and, for $F = \pi^{-1}(\mathbf{u})$, $d_F = \partial_{\Iu} + \bar{\partial}_{\Iu}$. In contrast, due to the non-trivial character of the holomorphic fibration, the Dolbeault operato $\bar{\partial}_{\mathcal{Z}}$ does not split into a sum of Dolbeault operators along the local fibers and horizontal twistor lines. Not, that is, in general\,---\,because this may nevertheless happen in specific circumstances. The next lemma is illustrative of this dichotomic behavior.  
\begin{lemma} \label{Dolbeault_Z}
Let $M$ be a hyperk\"ahler manifold with twistor space $\mathcal{Z}$. The following formulas hold at generic points of $\mathcal{Z}$ situated on a fiber $F = \pi^{-1}(\mathbf{u})$:
\begin{itemize}
\setlength{\itemsep}{0pt}
\item[1.] For any function $f \in \mathscr{A}^0(\mathcal{Z},\mathbb{C})$, %we have
\begin{gather}
\bar{\partial}_{\mathcal{Z}} f = (\bar{\partial}_{\Iu} + \bar{\partial}_{\hp\smash{\mathbb{CP}^1}} \npt) f \label{del_bar_Z_fct} \\[2pt]
\partial_{\mathcal{Z}}\bar{\partial}_{\mathcal{Z}} f = \partial_{\Iu} \bar{\partial}_{\Iu} f + \partial_{\hp\smash{\mathbb{CP}^1}}\bar{\partial}_{\Iu} f - \bar{\partial}_{\hp\smash{\mathbb{CP}^1}} \partial_{\Iu} f + \partial_{\hp\smash{\mathbb{CP}^1}}\bar{\partial}_{\hp\smash{\mathbb{CP}^1}} f \mathrlap{.} \label{delZ_delZbar}
\end{gather}

\item[2.] For any complex differential form $\alpha \in \mathscr{A}^{1,0}(\mathcal{Z})$, %and with the choice \eqref{stereo_map} of inhomogeneous coordinate on the $\mathbb{CP}^1$ twistor base we have
\begin{equation} \label{d_bar_(1,0)}
\bar{\partial}_{\mathcal{Z}} \alpha = (\bar{\partial}_{\Iu} + \bar{\partial}_{\hp\smash{\mathbb{CP}^1}}) \hp \alpha -  i\pt \alpha_F \cramped{\overline{I(\zeta)} \wedge \rho} \pt \frac{d\zeta}{\zeta}\rlap{.}
\end{equation}
%where $\alpha_F$ denotes the component of $\alpha$ along the fiber.

\end{itemize}
\end{lemma}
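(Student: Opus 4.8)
The plan is to work throughout in a local coframe $dx^\mu, dx^{\bar\mu}$ on $M$ holomorphic with respect to $I_0$, so that together with $d\zeta, d\bar\zeta$ these give a coframe on $\mathcal{Z}$ that does \emph{not} depend on the base point $\mathbf{u}$. In such a coframe $d_{\mathcal{Z}} = d_F + d_{\mathbb{CP}^1}$ acts simply by differentiating coefficient functions, and the whole content of the lemma lies in re-expressing the result in terms of the $\IU$-adapted type decomposition $T^*_{\mathbb{C}}M = T^{*1,0}_{\Iu}M \oplus T^{*0,1}_{\Iu}M$. Because this splitting rotates with $\mathbf{u}$, differentiating a $\IU$-typed object along $\mathbb{CP}^1$ will feed the derivation formula \eqref{deriv_formula} into the computation. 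All three identities are local statements about globally defined forms, so verifying them in this convenient coframe suffices.

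For \eqref{del_bar_Z_fct} I would simply write $d_{\mathcal{Z}}f = (\partial_{\Iu}f + \bar{\partial}_{\Iu}f) + (\partial_{\mathbb{CP}^1}f + \bar{\partial}_{\mathbb{CP}^1}f)$ and read off the $(0,1)$ part with respect to the complex structure of $\mathcal{Z}$, using the pointwise splitting $T^*_{\mathcal{Z}} \cong p^*T^{*1,0}_{\Iu}M \oplus \pi^*T^*_{\mathbb{CP}^1}$. The fiber piece $\bar{\partial}_{\Iu}f$ and the base piece $\bar{\partial}_{\mathbb{CP}^1}f$ are each of type $(0,1)$ on $\mathcal{Z}$, and no correction can arise because no bundle-valued form is being differentiated along $\mathbb{CP}^1$; the same reasoning applied to the $(1,0)$ part gives the companion relation $\partial_{\mathcal{Z}}f = \partial_{\Iu}f + \partial_{\mathbb{CP}^1}f$, which I will use later.

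The heart of the matter is \eqref{d_bar_(1,0)}. I would decompose $\alpha = \alpha_F + \alpha_{\mathbb{CP}^1}$ and compute the $(1,1)_{\mathcal{Z}}$ component of each of the four pieces of $d_{\mathcal{Z}}\alpha$. Three of them, $d_F\alpha_F$, $d_F\alpha_{\mathbb{CP}^1}$ and $d_{\mathbb{CP}^1}\alpha_{\mathbb{CP}^1}$, contribute $\bar{\partial}_{\Iu}\alpha_F$, $\bar{\partial}_{\Iu}\alpha_{\mathbb{CP}^1}$ and $\bar{\partial}_{\mathbb{CP}^1}\alpha_{\mathbb{CP}^1}$ with no subtlety. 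The anomalous piece is $d_{\mathbb{CP}^1}\alpha_F$, where $\alpha_F$ is valued in the $\mathbf{u}$-varying bundle $T^{*1,0}_{\Iu}M$, i.e.\ obeys $\alpha_F P^{0,1}_{\Iu} = 0$. The key manoeuvre is to differentiate this constraint along $\mathbb{CP}^1$: its $\zeta$-derivative gives $(\partial_\zeta \alpha_F)P^{0,1}_{\Iu} = -\,\alpha_F\,\partial_\zeta P^{0,1}_{\Iu}$, and the left-hand side is exactly (up to a leading $d\zeta\wedge$) the $(1,0)_{\mathbb{CP}^1}\wedge(0,1)_{\Iu}$ part of $\partial_{\mathbb{CP}^1}\alpha_F$; substituting \eqref{deriv_formula} turns the projector derivative into $\overline{I(\zeta)}$ and produces precisely $-\,i\,\alpha_F\overline{I(\zeta)}\wedge\rho\frac{d\zeta}{\zeta}$. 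By contrast, the $\bar{\partial}_{\mathbb{CP}^1}$ direction is harmless: the analogous manipulation yields a term proportional to $\alpha_F\,I(\zeta)$, which vanishes because $\alpha_F$ lies in the kernel of $I(\zeta)$ by its short exact sequence, so its $(1,1)$ part is just $\bar{\partial}_{\mathbb{CP}^1}\alpha_F$. Collecting the four pieces reproduces $(\bar{\partial}_{\Iu} + \bar{\partial}_{\mathbb{CP}^1})\alpha$ plus the single correction. I expect this disentangling of the two contributions to $d_{\mathbb{CP}^1}\alpha_F$, and recognizing the asymmetry between the $\partial_{\mathbb{CP}^1}$ direction (anomalous, since $\alpha_F\overline{I(\zeta)} \neq 0$ in general) and the $\bar{\partial}_{\mathbb{CP}^1}$ direction (trivial, since $\alpha_F I(\zeta) = 0$), to be the main obstacle, as it is where the nontrivial complex structure of $\mathcal{Z}$ enters through \eqref{deriv_formula} and the nilpotent $I(\zeta)$.

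Finally, for \eqref{delZ_delZbar} I would apply the just-proved formula \eqref{d_bar_(1,0)} to the $(1,0)$ form $\alpha = \partial_{\mathcal{Z}}f$, with $\alpha_F = \partial_{\Iu}f$ and $\alpha_{\mathbb{CP}^1} = \partial_{\mathbb{CP}^1}f$, and then invoke $\partial_{\mathcal{Z}}\bar{\partial}_{\mathcal{Z}} = -\,\bar{\partial}_{\mathcal{Z}}\partial_{\mathcal{Z}}$. Expanding $(\bar{\partial}_{\Iu} + \bar{\partial}_{\mathbb{CP}^1})(\partial_{\Iu}f + \partial_{\mathbb{CP}^1}f)$ into four terms, the fiberwise and base relations $\partial_{\Iu}\bar{\partial}_{\Iu} + \bar{\partial}_{\Iu}\partial_{\Iu} = 0$ and $\partial_{\mathbb{CP}^1}\bar{\partial}_{\mathbb{CP}^1} + \bar{\partial}_{\mathbb{CP}^1}\partial_{\mathbb{CP}^1} = 0$ account for the first and last terms of the target, the mixed term $-\,\bar{\partial}_{\mathbb{CP}^1}\partial_{\Iu}f$ survives unchanged, and the remaining mixed term combines with the anomaly $i(\partial_{\Iu}f)\overline{I(\zeta)}\wedge\rho\frac{d\zeta}{\zeta}$ through the identity $\partial_{\mathbb{CP}^1}\bar{\partial}_{\Iu}f + \bar{\partial}_{\Iu}\partial_{\mathbb{CP}^1}f = i(\partial_{\Iu}f)\overline{I(\zeta)}\wedge\rho\frac{d\zeta}{\zeta}$, itself an instance of the differentiate-the-constraint computation used for \eqref{d_bar_(1,0)}. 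This reorganizes the four terms into the stated right-hand side.
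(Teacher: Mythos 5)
Your proposal is correct and follows essentially the same route as the paper's own proof: the paper likewise isolates the anomalous term by differentiating the typing constraint $\alpha_F P^{1,0}_{\Iu} = \alpha_F$ along $\mathbb{CP}^1$ via \eqref{deriv_formula}, kills the $\bar{\partial}_{\hp\smash{\mathbb{CP}^1}}$ contribution using $\alpha_F I(\zeta) = 0$, and obtains \eqref{delZ_delZbar} by specializing \eqref{d_bar_(1,0)} to $\alpha = \partial_{\mathcal{Z}}f$ and invoking the commutator identity \eqref{deriv_commutator}. Your justification of $\alpha_F I(\zeta) = 0$ through the short exact sequence for $I(\zeta)$ is equivalent to the paper's appeal to factorization and nilpotence.
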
 

\begin{remark}
In the last formula we have assumed implicitly that $\mathbf{u} \in N \cap S$, but the expression given can be easily continued analytically to include the ``north" and ``south" poles of the twistor $S^2$. Also, to avoid the proliferation of pull-back symbols, here and throughout these notes such formulas are to be understood as being written in a local trivialization of the twistor space.
\end{remark}

\begin{proof}

The first formula of the lemma follows immediately by projecting $d_{\mathcal{Z}}f = d_Ff + d_{\hp\smash{\mathbb{CP}^1}}f$ to $\mathscr{A}^{1,0}(\mathcal{Z})$.

Next, we turn our attention to formula \eqref{d_bar_(1,0)}. Given a form $\alpha \in \mathscr{A}^{1,0}(\mathcal{Z})$, its fiberwise component \mbox{$\alpha_F \in \mathscr{A}_F^{1,0}(\mathcal{Z})$} is a $(1,0)$ form on each fiber with respect to the particular hyperk\"ahler complex structure corresponding to that fiber. Hence, for $F=\pi^{-1}(\mathbf{u})$ we have $\alpha_F P^{1,0}_{\Iu} = \alpha_F$. By differentiating this relation and resorting to the property \eqref{deriv_formula}~we~get
{\allowdisplaybreaks
\begin{equation}
\begin{aligned}
\partial_{\hp\smash{\mathbb{CP}^1}}\alpha_F & = \underbracket[0.4pt][4pt]{(\partial_{\hp\smash{\mathbb{CP}^1}}\alpha_F) P^{1,0}_{\Iu}}_{\text{$_{2,0}$}} -  \underbracket[0.4pt][4pt]{ i \pt \alpha_F\cramped{\overline{I(\zeta)} \wedge \rho} \pt \frac{d\zeta}{\zeta} }_{\text{$_{1,1}$}} \\[-2pt]
\bar{\partial}_{\hp\smash{\mathbb{CP}^1}}\alpha_F & = \underbracket[0.4pt][4pt]{(\bar{\partial}_{\hp\smash{\mathbb{CP}^1}}\alpha_F) P^{1,0}_{\Iu}}_{\text{$_{1,1}$}} - \underbracket[0.4pt][4pt]{ i \pt \alpha_F\cramped{I(\zeta) \wedge \rho} \pt \frac{d\bar{\zeta}}{\bar{\zeta}} }_{\text{vanishes}} \mathrlap{.}
\end{aligned}
\end{equation}
}%
Notice that the last term vanishes as \mbox{$\alpha_FI(\zeta) = \alpha_FP^{1,0}_{\Iu}I(\zeta) = 0$} by the factorization property \eqref{factorization_Q} and the nilpotence of $I(\zeta)$. Underneath each non-vanishing term on the right-hand side we indicate its Hodge type with respect to the complex structure on $\mathcal{Z}$. It is clear then that the $(1,1)$ component of $d_{\hp\smash{\mathbb{CP}^1}} \alpha_F$ is of the form 
\begin{equation}
d_{\hp\smash{\mathbb{CP}^1}} \alpha_F \hp |^{\phantom{+}}_{\text{$(1,1)$ on $\mathcal{Z}$}} = \bar{\partial}_{\hp\smash{\mathbb{CP}^1}}\alpha_F - i \pt \alpha_F\cramped{\overline{I(\zeta)} \wedge \rho} \pt \frac{d\zeta}{\zeta} \mathrlap{.}
\end{equation}
To compute $\bar{\partial}_{\mathcal{Z}}\alpha$ we use the fact that it is the $(1,1)$ component of the 2-form $d_{\mathcal{Z}}\alpha$. We have $d_{\mathcal{Z}}\alpha = (d_F+d_{\hp\smash{\mathbb{CP}^1}})(\alpha_F + \alpha_{\hp\smash{\mathbb{CP}^1}})$ and, moreover, 
{\allowdisplaybreaks
\begin{equation}
\begin{aligned}
d_F\alpha_F \hp |^{\phantom{+}}_{\text{$(1,1)$ on $\mathcal{Z}$}} & = \bar{\partial}_{\Iu} \alpha_F \\
d_F\alpha_{\hp\smash{\mathbb{CP}^1}} \hp |^{\phantom{+}}_{\text{$(1,1)$ on $\mathcal{Z}$}} & = \bar{\partial}_{\Iu} \alpha_{\hp\smash{\mathbb{CP}^1}} \\
d_{\hp\smash{\mathbb{CP}^1}}\alpha_{\hp\smash{\mathbb{CP}^1}} \hp |^{\phantom{+}}_{\text{$(1,1)$ on $\mathcal{Z}$}} & = \bar{\partial}_{\hp\smash{\mathbb{CP}^1}} \alpha_{\hp\smash{\mathbb{CP}^1}} \mathrlap{.}
\end{aligned}
\end{equation}
}%
Formula \eqref{d_bar_(1,0)} of the lemma follows then immediately. 

Finally, to show the remaining formula of the lemma, let us apply the formula we have just proved to the particular case when $\alpha = \partial_{\mathcal{Z}}f$, for an arbitrary function $f \in \mathscr{A}^0(\mathcal{Z},\mathbb{C})$. By the first formula, $\partial_{\mathcal{Z}}f = \partial_{\Iu}f + \partial_{\hp\smash{\mathbb{CP}^1}}f$ on a fiber $F=\pi^{-1}(\mathbf{u})$, and so, at any point on that fiber, 
{\allowdisplaybreaks
\begin{align}
\bar{\partial}_{\mathcal{Z}} \partial_{\mathcal{Z}} f & = (\bar{\partial}_{\Iu} + \bar{\partial}_{\hp\smash{\mathbb{CP}^1}})(\partial_{\Iu} f + \partial_{\hp\smash{\mathbb{CP}^1}} f) - i\pt \partial_{\Iu} f \hp \cramped{\overline{I(\zeta)} \wedge \rho} \pt \frac{d\zeta}{\zeta} \nonumber \\[-4pt]
& = \bar{\partial}_{\Iu}\partial_{\Iu} f + \bar{\partial}_{\Iu}\partial_{\hp\smash{\mathbb{CP}^1}} f + \bar{\partial}_{\hp\smash{\mathbb{CP}^1}}\partial_{\Iu} f + \bar{\partial}_{\hp\smash{\mathbb{CP}^1}}\partial_{\hp\smash{\mathbb{CP}^1}} f - i\pt d f \hp \cramped{\overline{I(\zeta)} \wedge \rho} \pt \frac{d\zeta}{\zeta} \mathrlap{.} \nonumber
\end{align}
}%
To obtain the second line we have used that $\partial_{\Iu} f \hp \overline{I(\zeta)} = df P^{1,0}_{\Iu} \hp \overline{I(\zeta)} = df \hp \overline{I(\zeta)}$. Based on the property \eqref{deriv_formula} one can show, moreover, that
\begin{equation} \label{deriv_commutator}
(\partial_{\hp\smash{\mathbb{CP}^1}}\bar{\partial}_{\Iu} + \bar{\partial}_{\Iu}\partial_{\hp\smash{\mathbb{CP}^1}}) f = i\pt d f \hp \cramped{\overline{I(\zeta)} \wedge \rho} \pt \frac{d\zeta}{\zeta}
\end{equation}
expressing a non-commutativity of derivatives, and so we obtain immediately
\begin{equation}
\bar{\partial}_{\mathcal{Z}} \partial_{\mathcal{Z}} f = \bar{\partial}_{\Iu}\partial_{\Iu} f - \partial_{\hp\smash{\mathbb{CP}^1}}\bar{\partial}_{\Iu} f + \bar{\partial}_{\hp\smash{\mathbb{CP}^1}}\partial_{\Iu} f + \bar{\partial}_{\hp\smash{\mathbb{CP}^1}}\partial_{\hp\smash{\mathbb{CP}^1}} f
\end{equation}
which is clearly equivalent to formula \eqref{delZ_delZbar} of the lemma. 
\end{proof}

\begin{remark} 

%We can use a similar argument to give an alternative proof of the integrability of the twistor complex structure on $\mathcal{Z}$. 
A similar argument can be used to prove the integrability of the complex structure on $\mathcal{Z}$. Note first that we have the following complementary formula to \eqref{d_bar_(1,0)} 
\begin{equation}
\partial_{\mathcal{Z}}\alpha = (\partial_{\Iu} + \partial_{\hp\smash{\mathbb{CP}^1}}) \hp \alpha + i\pt \alpha_F \cramped{\overline{I(\zeta)} \wedge \rho} \pt \frac{d\zeta}{\zeta}
\end{equation}
for any form $\alpha \in \mathscr{A}^{1,0}(\mathcal{Z})$ and at a generic point situated on a fiber $F = \pi^{-1}(\mathbf{u})$, as can be easily checked by verifying that $\partial_{\mathcal{Z}}\alpha + \bar{\partial}_{\mathcal{Z}}\alpha = d_{\mathcal{Z}}\alpha$. Taking in particular $\alpha = \partial_{\mathcal{Z}}f$ for some function $f \in \mathscr{A}^0(\mathcal{Z},\mathbb{C})$ and proceeding as above, we get
{\allowdisplaybreaks
\begin{align}
\partial_{\mathcal{Z}}^2 f & = (\partial_{\Iu} + \partial_{\hp\smash{\mathbb{CP}^1}})(\partial_{\Iu} f + \partial_{\hp\smash{\mathbb{CP}^1}} f) + i\pt \partial_{\Iu}f \hp \cramped{\overline{I(\zeta)} \wedge \rho} \pt \frac{d\zeta}{\zeta} \nonumber \\[-3pt]
& = \partial_{\Iu}\partial_{\hp\smash{\mathbb{CP}^1}} f + \partial_{\hp\smash{\mathbb{CP}^1}}\partial_{\Iu} f + i\pt df \hp \cramped{\overline{I(\zeta)} \wedge \rho} \pt \frac{d\zeta}{\zeta} \nonumber \\[3.5pt]
& = 0 \mathrlap{.} \nonumber
\end{align}
}%
The vanishing of the second line can be checked directly using the property \eqref{deriv_formula} or, alternatively and more easily, it can be inferred from the equation \eqref{deriv_commutator} by observing that $(d_F \partial_{\hp\smash{\mathbb{CP}^1}} + \partial_{\hp\smash{\mathbb{CP}^1}} d_F) f = 0$. Since $f$ is arbitrary, it follows that the twistor complex structure on $\mathcal{Z}$ is integrable.

\end{remark}

In view of formula \eqref{del_bar_Z_fct} of Lemma \ref{Dolbeault_Z}, for a function $f$ to be holomorphic on a domain on $\mathcal{Z}$, both the component along the local fiber and the component along the local horizontal twistor line of $\bar{\partial}_{\mathcal{Z}} f$ must vanish at every point of the domain. Proposition \ref{CR_M} translates then on the twistor space as follows: 

\begin{proposition}[The Cauchy-Riemann equations on the twistor space] \label{CR_Z}
Let $M$ be a hyperk\"ahler manifold with twistor space $\mathcal{Z}$, $f$ a function on $\mathcal{Z}$, and $V_N$, $V_S$, $V_{N \cap \hp S}$ domains on $\mathcal{Z}$ which project down on $\mathbb{CP}^1$ to $N$, $S$ and $N \cap S$  or subdomains of these, respectively. Then $f$ is holomorphic on
\begin{flushleft}
\begin{tabular}{llcr@{\hskip 0pt}l}
\hspace{10pt} & $V_N$ & $\Leftrightarrow$ & $df P_N(\zeta) = 0$ and $\bar{\partial}_{\hp\smash{\mathbb{CP}^1}} f = 0$ & \ on $V_N$; \\[2pt]
& $V_S$ & $\Leftrightarrow$ & $df P_S(\tilde{\zeta}) = 0$ and $\bar{\partial}_{\hp\smash{\mathbb{CP}^1}} f = 0$ & \ on $V_S$; \\[3pt]
& $V_{N \cap \hp S}$ & $\Leftrightarrow$ & $df I(\zeta) = 0$ and $\bar{\partial}_{\hp\smash{\mathbb{CP}^1}} f = 0$ & \ on $V_{N \cap \hp S}$.
\end{tabular}
\end{flushleft}
\end{proposition}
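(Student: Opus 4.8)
The plan is to deduce the three equivalences directly from the fiberwise Cauchy--Riemann equations of Proposition~\ref{CR_M}, combined with the additive splitting of the Dolbeault operator on functions recorded in formula~\eqref{del_bar_Z_fct}. Recall that $f$ is holomorphic on a domain of $\mathcal{Z}$ exactly when $\bar{\partial}_{\mathcal{Z}} f = 0$ there, and that \eqref{del_bar_Z_fct} gives, at a generic point lying on a fiber $F = \pi^{-1}(\mathbf{u})$,
\begin{equation*}
\bar{\partial}_{\mathcal{Z}} f = \bar{\partial}_{\Iu} f + \bar{\partial}_{\hp\smash{\mathbb{CP}^1}} f,
\end{equation*}
where $\bar{\partial}_{\Iu} f$ is read off as the fiberwise Dolbeault derivative of the restriction of $f$ to $F$. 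The hypotheses on $V_N$, $V_S$, $V_{N \cap \hp S}$ guarantee that every fiber meeting one of these domains sits over a point of $N$, $S$, or $N \cap S$ respectively, so the appropriate part of Proposition~\ref{CR_M} applies on each.

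The key step I would isolate is that the two summands above lie in complementary pieces of the pointwise cotangent decomposition $T^*_{\mathcal{Z}}|_{(x,\mathbf{u})} \cong p^*T^{*1,0}_{\Iu} M|_x \oplus \pi^*T^*_{\hp\smash{\mathbb{CP}^1}}|_{\mathbf{u}}$ and its $(0,1)$ conjugate: indeed $\bar{\partial}_{\Iu} f \in p^*\mathscr{A}^{0,1}_{\Iu}(M)$ is supported along the local fiber, while $\bar{\partial}_{\hp\smash{\mathbb{CP}^1}} f \in \pi^*\mathscr{A}^{0,1}(\mathbb{CP}^1)$ is supported along the local horizontal twistor line. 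Since these subspaces intersect trivially at each point, the two contributions cannot cancel, and therefore $\bar{\partial}_{\mathcal{Z}} f = 0$ holds if and only if $\bar{\partial}_{\Iu} f = 0$ and $\bar{\partial}_{\hp\smash{\mathbb{CP}^1}} f = 0$ hold separately. This already extracts the condition $\bar{\partial}_{\hp\smash{\mathbb{CP}^1}} f = 0$ shared by all three cases.

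To finish I would rewrite the vanishing of the fiber component. First I would observe that the $\mathbb{CP}^1$-part of $df$ is killed by each of $P_N(\zeta)$, $P_S(\tilde{\zeta})$ and $I(\zeta)$, since these endomorphisms act only on the fiber cotangent directions; hence $df\,P_N(\zeta) = (d_F f)\,P_N(\zeta)$, and likewise for the other two, so each algebraic condition is genuinely a statement about the fiberwise differential. Applying Proposition~\ref{CR_M} fiberwise then converts $\bar{\partial}_{\Iu} f = 0$ into $df\,P_N(\zeta) = 0$ on $V_N$, into $df\,P_S(\tilde{\zeta}) = 0$ on $V_S$, and into $df\,I(\zeta) = 0$ on $V_{N \cap \hp S}$; pairing each with the base condition $\bar{\partial}_{\hp\smash{\mathbb{CP}^1}} f = 0$ yields the three stated equivalences.

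The one point I expect to require care is the pointwise independence of the fiber and base components of $\bar{\partial}_{\mathcal{Z}} f$, i.e.\ the fact that they cannot conspire to cancel. This is clean precisely because $f$ has degree zero: unlike the full Dolbeault operator on higher-degree forms, which does \emph{not} split along the fibration (as stressed before Lemma~\ref{Dolbeault_Z}), on functions the operator decomposes additively with each piece landing in a distinct summand of the cotangent decomposition. Once this transversality is secured, the remainder is a direct invocation of Proposition~\ref{CR_M}.
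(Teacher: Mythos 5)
Your proposal is correct and follows essentially the same route as the paper, which derives Proposition~\ref{CR_Z} in one stroke from formula~\eqref{del_bar_Z_fct} of Lemma~\ref{Dolbeault_Z} by noting that the fiber and horizontal twistor line components of $\bar{\partial}_{\mathcal{Z}} f$ lie in complementary summands of the pointwise cotangent decomposition and must therefore vanish separately, after which Proposition~\ref{CR_M} converts the fiberwise condition into the stated algebraic ones. Your added observation that $df\,P_N(\zeta) = (d_F f)\,P_N(\zeta)$ (and likewise for $P_S(\tilde{\zeta})$ and $I(\zeta)$) merely makes explicit a point the paper leaves implicit, and is sound.
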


\noindent This shows in particular that holomorphic functions on $\mathcal{Z}$ must necessarily have a holomorphic dependence on the complex $\mathbb{CP}^1$ coordinate. For this reason in what follows we will often indicate this dependence explicitly. 

A particularly important application of formula \eqref{d_bar_(1,0)} of Lemma \ref{Dolbeault_Z} is given by 

\begin{lemma} \label{fiberwise-lemma}
Let \mbox{$\alpha \in \mathscr{A}^{1,0}(\mathcal{Z})$} be a 1-form on the twistor space $\mathcal{Z}$ of type $(1,0)$. The following statements are equivalent:
\begin{itemize}
\setlength\itemsep{0.4em}
\item[1.] $\bar{\partial}_{\mathcal{Z}}\alpha \in \mathscr{A}_F^{1,1}(\mathcal{Z})$, that is, $\bar{\partial}_{\mathcal{Z}}\alpha$ is entirely supported on the fibers.
\item[2.] Locally, $\alpha$ is of the form
\begin{equation}
\alpha = df(\zeta) I(\zeta) - i f(\zeta) \frac{d\zeta}{\zeta}
\end{equation}
for some function $f = f(\zeta) \in \mathscr{A}^0(\mathcal{Z},\mathbb{C})$ satisfying $\bar{\partial}_{\hp\smash{\mathbb{CP}^1}} f = 0$. 
\end{itemize}
\end{lemma}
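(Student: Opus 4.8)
The plan is to use formula \eqref{d_bar_(1,0)} of Lemma \ref{Dolbeault_Z} as the sole engine, decomposing everything according to the fiber/$\mathbb{CP}^1$ leg structure on $\mathcal{Z}$. Writing $\alpha = \alpha_F + \alpha_{\hp\smash{\mathbb{CP}^1}}$ in a local trivialization, with $\alpha_{\hp\smash{\mathbb{CP}^1}} = g\, d\zeta$ for some $g \in \mathscr{A}^0(\mathcal{Z},\mathbb{C})$, I would substitute into \eqref{d_bar_(1,0)} and sort the terms of the $(1,1)$ form $\bar{\partial}_{\mathcal{Z}}\alpha$ by how many of their legs lie along the fiber versus the twistor line. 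Exactly four leg-types can occur: the purely fiberwise piece $\bar{\partial}_{\Iu}\alpha_F$ (which \emph{is} $\mathscr{A}_F^{1,1}$ and is always admissible), and three ``off-fiber'' pieces. Statement~1 is therefore equivalent to the simultaneous vanishing of these three, which read (i)~$\bar{\partial}_{\hp\smash{\mathbb{CP}^1}}\alpha_F = 0$, (ii)~$\bar{\partial}_{\hp\smash{\mathbb{CP}^1}}g = 0$, and (iii)~$\zeta\,\bar{\partial}_{\Iu}g = i\rho\,\alpha_F\overline{I(\zeta)}$, the last obtained by collecting the two terms carrying a $\frac{d\zeta}{\zeta}$ leg, namely $\bar{\partial}_{\Iu}\alpha_{\hp\smash{\mathbb{CP}^1}} = \zeta\hp(\bar{\partial}_{\Iu}g)\wedge\frac{d\zeta}{\zeta}$ and $-i\alpha_F\overline{I(\zeta)}\wedge\rho\frac{d\zeta}{\zeta}$, and stripping the common factor $\wedge\,\frac{d\zeta}{\zeta}$.

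For the implication $1 \Rightarrow 2$ I would reconstruct the potential directly. Set $f = i\zeta g$, so that $\alpha_{\hp\smash{\mathbb{CP}^1}} = -i f\frac{d\zeta}{\zeta}$ automatically. Condition (ii) gives $\bar{\partial}_{\hp\smash{\mathbb{CP}^1}}f = i\zeta\,\bar{\partial}_{\hp\smash{\mathbb{CP}^1}}g = 0$, so $f$ is $\mathbb{CP}^1$-holomorphic as required. Rewriting (iii) in terms of $f$ turns it into $\bar{\partial}_{\Iu}f = -\rho\,\alpha_F\overline{I(\zeta)}$; right-multiplying by $I(\zeta)$ and invoking the conjugate of the factorization formula \eqref{factorization_Q}, namely $\overline{I(\zeta)}\,I(\zeta) = -\rho^{-1}P^{1,0}_{\Iu}$, collapses the right-hand side to $\alpha_F P^{1,0}_{\Iu} = \alpha_F$, using that $\alpha_F$ is fiberwise $(1,0)$. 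Since the $(1,0)$ part of $df$ lies in $\ker I(\zeta)$ by the exact sequence for $I(\zeta)$, and $I(\zeta)$ annihilates the $\mathbb{CP}^1$ legs, one has $(\bar{\partial}_{\Iu}f)I(\zeta) = df\,I(\zeta)$, giving $\alpha_F = df\,I(\zeta)$ and hence the claimed form of $\alpha$.

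For the converse $2 \Rightarrow 1$ I would insert $\alpha_F = df\,I(\zeta)$ and $\alpha_{\hp\smash{\mathbb{CP}^1}} = -if\frac{d\zeta}{\zeta}$ into \eqref{d_bar_(1,0)} and verify that the three off-fiber pieces vanish. The $\mathbb{CP}^1$-$(1,1)$ piece vanishes because $\bar{\partial}_{\hp\smash{\mathbb{CP}^1}}f = 0$; the mixed piece $\bar{\partial}_{\hp\smash{\mathbb{CP}^1}}\alpha_F$ vanishes because $I(\zeta) = I_0 + \zeta^{-1}I_+ + \zeta I_-$ depends holomorphically on $\zeta$ while $df$ has $\bar\zeta$-independent coefficients once $\bar{\partial}_{\hp\smash{\mathbb{CP}^1}}f = 0$; and the two $\frac{d\zeta}{\zeta}$-terms cancel after simplifying $\alpha_F\overline{I(\zeta)} = df\,I(\zeta)\overline{I(\zeta)} = -\rho^{-1}\bar{\partial}_{\Iu}f$ by \eqref{factorization_Q} itself. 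What survives is precisely $\bar{\partial}_{\Iu}\alpha_F \in \mathscr{A}_F^{1,1}(\mathcal{Z})$.

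The main obstacle, and the only genuinely non-formal step, is the algebraic inversion in condition (iii): one must recognize that the off-fiber obstruction forces $\alpha_F\overline{I(\zeta)}$ to be proportional to the exact fiber form $\bar{\partial}_{\Iu}g$, and then use the (conjugate) factorization of $P^{0,1}_{\Iu}$ to undo the non-invertible operator $\overline{I(\zeta)}$ and solve for $\alpha_F$. This is what pins down the precise coefficients $-i$ and $\zeta^{-1}$ appearing in the statement; the remaining work is careful bidegree bookkeeping and keeping straight the right-action composition law for the endomorphisms $I(\zeta)$, $\overline{I(\zeta)}$ and the projectors. A minor point to dispatch is the ``locally'' qualifier together with the analytic continuation of the $\frac{d\zeta}{\zeta}$ factor to the poles of the twistor sphere, handled exactly as in the remark following Lemma \ref{Dolbeault_Z}.
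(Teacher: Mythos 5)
Your proof is correct and follows essentially the same route as the paper's: both directions run entirely on formula \eqref{d_bar_(1,0)}, sorting the $(1,1)$ form $\bar{\partial}_{\mathcal{Z}}\alpha$ by leg type into the same three off-fiber conditions (your (i)--(iii), with $f = i\zeta g$ matching the paper's parametrization $\alpha_{\hp\smash{\mathbb{CP}^1}} = -if\hp d\zeta/\zeta$), and then invoke the factorization \eqref{factorization_Q} to solve for $\alpha_F$. The only cosmetic difference lies in how the non-invertible $\overline{I(\zeta)}$ is undone: the paper rewrites the key condition as $[\hp df I(\zeta) - \alpha_F]\hp\overline{I(\zeta)} = 0$ and kills the bracket by a Hodge-type clash between $\mathscr{A}^{0,1}_F(\mathcal{Z})$ and $\mathscr{A}^{1,0}_F(\mathcal{Z})$, whereas you right-multiply by $I(\zeta)$ and use $\overline{I(\zeta)} I(\zeta) = -\rho^{-1} P^{1,0}_{\Iu}$ together with $\alpha_F P^{1,0}_{\Iu} = \alpha_F$ and $\partial_{\Iu} f\, I(\zeta) = 0$ — an equally valid algebraic variant of the same step.
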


\begin{proof}
We begin with the converse implication, $2 \Rightarrow 1$. In this case \mbox{$\alpha_F = df(\zeta) I(\zeta)$} and then by formula \eqref{d_bar_(1,0)} we have
%\begin{equation}
%\bar{\partial}_{\mathcal{Z}} \alpha = (\bar{\partial}_{\Iu} + \underbracket[0.4pt][4pt]{\bar{\partial}_{\hp\smash{\mathbb{CP}^1}}}_{\mathclap{\rule[6pt]{0pt}{0pt}\text{vanishes}}}) [df(\zeta)I(\zeta) - i f(\zeta) \frac{d\zeta}{\zeta}] - i \pt \cramped{ \underbracket[0.4pt][4pt]{df(\zeta)I(\zeta)\cramped{\overline{I(\zeta)} \wedge \rho}\rule[-5pt]{0pt}{0pt} }_{\mathrlap{\text{\rule[9pt]{0pt}{0pt}$ \hspace{-41pt} = -\, df(\zeta) P^{0,1}_{\Iu} = - \, \bar{\partial}_{\Iu} f$}}} \frac{d\zeta}{\zeta} } 
%\, = \bar{\partial}_{\Iu} \alpha_F 
%\end{equation}
\begin{equation}
\bar{\partial}_{\mathcal{Z}} \alpha = (\bar{\partial}_{\Iu} + \underbracket[0.4pt][4pt]{\bar{\partial}_{\hp\smash{\mathbb{CP}^1}}}_{\mathclap{\rule[6pt]{0pt}{0pt}\text{vanishes}}}) [df(\zeta)I(\zeta) - i f(\zeta) \frac{d\zeta}{\zeta}] - i \pt \cramped{ \underbracket[0.4pt][4pt]{df(\zeta)I(\zeta)\cramped{\overline{I(\zeta)} \wedge \rho}\rule[-5pt]{0pt}{0pt} }_{\mathrlap{\rule[11pt]{0pt}{0pt} \text{\normalsize{$ \hspace{-41pt} = -\, df(\zeta) P^{0,1}_{\Iu} = - \, \bar{\partial}_{\Iu} f$}}}} \frac{d\zeta}{\zeta} } 
\, = \bar{\partial}_{\Iu} \alpha_F 
\end{equation}
\noindent which is clearly supported on the fiber.

Assume now instead that part 1 holds. As a $(1,0)$ form on $\mathcal{Z}$, $\alpha$ splits uniquely into a component along the local fiber and one along the local horizontal twistor line: \mbox{$\alpha = \alpha_F + \alpha_{\hp\smash{\mathbb{CP}^1}}$.} Without any loss of generality we can write
\begin{equation}
\alpha_{\hp\smash{\mathbb{CP}^1}} = -i f \hp \frac{d\zeta}{\zeta}
\end{equation}
for some function $f$. For the form $\bar{\partial}_{\mathcal{Z}}\alpha$ to be entirely supported on the fibers, both its component along the local horizontal twistor line and any mixed-type components it might have must vanish. By way of formula \eqref{d_bar_(1,0)} these requirements are equivalent to
{\allowdisplaybreaks
\begin{equation}
\begin{aligned}
& \bar{\partial}_{\Iu} f +  \rho \pt\hp \alpha_F \overline{I(\zeta)} = 0 \\
& \bar{\partial}_{\hp\smash{\mathbb{CP}^1}} \alpha_F = 0 \\[0pt]
& \bar{\partial}_{\hp\smash{\mathbb{CP}^1}} f = 0 \mathrlap{.}
\end{aligned}
\end{equation}
}%
Writing $\bar{\partial}_{\Iu} f = df P^{0,1}_{\Iu}$ and then using the factorization formula \eqref{factorization_Q}, the first condition yields $[\hp df I(\zeta) - \alpha_F] \overline{I(\zeta)} = 0$. This implies in particular that \mbox{$df I(\zeta)- \alpha_F \in \mathscr{A}^{0,1}_F(\mathcal{Z})$.} However, both terms plainly belong to $\mathscr{A}^{1,0}_F(\mathcal{Z})$, which leads to a contradiction unless $df I(\zeta)- \alpha_F = 0$. Note that between this and the third condition, the second condition is automatically satisfied. The direct implication of the lemma follows now readily.
\end{proof}

The following statement is an easy consequence of Lemma \ref{fiberwise-lemma}: 

\begin{corollary}
Let $\phi(\zeta)$ be a holomorphic function on a domain on $\mathcal{Z}$. Then
\begin{equation} \label{delZ_hol}
\partial_{\mathcal{Z}}\phi(\zeta) = i\pt d[\hp \zeta\partial_{\zeta}\phi(\zeta)]I(\zeta) + \zeta\partial_{\zeta}\phi(\zeta) \frac{d\zeta}{\zeta}.
\end{equation}
\end{corollary}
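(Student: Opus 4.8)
The plan is to apply Lemma \ref{fiberwise-lemma} to the $(1,0)$-form $\alpha = \partial_{\mathcal{Z}}\phi(\zeta)$ and then simply read off the function $f$ that the lemma produces. First I note that since $\phi(\zeta)$ is holomorphic we have $\bar{\partial}_{\mathcal{Z}}\phi(\zeta) = 0$, so $\alpha = \partial_{\mathcal{Z}}\phi(\zeta) = d_{\mathcal{Z}}\phi(\zeta)$. Consequently $d_{\mathcal{Z}}\alpha = d_{\mathcal{Z}}^2\phi(\zeta) = 0$, and since this $2$-form splits by Hodge type (the complex structure on $\mathcal{Z}$ being integrable) into its $(2,0)$ piece $\partial_{\mathcal{Z}}^2\phi(\zeta)$ and its $(1,1)$ piece $\bar{\partial}_{\mathcal{Z}}\partial_{\mathcal{Z}}\phi(\zeta)$, each must vanish separately; in particular $\bar{\partial}_{\mathcal{Z}}\alpha = \bar{\partial}_{\mathcal{Z}}\partial_{\mathcal{Z}}\phi(\zeta) = 0$. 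The zero form lies trivially in $\mathscr{A}_F^{1,1}(\mathcal{Z})$, so condition 1 of Lemma \ref{fiberwise-lemma} is satisfied, and the lemma guarantees that locally $\alpha = df\,I(\zeta) - i f \frac{d\zeta}{\zeta}$ for some function $f$ obeying $\bar{\partial}_{\hp\smash{\mathbb{CP}^1}} f = 0$.

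It then remains to identify $f$, which I would do by extracting the component of $\alpha$ along the local horizontal twistor line. From the complementary formula to \eqref{del_bar_Z_fct} established in the proof of Lemma \ref{Dolbeault_Z}, on a fiber $F = \pi^{-1}(\mathbf{u})$ one has $\partial_{\mathcal{Z}}\phi(\zeta) = \partial_{\Iu}\phi(\zeta) + \partial_{\hp\smash{\mathbb{CP}^1}}\phi(\zeta)$, whose $\mathbb{CP}^1$-component is $\partial_{\hp\smash{\mathbb{CP}^1}}\phi(\zeta) = \partial_\zeta\phi(\zeta)\,d\zeta = \zeta\partial_\zeta\phi(\zeta)\frac{d\zeta}{\zeta}$. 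Matching this against the term $-i f \frac{d\zeta}{\zeta}$ in the lemma's expression, and using the uniqueness of the decomposition of a $(1,0)$-form into fiber and horizontal-line components, forces $f = i\,\zeta\partial_\zeta\phi(\zeta)$. The consistency requirement $\bar{\partial}_{\hp\smash{\mathbb{CP}^1}} f = 0$ is then immediate: by Proposition \ref{CR_Z} the holomorphy of $\phi$ gives $\bar{\partial}_{\hp\smash{\mathbb{CP}^1}}\phi = 0$, so $\phi$ depends holomorphically on $\zeta$, and hence so does $\zeta\partial_\zeta\phi$. Substituting $f = i\,\zeta\partial_\zeta\phi(\zeta)$ back into $\alpha = df\,I(\zeta) - i f \frac{d\zeta}{\zeta}$ yields $\partial_{\mathcal{Z}}\phi(\zeta) = i\,d[\hp\zeta\partial_\zeta\phi(\zeta)]I(\zeta) + \zeta\partial_\zeta\phi(\zeta)\frac{d\zeta}{\zeta}$, which is exactly \eqref{delZ_hol}.

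I do not anticipate a serious obstacle, consistent with the statement being flagged as an easy consequence; the only place requiring care is the bookkeeping of the factors of $i$ and $\zeta$ when matching the two expressions for the $\mathbb{CP}^1$-component, together with confirming that the resulting $f$ meets the $\bar{\partial}_{\hp\smash{\mathbb{CP}^1}}$-closedness demanded by Lemma \ref{fiberwise-lemma} — both of which are settled at once by the holomorphy of $\phi$ in $\zeta$. An alternative route bypassing the lemma would be to verify \eqref{delZ_hol} directly, computing the fiber component $\partial_{\Iu}\phi = d\phi\,P^{1,0}_{\Iu}$ and comparing it with $i\,d[\hp\zeta\partial_\zeta\phi]I(\zeta)$ by means of the factorization \eqref{factorization_Q}; however, the lemma-based argument is cleaner and keeps the explicit computation to a minimum.
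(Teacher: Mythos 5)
Your proposal is correct and follows exactly the route the paper intends: the corollary is stated there as an immediate consequence of Lemma \ref{fiberwise-lemma}, applied to $\alpha = \partial_{\mathcal{Z}}\phi(\zeta)$ (which satisfies $\bar{\partial}_{\mathcal{Z}}\alpha = 0$ by holomorphy and integrability), with $f$ then pinned down as $i\,\zeta\partial_{\zeta}\phi(\zeta)$ by matching the horizontal component $\partial_{\hp\smash{\mathbb{CP}^1}}\phi(\zeta) = \zeta\partial_{\zeta}\phi(\zeta)\,\frac{d\zeta}{\zeta}$ against $-\,i f\,\frac{d\zeta}{\zeta}$. Your bookkeeping of the factors of $i$ and $\zeta$, and the check that $\bar{\partial}_{\hp\smash{\mathbb{CP}^1}} f = 0$ via Proposition \ref{CR_Z}, are both accurate, so nothing is missing.
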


\section{Closed hyper $(1,1)$ forms} \label{sec:Hyp-1,1}

\subsection{Hyper $(1,1)$ forms} \hfill \medskip

We will shift now our focus towards the study of a class of \mbox{2-forms} which, we will try to argue, plays a fundamentally important role in hyperk\"ahler geometries. In the beginning of our analysis we will be concerned primarily with local aspects. Further on we will examine also some implications of a global definition. 

%We shift now our focus towards the study of a special class of \mbox{2-forms} on hyperk\"ahler manifolds with a rich structure and of a fundamental geometric importance. In the beginning of our analysis we will be concerned mostly with local aspects. Further on we will examine also some important global implications. 

\begin{definition}
Let $M$ be a hyperk\"ahler manifold. By definition, we call a 2-form on $M$ to be \textit{of hyper $(1,1)$ type} if it is of Hodge type $(1,1)$ with respect to \textit{all} hyperk\"ahler complex structures of $M$ simultaneously. 
\end{definition}

%In four dimensions the condition that a 2-form be of hyper $(1,1)$ type is equivalent to it being anti-self-dual. 
In four dimensions this definition yields the class of anti-self-dual forms on $M$.
Note that hyper $(1,1)$ forms are naturally pulled back to fiber-supported $(1,1)$ forms on the twistor space. 
The condition in the definition is equivalent to requiring that the 2-form be of $(1,1)$ type with respect to each one of the elements of a standard quaternionic frame \mbox{$I_1$, $I_2$, $I_3$} of the bundle of hyperk\"ahler complex structures. That is to say, a 2-form $\sigma$ is of hyper $(1,1)$ type if and only if $\sigma(X,I_iY) = \sigma(Y,I_iX)$ for all $i=1,2,3$ and any vector fields $X,Y \in TM$. In what follows we will use this criterion mostly in the following trivially rephrased form: 

\begin{lemma} \label{criterion_1}
Let $\sigma \in \mathscr{A}^2(M,\mathbb{C})$ be a form of type $(1,1)$ with respect to the complex structure $I_0$. Then $\sigma$ is of hyper $(1,1)$ type if and only if \begin{equation}
\sigma(X,I_+Y) = \sigma(Y,I_+X)
\qquad \text{and} \qquad
\sigma(X,I_-Y) = \sigma(Y,I_-X)
\end{equation}
for any vector fields $X,Y \in TM$.
\end{lemma}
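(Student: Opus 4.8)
The plan is to reduce the claim to pure linear algebra, exploiting the fact that the ``failure-of-symmetry'' functional is $\mathbb{C}$-linear in the endomorphism that is inserted. For any $A \in \text{End}_{\mathbb{C}}(T_{\mathbb{C}}M)$ I would introduce the bilinear expression $B_A(X,Y) := \sigma(X,AY) - \sigma(Y,AX)$, extended $\mathbb{C}$-bilinearly in $X,Y$; the key observation is that the assignment $A \mapsto B_A$ is itself $\mathbb{C}$-linear. By the criterion recorded just before the lemma, $\sigma$ is of hyper $(1,1)$ type precisely when $B_{I_1} = B_{I_2} = B_{I_3} = 0$, whereas the two conditions in the statement read simply $B_{I_+} = 0$ and $B_{I_-} = 0$.

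Next I would record the elementary change of basis between the quaternionic frame and the mirror frame. From $I_+ = \frac{1}{2}(I_1 + iI_2)$ and $I_- = -\frac{1}{2}(I_1 - iI_2)$ one gets $I_1 = I_+ - I_-$ and $I_2 = -i(I_+ + I_-)$, together with $I_3 = I_0$. Feeding these into the linearity of $A \mapsto B_A$ yields $B_{I_+} = \frac{1}{2}(B_{I_1} + iB_{I_2})$ and $B_{I_-} = -\frac{1}{2}(B_{I_1} - iB_{I_2})$, and conversely $B_{I_1} = B_{I_+} - B_{I_-}$ and $B_{I_2} = -i(B_{I_+} + B_{I_-})$.

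With these identities both implications are immediate. For the forward direction, if $\sigma$ is hyper $(1,1)$ then $B_{I_1} = B_{I_2} = 0$, so the first pair of combinations gives $B_{I_+} = B_{I_-} = 0$. For the converse, the standing hypothesis that $\sigma$ is of type $(1,1)$ with respect to $I_0 = I_3$ supplies $B_{I_3} = 0$, while the assumed conditions $B_{I_+} = B_{I_-} = 0$ yield $B_{I_1} = B_{I_2} = 0$ via the inverse relations; hence $B_{I_1} = B_{I_2} = B_{I_3} = 0$ and $\sigma$ is of hyper $(1,1)$ type.

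The only point deserving a word of care\,---\,and it is the closest thing to an obstacle here\,---\,is that $I_+$ and $I_-$ are genuinely complex endomorphisms, sending real vectors to complexified ones, so the symmetry conditions must be interpreted through the $\mathbb{C}$-bilinear extension of $\sigma$. Since every quantity in play is $\mathbb{C}$-linear in the inserted endomorphism and $\mathbb{C}$-bilinear in $X,Y$, it suffices to check the identities on $TM$ and extend, and no integrability or metric input is needed; this is precisely why the text can call the statement a ``trivially rephrased form'' of the hyper $(1,1)$ criterion.
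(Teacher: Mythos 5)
Your proof is correct and is precisely the argument the paper intends: the text offers the lemma as a ``trivially rephrased form'' of the criterion $\sigma(X,I_iY)=\sigma(Y,I_iX)$ for $i=1,2,3$, and your linearity of $A\mapsto B_A$ together with the basis change $I_1=I_+-I_-$, $I_2=-i(I_+ + I_-)$, $I_3=I_0$ (with the $I_0$ condition supplied by the standing $(1,1)$ hypothesis) is exactly the implicit reasoning. Your closing remark about interpreting the conditions through the $\mathbb{C}$-bilinear extension of $\sigma$ is the right point of care and matches the paper's conventions, so nothing is missing.
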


Remark for later benefit that in an arbitrary local coordinate coframe for $T^*_{\mathbb{C}}M$ holomorphic with respect to $I_0$ in which $\smash{ \sigma = \sigma_{\mu\bar{\nu}} \pt dx^{\mu} \!\wedge dx^{\bar{\nu}} }$, the two conditions of the lemma can be equivalently stated as the vanishing of the following two forms:
\begin{equation}
\sigma_{\mu\bar{\eta}} \hp (I_+)^{\bar{\eta}}{}_{\nu} \pt dx^{\mu} \!\wedge dx^{\nu} = 0 
\qquad \text{and} \qquad
\sigma_{\bar{\mu}\eta} \hp (I_-)^{\eta}{}_{\bar{\nu}} \pt dx^{\bar{\mu}} \!\wedge dx^{\bar{\nu}} = 0 \rlap{.}
\end{equation}

The next result was proved by Verbitsky in the compact case \cite[Lemma 2.1]{MR1486984}. In keeping in line with our generic assumptions we give here an alternative proof which dispenses with the compactness requirement.  

\begin{proposition}
Let $\Lambda_{\omega(\mathbf{u})}: \mathscr{A}^2(M) \longrightarrow \mathscr{A}^0(M)$ be the linear Hodge $\Lambda$-operator defined by tracing with the inverse of the K\"ahler form $\omega(\mathbf{u})$. If $\sigma$ is a hyper $(1,1)$ form on $M$ then \mbox{$\Lambda_{\omega(\mathbf{u})}(\sigma) = 0$} for all $\mathbf{u} \in S^2$. 
\end{proposition}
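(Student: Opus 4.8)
The plan is to reduce the statement to a purely algebraic identity about the endomorphism associated to $\sigma$ by the metric, and then to exploit the quaternion relations. First I would introduce the endomorphism $\Sigma \in \text{End}(TM)$ defined by $\sigma(X,Y) = g(X,\Sigma Y)$; since $\sigma$ is antisymmetric and $g$ symmetric, $\Sigma$ is $g$-skew, i.e. $\Sigma^T = -\Sigma$. Recalling that the K\"ahler form of $\IU$ is $\omega(\mathbf{u})(X,Y) = g(X,\IU Y)$, the Hodge $\Lambda$-operator—being the metric trace against $\omega(\mathbf{u})$—can be written as the pointwise inner product $\Lambda_{\omega(\mathbf{u})}(\sigma) = \langle \sigma, \omega(\mathbf{u})\rangle_g$. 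Passing to the endomorphism picture, the $g$-induced inner product of two 2-forms is the Hilbert--Schmidt pairing of their associated skew endomorphisms, so up to a fixed nonzero constant $\Lambda_{\omega(\mathbf{u})}(\sigma)$ equals $\text{tr}(\Sigma\,\IU)$. Since $\IU = x_1 I_1 + x_2 I_2 + x_3 I_3$, it therefore suffices to prove $\text{tr}(\Sigma I_i) = 0$ for $i = 1,2,3$.

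Next I would translate the hyper $(1,1)$ hypothesis into a commutation relation. Using Lemma~\ref{criterion_1} in the rephrased form $\sigma(X,I_iY)=\sigma(Y,I_iX)$, together with the fact that each $I_i$ is $g$-skew, the $(1,1)$ condition with respect to $I_i$ becomes $I_i\Sigma I_i = -\Sigma$, equivalently $[\Sigma, I_i]=0$. Thus $\sigma$ being of hyper $(1,1)$ type means precisely that its associated endomorphism $\Sigma$ commutes with every one of $I_1, I_2, I_3$.

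The core of the argument is then a trace-conjugation trick resting on the quaternion algebra. Fix $i$ and choose any $j\neq i$; then $I_j^{-1}I_iI_j = -I_i$ because distinct imaginary quaternions anticommute, while $I_j^{-1}\Sigma I_j = \Sigma$ by the commutation relation just established. Using invariance of the trace under conjugation,
\[
\text{tr}(\Sigma I_i) = \text{tr}\big(I_j^{-1}(\Sigma I_i)I_j\big) = \text{tr}\big((I_j^{-1}\Sigma I_j)(I_j^{-1}I_iI_j)\big) = \text{tr}\big(\Sigma(-I_i)\big) = -\text{tr}(\Sigma I_i),
\]
which forces $\text{tr}(\Sigma I_i)=0$. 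Assembling these against the coefficients $x_i$ gives $\text{tr}(\Sigma\,\IU)=0$, hence $\Lambda_{\omega(\mathbf{u})}(\sigma)=0$ for all $\mathbf{u}\in S^2$. The entire argument is pointwise and algebraic, so no compactness is needed—precisely the improvement over the original formulation.

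I expect the main obstacle to be the very first step: pinning down, with the paper's sign and normalization conventions, the identity $\Lambda_{\omega(\mathbf{u})}(\sigma) = (\text{const})\cdot\text{tr}(\Sigma\,\IU)$, namely that the Hodge contraction of a 2-form against the K\"ahler form is the Hilbert--Schmidt pairing of the associated $g$-skew endomorphisms. This is a convention-sensitive computation (most cleanly carried out in a local $g$-orthonormal frame, where raising indices is trivial and $\IU$ is represented by a skew matrix $J$ with $J^2=-1$). Once this bookkeeping is settled, the commutation relation and the conjugation trick are short and conclude the proof.
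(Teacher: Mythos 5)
Your proof is correct, and it reaches the conclusion by a somewhat different route than the paper. You pass to the endomorphism picture $\sigma(X,Y)=g(X,\Sigma Y)$, observe that the hyper $(1,1)$ condition $\sigma(X,I_iY)=\sigma(Y,I_iX)$ is exactly $[\Sigma,I_i]=0$, identify $\Lambda_{\omega(\mathbf{u})}(\sigma)$ up to a nonzero constant with $\text{tr}(\Sigma\, (x_1I_1+x_2I_2+x_3I_3))$, and kill each $\text{tr}(\Sigma I_i)$ by conjugating with some $I_j$, $j\neq i$, using $I_j^{-1}I_iI_j=-I_i$ and trace invariance. All of these steps check out (including the sign bookkeeping you flag: with $\omega_{ab}=g_{ac}J^c{}_b$ one gets $\omega^{-1}=-Jg^{-1}$, so the contraction is indeed $\pm\,\text{tr}(\Sigma J)$), and like the paper's argument yours is pointwise algebraic, so it equally dispenses with Verbitsky's compactness hypothesis and works in the pseudo-Riemannian case. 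The paper instead stays on the level of forms: it extends the left action of a complex structure to an operator $\mathrm{ad}(I)$ on all forms via the Leibniz rule, proves the identity $\Lambda_{\omega_i}(\sigma\,\mathrm{ad}(I_j))=\varepsilon_{ijk}\,\Lambda_{\omega_k}(\sigma)$ for an \emph{arbitrary} 2-form $\sigma$, and then notes that hyper $(1,1)$ means $\sigma\in\ker\mathrm{ad}(I_j)$ for all $j$. The two arguments are close cousins: in your language $\sigma\,\mathrm{ad}(I_j)$ corresponds to the commutator $[\Sigma,I_j]$, and the paper's identity is essentially the trace identity $\text{tr}([\Sigma,I_j]\,I_i)=\text{tr}(\Sigma\,[I_j,I_i])=-2\varepsilon_{ijk}\,\text{tr}(\Sigma I_k)$, while your conjugation trick shows $\text{tr}(\Sigma I_i)=-\text{tr}(\Sigma I_i)$ directly. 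What the paper's formulation buys is the general intertwining identity between $\Lambda$ and the quaternionic $\mathrm{ad}$-action, valid without any hypothesis on $\sigma$ (the shadow of the $\mathfrak{sp}(1)$-action on forms underlying Verbitsky's lemma); what yours buys is a shorter, more elementary linear-algebra argument whose only input is the anticommutation of distinct imaginary quaternionic units.
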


\begin{proof} 

Let $I$ be a complex structure on $M$. Following \cite{MR1486984}, we define the linear left action $\text{ad}(I)$ of $I$ on the bundle of differential forms on $M$ of arbitrary positive degree by extending the usual endomorphic left action of $I$ on the bundle of differential 1-forms on $M$ by means of Leibniz's formula: $\smash{ (\alpha \wedge \beta) \pt \text{ad}(I) = \alpha \wedge (\beta \pt \text{ad}(I)) + (\alpha \pt \text{ad}(I)) \wedge \beta }$. With this definition, for any two complex structures $I$, $J$ on $M$ we have $[\text{ad}(I),\text{ad}(I)] = \text{ad}([I,J])$. Observe in particular that the condition that a 2-form $\sigma$ on $M$ be of $(1,1)$ type with respect to the complex structure $I$ can be equivalently expressed as the requirement that $\sigma$ be in the kernel of the operator $\text{ad}(I)$. 

Choose an arbitrary standard quaternionic frame $I_1$, $I_2$, $I_3$ for the bundle of hyperk\"ahler complex structures and a corresponding frame $\omega_1$, $\omega_2$, $\omega_3$ in the bundle of hyperk\"ahler symplectic forms. From the hyperk\"ahler properties \eqref{i=oo} and the fact that complex structures square to the minus identity a simple calculation shows that for any 2-form $\sigma$
\begin{equation}
\Lambda_{\omega_i} (\sigma \hp \text{ad}(I_j)) = \varepsilon_{ijk} \hp \Lambda_{\omega_k}(\sigma) \rlap{.}
\end{equation}
Clearly then, if $\sigma$ is of hyper $(1,1)$ type\,---\,and so in the kernel of $\text{ad}(I_j)$ for all $j=1,2,3$\,---\,we have $\Lambda_{\omega_k}(\sigma) = 0$ for all $k=1,2,3$. 
Since the frame was chosen arbitrarily, the statement of the Proposition follows. 
\end{proof}

Thus, at least locally, \textit{closed} hyper $(1,1)$ forms satisfy formally the same type of constraints characterizing curvatures of hyper-Hermitian Yang-Mills connections with vanishing slope on complex line bundles.

\subsection{Hyperpotentials} \hfill \medskip

By the local $\partial\bar{\partial}$\pt-\hp lemma, any closed hyper $(1,1)$ form $\sigma$ on $M$ can be locally derived from a potential in any one of the complex structures $\IU$. That is,  for any $\mathbf{u} \in S^2$, every point in $M$ has an open neighborhood on which \mbox{$\sigma = i \pt \partial_{\Iu}\bar{\partial}_{\Iu} \phi(\mathbf{u})$} for some potential $\phi(\mathbf{u})$ defined on that neighborhood (the imaginary factor is conventional and ensures that if $\sigma$ is real-valued the potential can be chosen to be real). This suggests the following

%\begin{definition}
%Let $\IU$ be a complex structure on $M$ with Dolbeault operator $\partial_{\Iu}$. By definition we call a local function $\phi(\mathbf{u})$ such that $i \pt \partial_{\Iu}\bar{\partial}_{\Iu} \phi(\mathbf{u})$ is of hyper $(1,1)$ type a \textit{hyperpotential with respect to $\IU$}. 
%\end{definition}

\begin{definition}
Let $I$ be a complex structure on $M$ with Dolbeault operator $\partial_I$. By definition we call a local function $\phi$ such that $\partial_I\bar{\partial}_I \phi$ is of hyper $(1,1)$ type a \textit{hyperpotential with respect to $I$}. 
\end{definition}

\noindent So in these terms the problem of finding and characterizing closed hyperinvariant forms can be alternatively formulated, when convenient, as the problem of finding and characterizing hyperpotentials. %\smallskip

Let us pick now $I_0 \equiv I_3$ from among the $S^2$ family of hyperk\"ahler complex structures as the \textit{manifest} complex structure. To mark its distinguished status we will henceforth denote the corresponding Dolbeault operator simply by $\partial$. It is important to keep in mind that for the time being this is just an arbitrary choice of perspective which does not reflect or imply anything intrinsically special about $I_0$. With these notation conventions in place, consider the set of five complex second-order partial differential operators on $M$ given by 
\begin{equation} \label{Fs_and_Hs}
\begin{gathered}
F_0(f) = i \hp \partial \bar{\partial} f  \\[0pt]
F_+(f) = \bar{\partial} (\bar{\partial} f \hp I_+) \qquad\qquad H_+(f) = \partial (\bar{\partial} f \hp I_+) \\[1pt]
F_-(f) = \partial (\partial f \hp I_-) \qquad\qquad H_-(f) = \bar{\partial} (\partial f \hp I_-)
\end{gathered}
\end{equation}
for any function $f \in \mathscr{A}^0(M,\mathbb{C})$. Remark that owing to the quaternionic properties of hyperk\"ahler complex structures we can write alternatively 
\begin{equation}
\begin{aligned}
\bar{\partial} f \hp I_+ & = df P^{0,1}_{\Io}I_+ = df I_+ \\
\partial f \hp I_- & = df P^{1,0}_{\Io}I_- = df I_- \mathrlap{.}
\end{aligned}
\end{equation}
These are 1-forms of type $(1,0)$ respectively $(0,1)$ relative to $I_0$, and so the operators of $F$ type take values in $\mathscr{A}^{1,1}_{\Io}(M)$, the $H_+$ operator in $\mathscr{A}^{2,0}_{\Io}(M)$ and the $H_-$ one in $\mathscr{A}^{\hp 0,2}_{\Io}(M)$.  As a consequence of hyperk\"ahler integrability we have

\begin{lemma} \label{F+F-}
$F_+(f)$ and $F_-(f)$ are hyper $(1,1)$ forms for \underline{any} function $f \in \mathscr{A}^0(M,\mathbb{C})$.
\end{lemma}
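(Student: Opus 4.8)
The plan is to verify the two conditions of Lemma~\ref{criterion_1} directly in a local coframe holomorphic with respect to $I_0$, using the hyperk\"ahler integrability relations \eqref{HK_integrability} as the essential input. First I would establish that $F_+(f)$ and $F_-(f)$ are $(1,1)$ forms with respect to $I_0$: this is already built into the definitions, since we noted that $F_+(f) = \bar{\partial}(df\,I_+)$ takes values in $\mathscr{A}^{1,1}_{\Io}(M)$ and similarly for $F_-$. With that in hand, the full hyper $(1,1)$ property reduces by Lemma~\ref{criterion_1} to checking two algebraic-differential identities, the vanishing of $\sigma_{\mu\bar{\eta}}(I_+)^{\bar{\eta}}{}_\nu\,dx^\mu\wedge dx^\nu$ and of $\sigma_{\bar{\mu}\eta}(I_-)^{\eta}{}_{\bar{\nu}}\,dx^{\bar{\mu}}\wedge dx^{\bar{\nu}}$, where $\sigma$ is $F_+(f)$ or $F_-(f)$.

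The core computation is to write out $F_+(f)$ explicitly in components. Since $\bar{\partial}f\,I_+ = df\,I_+ = \partial_\mu f\,(I_+)^{\bar{\kappa}}{}_\nu\,dx^\nu$ contracted appropriately — more precisely a $(1,0)$ form with components involving $\partial_{\bar{\kappa}} f\,(I_+)^{\bar{\kappa}}{}_\nu$ — applying $\bar{\partial}$ produces a $(1,1)$ form whose antiholomorphic derivative falls on both the function and on the $(I_+)^{\bar{\kappa}}{}_\nu$ coefficients. I would then extract the coefficient $\sigma_{\mu\bar{\nu}}$ and form the combination required by the lemma. The expectation is that contracting $\sigma_{\mu\bar{\eta}}$ with $(I_+)^{\bar{\eta}}{}_\nu$ and antisymmetrizing in $\mu,\nu$ produces exactly the two integrability relations in \eqref{HK_integrability}: the first relation $\partial_\mu(I_+)^{\bar\kappa}{}_\nu = \partial_\nu(I_+)^{\bar\kappa}{}_\mu$ handles the terms where the derivative hits the function, and the second, quadratic relation $(I_+)^{\bar\eta}{}_\mu\partial_{\bar\eta}(I_+)^{\bar\kappa}{}_\nu = (I_+)^{\bar\eta}{}_\nu\partial_{\bar\eta}(I_+)^{\bar\kappa}{}_\mu$ handles the terms where it hits the complex-structure coefficient after contraction with $(I_+)$. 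The $F_-$ case then follows by the complex-conjugate / antipodal symmetry of the construction, using the constraint \eqref{I+I-} relating $I_+$ and $I_-$.

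The main obstacle I anticipate is bookkeeping: organizing the index contractions so that the symmetrization in the $dx^\mu\wedge dx^\nu$ (or $dx^{\bar\mu}\wedge dx^{\bar\nu}$) factor cleanly separates into the two pieces matched by the two lines of \eqref{HK_integrability}. In particular, one must be careful that after contracting the coefficient of $F_+(f)$ with $(I_+)^{\bar\eta}{}_\nu$ the second integrability relation appears with the correct index placement, which requires using the symmetry of $\partial_\mu\partial_\nu f$ in the holomorphic indices and possibly re-expressing $\partial_{\bar{\kappa}} f$-type terms via the quaternionic identity $P^{1,0}_{\Io}I_+ = 0$. Once the antisymmetrized coefficient is seen to be a sum of terms each proportional to one of the two relations in \eqref{HK_integrability}, both vanish identically and the lemma follows. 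A useful sanity check I would keep in mind is the four-dimensional case, where hyper $(1,1)$ means anti-self-dual, and where the statement can be verified against the explicit structure of $I_\pm$.
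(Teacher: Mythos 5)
Your proposal is correct and is essentially the paper's own proof: work in an $I_0$-holomorphic coframe, verify the two contraction criteria of Lemma~\ref{criterion_1} for $F_+(f)$ using the integrability constraints \eqref{HK_integrability} together with the symmetry of second derivatives of $f$ (and the quaternionic contraction \eqref{I+I-} to force out a total derivative in the second check), then dispose of $F_-(f)$ by the conjugate argument. One minor bookkeeping correction: in the $(I_+)$-contraction the derivative-hits-$f$ term vanishes by Schwarz symmetry alone and the quadratic second relation of \eqref{HK_integrability} kills the derivative-hits-$I_+$ term, while the first relation enters only, in conjugated form $\partial_{\bar{\mu}}(I_-)^{\eta}{}_{\bar{\nu}} = \partial_{\bar{\nu}}(I_-)^{\eta}{}_{\bar{\mu}}$, in the $(I_-)$-contraction check --- so your attribution of the first relation to the derivative-on-$f$ terms is scrambled, though this does not affect the viability of the plan.
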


\begin{proof}

Let us choose an arbitrary coordinate coframe holomorphic with respect to $I_0$. On one hand, by distributing a derivative we obtain
{\allowdisplaybreaks
\begin{align}
F_+(f)_{\mu\bar{\eta}} (I_+)^{\bar{\eta}}{}_{\nu} \pt dx^{\mu} \!\wedge dx^{\nu} & = - \, \partial_{\bar{\eta}} [\hp \partial_{\bar{\kappa}} f \hp (I_+)^{\bar{\kappa}}{}_{\mu}] (I_+)^{\bar{\eta}}{}_{\nu} \pt dx^{\mu} \!\wedge dx^{\nu} \\[2pt]
& = - \, [\hp \partial_{\bar{\eta}}\partial_{\bar{\kappa}} f \hp (I_+)^{\bar{\kappa}}{}_{\mu} (I_+)^{\bar{\eta}}{}_{\nu} +  \partial_{\bar{\kappa}} \pt f \partial_{\bar{\eta}}(I_+)^{\bar{\kappa}}{}_{\mu} (I_+)^{\bar{\eta}}{}_{\nu}] \hp dx^{\mu} \!\wedge dx^{\nu} \nonumber \\[2.5pt]
& = 0 \mathrlap{.} \nonumber
\end{align}
}%
The first term in the second line vanishes by index symmetry considerations due to the differentiability of $f$ and the last one by the second integrability constraint \eqref{HK_integrability}. On the other hand, this time by forcing out a total derivative and then resorting to the property \eqref{I+I-}, we get successively
{\allowdisplaybreaks
\begin{align}
F_+(f)_{\bar{\mu}\eta} (I_-)^{\eta}{}_{\bar{\nu}} \pt dx^{\bar{\mu}} \!\wedge dx^{\bar{\nu}} & = \partial_{\bar{\mu}} [\hp \partial_{\bar{\kappa}} f \hp (I_+)^{\bar{\kappa}}{}_{\eta}] (I_-)^{\eta}{}_{\bar{\nu}} \pt dx^{\bar{\mu}} \!\wedge dx^{\bar{\nu}} \\[2pt]
& = [\partial_{\bar{\mu}} (\hp \partial_{\bar{\kappa}} f \hp (I_+)^{\bar{\kappa}}{}_{\eta} (I_-)^{\eta}{}_{\bar{\nu}}) - \partial_{\bar{\kappa}} f \hp (I_+)^{\bar{\kappa}}{}_{\eta} \pt \partial_{\bar{\mu}} (I_-)^{\eta}{}_{\bar{\nu}}] \hp dx^{\bar{\mu}} \!\wedge dx^{\bar{\nu}} \nonumber \\[2pt]
& = [\partial_{\bar{\mu}}\partial_{\bar{\nu}} f - \partial_{\bar{\kappa}} f \hp (I_+)^{\bar{\kappa}}{}_{\eta} \pt \partial_{\bar{\mu}} (I_-)^{\eta}{}_{\bar{\nu}}] \hp dx^{\bar{\mu}} \!\wedge dx^{\bar{\nu}} \nonumber \\[2.5pt]
& = 0 \mathrlap{.} \nonumber
\end{align}
}%
The first term in the third line vanishes again by index symmetry considerations and the last one by way of the first integrability constraint \eqref{HK_integrability}. Lemma \ref{criterion_1} implies then that $F_+(f)$ is a hyper $(1,1)$ form. A similar argument works for $F_-(f)$.
\end{proof} 

In contrast to $F_+(f)$ and $F_-(f)$, $F_0(f)$ is not of type $(1,1)$ with respect to all hyperk\"ahler complex structures of $M$ at once for generic functions $f$. That, however, does not prevent it to be so for select functions $f$. The next result collects a number of five equivalent maximal criteria for $f$, any one of which, if satisfied, guaranteeing that $F_0(f)$ is of hyper $(1,1)$ type. 

\begin{proposition} \label{criterion_2}
Let $\phi$ be a smooth and possibly local complex function on $M$. The following conditions are equivalent:
\begin{itemize}
\setlength{\itemsep}{4pt}

\item[1.] $F_0(\phi)$ is of hyper $(1,1)$ type,  that is, $\phi$ is a hyperpotential with respect to $I_0$;

\item[2.] $H_+(\phi) = H_-(\phi) = 0$;

\item[3.] Locally there exists a complex function $\phi_+$ such that $\bar{\partial}\phi \hp I_+ = - \hp i\pt \partial \phi_+$;

\item[4.] Locally there exists a complex function $\phi_-$ such that $\partial\phi \hp I_- =  i\pt \bar{\partial} \phi_-$;

\item[5.] The symplectic gradient vector field $X \in T^{1,0}_{\scriptscriptstyle I_0}M$ defined by $\iota_{X} \omega_0 = \bar{\partial} \phi$ admits an alternative characterization as a symplectic gradient with respect to $\omega_+$, that is, $\iota_{X} \omega_+ = \partial \phi_+$ for some local complex function $\phi_+$;

\item[6.] The symplectic gradient vector field $Y \in T^{0,1}_{\scriptscriptstyle I_0}M$ defined by $\iota_{Y} \omega_0 = \partial \phi$ admits an alternative characterization as a symplectic gradient with respect to $\omega_-$, that is, $\iota_{Y} \omega_- = \bar{\partial} \phi_-$ for some local complex function $\phi_-$.

\end{itemize}
\end{proposition}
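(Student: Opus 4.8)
The plan is to organise the six conditions into a symmetric pair $\{1,2\}$ together with a ``$+$'' family $\{3,5\}$ and a conjugate ``$-$'' family $\{4,6\}$, and to show the whole list collapses onto a single symmetry requirement on the complex Hessian of $\phi$. The reformulations 3--6 are essentially routine; the one genuinely geometric input is an algebraic identity forcing the ``$+$'' and ``$-$'' requirements to coincide.

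\emph{Reducing 1 and 2 to Hessian symmetries.} Writing $F_0(\phi)=i\,\partial\bar{\partial}\phi=i\,\partial_\mu\partial_{\bar\nu}\phi\,dx^\mu\wedge dx^{\bar\nu}$ in a local $I_0$-holomorphic coframe, I would expand $H_+(\phi)=\partial(\bar{\partial}\phi\,I_+)$ and $H_-(\phi)=\bar{\partial}(\partial\phi\,I_-)$ exactly as in the proof of Lemma \ref{F+F-}: distribute the derivative and discard the term carrying a derivative of $I_\pm$ by means of the integrability constraints \eqref{HK_integrability}. This leaves $H_+(\phi)=0$ equivalent to the symmetry of $\partial_\mu\partial_{\bar\eta}\phi\,(I_+)^{\bar\eta}{}_\nu$ in $\mu,\nu$, and $H_-(\phi)=0$ equivalent to the symmetry of $\partial_{\bar\mu}\partial_\eta\phi\,(I_-)^\eta{}_{\bar\nu}$ in $\bar\mu,\bar\nu$; by the coordinate form of the criterion of Lemma \ref{criterion_1} these are exactly the two conditions for $F_0(\phi)$ to be hyper $(1,1)$.

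\emph{The algebraic bridge (the crux).} The two symmetries look independent, but because both are built from the single Hessian $P_{\mu\bar\nu}=\partial_\mu\partial_{\bar\nu}\phi$ and because $I_+$ and $I_-$ are mutually inverse, they are equivalent. Concretely, I would contract the relation $P_{\mu\bar\eta}(I_+)^{\bar\eta}{}_\nu=P_{\nu\bar\eta}(I_+)^{\bar\eta}{}_\mu$ with $(I_-)^\mu{}_{\bar\alpha}(I_-)^\nu{}_{\bar\beta}$ and collapse each pair $(I_+)(I_-)$ through \eqref{I+I-} and its companion $(I_+)^{\bar\mu}{}_\rho(I_-)^\rho{}_{\bar\nu}=\delta^{\bar\mu}{}_{\bar\nu}$ (coming from $I_+I_-=P^{0,1}_{\Io}$); this turns the $\mu,\nu$-symmetry into the $\bar\mu,\bar\nu$-symmetry, and the reverse contraction runs backwards. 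Hence $H_+(\phi)=0\Leftrightarrow H_-(\phi)=0$, each alone already forces $F_0(\phi)$ to be hyper $(1,1)$, and $1\Leftrightarrow 2$ follows. I expect this mutual-inverse step to be the heart of the matter: it is where the quaternionic geometry enters (for a \emph{generic} $(1,1)$ form the two conditions of Lemma \ref{criterion_1} are independent), and it is what lets the purely ``$+$'' data of conditions 3 and 5 control the ``$-$'' data of 4 and 6.

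\emph{Potentials and symplectic gradients.} The remaining conditions I would peel off one at a time. Condition 3 restates $H_+(\phi)=0$: the $(1,0)$-form $\bar{\partial}\phi\,I_+$ is then $\partial$-closed, hence locally $\partial$-exact by the $\partial$-Poincar\'e lemma (the conjugate of Dolbeault--Grothendieck), yielding $\phi_+$ with $\bar{\partial}\phi\,I_+=-i\,\partial\phi_+$; conversely $\partial^2=0$ recovers $H_+(\phi)=0$. The same reasoning ties condition 4 to $H_-(\phi)=0$. Finally, using $\omega_+(\,\cdot\,,\,\cdot\,)=g(\,\cdot\,,I_+\,\cdot\,)$, $\omega_0(\,\cdot\,,\,\cdot\,)=g(\,\cdot\,,I_0\,\cdot\,)$, the defining relation $\iota_X\omega_0=\bar{\partial}\phi$ and the quaternionic identity $I_0I_+=-iI_+$, a one-line computation gives $\iota_X\omega_+=i\,\bar{\partial}\phi\,I_+$, so condition 5 ($\iota_X\omega_+=\partial\phi_+$) is verbatim condition 3; the mirror identity $I_0I_-=iI_-$ gives $\iota_Y\omega_-=-i\,\partial\phi\,I_-$, identifying condition 6 with condition 4. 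Together with the bridge this closes all equivalences: $1\Leftrightarrow 2\Leftrightarrow 3\Leftrightarrow 4$, $3\Leftrightarrow 5$, and $4\Leftrightarrow 6$.
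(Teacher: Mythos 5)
Your proof is correct, and for most of the cycle it coincides with the paper's own argument: the coordinate expansion of $H_\pm(\phi)$, using the first integrability constraint \eqref{HK_integrability} (and its conjugate) to discard the $\partial I_\pm$ terms, followed by the coordinate form of Lemma \ref{criterion_1}, is exactly the paper's proof of $1 \Leftrightarrow 2$; the $\partial$- and $\bar{\partial}$-Poincar\'e lemma steps are the paper's $2 \Rightarrow 3$ and $2 \Rightarrow 4$; and your identity $\iota_X\omega_+ = i\,\bar{\partial}\phi\, I_+$, derived from $I_0I_+ = -i I_+$, is the same key fact the paper encodes as $I_+ = -\hp i\,\omega_0^{-1}\omega_+$ to get $3 \Leftrightarrow 5$ and $4 \Leftrightarrow 6$. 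The genuine divergence is in how you return to condition 1. The paper proves $3 \Rightarrow 1$ by rewriting the equation of part 3 as $\partial\phi_+ I_- = i\,\bar{\partial}\phi$ via $I_+I_- = P^{0,1}_{\Io}$, so that $F_0(\phi) = F_-(\phi_+)$, and then invoking Lemma \ref{F+F-} (similarly $4 \Rightarrow 1$ through $F_0(\phi) = F_+(\phi_-)$). You instead deduce $H_+(\phi) = 0$ from $\partial^2 = 0$ and pass to $H_-(\phi) = 0$ by contracting the symmetry relation with two copies of $I_-$ and collapsing via \eqref{I+I-} and its companion. I checked the contraction: it is valid, and it yields something slightly sharper than the paper records, namely that \emph{either one} of $H_+(\phi)=0$, $H_-(\phi)=0$ alone is already equivalent to condition 1, whereas the paper's condition 2 bundles both. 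The trade-off is that your route is purely pointwise linear algebra at this step, while the paper's detour through $F_0(\phi) = F_-(\phi_+) = F_+(\phi_-)$ is precisely what furnishes the step-operator identities \eqref{F_shifts} on which the later chain machinery rests, so its apparent redundancy is deliberate.

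One aside in your write-up is wrong, though nothing rests on it: you claim that for a generic $(1,1)$ form the two conditions of Lemma \ref{criterion_1} are independent, with the Hessian structure being what identifies them. Your contraction never actually uses that $P_{\mu\bar{\nu}}$ is a Hessian\,---\,it applies verbatim to arbitrary coefficients $\sigma_{\mu\bar{\nu}}$\,---\,so the two conditions of Lemma \ref{criterion_1} are in fact equivalent for \emph{every} 2-form of type $(1,1)$ relative to $I_0$ (in four dimensions, for instance, each reduces separately to the single primitivity condition $\Lambda_{\omega_0}(\sigma) = 0$). The genuinely geometric input in the proposition is therefore not the mutual-inverse relation, which is pointwise quaternion algebra valid on any almost hypercomplex manifold, but the covariant constancy of the complex structures encoded in \eqref{HK_integrability}, which is what allows the derivatives of $I_\pm$ to be stripped off in the first place.
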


\begin{proof}

The logical flow of the proof is as follows:
\begin{equation*}
\begin{tikzcd}[column sep=small,row sep=tiny]
					& 									& 1 \arrow[rd, Leftarrow, shorten >=-.2em] 	\arrow[dd, Leftrightarrow]			&	& \\
5 \arrow[r, Leftrightarrow]	& 3 \arrow[ru, Rightarrow, shorten <=-.2em] 	& 																& 4 	& \arrow[l, Leftrightarrow] 6 \\
					&									& 2 \arrow[lu, Rightarrow, shorten >=-.2em] \arrow[ru, Rightarrow, shorten >=-.2em] 	& 	&
\end{tikzcd}
\end{equation*}

\begin{itemize}[leftmargin=\leftmargin-3.5pt]
\setlength{\itemsep}{4pt}

\item[$1 \Leftrightarrow 2$] 

Note that in a generic local coordinate coframe holomorphic with respect to $I_0$, by the definition of the $H$-operators and the first integrability condition \eqref{HK_integrability} we have
{\allowdisplaybreaks
\begin{equation}
\begin{aligned}
H_+(\phi) & = \partial_{\mu} [\hp \partial_{\bar{\rho}}\phi \hp (I_+)^{\bar{\rho}}{}_{\nu}] \hp dx^{\mu} \! \wedge dx^{\nu} = \partial_{\mu} \partial_{\bar{\rho}}\phi \hp (I_+)^{\bar{\rho}}{}_{\nu} \pt dx^{\mu} \! \wedge dx^{\nu} \\[2pt]
H_-(\phi) & = \partial_{\bar{\mu}} [\hp \partial_{\rho}\phi \hp (I_-)^{\rho}{}_{\bar{\nu}}] \hp dx^{\bar{\mu}} \! \wedge dx^{\bar{\nu}} = \partial_{\bar{\mu}}\partial_{\rho}\phi \hp (I_-)^{\rho}{}_{\bar{\nu}} \hp dx^{\bar{\mu}} \! \wedge dx^{\bar{\nu}} \mathrlap{.}
\end{aligned}
\end{equation}
}%
In view of these relations, the equivalence of the two statements follows immediately based on Lemma \ref{criterion_1}. 

\item[$2 \Rightarrow 3$] 

The condition $H_+(\phi) = 0$ means that the form $\bar{\partial}\phi \hp I_+$ of type $(1,0)$ relative to $I_0$ is $\partial$-closed on the domain of definition of $\phi$, and so by the $\partial$\pt-\hp Poincar\'e lemma it must be locally $\partial$-exact. 

\item[$2 \Rightarrow 4$]

Follows from a mirror argument starting from the condition $H_-(\phi) = 0$ and relying on the \mbox{$\bar{\partial}$\pt-\hp Poincar\'e} lemma. 

\item[$3 \Rightarrow 1$]

Using the quaternionic relation $I_+I_- = P^{0,1}_{\Io}$ the equation in part 3 can be equivalently rewritten as $\partial\phi_+I_- = i \hp \bar{\partial}\phi$. Then 
\begin{equation}
F_0(\phi) = i\hp \partial\bar{\partial}\phi =  i\hp \partial(\bar{\partial}\phi)  =  \partial(\partial\phi_+I_-) = F_-(\phi_+)
\end{equation}
which by Lemma \ref{F+F-} is automatically of hyper $(1,1)$ type.

\item[$4 \Rightarrow 1$]

The argument follows a similar route, with the equation in part 4 now equivalently recast in the form $\bar{\partial} \phi_-I_+ = - \hp i \hp \partial\phi$. Hence we can write
\begin{equation}
F_0(\phi) = i \hp \partial\bar{\partial}\phi = - \hp i \hp \bar{\partial}(\partial\phi) = \bar{\partial}(\bar{\partial}\phi_-I_+) = F_+(\phi_-)
\end{equation}
which is again manifestly of hyper $(1,1)$ type. 

\item[$3 \Leftrightarrow 5$]

The key observation underlying the proof is that $I_+ = - \hp i \pt \omega_0^{-1}\omega_+$.

\item[$4 \Leftrightarrow 6$]

Follows similarly based on the conjugate relation. \hfill \ \qedhere  
\end{itemize}
\end{proof}

\subsection{Recursive chains of hyperpotentials} \label{ssec:rec_chains} \hfill \medskip

This Proposition not only lays out a set of criteria for a function to be a hyperpotential, but also, quite remarkably, describes a mechanism through which hyperpotentials produce new hyperpotentials.

\begin{lemma} \label{recurrence_lemma}
If $\phi$ is a hyperpotential with respect to $I_0$, then so are the corresponding functions $\phi_+$ and $\phi_-$.
\end{lemma}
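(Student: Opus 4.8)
The plan is to bypass the defining data $F_0$ entirely and instead exploit the equivalences catalogued in Proposition \ref{criterion_2}, which show that $\phi$ being a hyperpotential is the same as the existence of the two companion functions $\phi_+$ and $\phi_-$ furnished by conditions 3 and 4. The key observation I would rely on is that conditions 3 and 4 are almost symmetric under interchanging a function with its companion, so I expect to verify that $\phi_+$ and $\phi_-$ each satisfy one of these same conditions by a one-line algebraic manipulation, with $\phi$ itself serving as the new companion function.

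Concretely, I would first treat $\phi_+$. Condition 3 for $\phi$ reads $\bar\partial\phi\,I_+ = -\,i\,\partial\phi_+$, which I rewrite as $\partial\phi_+ = i\,\bar\partial\phi\,I_+$. Multiplying on the right by $I_-$ and using the quaternionic relation $I_+ I_- = P^{0,1}_{\Io}$ together with the fact that $\bar\partial\phi$ is already of type $(0,1)$ with respect to $I_0$ (so that $\bar\partial\phi\,P^{0,1}_{\Io} = \bar\partial\phi$), I obtain $\partial\phi_+\,I_- = i\,\bar\partial\phi$. This is precisely condition 4 of Proposition \ref{criterion_2} applied to $\phi_+$, with $\phi$ playing the role of the companion function. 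Since condition 4 implies condition 1, it follows that $\phi_+$ is a hyperpotential.

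The argument for $\phi_-$ is the mirror image. Condition 4 for $\phi$ reads $\partial\phi\,I_- = i\,\bar\partial\phi_-$, that is, $\bar\partial\phi_- = -\,i\,\partial\phi\,I_-$. Multiplying on the right by $I_+$ and invoking $I_- I_+ = P^{1,0}_{\Io}$ together with $\partial\phi\,P^{1,0}_{\Io} = \partial\phi$, I get $\bar\partial\phi_-\,I_+ = -\,i\,\partial\phi$, which is exactly condition 3 applied to $\phi_-$ (again with $\phi$ as companion). By the implication $3 \Rightarrow 1$, $\phi_-$ is a hyperpotential as well.

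The only genuinely non-routine point, which I would flag as the main obstacle, is recognizing this self-referential structure: one need not produce a brand-new companion for $\phi_\pm$ from scratch, since the original $\phi$ already serves, so that checking a single condition (4 for $\phi_+$, 3 for $\phi_-$) suffices. Once this is seen, the computation reduces to two applications of the projector identities $I_+I_- = P^{0,1}_{\Io}$ and $I_-I_+ = P^{1,0}_{\Io}$, and the conclusion follows at once without ever having to handle the operators $H_\pm$ directly.
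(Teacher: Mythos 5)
Your proof is correct and is essentially the paper's own argument in slightly repackaged form: the paper likewise invokes the equivalences $1 \Leftrightarrow 3$ and $1 \Leftrightarrow 4$ of Proposition \ref{criterion_2} and writes $F_0(\phi_+) = -\,i\,\bar{\partial}(\partial\phi_+) = \bar{\partial}(\bar{\partial}\phi\, I_+) = F_+(\phi)$ and $F_0(\phi_-) = i\,\partial(\bar{\partial}\phi_-) = \partial(\partial\phi\, I_-) = F_-(\phi)$, both hyper $(1,1)$ by Lemma \ref{F+F-}, which is exactly what one obtains on unwinding your appeals to the implications $4 \Rightarrow 1$ and $3 \Rightarrow 1$. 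Your ``self-referential'' observation that $\phi$ itself serves as the companion, $(\phi_+)_- = (\phi_-)_+ = \phi$, obtained by right-multiplication with $I_-$ and $I_+$ and the identities $I_+I_- = P^{0,1}_{\Io}$ and its conjugate, is made explicitly in the paper immediately after the lemma by the very same manipulation.
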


\begin{proof}

Indeed, by way of the equivalences $1 \Leftrightarrow 3$ and $1 \Leftrightarrow 4$ of Proposition \ref{criterion_2}, respectively, we have
\begin{align}
\begin{split}
F_0(\phi_+) & = i\hp \partial\bar{\partial}\phi_+ = - i\hp \bar{\partial}(\partial\phi_+) = \bar{\partial}(\bar{\partial}\phi I_+) = F_+(\phi)  \\[0pt]
F_0(\phi_-) & = i\hp \partial\bar{\partial}\phi_- = \phantom{+} i\hp \partial(\bar{\partial}\phi_-) = \partial(\partial\phi I_-) = F_-(\phi)     
\end{split}
\end{align} 
both of which are of hyper $(1,1)$ type  by Lemma \ref{F+F-}.
\end{proof}

\noindent In what follows we will call triplets of hyperpotentials related in this way \textit{adjacent} and we will represent them by means of the schematic notation 
\begin{equation*}
\begin{tikzcd}
\phi_- \!\! \arrow[dash]{r} & \! \phi \! \arrow[dash]{r} & \! \phi_+
\end{tikzcd}
\end{equation*}
%\begin{equation*}
%\phi_- \,\noarrow\, \phi \,\noarrow\, \phi_+ \rlap{.} \\[1pt]
%\end{equation*}
From the considerations above it is clear that for any such triplet of hyperpotentials we have
\begin{equation} \label{F_shifts}
\begin{gathered}
F_0(\phi) = F_-(\phi_+) = F_+(\phi_-) \\
F_0(\phi_+) = F_+(\phi) \\
F_0(\phi_-) = F_-(\phi) \mathrlap{.}
\end{gathered}
\end{equation}

\noindent Incidentally, the last two properties imply that

\begin{corollary}
If the automatically closed $F_0(\phi)$ is hyper $(1,1)$, then the automatically hyper $(1,1)$ $F_+(\phi)$ and $F_-(\phi)$ are closed. 
\end{corollary}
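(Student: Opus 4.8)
The plan is to read the statement directly off the shift relations \eqref{F_shifts}. The one elementary fact that drives everything is that an operator of the form $F_0$ produces a closed form \emph{by construction}: since $F_0(f) = i\hp\partial\bar{\partial} f$ and, $I_0$ being integrable, $d = \partial + \bar{\partial}$, one computes $d\pt F_0(f) = i\hp\partial^2\bar{\partial} f + i\hp\bar{\partial}\partial\bar{\partial} f = 0$, the first term vanishing by $\partial^2 = 0$ and the second by $\bar{\partial}\partial = -\partial\bar{\partial}$ together with $\bar{\partial}^2 = 0$. Equivalently, $F_0(f) = \tfrac{i}{2}\pt d[(\bar{\partial} - \partial)f]$ is exact, hence closed.

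First I would invoke the hypothesis. Saying that $F_0(\phi)$ is of hyper $(1,1)$ type is exactly condition~1 of Proposition \ref{criterion_2}, so $\phi$ is a hyperpotential with respect to $I_0$. Conditions~3 and~4 of that Proposition then furnish local functions $\phi_+$ and $\phi_-$ adjacent to $\phi$, which is precisely what is needed for the relations \eqref{F_shifts} to be in force.

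The concluding step is to invoke the last two lines of \eqref{F_shifts}, namely $F_+(\phi) = F_0(\phi_+)$ and $F_-(\phi) = F_0(\phi_-)$. Since each right-hand side is $F_0$ of a function, it is closed by the observation above, and therefore $F_+(\phi)$ and $F_-(\phi)$ are closed as well. (That they are simultaneously of hyper $(1,1)$ type is the content of Lemma \ref{F+F-} and requires no hypothesis on $\phi$.)

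There is no genuinely hard step: the substance of the corollary is already packaged into \eqref{F_shifts}, and all that remains is the trivial remark that $i\hp\partial\bar{\partial}$ of any function is closed. The only point meriting a word of caution is that the adjacent potentials $\phi_\pm$ are produced only locally, by the $\partial$- and $\bar{\partial}$-Poincar\'e lemmas used inside Proposition \ref{criterion_2}; consequently the identities \eqref{F_shifts} and the resulting conclusion are a priori local statements. This is harmless, however, since closedness is a purely local condition and the forms $F_+(\phi)$, $F_-(\phi)$ are globally defined to begin with.
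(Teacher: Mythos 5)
Your proof is correct and follows essentially the same route as the paper, which presents the corollary as an immediate consequence of the shift relations \eqref{F_shifts} together with the trivial observation that $F_0(f) = i\hp\partial\bar{\partial} f$ is closed for any $f$. Your added remark about the locality of the adjacent potentials $\phi_\pm$ is a sound precaution and consistent with the paper's (implicitly local) framing.
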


The most salient and immediately apparent consequence of Lemma \ref{recurrence_lemma} is that hyperpotentials naturally generate new hyperpotentials of the same type in recursive cascades. Let us try to understand how this happens more closely. Suppose we start with a generic hyperpotential $\phi$. In a first step this gives rise via the $\partial$ and \mbox{$\bar{\partial}$\pt-\hp Poincar\'e} lemmas in accordance with the mandates of Proposition \ref{criterion_2} to two new hyperpotentials, $\phi_+$ and $\phi_-$:
\begin{equation*}
\begin{tikzcd}
\phi_{-} \!\! \arrow[dash]{r}  &  \arrow[l, dashed, bend right=45, in=222] \phi  \arrow[r, dashed, bend left=45, in=140] \arrow[dash]{r} & \! \phi_{+}
\end{tikzcd}
\end{equation*}
Through the same mechanism, each of these generates in turn two more hyperpotentials, which we denote in similar fashion by $(\phi_+)_+$, $(\phi_+)_-$ and $(\phi_-)_+$, $(\phi_-)_-$. Note, however, that not all of these are necessarily new. By acting from the right on the equations of parts 3 and 4 of Proposition \ref{criterion_2} with $I_-$ and $I_+$ and making use of the quaternionic identity $I_+I_- = P^{0,1}_{\scriptscriptstyle I_0}$ and its complex conjugate, respectively, one can easily convince oneself that we can actually take $(\phi_+)_- = (\phi_-)_+ = \phi$. In contrast, no such argument can be conceived for the remaining two hyperpotentials, which are therefore genuinely new. Let us denote them by $(\phi_+)_+ = \phi_{++}$ and $(\phi_-)_- = \phi_{--}$. What we have shown then is that the above adjacency relations can be extended to
\begin{equation*}
\begin{tikzcd}
\phi_{--} \!\! \arrow[dash]{r} & \arrow[l, dashed, bend right=40] \! \phi_{-} \!\!  \arrow[r, dashed, bend left=45, in=138] \arrow[dash]{r}  & \phi  \arrow[dash]{r} & \arrow[l, dashed, bend right=45, in=222] \! \phi_{+} \!\! \arrow[r, dashed, bend left=45, in =152] \arrow[dash]{r} & \! \phi_{++}
\end{tikzcd}
\end{equation*}
%\begin{equation*}
%\phi_{--} \,\noarrow\, \phi_{-} \,\noarrow\, \phi \,\noarrow\, \phi_{+} \,\noarrow\, \phi_{++} \\[1pt]
%\end{equation*}
The argument can be repeated recursively again and again, with each successive iteration producing in the same way two new hyperpotentials and falling back onto two old ones. Thus, if $\phi_{+n}$ and $\phi_{-n}$ denote the new hyperpotentials resulting from the $n$-th iteration,\footnote{\pt We encourage the reader to think of the indices interchangeably as both integers and pluses or minuses, in the obvious way.} then the next iteration produces on one hand $(\phi_{+n})_- = \phi_{+(n-1)}$ and $(\phi_{-n})_+ = \phi_{-(n-1)}$, \textit{i.e.}, two lower-level hyperpotentials, and on the other hand two new ones, which we denote similarly by $(\phi_{+n})_+ = \phi_{+(n+1)}$ and $(\phi_{-n})_- = \phi_{-(n+1)}$. Symbolically, we have
\begin{equation*}
\begin{tikzcd}
\phi_{-(n+1)} \!\! \arrow[dash]{r} & \arrow[l, dashed, bend right] \! \phi_{-n} \!\! \arrow[r, dashed, bend left]  \arrow[dash]{r} & \! \phi_{-(n-1)} \!\! \arrow[dash]{r} & {\! \dots \, \phi \, \dots \!} \arrow[dash]{r} & \! \phi_{+(n-1)} \!\! \arrow[dash]{r} & \arrow[l, dashed, bend right] \! \phi_{+n} \!\! \arrow[r, dashed, bend left]  \arrow[dash]{r} & \! \phi_{+(n+1)} 
\end{tikzcd}
\end{equation*}
%\begin{equation*}
%\phi_{-(n+1)} \,\noarrow\, \phi_{-n} \,\noarrow\, \phi_{-(n-1)} \,\noarrow\, \dots  \, \phi \,  \dots \,\noarrow\, \phi_{+(n-1)} \,\noarrow\, \phi_{+n} \,\noarrow\, \phi_{+(n+1)}  
%\\[1pt]
%\end{equation*}
In this way the hyperpotential $\phi$ generates recursively an infinite ordered sequence of hyperpotentials $(\phi_n )_{n \in \mathbb{Z}}$, where we identify, conventionally, $\phi_{\hp 0} = \phi$.
%\begin{equation*}
%\begin{tikzcd}
%\dots \! \arrow[dash]{r} & \! \phi_{-n} \!\! \arrow[dash]{r} & \! \dots \! \arrow[dash]{r} & \! \phi_- \!\! \arrow[dash]{r} & \! \phi \! \arrow[dash]{r} & \! \phi_+ \!\! \arrow[dash]{r} & \! \dots \! \arrow[dash]{r} & \! \phi_{+n} \!\! \arrow[dash]{r} & \! \dots
%\end{tikzcd}
%\end{equation*}
By construction, the elements of the sequence satisfy the right respectively left-moving recursion relations
\begin{equation} \label{chain_recursion}
\begin{aligned}
\bar{\partial}\phi_n I_+ & = - \hp i\hp \partial \phi_{n+1} \\[0pt]
\partial\phi_n I_- & = \phantom{+} \hp i\hp \bar{\partial} \phi_{n-1} 
\end{aligned}
\end{equation}
for all $n \in \mathbb{Z}$. The two recursions are in fact equivalent as they can be obtained from one another by means of the previously mentioned quaternionic identity. Another equivalent condition is
\begin{equation} \label{chain_recursion_2}
d\phi_{n-1}I_+ + d\phi_n I_0 + d\phi_{n+1} I_- = 0
\end{equation}
and, indeed, the two equations \eqref{chain_recursion} can be easily identified as the $(1,0)$ respectively $(0,1)$ parts of this relation with respect to the complex structure $I_0$. The operators $F_+$ and $F_-$ act as step-right and step-left operators in the sense that for any $n \in \mathbb{Z}$ we have
\begin{equation} \label{step-ops}
F_+(\phi_{n-1}) = F_0(\phi_n) = F_-(\phi_{n+1}) \rlap{.}
\end{equation} 
%\begin{equation} \label{step-ops}
%\begin{gathered}
%F_+(\phi_n) = F_0(\phi_{n+1}) \\[1pt]
%F_-(\phi_n) = F_0(\phi_{n-1}) \rlap{.}
%\end{gathered}
%\end{equation}

\begin{definition}
We call an ordered sequence $(\phi_n )_{n \in \mathbb{Z}}$ of functions on $M$ sharing a non-trivial common domain on which they satisfy either one of the three recursion relations from \eqref{chain_recursion} and \eqref{chain_recursion_2} a \textit{recursive chain of hyperpotentials with respect to the complex structure $I_0$}.
\end{definition}

\noindent Note that the term ``hyperpotentials" is quite adequately used in this definition. By the equivalences $1 \Leftrightarrow 3$ and $1 \Leftrightarrow 4$ of Proposition \ref{criterion_2} all the functions making up such a sequence are indeed guaranteed to be hyperpotentials. The considerations above can then be summed up as follows: 

\begin{proposition}
Any hyperpotential with respect to a given complex structure on $M$ gives rise to a recursive chain of hyperpotentials with respect to the same complex structure. 
\end{proposition}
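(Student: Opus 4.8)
The plan is to construct the chain by iterating the mechanism already isolated in Lemma~\ref{recurrence_lemma}, using parts~3 and~4 of Proposition~\ref{criterion_2} as the two elementary stepping moves and the quaternionic identity $I_+I_- = P^{0,1}_{\Io}$ to guarantee that the two directions of stepping assemble into a single coherent $\mathbb{Z}$-indexed recursion. Since the statement is purely local, I would first fix a contractible coordinate neighborhood on which the $\partial$- and $\bar{\partial}$-Poincar\'e lemmas underlying Proposition~\ref{criterion_2} produce primitives on that same neighborhood; iterating the construction then never forces us off this fixed domain, so a common non-trivial domain for all members of the chain is available from the outset.

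Setting $\phi_{\hp 0} = \phi$, I would build the chain outward in two directions. To step right, apply the equivalence $1 \Leftrightarrow 3$ of Proposition~\ref{criterion_2} to the hyperpotential $\phi_n$ for $n \ge 0$: this yields a local $\phi_{n+1}$ with $\bar{\partial}\phi_n\hp I_+ = -\,i\hp\partial\phi_{n+1}$, and Lemma~\ref{recurrence_lemma} certifies that $\phi_{n+1}$ is again a hyperpotential, so the step can be repeated. To step left, apply $1 \Leftrightarrow 4$ to $\phi_n$ for $n \le 0$: this yields a local $\phi_{n-1}$ with $\partial\phi_n\hp I_- = i\hp\bar{\partial}\phi_{n-1}$, again a hyperpotential by Lemma~\ref{recurrence_lemma}. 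In this way one manufactures a bi-infinite family $(\phi_n)_{n\in\mathbb{Z}}$ of hyperpotentials on the fixed domain.

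The crux is to check that these two families of moves are compatible, i.e. that every consecutive pair satisfies one and the same recursion relation \eqref{chain_recursion} rather than merely the relation used to produce it. Here I would act on the right-moving relation $\bar{\partial}\phi_n\hp I_+ = -\,i\hp\partial\phi_{n+1}$ from the right with $I_-$ and invoke $I_+I_- = P^{0,1}_{\Io}$ together with $\bar{\partial}\phi_n\hp P^{0,1}_{\Io} = \bar{\partial}\phi_n$, turning it into $\partial\phi_{n+1}\hp I_- = i\hp\bar{\partial}\phi_n$; this is precisely the left-moving relation evaluated at index $n+1$. Hence the right- and left-moving recursions coincide, the leftward-built and rightward-built portions glue consistently across $n=0$, and the entire sequence obeys \eqref{chain_recursion}\,---\,equivalently \eqref{chain_recursion_2}\,---\,for every $n \in \mathbb{Z}$. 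By the remark following the definition of a recursive chain (again the equivalences $1 \Leftrightarrow 3$ and $1 \Leftrightarrow 4$), every $\phi_n$ is automatically a hyperpotential with respect to $I_0$, so $(\phi_n)_{n\in\mathbb{Z}}$ is a recursive chain of hyperpotentials, as claimed.

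I expect the only genuine subtlety to be exactly this coherence of the bi-infinite recursion, which the quaternionic identity resolves; everything else is an immediate application of the already-established Proposition~\ref{criterion_2} and Lemma~\ref{recurrence_lemma}, and the domain bookkeeping is handled once and for all by working on a fixed contractible set. I would also flag, without belaboring it, that the construction is not unique: each primitive $\phi_{n\pm1}$ is determined only up to an additive $I_0$-holomorphic (respectively antiholomorphic) ambiguity, so the Proposition asserts the existence of a chain rather than a canonical one.
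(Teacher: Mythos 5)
Your proof is correct and takes essentially the same route as the paper: iterate parts 3 and 4 of Proposition \ref{criterion_2} together with Lemma \ref{recurrence_lemma}, and use the quaternionic identity $I_+I_- = P^{0,1}_{\Io}$ (acting from the right, exactly as the paper does to show $(\phi_+)_- = (\phi_-)_+ = \phi$) to verify that the right- and left-moving recursions \eqref{chain_recursion} are equivalent, so the bi-infinite sequence is coherent. Your one genuine refinement is the domain bookkeeping: by fixing an $I_0$-holomorphic polydisc on which the $\partial$- and $\bar{\partial}$-Poincar\'e lemmas solve globally, you remove the possibility of shrinking domains, which the paper instead handles by the explicit assumption that a non-empty open set survives the infinite iteration.
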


We end this discussion with several remarks. If at some point during a right-moving recursion along a chain we encounter a hyperpotential which is holomorphic with respect to $I_0$ then we can take all the subsequent potentials to its right to be equal to zero. Similarly, if in the course of a left-moving recursion we encounter a hyperpotential which is anti-holomorphic with respect to $I_0$ then we can take all the hyperpotentials to its left to be equal to zero. Thus, chains can be \textit{bounded}, \textit{half-bounded}\,---\,from the right or from the left, or \textit{infinite}. A right-boundary hyperpotential is always holomorphic and a left-boundary one always anti-holomorphic with respect to $I_0$. 

Observe that if $(\pt \phi_n \hp )_{n \in \mathbb{Z}}$ is a chain of hyperpotentials then so is \mbox{$\smash{ ( \pt \phi^c_n \equiv (-)^n \bar{\phi}_{-n} \pt )_{n \in \mathbb{Z}} }$}, where the overhead bar symbolizes complex conjugation. This will be termed the \textit{conjugate chain}. A self-conjugate chain always contains a real hyperpotential and, conversely, a real hyperpotential can always generate a self-conjugate chain. 

Chains are constructed by a recursive application of the $\partial$ and $\bar{\partial}$\pt-\hp Poincar\'e lemmas, and at each iteration the domain of definition of the newly produced hyperpotentials may possibly shrink with respect to the domain of definition of the previous crop of hyperpotentials. Here we will assume that in the infinite iteration limit we are still left with a domain containing a non-empty open subset (this is obviously always the case for bounded chains). %We will call this the \textit{domain of the chain}. 
The intersection domain of all the hyperpotentials in the chain will be called the \textit{domain of the chain}.

\subsection{Chains of hyperpotentials and holomorphic functions} \hfill \medskip

Chains of hyperpotentials on a hyperk\"ahler manifold are closely related to holomorphic functions on its twistor space.

\begin{proposition} \label{hol-hyper(1,1)}
Let $(\phi_n)_{n \in \mathbb{Z}}$ be a sequence of functions on $M$ with a non-trivial intersection domain and
\begin{equation} \label{hol-fct-ser}
\phi(\zeta) = \sum_{n=-\infty}^{\infty} \phi_{n} \hp \zeta^{-n}
\end{equation}
be the associated formal series. \smallskip

$1.$~If $(\phi_n)_{n \in \mathbb{Z}}$ forms a recursive chain of hyperpotentials with respect to the complex structure on $M$ parametrized by $\zeta = 0$ then on the domain of $\mathcal{Z}$ on which the associated series exists and converges, $\phi(\zeta)$ is a holomorphic function. \smallskip

$2.$~Conversely, any function $\phi(\zeta)$ holomorphic on a domain of $\mathcal{Z}$ is, in particular, holomorphic in the $\mathbb{CP}^1$ coordinate, and so if \eqref{hol-fct-ser} is its Laurent expansion around $\zeta=0$, then the coefficients $(\phi_n)_{n \in \mathbb{Z}}$ form a recursive chain of hyperpotentials on $M$ with respect to the complex structure parametrized by $\zeta = 0$.
\end{proposition}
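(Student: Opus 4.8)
The plan is to reduce both implications to a single algebraic identity obtained by feeding the series \eqref{hol-fct-ser} into the Cauchy--Riemann characterization of Proposition \ref{CR_Z}. A bilateral series in $\zeta$ converges on an annulus, that is, on a domain of $\mathcal{Z}$ projecting to a subregion of $N \cap S$, so the relevant criterion is the third line of Proposition \ref{CR_Z}: $\phi(\zeta)$ is holomorphic there precisely when $d\phi(\zeta)\,I(\zeta)=0$ and $\bar{\partial}_{\hp\smash{\mathbb{CP}^1}}\phi(\zeta)=0$ hold simultaneously, where $d$ denotes the fiberwise differential. The whole proposition will follow once I show that, read off coefficient by coefficient in $\zeta$, these two conditions are exactly equivalent to the chain recursion \eqref{chain_recursion_2}.

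First I would dispose of the $\mathbb{CP}^1$-condition: since every coefficient $\phi_n$ is a function on $M$ carrying no $\zeta$-dependence and each monomial $\zeta^{-n}$ is holomorphic in $\zeta$, we have $\bar{\partial}_{\hp\smash{\mathbb{CP}^1}}\phi(\zeta)=0$ automatically, wherever the series converges. The content therefore lies entirely in the fiberwise condition. Differentiating the series fiberwise term by term and multiplying on the right by $I(\zeta)=\frac{I_+}{\zeta}+I_0+\zeta I_-$ from \eqref{def_I(zeta)}, I would compute
\begin{equation*}
d\phi(\zeta)\,I(\zeta)=\sum_{n}\big(d\phi_n\,I_+\big)\zeta^{-n-1}+\sum_{n}\big(d\phi_n\,I_0\big)\zeta^{-n}+\sum_{n}\big(d\phi_n\,I_-\big)\zeta^{-n+1},
\end{equation*}
and after reindexing each sum so that every term carries a common power $\zeta^{-k}$, collect
\begin{equation*}
d\phi(\zeta)\,I(\zeta)=\sum_{k}\big(d\phi_{k-1}\,I_+ + d\phi_k\,I_0 + d\phi_{k+1}\,I_-\big)\,\zeta^{-k}.
\end{equation*}
The coefficient of $\zeta^{-k}$ is precisely the left-hand side of the recursion relation \eqref{chain_recursion_2} with $n=k$.

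The two implications now drop out. For part 1, if $(\phi_n)_{n\in\mathbb{Z}}$ is a recursive chain then \eqref{chain_recursion_2} holds for every $k$, so every coefficient above vanishes and $d\phi(\zeta)\,I(\zeta)=0$; combined with the automatic vanishing of $\bar{\partial}_{\hp\smash{\mathbb{CP}^1}}\phi$, Proposition \ref{CR_Z} makes $\phi(\zeta)$ holomorphic on its domain of convergence. For part 2, a function holomorphic on a domain of $\mathcal{Z}$ satisfies $\bar{\partial}_{\hp\smash{\mathbb{CP}^1}}\phi=0$, hence is holomorphic in $\zeta$ and admits a Laurent expansion about $\zeta=0$ whose coefficients $\phi_n$ are functions on $M$; it also satisfies $d\phi(\zeta)\,I(\zeta)=0$, and by uniqueness of the Laurent expansion in $\zeta$ every coefficient in the displayed series must vanish, which is exactly \eqref{chain_recursion_2} for all $k$, i.e. the chain condition.

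The only genuine care, and the step I expect to be the main (if routine) obstacle, is the analytic bookkeeping around the series. For part 1 one must justify differentiating the series fiberwise term by term, legitimate on the interior of the annulus of convergence by uniform convergence of Laurent series on compact subsets. For part 2 one should observe that the coefficients arise as contour integrals $\phi_n=\frac{1}{2\pi i}\oint \phi(\zeta)\,\zeta^{n-1}\,d\zeta$ in the $\zeta$-variable, so that differentiating under the integral sign identifies $d\phi_n$ with the corresponding Laurent coefficient of the $M$-valued one-form $d\phi(\zeta)$; this is what guarantees that the vanishing of $d\phi(\zeta)\,I(\zeta)$ as a function on $\mathcal{Z}$ forces the vanishing of each coefficient of the one-form, as used above. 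With these standard facts in place, the displayed algebraic identity carries the entire proof.
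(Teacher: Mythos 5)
Your proof is correct and follows essentially the same route as the paper: expand $d\phi(\zeta)\,I(\zeta)$ as a Laurent series using \eqref{def_I(zeta)}, identify the coefficient of $\zeta^{-n}$ with the left-hand side of the recursion \eqref{chain_recursion_2}, and invoke the third criterion of Proposition \ref{CR_Z} together with uniqueness of Laurent coefficients in both directions. The only detail the paper includes that you omit is the remark that when the series has no pole at $\zeta = 0$ one can run the same argument with $P_N(\zeta)$ in place of $I(\zeta)$, using the recursion \eqref{chain_recursion}, which covers fibers where the tropical operator $I(\zeta)$ is unavailable.
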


\begin{proof}

If we parametrize the twistor sphere as in \eqref{stereo_map} then the complex structure corresponding to $\zeta =0$ will be $I_0$. From the series expansion of $\phi(\zeta)$ and the second formula \eqref{def_I(zeta)} we get immediately
\begin{equation} \label{dphiQ}
\hspace{38pt}
d\phi(\zeta) I(\zeta) = \sum_{n=-\infty}^{\infty} (d\phi_{n-1}I_+ + d\phi_n I_0 + d\phi_{n+1}I_-) \hp \zeta^{-n} \rlap{.}
\end{equation} 
By the third part of Proposition \ref{CR_Z}, $\phi(\zeta)$ is holomorphic on $\mathcal{Z}$ if and only if $d\phi(\zeta) I(\zeta) = 0$ for all allowed values of $\zeta$. 
This condition is clearly equivalent to the recursion relation \eqref{chain_recursion_2}. If the series has no poles at $\zeta = 0$, a similar argument can be made using $P_N(\zeta)$ instead of $I(\zeta)$ and the second recursion relation \eqref{chain_recursion}. 
\end{proof}

\begin{remark}
Chains of hyperpotentials do not always give rise to holomorphic functions on $\mathcal{Z}$ since the associated series may be nowhere convergent. On the other hand, holomorphic functions on $\mathcal{Z}$ always yield chains of hyperpotentials when Laurent-expanded.
\end{remark}

% \subsection{Transitioning to other complex structures} \hfill \medskip

% \subsection{The action of $\partial_{\Iu}\bar{\partial}_{\Iu}$-operators on hyperpotentials} \hfill \medskip

% \subsection{$\partial\bar{\partial}$\pt-\hp operators and hyperpotentials at different complex structures} \hfill \medskip

\subsection{The action of generic $\partial\bar{\partial}$\pt-\hp operators on hyperpotentials} \hfill \medskip

Even though defined for a given hyperk\"ahler complex structure, recursive chains of hyperpotentials have properties which allow us to represent with ease the actions on them of various differential operators associated to other hyperk\"ahler complex structures. 

Let $(\phi_n)_{n \in \mathbb{Z}}$ be a recursive chain of hyperpotentials with respect to the complex structure $I_0$. For any other complex structure $\IU$, from the first decomposition formula \eqref{lin_decomp_proj} and the recursion relations \eqref{chain_recursion} we obtain  
\begin{equation} \label{dbar_Iu_phi_n}
\bar{\partial}_{\Iu} \phi_n = \rho_S \hp (\partial_{\Io}\phi_n - \partial_{\Io}\phi_{n+1} \zeta^{-1}) \hp + \rho_N (\bar{\partial}_{\Io}\phi_n - \bar{\partial}_{\Io}\phi_{n-1}\zeta \hp) \rlap{.}
\end{equation}
A similar expression holds also for $\partial_{\Iu}\phi_n$. Acting on this equation with the exterior derivative of $M$ yields after a few identifications the formula 
\begin{equation} \label{ddbar-chain}
\partial_{\Iu} \bar{\partial}_{\Iu} \phi_n = \partial_{\Io}\bar{\partial}_{\Io} (\hp x_-\phi_{n+1} + x_0\hp \phi_n + x_+\phi_{n-1}) 
\end{equation}
where $x_+ = \frac{1}{2} (x_1 + i x_2)$, $x_0 = x_3$, $x_- = - \frac{1}{2} (x_1 - i x_2)$ satisfying the alternating reality property $\bar{x}_m = (-)^m x_{-m}$ are the complex spherical-basis components of the position \mbox{$\mathbb{R}^3$-vector} corresponding to $\mathbf{u}$. Thus, we find that the action of the $\partial\bar{\partial}$-operator with respect to $\IU$  on a hyperpotential $\phi_n$ from the recursive chain can be very simply expressed in terms of the actions of the $\partial\bar{\partial}$-operator with respect to $I_0$ on the adjacent triplet of hyperpotentials \mbox{$\phi_{n-1}$, $\phi_n$, $\phi_{n+1}$}.

It is instructive to consider an additional alternative derivation of this remarkable formula which underscores the role of the second-order differential operators defined in \eqref{Fs_and_Hs}. These, it turns out, appear quite naturally when trying to express the $\partial\bar{\partial}$-operator for a complex structure $\IU$ in a coordinate frame holomorphic with respect to $I_0$. In fact, it was because of this reason that we have considered them in the first place. By projecting onto two complex subspaces of the complexified cotangent bundle and its second exterior power, respectively, using the soldering forms defined in equation \eqref{dx_splitting} we obtain the following formulas: 

\begin{lemma}
For any point $\mathbf{u} \in S^2$ and function $f \in \mathscr{A}^0(M,\mathbb{C})$ we have
\begin{equation}
\bar{\partial}_{\Iu} f = \partial_{\mu} f \hp \theta^{\mu}_{\minus} + \partial_{\bar{\mu}} f \hp \theta^{\bar{\mu}}_{\minus}
\end{equation}
%{\allowdisplaybreaks
%\begin{align} \label{deldelbar_at_u}
%i \pt \partial_{\Iu} \bar{\partial}_{\Iu} f = [\hp x_-F_+(f) + x_0 \hp F_0(f) + x_+F_-(f)]_{\mu\bar{\nu}} \pt (&\theta^{\mu}_{\plus} \wedge \theta^{\bar{\nu}}_{\minus} + \theta^{\mu}_{\minus} \wedge \theta^{\bar{\nu}}_{\plus}) \\[0pt]
%+ \hspace{3pt}  x_- \hp H_+(f)_{\mu\nu} \ \pt\hp & \theta^{\mu}_{\plus} \wedge \theta^{\nu}_{\minus} \nonumber \\
%+ \hspace{3.5pt} x_+ \hp H_-(f)_{\bar{\mu}\bar{\nu}} \ \pt & \theta^{\bar{\mu}}_{\plus} \wedge \theta^{\bar{\nu}}_{\minus} \rlap{.} \nonumber
%\end{align}
%}%
{\allowdisplaybreaks
\begin{align} \label{deldelbar_at_u}
i \pt \partial_{\Iu} \bar{\partial}_{\Iu} f = & \ [\hp x_-F_+(f) + x_0 \hp F_0(f) + x_+F_-(f)]_{\mu\bar{\nu}} \pt (\theta^{\mu}_{\plus} \wedge \theta^{\bar{\nu}}_{\minus} + \theta^{\mu}_{\minus} \wedge \theta^{\bar{\nu}}_{\plus}) \\[0pt]
& + x_- H_+(f)_{\mu\nu} \pt \theta^{\mu}_{\plus} \wedge \theta^{\nu}_{\minus} \nonumber \\
& + x_+ H_-(f)_{\bar{\mu}\bar{\nu}} \pt \theta^{\bar{\mu}}_{\plus} \wedge \theta^{\bar{\nu}}_{\minus} \rlap{.} \nonumber
\end{align}
}%
\end{lemma}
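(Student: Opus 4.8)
The plan is to read both identities off the defining relation $\bar{\partial}_{\Iu} f = df\, P^{0,1}_{\Iu}$ together with the linear decomposition \eqref{lin_decomp_proj}, deferring all differentiation to a single application of $d$. For the first formula I would expand $df = \partial_{\mu} f\, dx^{\mu} + \partial_{\bar{\mu}} f\, dx^{\bar{\mu}}$ in the coframe holomorphic with respect to $I_0$ and apply $P^{0,1}_{\Iu}$ on the right. Since $\partial_{\mu} f$ and $\partial_{\bar{\mu}} f$ are scalars, the projector acts only on the coframe $1$-forms, and by the very definition of the soldering forms in \eqref{dx_splitting} one has $dx^{\mu} P^{0,1}_{\Iu} = \theta^{\mu}_{\minus}$ and $dx^{\bar{\mu}} P^{0,1}_{\Iu} = \theta^{\bar{\mu}}_{\minus}$. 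This delivers $\bar{\partial}_{\Iu} f = \partial_{\mu} f\, \theta^{\mu}_{\minus} + \partial_{\bar{\mu}} f\, \theta^{\bar{\mu}}_{\minus}$ at once.

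For the second formula the first move is to note that, because $\IU$ is integrable, $d$ sends $(0,1)_{\Iu}$ forms into $\mathscr{A}^{1,1}_{\Iu}(M) \oplus \mathscr{A}^{0,2}_{\Iu}(M)$ and the $(0,2)_{\Iu}$ component of $d\hp\bar{\partial}_{\Iu} f$ is $\bar{\partial}_{\Iu}^{\,2} f = 0$; hence $i\, \partial_{\Iu} \bar{\partial}_{\Iu} f = i\, d\hp\bar{\partial}_{\Iu} f$, and it is enough to differentiate a closed-form expression for $\bar{\partial}_{\Iu} f$. Rather than invoking the explicit soldering coefficients, I would obtain that expression by feeding $df$ into the first equality of \eqref{lin_decomp_proj}, $P^{0,1}_{\Iu} = \rho_N P_N(\zeta) + \rho_S P_S(\tilde{\zeta})$. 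Using $df\, I_- = \partial f\, I_-$ and $df\, I_+ = \bar{\partial} f\, I_+$ one finds $df\, P_N(\zeta) = \bar{\partial} f + i\hp\zeta\, \partial f\, I_-$ and $df\, P_S(\tilde{\zeta}) = \partial f - i\hp\tilde{\zeta}\, \bar{\partial} f\, I_+$, and then the stereographic identities $\rho_N \zeta = - x_+$ and $\rho_S \tilde{\zeta} = x_-$ collapse everything to $\bar{\partial}_{\Iu} f = \rho_N\, \bar{\partial} f + \rho_S\, \partial f - i\hp x_+\, \partial f\, I_- - i\hp x_-\, \bar{\partial} f\, I_+$, the generic counterpart of the chain identity \eqref{dbar_Iu_phi_n}.

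Applying $d$ is now mechanical. With $d\hp\bar{\partial} f = \partial\bar{\partial} f$, $d\hp\partial f = \bar{\partial}\partial f$ and $F_0 = i\hp\partial\bar{\partial} f$ (so that $\partial\bar{\partial} f = -\hp i F_0$ and $\bar{\partial}\partial f = i F_0$), the first two terms contribute $-\hp i\hp(\rho_N - \rho_S) F_0 = -\hp i\hp x_0 F_0$, while the last two, by the definitions \eqref{Fs_and_Hs}, produce $d(\partial f\, I_-) = F_-(f) + H_-(f)$ and $d(\bar{\partial} f\, I_+) = F_+(f) + H_+(f)$. Multiplying through by $i$ yields, as an identity of $2$-forms, $i\, \partial_{\Iu} \bar{\partial}_{\Iu} f = x_0 F_0(f) + x_+ \big( F_-(f) + H_-(f) \big) + x_- \big( F_+(f) + H_+(f) \big)$, whose $(1,1)_{\Io}$, $(2,0)_{\Io}$ and $(0,2)_{\Io}$ parts become the three successive lines of \eqref{deldelbar_at_u} once one passes to the soldering frame (recall $F_0, F_{\pm}$ are of type $(1,1)_{\Io}$ while $H_+$ and $H_-$ are of types $(2,0)_{\Io}$ and $(0,2)_{\Io}$).

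The only remaining task is that repackaging: since $i\, \partial_{\Iu} \bar{\partial}_{\Iu} f$ is of type $(1,1)_{\Iu}$, one projects each $I_0$-frame $2$-form onto its $(1,1)_{\Iu}$ part, replacing $dx^{\mu} \!\wedge dx^{\bar{\nu}}$ by $\theta^{\mu}_{\plus} \wedge \theta^{\bar{\nu}}_{\minus} + \theta^{\mu}_{\minus} \wedge \theta^{\bar{\nu}}_{\plus}$ in the $F$-terms, and $dx^{\mu} \!\wedge dx^{\nu}$, $dx^{\bar{\mu}} \!\wedge dx^{\bar{\nu}}$ by their $(1,1)_{\Iu}$ projections in the $H$-terms. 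I expect the genuine obstacle to be bookkeeping rather than conceptual: getting the soldering coefficients and signs right through the stereographic dictionary, and handling the fact that the eight families $\theta^{\mu}_{\plus}, \theta^{\mu}_{\minus}, \theta^{\bar{\mu}}_{\plus}, \theta^{\bar{\mu}}_{\minus}$ form an \emph{overcomplete} spanning set (they satisfy $\theta^{\mu}_{\plus} + \theta^{\mu}_{\minus} = dx^{\mu}$, etc.), so that \eqref{deldelbar_at_u} is safest read, and verified, as an identity of honest $2$-forms in the $dx$-coframe rather than by matching $\theta$-wedge coefficients term by term. As a consistency check one can specialize to a chain hyperpotential, where $H_{\pm}$ vanish and the step relations \eqref{step-ops} reduce the first line to \eqref{ddbar-chain}.
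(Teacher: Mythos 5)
Your proof is correct, and it reaches the lemma by a different organization than the paper, which obtains both formulas by a direct computation in the soldering coframe: the paper projects $df$ and then $d\hp\bar{\partial}_{\Iu} f$ onto the complex subspaces of $T^*_{\mathbb{C}}M$ and $\Lambda^2 T^*_{\mathbb{C}}M$ using the explicit soldering forms \eqref{dx_splitting}--\eqref{vielbein}, so that the $\theta$-frame statement comes out directly, with the operators \eqref{Fs_and_Hs} appearing as the coefficient blocks of the projection. You instead work in the $dx$-coframe throughout: the linear projector decomposition \eqref{lin_decomp_proj} plus the stereographic identities $\rho_N\zeta = -\pt x_+$, $\rho_S\tilde{\zeta} = x_-$ (both of which check out, and your intermediate identity correctly reduces to \eqref{dbar_Iu_phi_n} on a chain) give a closed-form expression for $\bar{\partial}_{\Iu} f$ with $\mathbf{u}$-dependent \emph{constant} coefficients, after which a single application of $d$ identifies $F_0$, $F_{\pm} + H_{\pm}$ purely from the definitions \eqref{Fs_and_Hs}, and the $\theta$-frame form of \eqref{deldelbar_at_u} is recovered at the very end by projecting onto the $(1,1)$ part with respect to $\IU$ --- legitimate, since the left-hand side is of that type, so it must equal the $(1,1)_{\Iu}$ projection of your unprojected two-form identity. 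In effect you have proved the lemma by the same mechanism as the paper's \emph{first} derivation of \eqref{ddbar-chain} (differentiating \eqref{dbar_Iu_phi_n}), generalized to arbitrary $f$; this is somewhat against the narrative grain, since the lemma is advertised as an independent alternative route to \eqref{ddbar-chain}, but as a proof of the stated identity it is complete and arguably cleaner: it never differentiates the soldering forms, so no Cartan--Maurer or connection input is needed, and hyperk\"ahler integrability enters only through the subsequent use of the $F$- and $H$-operators.

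The one loose end is exactly the one you flag: in passing to the $\theta$-frame, the $(1,1)_{\Iu}$ projection of $H_+(f)_{\mu\nu}\pt dx^{\mu}\!\wedge dx^{\nu}$ is $H_+(f)_{\mu\nu}\pt(\theta^{\mu}_{\plus}\wedge\theta^{\nu}_{\minus} + \theta^{\mu}_{\minus}\wedge\theta^{\nu}_{\plus})$, and antisymmetry of the coefficients collapses the two terms into a single one with a factor of $2$, which must be absorbed into the ``anti-symmetrized and combinatorially normalized'' convention the paper stipulates for $H_{\pm}(f)$ components; your proposal to verify the identity once and for all in the $dx$-coframe, treating the overcomplete $\theta$-system only as a repackaging, is the right way to sidestep this bookkeeping. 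One further small remark: your intermediate formula is derived on $N \cap S$, but since it is expressed entirely through $\rho_N$, $\rho_S$, $x_{\pm}$, $x_0$ it extends smoothly to the poles, so the lemma's claim for all $\mathbf{u} \in S^2$ follows by continuity.
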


\noindent Here, $F_m(f)_{\mu\bar{\nu}}$, $H_+(f)_{\mu\nu}$ and $H_-(f)_{\bar{\mu}\bar{\nu}}$ are the (in the latter two cases, anti-symmetrized and combinatorially normalized) components of $F_m(f)$, $H_+(f)$ and $H_-(f)$ in an  \mbox{$I_0$-adapted} coordinate coframe.

Specialize then to $f = \phi_n$ in formula \eqref{deldelbar_at_u} for some $n \in \mathbb{Z}$. Using the properties \eqref{step-ops} and the linearity of the $F$-operators we may write
\begin{equation}
x_-F_+(\phi_n) + x_0 \hp F_0(\phi_n) + x_+F_-(\phi_n) = F_0(x_-\phi_{n+1} + x_0\hp \phi_n + x_+\phi_{n-1}) \rlap{.}
\end{equation}
The corresponding term in \eqref{deldelbar_at_u} can be further simplified by noting that if $\sigma$ is a hyper $(1,1)$ form then $\sigma_{\mu\bar{\nu}} \pt (\theta^{\mu}_{\plus} \wedge \theta^{\bar{\nu}}_{\minus} + \theta^{\mu}_{\minus} \wedge \theta^{\bar{\nu}}_{\plus}) = \sigma_{\mu\bar{\nu}} \pt dx^{\mu} \!\wedge dx^{\bar{\nu}}$. By the equivalence of parts 1 and 2 of Proposition \ref{criterion_2} the remaining two terms drop out and we retrieve again the result \eqref{ddbar-chain}.

\subsection{The local \mbox{$\partial\bar{\partial}$\pt-\hp lemma} for closed hyper $(1,1)$ forms} \hfill \medskip

Consider now a closed hyper $(1,1)$ form $\sigma$ defined at least locally on $M$. In particular, $\sigma$ must be of type $(1,1)$ with respect to the complex structure $I_0$, and so by the corresponding local \mbox{$\partial\bar{\partial}$\pt-\hp lemma} one may express it locally in the form $\sigma = i \hp \partial\bar{\partial} \phi_0 = F_0(\phi_0)$ in terms of a function $\phi_0$, which is in fact a hyperpotential. As such, this can be multiplied recursively to an entire chain of hyperpotentials, and so for closed hyper $(1,1)$ forms the usual local \mbox{$\partial\bar{\partial}$\pt-\hp lemma} in complex structure $I_0$ expands into the following more elaborate statement: 

\begin{lemma} \label{exp_ddbar_lem}
A hyper $(1,1)$ form $\sigma$ on $M$ is closed if and only if locally there exists a chain of hyperpotentials $(\phi_n)_{n \in \mathbb{Z}}$ with respect to $I_0$ such that
\begin{equation}
\sigma = F_0(\phi_n) = F_+(\phi_{n-1}) = F_-(\phi_{n+1})  
\end{equation} 
for some $n \in \mathbb{Z}$ (in what follows we will take, conventionally, \mbox{$n=0$}). An analogous statement is valid for any hyperk\"ahler complex structure on $M$.
\end{lemma}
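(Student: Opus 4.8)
The plan is to prove the two implications separately, leaning on the machinery already assembled in Proposition~\ref{criterion_2}, Lemma~\ref{F+F-} and Lemma~\ref{recurrence_lemma}; the only genuinely analytic input is the local $\partial\bar{\partial}$\pt-\hp lemma for the K\"ahler complex structure $I_0$, which was already invoked in the discussion preceding the definition of hyperpotential.

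For the forward (``only if'') direction I would start from a closed hyper $(1,1)$ form $\sigma$. Being hyper $(1,1)$, $\sigma$ is in particular of type $(1,1)$ with respect to $I_0$, and being $d$-closed it is then separately $\partial$- and $\bar{\partial}$-closed, since the $(2,1)$ and $(1,2)$ components of $d\sigma$ are $\partial\sigma$ and $\bar{\partial}\sigma$. The local $\partial\bar{\partial}$\pt-\hp lemma then yields, on a neighborhood of any given point, a complex function $\phi_0$ with $\sigma = i\hp\partial\bar{\partial}\phi_0 = F_0(\phi_0)$. Because $F_0(\phi_0)=\sigma$ is by hypothesis hyper $(1,1)$, the equivalence $1 \Leftrightarrow 2$ of Proposition~\ref{criterion_2} identifies $\phi_0$ as a hyperpotential with respect to $I_0$. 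Applying Lemma~\ref{recurrence_lemma} repeatedly, that is, running the recursive cascade described after it, extends $\phi_0$ to a full chain $(\phi_n)_{n \in \mathbb{Z}}$ of hyperpotentials, and the step-operator identities \eqref{step-ops} then deliver at once the three-way equality $\sigma = F_0(\phi_0) = F_+(\phi_{-1}) = F_-(\phi_1)$, which is the asserted relation with the conventional choice $n=0$.

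For the converse, suppose $\sigma = F_0(\phi_0)$ for some member $\phi_0$ of a chain of hyperpotentials. Then $\sigma = i\hp\partial\bar{\partial}\phi_0$ is automatically $d$-closed, since $d\pt\partial\bar{\partial}\phi_0 = \bar{\partial}\partial\bar{\partial}\phi_0 = 0$ by $\bar{\partial}^2 = 0$, and $\sigma$ is automatically of hyper $(1,1)$ type because $\phi_0$ is a hyperpotential, by part~1 of Proposition~\ref{criterion_2}. This closes the equivalence. The statement for an arbitrary hyperk\"ahler complex structure $\IU$ in place of $I_0$ follows by rerunning the entire argument verbatim: nothing in Proposition~\ref{criterion_2}, Lemma~\ref{F+F-} or Lemma~\ref{recurrence_lemma} privileges $I_0$ intrinsically\,---\,its role as manifest complex structure was only a convention tied to the chosen $\smash{\mathbb{CP}^1}$ chart\,---\,so the analogous $F$- and $H$-operators built from $\IU$ and its transversal generators satisfy the same formal identities and generate chains in the same way.

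I do not expect any serious obstacle once the preceding results are in place; the proof is essentially an assembly step. The single point requiring care is the domain bookkeeping for the infinite chain: each application of the $\partial$- and $\bar{\partial}$\pt-\hp Poincar\'e lemmas in the cascade may shrink the neighborhood on which the next hyperpotential is defined, so the three-way equality should be read as an identity on the domain of the chain in the sense fixed earlier, invoking the standing assumption that the intersection of all these domains still contains a non-empty open subset.
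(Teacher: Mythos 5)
Your proof is correct and takes essentially the same route as the paper, which establishes the lemma in the paragraph immediately preceding its statement by exactly this assembly: the local $\partial\bar{\partial}$\pt-\hp lemma in $I_0$ yields $\phi_0$ with $\sigma = F_0(\phi_0)$, this is a hyperpotential since $\sigma$ is hyper $(1,1)$, the recursive cascade of Lemma~\ref{recurrence_lemma} extends it to a chain, and the step-operator identities \eqref{step-ops} deliver the three-way equality, with the trivial converse and the analogous statement for a general $\IU$ following just as you say. Your domain-bookkeeping caveat likewise matches the paper's standing assumption that the intersection domain of the chain contains a non-empty open set.
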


\noindent Clearly, such chains are not unique. If $\sigma$ is real-valued the chain can always be chosen to be self-conjugate. More generally, if $\sigma$ is complex-valued then its complex conjugate is also a closed hyper $(1,1)$ form and the two corresponding chains can always be chosen to be mutually conjugate.

\subsection{The structure of hyperpotentials} \label{ssec:hyper_str} \hfill \medskip

There is more insight to be gained if we assume instead a twistor space perspective. The first thing to notice in this respect is that closed hyper $(1,1)$ forms on $M$ are naturally pulled back by the projection map $p: \mathcal{Z} \rightarrow M$ to closed  $(1,1)$ forms on the twistor space $\mathcal{Z}$\,---\,where again, by the corresponding local \mbox{$\partial\bar{\partial}$\pt-\hp lemma}, they can be derived from local potentials. That is, for every point on $\mathcal{Z}$ in the interior of the preimage of the domain of $\sigma$ in $\mathcal{Z}$ there exists a neighborhood $V$ such that
\begin{equation} \label{form-pot}
p^*\sigma |_{V}^{\phantom{I}} = i \pt \partial_{\mathcal{Z}}\bar{\partial}_{\mathcal{Z}} \phi_V(\mathbf{u}) \mathrlap{.}
\end{equation}
Although the potential is a function on $V \subset \mathcal{Z}$, we indicate explicitly only its dependence on the $\mathbf{u}$ variable. From here on until the end of the section we will assume without any essential loss of generality that $\sigma$ is real-valued, in which case we can take $\phi_V(\mathbf{u})$ to~be~real. 
The right-hand side of \eqref{form-pot} can be decomposed in components along the local twistor fibration structure using the formula \eqref{delZ_delZbar}. By virtue of its definition, $p^*\sigma \in \mathscr{A}^{1,1}_F(\mathcal{Z})$. The vanishing of the non-fiberwise supported components imposes the following set of restrictions on the potential:
{\allowdisplaybreaks 
\begin{align}
\bar{\partial}_{\hp\smash{\mathbb{CP}^1}} \partial_{\Iu}\phi_V(\mathbf{u}) & = 0 \nonumber \\
\partial_{\hp\smash{\mathbb{CP}^1}} \bar{\partial}_{\Iu}\phi_V(\mathbf{u}) & = 0 \label{pot_constrs} \\
\partial_{\hp\smash{\mathbb{CP}^1}} \bar{\partial}_{\hp\smash{\mathbb{CP}^1}} \phi_V(\mathbf{u})& = 0 \mathrlap{.} \nonumber
\end{align}
}%
In fact, it suffices to retain only one out of the first two conditions since by the reality assumption for the potential they are mutually complex conjugated. From the remaining fiberwise components, on any non-empty domain $p(V \cap \pt \pi^{-1}(\mathbf{u})) \subset M$ we have
\begin{equation} \label{leeloo}
\sigma 
= i \pt \partial_{\Iu}\bar{\partial}_{\Iu} \phi_V(\mathbf{u}) \rlap{.}
\end{equation}
This means that the twistor space potential $\phi_V(\mathbf{u})$ with $\mathbf{u}$ viewed now as a parameter rather than a variable doubles on $M$ as a hyperpotential with respect to the complex structure $\IU$. 

Let $x$ be an arbitrary point in the interior of the domain of $\sigma$ in $M$. On the associated horizontal twistor line $H_x = p^{-1}(x)$ in $\mathcal{Z}$ choose two antipodally conjugated points and parametrize the twistor $\mathbb{CP}^1$ in such a way that they correspond to $\zeta = 0$ and $\zeta = \infty$ (see the sketch in Figure \ref{cover-fig}). The local \mbox{$\partial\bar{\partial}$\pt-\hp lemma} lemma on $\mathcal{Z}$ guarantees the existence of two open neighborhoods $V_N$ and $V_S$ around these two points and of two corresponding potentials $\phi_{V_N}(\mathbf{u})$ and $\phi_{V_S}(\mathbf{u})$ for the form $p^*\sigma$ defined on them. Through further applications of this lemma around various points situated on $H_x$ and subsequent refinements it is possible to construct more potentials $\phi_V(\mathbf{u})$ for $p^*\sigma$ defined on open subsets $V$ of its domain in $\mathcal{Z}$ forming a collection $\mathscr{V}_x$ with the following properties: 
\begin{itemize}
\setlength{\itemsep}{1pt}

\item[1.] $V_N, V_S \in \mathscr{V}_x$.

\item[2.] Each element of $\mathscr{V}_x$ has a non-empty overlap with $H_x$. Together, these overlaps form an open covering of $H_x$.

\item[3.] Any non-empty intersection of an element of $\mathscr{V}_x$ with a horizontal twistor line is simply-connected. 

\item[4.] Any non-empty intersection of two elements of $\mathscr{V}_x$ is simply-connected. 

\end{itemize}

\begin{figure}[ht]
\begin{tikzpicture}

\draw (-2,2) .. controls (-1,2.3) and (1,1.7) .. (2,2);
\draw (-2,-1) .. controls (-1,-0.7) and (1,-1.3) .. (2,-1);
\draw[dashed] (-2,0.5) .. controls (-1,0.8) and (1,0.2) .. (2,0.5);

\draw (-2,2) .. controls (-2.2,1) and (-1.8,0) .. (-2,-1);
\draw (2,2) .. controls (1.8,1) and (2.2,0) .. (2,-1);
\draw[dashed] (1,1.92) .. controls (0.8,1) and (1.2,0) .. (1,-1.08);
\draw[dashed] (-1,2.09) .. controls (-1.2,1) and (-0.8,0) .. (-1,-0.92);

\draw[fill=lightgray,opacity=0.5] (-1.2,0.6) ellipse (19pt and 15pt);
\draw[fill=lightgray,opacity=0.5] ( 1.2,0.4) ellipse (19pt and 15pt);
\draw[fill=lightgray,opacity=0.5] (-0.35,0.55) ellipse (15pt and 10pt);
\draw[fill=lightgray,opacity=0.5] (0.35,0.48) ellipse (15pt and 10pt);

\draw (-2,-2) -- (2,-2);
\draw  (-3,2) -- (-3,-1);

\draw[->] (-2.05,0.5) -- (-2.95,0.5);
\draw[->] (0,-1.05) -- (0,-1.95);

\draw[dotted,->] (-1,-1) -- (-1,-1.95);
\draw[dotted,->] (1,-1.15) -- (1,-1.95);

\draw (-3.04,0.5) -- (-2.96,0.5);
\draw (-1,-1.96) -- (-1,-2.04);
\draw (1,-1.96) -- (1,-2.04);

\node at (-3,2.3) {$M$};
\node at (2.5,-2) {$\mathbb{CP}^1$};
\node at (2.3,2.3) {$\mathcal{Z}$};

\node at (-1,-2.3) {$\zeta = 0$};
\node at ( 1,-2.3) {$\zeta = \infty$};

\node at (0.23,-1.5) {$\pi$};

\node at (-3.25,0.5) {$x$};

\node at (-2.5,0.75) {$p$};

\node at (-0.7,1.34) {$V_N$};
\node at (1.28,1.22) {$V_S$};

\end{tikzpicture}
\caption{} \label{cover-fig}
\end{figure} 

For any set $V \in \mathscr{V}_x$, the last condition \eqref{pot_constrs} and the third property above imply that the corresponding local potential must be of the form
\begin{equation} \label{zeta_hol_str}
\phi_V(\mathbf{u}) = f_V(\zeta) + \overline{f_V(\zeta)}
\end{equation}
for some complex function $f_V(\zeta)$ on $V$ analytic in the $\mathbb{CP}^1$ variable. Here we will assume additionally that this function has on its domain of definition a Laurent series
\begin{equation}
f_V(\zeta) = \sum_{n = - \infty}^{\infty} f^V_n \zeta^{-n} \rlap{.}
\end{equation}

Note in particular that the requirement that the potentials on $V_N$ and $V_S$ be well-defined at $\zeta = 0$ respectively $\zeta = \infty$ entails that $f^{V_N}_n \!= 0$ for $n>0$ and $f^{V_S}_n \!= 0$ for $n<0$. Moreover, a simple argument shows that these two potentials can always be chosen in such a way that, close enough to the two points around which they are defined, they are interchanged by the action of the antipodal map, modulo a minus sign. In this case, their Laurent coefficients are related by 
%\mbox{$\Re f^{V_S}_0 = - \Re f^{V_N}_0$} and $f^{V_S}_{\hp n} = - (-)^n \overline{f^{V_N}_{-n}}$ for $n \neq 0$. 
\begin{equation}
\begin{aligned}
& \Re f^{V_S}_0 = - \, \Re f^{V_N}_0 && \text{for $n = 0$} \\[-2.5pt]
& f^{V_S}_{\hp n} = - (-)^n \overline{f^{V_N}_{-n}} && \text{for $n \neq 0$\rlap{.}}
\end{aligned}
\end{equation}
If we define
\begin{equation} \label{pot-phi}
\phi_n = 
\begin{cases}
\Re (f^{V_N}_0 \!- f^{V_S}_0) & \text{for $n=0$} \\[2pt]
\phantom{\Re(}f^{V_N}_n \!- f^{V_S}_n & \text{for $n \neq 0$}
\end{cases}
\end{equation}
satisfying as a consequence the self-conjugacy condition $\phi_n = \phi^c_n$ for all $n \in \mathbb{Z}$, then the potentials can be expressed as the following series expansions
{\allowdisplaybreaks
\begin{equation} \label{N&S_pots}
\begin{aligned}
\phi_{V_N}(\mathbf{u}) & = \phantom{+\,} \phi_0 + \sum_{n=1}^{\infty} (\hp \phi_{-n} \zeta^n + \text{c.c.} ) \\[-1.5pt]
\phi_{V_S}(\mathbf{u}) & = - \, \phi_0 - \sum_{n=1}^{\infty} (\hp \phi_{n} \zeta^{-n} + \text{c.c.} ) 
\end{aligned}
\end{equation}
}%
where c.c.~stands for the complex conjugate of the preceding expression. Remarkably, the following result holds:

\begin{lemma} \label{different_version}
The sequence of coefficients $(\phi_n)_{n \in \mathbb{Z}}$ thus defined forms a recursive chain of hyperpotentials with respect to the hyperk\"ahler complex structure on $M$ corresponding to $\zeta = 0$. 
\end{lemma}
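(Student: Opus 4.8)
The plan is to reduce the statement to the correspondence already established in Proposition~\ref{hol-hyper(1,1)}: if the formal series $\phi(\zeta) = \sum_{n} \phi_n\,\zeta^{-n}$ built from the coefficients \eqref{pot-phi} can be shown to be a genuine holomorphic function on the tropical part of $\mathcal{Z}$, then its Laurent coefficients automatically form a recursive chain of hyperpotentials with respect to the complex structure at $\zeta = 0$, i.e.\ $I_0$, which is exactly the assertion. So the whole problem becomes: prove that $\phi(\zeta)$ is holomorphic on $\mathcal{Z}$.

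The first step is to compare the two potentials. Since $\phi_{V_N}(\mathbf{u})$ and $\phi_{V_S}(\mathbf{u})$ are both local potentials for the single form $p^*\sigma$, on the overlap $V_N \cap V_S$ their difference $\psi = \phi_{V_N} - \phi_{V_S}$ satisfies $\partial_{\mathcal{Z}}\bar{\partial}_{\mathcal{Z}}\psi = 0$; it is pluriharmonic on $\mathcal{Z}$. Because this overlap is simply-connected (property~4 of $\mathscr{V}_x$), the local $\partial\bar{\partial}$-lemma for pluriharmonic functions lets me write $\psi = \Psi + \overline{\Psi}$ for a function $\Psi$ holomorphic on $\mathcal{Z}$, unique up to an additive imaginary constant. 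On the other hand, feeding the explicit expansions \eqref{N&S_pots} into $\psi$ and using $\phi_0 = \overline{\phi_0}$, a direct bookkeeping check gives $\psi = \phi(\zeta) + \overline{\phi(\zeta)}$. Both $\Psi$ and $\phi(\zeta)$ are holomorphic in the $\mathbb{CP}^1$ coordinate and have the same real part $\tfrac12\psi$, so matching Laurent coefficients in $\zeta$ yields $\Psi_n = \phi_n$ for every $n \neq 0$ together with $\Re\Psi_0 = \phi_0$; equivalently $\phi(\zeta) = \Psi - i\,\Im\Psi_0$.

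The main obstacle is precisely this zeroth coefficient: the relation $\phi(\zeta) = \Psi - i\,\Im\Psi_0$ exhibits $\phi(\zeta)$ as holomorphic on $\mathcal{Z}$ only if $\Im\Psi_0$ is constant, whereas a priori it is an arbitrary function on $M$, and the residual imaginary-constant freedom in $\Psi$ cannot kill a non-constant function. To close this gap I would invoke the self-conjugacy designed into \eqref{pot-phi}, namely $\phi_n = \phi^c_n = (-)^n\overline{\phi_{-n}}$. Since $\Psi$ is holomorphic on $\mathcal{Z}$, Proposition~\ref{hol-hyper(1,1)} makes $(\Psi_n)$ a chain, and so is its conjugate chain $(\Psi^c_n) = ((-)^n\overline{\Psi_{-n}})$; the two chains coincide for all $n\neq 0$, because there $\Psi_n = \phi_n = (-)^n\overline{\phi_{-n}} = (-)^n\overline{\Psi_{-n}} = \Psi^c_n$. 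Writing the recursion relations \eqref{chain_recursion} at $n=0$ for both chains and subtracting, the difference $\delta = \Psi^c_0 - \Psi_0$ obeys $\bar{\partial}\delta\,I_+ = 0$ and $\partial\delta\,I_- = 0$; multiplying on the right by $I_-$ and $I_+$ and using $I_+I_- = P^{0,1}_{\Io}$ and $I_-I_+ = P^{1,0}_{\Io}$ turns these into $\bar{\partial}\delta = 0$ and $\partial\delta = 0$, so $d\delta = 0$. As $\delta = \overline{\Psi_0} - \Psi_0 = -2i\,\Im\Psi_0$, this shows $\Im\Psi_0$ is locally constant.

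With $\Im\Psi_0$ constant, $\phi(\zeta) = \Psi - i\,(\text{const})$ is holomorphic on $\mathcal{Z}$, and a final application of Proposition~\ref{hol-hyper(1,1)} to $\phi(\zeta)$ produces the recursive chain of hyperpotentials $(\phi_n)_{n\in\mathbb{Z}}$ with respect to $I_0$. Apart from the routine Laurent-coefficient bookkeeping, the one delicate point throughout is the constancy of $\Im\Psi_0$, which is where the self-conjugacy of the constructed coefficients and the stability of chains under conjugation do the essential work.
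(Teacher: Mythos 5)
Your argument is correct in the special case where $V_N$ and $V_S$ actually intersect, and within that case it is even a useful elaboration of the paper: your $\delta$-computation with the conjugate chain makes explicit the step the paper states only verbally (that the $n=0$ term of a chain flanked by mutually conjugate hyperpotentials can be replaced by its real part, since the imaginary part is then forced to be locally constant and adding constants does not disturb the recursion). But there is a genuine gap: your entire strategy lives ``on the overlap $V_N \cap V_S$,'' and this overlap is empty in the generic situation. The sets $V_N$ and $V_S$ are produced by the local \mbox{$\partial\bar{\partial}$\pt-\hp lemma} as possibly very small neighborhoods of the two antipodal points $(x,\zeta=0)$ and $(x,\zeta=\infty)$ in $\mathcal{Z}$, and nothing forces either of them to stretch around the twistor line $H_x$ far enough to meet the other. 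This is precisely why the paper's proof splits into two alternatives and devotes itself to the second one, in which $V_N$ and $V_S$ are joined only through a finite collection $U_1,\dots,U_k$ of intermediate elements of $\mathscr{V}_x$ with consecutive overlaps along $H_x$; the overlapping case you treat is the one the paper dismisses as ``similar, only simpler.''

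Moreover, your opening reduction --- ``the whole problem becomes: prove that $\phi(\zeta)$ is holomorphic on $\mathcal{Z}$'' --- cannot succeed in the generic case, because the series $\sum_n \phi_n\hp\zeta^{-n}$ need not converge anywhere: the coefficients $\phi_n = f^{V_N}_n \!- f^{V_S}_n$ splice Laurent data from two functions analytic over disjoint regions of the twistor sphere, and the paper is explicit (Proposition \ref{lulu}, and the remark following Proposition \ref{hol-hyper(1,1)}) that convergence of this series is an \emph{additional} conclusion available only when $V_N \cap V_S \neq \emptyset$. The missing idea is the one that lets the paper's proof bypass convergence altogether: each transition function $\phi_{U_iU_{i+1}}(\zeta)$ is separately holomorphic on its own overlap, so each coefficient sequence $\phi^{U_iU_{i+1}}_n$ is a chain by Proposition \ref{hol-hyper(1,1)}, and since the recursion relation \eqref{chain_recursion_2} is linear, the telescoping sum $\phi^{V_NV_S}_n = \phi^{V_NU_1}_n \!+ \cdots + \phi^{U_kV_S}_n$ is again a chain on the common domain in $M$ (nonempty, as it contains $x$) --- even though it corresponds to no holomorphic function on $\mathcal{Z}$. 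Your conjugate-chain argument for the $n=0$ coefficient applies essentially verbatim to this summed chain and would then finish the proof; as written, however, your proof covers only the exceptional overlapping case, and it also omits the final check the paper performs, namely that different choices of connecting collection alter $\phi^{V_NV_S}_0$ at most by an imaginary constant, which taking the real part removes.
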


\begin{proof}

Let us begin by observing that on any non-empty intersection of two elements of $\mathscr{V}_x$ the corresponding potentials differ by a local pluriharmonic function on $\mathcal{Z}$. That is, if $U,V \in \mathscr{V}_x$ such that $U \cap V \neq \emptyset$ then on $U \cap V$
\begin{equation}
\phi_U(\mathbf{u}) - \phi_V(\mathbf{u}) = \phi_{UV}(\zeta) + \overline{\phi_{UV}(\zeta)}
\end{equation}
for some function $\phi_{UV}(\zeta)$ holomorphic with respect to the complex structure on $\mathcal{Z}$. This function depends in particular holomorphically on the $\mathbb{CP}^1$ coordinate, and by an assumption implicit in our choice of open sets, it has a Laurent expansion
\begin{equation}
\phi_{UV}(\zeta) = \sum_{n=-\infty}^{\infty} \phi^{UV}_n \zeta^{-n} \rlap{.}
\end{equation}
As Laurent coefficients of the $\zeta$-expansion of a holomorphic function on $\mathcal{Z}$, $\phi^{UV}_n$ form a recursive chain of hyperpotentials with respect to the hyperk\"ahler complex structure on $M$ labeled by $\zeta = 0$. Substituting the form \eqref{zeta_hol_str} for the potentials and comparing term by term the Laurent expansions on both sides of the resulting equation yields the relations 
\begin{equation} \label{fs-and-phis}
\begin{aligned}
\Re \hp \phi^{UV}_0 & = \Re (f^U_0 - f^V_0) && \text{for $n=0$} \\
\phi^{UV}_n & = f^U_n - f^V_n && \text{for $n\neq 0$} \rlap{.}
\end{aligned}
\end{equation}

The second fact we will exploit is that any two elements of an open covering of a connected topological space either overlap or are finitely connected. Applying this rule to the restrictions of the sets $V_N$ and $V_S$ to the horizontal twistor line $H_x$, it follows that 
\begin{itemize}
\setlength{\itemsep}{2pt}
\item[1.] either $V_N \cap V_S \cap H_x \neq \emptyset$

\item[2.] or there exists a collection $\{U_i\}_{i=0,\dots,k+1}$ of open sets from $\mathscr{V}_x$ for some positive integer $k$ such that $U_0 = V_N$, $U_{k+1} = V_S$ and $U_i \cap U_{i+1} \cap H_x \neq \emptyset$ for all $i = 0,\dots,k$.
\end{itemize}
%Applying this rule to the restrictions of the sets $V_N$ and $V_S$ to the horizontal twistor line $H_x$ we have consequently the following two alternatives: either $V_N \cap V_S \cap H_x \neq \emptyset$ or there exists a collection $\{U_i\}_{i=0,\dots,k+1}$ of open sets from $\mathscr{V}_x$ for some positive integer $k$ such that $U_0 = V_N$, $U_{k+1} = V_S$ and $U_i \cap U_{i+1} \cap H_x \neq \emptyset$ for all $i = 0,\dots,k$. 
Let us assume that the second, more generic alternative holds. (If the first one were to hold the proof would be similar, only simpler.) By virtue of the argument above we have in this case a set of $k+1$ recursive chains $\phi^{V_NU_1}_n, \phi^{U_1U_2}_n, \dots, \phi^{U_{k}V_S}_n$ with respect to the hyperk\"ahler complex structure on $M$  labeled by $\zeta=0$, corresponding to as many local holomorphic functions on $\mathcal{Z}$. The intersection of their domains is an open set containing at least one point, $x$, and therefore non-empty. On the overlap, by the linearity of the defining recursive relations, $\phi^{V_NV_S}_n \!\coloneqq \phi^{V_NU_1}_n \!+ \phi^{U_1U_2}_n \!+ \dots + \phi^{U_kV_S}_n$ forms a recursive chain as well with respect to the same complex structure. This chain, however, does not necessarily correspond to a holomorphic function on $\mathcal{Z}$ as its associated \mbox{$\zeta$-series} is not guaranteed to be convergent. From the second relation \eqref{fs-and-phis} we get
\begin{equation}
\phi^{V_NV_S}_n =
\begin{cases}
\phi^{V_NU_1}_0 \!+ \dots + \phi^{U_kV_S}_0 & \text{for $n=0$} \\[1pt]
f^{V_N}_n \!- f^{V_S}_n & \text{for $n \neq 0$} \rlap{.}
\end{cases}
\end{equation}
Since the $n=0$ term in the chain is flanked by two mutually conjugated hyperpotentials, one may replace it with its real part without breaching recursiveness. That is, this substitution yields yet another recursive chain, which is moreover self-conjugate. Based on the first relation \eqref{fs-and-phis} we have
\begin{equation}
\Re (\phi^{V_NU_1}_0 \!+ \dots + \phi^{U_kV_S}_0) = \Re (f^{V_N}_0 \!- f^{U_1}_0 \!+ \dots + f^{U_k}_0 \!- f^{V_S}_0) = \Re(f^{V_N}_0 \!- f^{V_S}_0) \rlap{.} \qquad
\end{equation}
In view of the definition \eqref{pot-phi}, this new chain is therefore precisely $\phi_n$. 

The collection of sets connecting $V_N$ and $V_S$ is not unique, in general. Based on the ambiguities in the definitions of the functions $f_V(\mathbf{u})$ and $\phi_{UV}(\mathbf{u})$ one can show that a different collection of connecting sets yields the same coefficients $\phi^{V_NV_S}_n\!$, with the possible exception of the $n=0$ one, which may differ by a purely imaginary constant. This, however, vanishes when the real part is taken, and so in the end the conclusion is the same regardless of which connecting sets one considers.
\end{proof}

The coincidence between the notations of the coefficients in the series expansions \eqref{N&S_pots} and the hyperpotentials in Lemma \ref{exp_ddbar_lem} is not accidental. If, as we have always assumed so far, the complex structure labeled by \mbox{$\zeta = 0$} is $I_0$, then the recursive chain of hyperpotentials of Lemma \ref{different_version} is precisely of the kind featuring in Lemma \ref{exp_ddbar_lem}. That this is so follows immediately from the equation \eqref{leeloo} and the second formula below:

\begin{lemma}
For each $\mathbf{u} \in \pi(V_N)$, on the domain $p(V_N \cap \pi^{-1}(\mathbf{u})) \subset M$ we have
\begin{gather} 
\partial_{\Iu} \phi_{V_N}(\mathbf{u}) = \partial_{\Io} \phi_0 + \sum_{n=1}^{\infty} d \phi_{-n} \zeta^n \label{delphi_at_u} \\ 
\partial_{\Iu}\bar{\partial}_{\Iu} \phi_{V_N}(\mathbf{u}) = \partial_{\Io}\bar{\partial}_{\Io} \phi_0. \label{hyper-pots}
\end{gather}
Analogous formulas hold also for the $V_S$-potential. 
\end{lemma}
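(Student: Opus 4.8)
The plan is to reduce both identities to the chain recursion relations \eqref{chain_recursion} together with the self-conjugacy of the $\phi_n$; the first formula \eqref{delphi_at_u} carries essentially all the work, while \eqref{hyper-pots} will then follow from it in two lines.

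First I would record the counterpart of \eqref{dbar_Iu_phi_n} for the holomorphic-type derivative. Writing $P^{1,0}_{\Iu} = 1 - P^{0,1}_{\Iu}$ and expanding via \eqref{lin_decomp_proj} gives $P^{1,0}_{\Iu} = \rho_N P^{1,0}_{\Io} + \rho_S P^{0,1}_{\Io} - i\,\rho_N\zeta\,I_- + i\,\rho_S\tilde{\zeta}\,I_+$. Contracting $d\phi_n$ against this and eliminating $d\phi_n I_{\pm}$ through the recursions \eqref{chain_recursion} (in the form $\partial_{\Io}\phi_n\,I_- = i\,\bar{\partial}_{\Io}\phi_{n-1}$ and $\bar{\partial}_{\Io}\phi_n\,I_+ = -\,i\,\partial_{\Io}\phi_{n+1}$) yields
\[ \partial_{\Iu}\phi_n = \rho_N(\partial_{\Io}\phi_n + \zeta\,\bar{\partial}_{\Io}\phi_{n-1}) + \rho_S(\bar{\partial}_{\Io}\phi_n + \zeta^{-1}\partial_{\Io}\phi_{n+1}). \]
Using $\rho_S = |\zeta|^2\rho_N$ and $\rho_S\tilde{\zeta} = \bar{\zeta}\,\rho_N$ I would rewrite this in the compact form $\partial_{\Iu}\phi_n = \rho_N(\partial_{\Io}\phi_n + \zeta\,\bar{\partial}_{\Io}\phi_{n-1} + \bar{\zeta}\,\partial_{\Io}\phi_{n+1} + |\zeta|^2\,\bar{\partial}_{\Io}\phi_n)$, which is the convenient starting point.

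Next I would insert the expansion \eqref{N&S_pots} of $\phi_{V_N}(\mathbf{u})$ and apply $\partial_{\Iu}$ term by term (valid on the domain of the chain, where the series converges), holding $\zeta,\bar{\zeta}$ fixed. Rewriting each complex-conjugate term by the self-conjugacy relation $\bar{\phi}_{-n} = (-)^n\phi_n$, so that $\partial_{\Iu}\bar{\phi}_{-n} = (-)^n\partial_{\Iu}\phi_n$, recasts the result as $\partial_{\Iu}\phi_0 + \sum_{n\geq1}\bigl(\zeta^n\partial_{\Iu}\phi_{-n} + (-)^n\bar{\zeta}^n\partial_{\Iu}\phi_n\bigr)$, into which the compact formula is substituted. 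The step I expect to be the main obstacle is the ensuing bookkeeping: after factoring out the common $\rho_N$ one collects monomials in $\zeta$ and $\bar{\zeta}$ and must check that all contributions carrying a positive power of $\bar{\zeta}$ cancel, apart from those that reassemble the factor $\rho_N^{-1} = 1+|\zeta|^2$ against the claimed answer. Shifting summation indices, the purely $\bar{\zeta}^m$ terms cancel in pairs between the two halves of the sum through the sign $(-)^n$, and the mixed $\zeta\bar{\zeta}^m$ terms cancel likewise; what survives is $\rho_N(1+|\zeta|^2)\bigl[\partial_{\Io}\phi_0 + \sum_{n\geq1}d\phi_{-n}\,\zeta^n\bigr]$, i.e. exactly \eqref{delphi_at_u}. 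The one asymmetry to track is that the $n=0$ slot contributes only $\partial_{\Io}\phi_0$, not $d\phi_0$ — precisely the term left uncancelled.

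Finally, \eqref{hyper-pots} follows from \eqref{delphi_at_u}. Since $\partial_{\Iu}\phi_{V_N}$ is of type $(1,0)$ with respect to $\IU$ and $\partial_{\Iu}^2 = 0$, one has $\partial_{\Iu}\bar{\partial}_{\Iu}\phi_{V_N} = -\,d(\partial_{\Iu}\phi_{V_N})$, with $d$ the fiber differential at fixed $\zeta$. Applying $d$ to \eqref{delphi_at_u} kills the entire series since $d^2 = 0$ and leaves $d(\partial_{\Io}\phi_0) = \bar{\partial}_{\Io}\partial_{\Io}\phi_0 = -\,\partial_{\Io}\bar{\partial}_{\Io}\phi_0$, whence $\partial_{\Iu}\bar{\partial}_{\Iu}\phi_{V_N} = \partial_{\Io}\bar{\partial}_{\Io}\phi_0$, in agreement with \eqref{leeloo}. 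The analogous statements for $\phi_{V_S}$ follow from the identical computation, or from antipodal conjugation, which exchanges the roles of $N$ and $S$.
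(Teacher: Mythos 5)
Your proof is correct and takes essentially the same route as the paper: the paper likewise obtains the expansion $\partial_{\Iu}\phi_n = \rho_N(\partial_{\Io}\phi_n + \zeta\,\bar{\partial}_{\Io}\phi_{n-1}) + \rho_S(\bar{\partial}_{\Io}\phi_n + \zeta^{-1}\partial_{\Io}\phi_{n+1})$ (as the antipodal conjugate of \eqref{dbar_Iu_phi_n}, identical to your compact formula), applies it termwise to the series \eqref{N&S_pots}, resums to get \eqref{delphi_at_u}, and then derives \eqref{hyper-pots} by acting with the exterior derivative of $M$. The only difference is bookkeeping: the paper condenses the series with coefficients $a_n$ built from $\zeta$ and $\zeta^c$, whereas you collect monomials in $\zeta$ and $\bar{\zeta}$ directly, using self-conjugacy $\bar{\phi}_{-n} = (-)^n\phi_n$ to organize the cancellations.
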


\begin{remark}
The fact that $\partial_{\Iu}\phi_{V_N}(\mathbf{u})$ depends on $\zeta$ strictly holomorphically means that the first constraint \eqref{pot_constrs} is automatically satisfied in this case.
\end{remark}

\begin{proof} 

In order to facilitate our manipulations, let us rewrite the first formula \eqref{N&S_pots} in the following condensed form
\begin{equation}
\phi_{V_N}(\mathbf{u}) = \sum_{n=-\infty}^{\infty} a_{-n}\phi_n
\end{equation}
with the coefficients given by $a_n = \zeta^n$ if $n > 0$, $a_0=1$, $a_n = (\zeta^c)^n$ if $n < 0$. Here, $\zeta^c$ denotes the antipodal conjugate of $\zeta$. We have then, successively, 
{\allowdisplaybreaks
\begin{align}
\partial_{\Iu}\phi_{V_N}(\mathbf{u}) & = \sum_{n=-\infty}^{\infty} a_{-n} \pt \partial_{\Iu}\phi_n \\[-2pt]
& = \sum_{n=-\infty}^{\infty} [\pt \rho_N(a_{-n} - \frac{a_{-n+1}}{\zeta^c}) \pt \partial_{\Io}\phi_n + \rho_S \hp (a_{-n} - a_{-n-1}\zeta^c) \pt \bar{\partial}_{\Io}\phi_n \hp] \nonumber \\[-2pt]
& = \sum_{n=-\infty}^{0} \partial_{\Io}\phi_n \hp \zeta^{-n} + \sum_{n=-\infty}^{-1} \bar{\partial}_{\Io}\phi_n \hp \zeta^{-n} \rlap{.} \nonumber
\end{align}
}%
To obtain the second line we used for $\partial_{\Iu}\phi_n$ a formula analogous to formula \eqref{dbar_Iu_phi_n}, which can in fact be retrieved from this  simply by substituting in it $\zeta$ with $\zeta^c$ (note that this entails in particular the interchange of $\rho_N$ and $\rho_S$). The third line follows then easily and is clearly equivalent to the first formula of the Lemma, equation \eqref{delphi_at_u}. Acting on this with the exterior derivative of $M$ yields immediately the second formula of the Lemma, equation \eqref{hyper-pots}. 

Alternatively, we can derive this last result directly by resorting to the formula \eqref{ddbar-chain}, as follows: 
{\allowdisplaybreaks
\begin{align}
\partial_{\Iu} \bar{\partial}_{\Iu} \phi_{V_N}(\mathbf{u}) & = \sum_{n=-\infty}^{\infty} a_{-n} \pt \partial_{\Iu} \bar{\partial}_{\Iu} \phi_n \\[-4pt]
& = \partial_{\Io}\bar{\partial}_{\Io} \! \sum_{n=-\infty}^{\infty} (x_- a_{-n+1} + x_0 \hp a_{-n} + x_+ a_{-n-1}) \hp \phi_n \nonumber \\[2pt]
& = \partial_{\Io}\bar{\partial}_{\Io} \phi_0 \rlap{.} \nonumber
\end{align}
}%
Indeed, one can easily check that $x_- a_{-n+1} + x_0 \hp a_{-n} + x_+ a_{-n-1}$ is equal to 0 for $n \neq 0$ and to 1 for $n=0$.
\end{proof}

While Lemma \ref{exp_ddbar_lem} guarantees the local existence of recursive chains of hyperpotentials with respect to a fixed hyperk\"ahler complex structure given a closed hyper $(1,1)$ form, \mbox{\textit{a priori}} it is not clear whether the corresponding series \eqref{hol-fct-ser} or \eqref{N&S_pots} converge. Our arguments demonstrate that, although the series \eqref{hol-fct-ser} may or may not converge in general, it is always possible to find chains for which the two series \eqref{N&S_pots} are well-defined on small enough but non-vanishing open domains. More precisely, we have proved the following result:

\begin{proposition} \label{lulu} 

Let $\sigma$ be a real-valued closed hyper $(1,1)$ form defined at least locally on $M$. Choose two conjugated hyperk\"ahler complex structures $I_0$ and $-I_0$, and a holomorphic parametrization of the twistor $\mathbb{CP}^1$ in which these correspond to $\zeta = 0$ respectively $\zeta = \infty$. Then for any point $x \in M$ from the domain of definition of $\sigma$ there exists a self-conjugated chain of hyperpotentials $(\phi_n)_{n \in \mathbb{Z}}$ with respect to $I_0$ defined on a neighborhood of $x$ such that the two associated series
{\allowdisplaybreaks
\begin{equation} \label{phi_VNetco}
\begin{aligned}
\phi_{V_N}(\mathbf{u}) & = \phantom{+\,} \phi_0 + \sum_{n=1}^{\infty} (\hp \phi_{-n} \zeta^n + \text{c.c.} ) \\[-1.5pt]
\phi_{V_S}(\mathbf{u}) & = -\, \phi_0 - \sum_{n=1}^{\infty} (\hp \phi_{n} \zeta^{-n} + \text{c.c.} ) 
\end{aligned}
\end{equation}
}%
are convergent and well-defined on non-empty open neighborhoods $V_N$ and $V_S$ of the points \mbox{$(x,\zeta=0)$} and \mbox{$(x, \zeta=\infty)$} on $\mathcal{Z}$. Viewed as functions on $M$ parametrized by $\mathbf{u}$,  they define hyperpotentials for $\sigma$ with respect to $\IU$. That is, for each $\mathbf{u} \in \pi(V_N)$, \mbox{$\sigma = i \pt \partial_{\Iu}\bar{\partial}_{\Iu} \phi_{V_N}\!\pt(\mathbf{u})$} on some neighborhood in $M$\,---\,and similarly on the $V_S$ side. 

If $V_N$ and $V_S$ overlap, then in addition the series 
\begin{equation} 
\phi_{V_NV_S}(\zeta) = \sum_{n=-\infty}^{\infty} \phi_{n} \hp \zeta^{-n}
\end{equation}
converges as well and defines a holomorphic function on $V_N \cap V_S \subset \mathcal{Z}$, on which we have 
\begin{equation}
\phi_{V_N}(\mathbf{u}) - \phi_{V_S}(\mathbf{u}) = \phi_{V_NV_S}(\zeta) + \overline{\phi_{V_NV_S}(\zeta)} \rlap{.}
\end{equation}
\end{proposition}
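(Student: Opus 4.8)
The plan is to assemble the statement from the construction already carried out in this subsection, since the Proposition is in essence a repackaging of the results leading up to and including Lemma \ref{different_version}. The starting point is to pull $\sigma$ back to $\mathcal{Z}$ via $p$, obtaining a closed, fiber-supported $(1,1)$ form $p^*\sigma$. Fixing a point $x$ in the domain of $\sigma$ and its horizontal twistor line $H_x$, I would select the two antipodally conjugated points at $\zeta = 0$ and $\zeta = \infty$ and apply the local $\partial_{\mathcal{Z}}\bar{\partial}_{\mathcal{Z}}$-lemma on $\mathcal{Z}$ to produce honest potentials $\phi_{V_N}(\mathbf{u})$ and $\phi_{V_S}(\mathbf{u})$ on neighborhoods $V_N$ and $V_S$ of these points, as in \eqref{form-pot}. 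The constraints \eqref{pot_constrs}, together with the simple-connectedness of the fiber slices, force each potential into the form \eqref{zeta_hol_str}, so that $\phi_{V_N}$ is a power series in $\zeta$ regular at $\zeta = 0$ and $\phi_{V_S}$ a power series in $\tilde{\zeta} = 1/\zeta$ regular at $\zeta = \infty$. Crucially, their convergence is automatic, guaranteed by the $\partial_{\mathcal{Z}}\bar{\partial}_{\mathcal{Z}}$-lemma itself and not by any infinite recursion.

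Next I would arrange the two potentials so that they are interchanged, up to a sign, by the antipodal map, thereby pinning down the relations between their Laurent coefficients $f^{V_N}_n$ and $f^{V_S}_n$, and then define the coefficients $\phi_n$ through \eqref{pot-phi}. By construction these satisfy the self-conjugacy condition $\phi_n = \phi^c_n$, and invoking Lemma \ref{different_version} gives at once that $(\phi_n)_{n \in \mathbb{Z}}$ is a self-conjugate recursive chain of hyperpotentials with respect to $I_0$. Substituting these coefficients back into \eqref{phi_VNetco} reproduces exactly the original potentials $\phi_{V_N}$ and $\phi_{V_S}$; since those series are then nothing but the already-constructed potentials re-indexed, their convergence on $V_N$ and $V_S$ is inherited from the convergence established in the first step.

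For the second assertion I would read off the hyperpotential property directly from the fiberwise content of \eqref{form-pot}: equation \eqref{leeloo} states that each potential, with $\mathbf{u}$ frozen as a parameter, restricts on $M$ to a hyperpotential for $\sigma$ with respect to $\IU$, and this is made explicit by \eqref{hyper-pots}, which yields $\partial_{\Iu}\bar{\partial}_{\Iu}\phi_{V_N}(\mathbf{u}) = \partial_{\Io}\bar{\partial}_{\Io}\phi_0$, whence $\sigma = i\,\partial_{\Iu}\bar{\partial}_{\Iu}\phi_{V_N}(\mathbf{u})$ on the relevant slice, the $V_S$ side being identical. For the overlap statement I would observe that on $V_N \cap V_S$ both potentials are defined and solve the same local $\partial_{\mathcal{Z}}\bar{\partial}_{\mathcal{Z}}$-equation, so their difference is pluriharmonic and equal to $\phi_{V_NV_S}(\zeta) + \overline{\phi_{V_NV_S}(\zeta)}$ for a function holomorphic in $\zeta$; comparing Laurent expansions identifies this function, up to an irrelevant purely imaginary constant, with $\sum_n \phi_n \zeta^{-n}$, whose convergence on the overlap follows from that of $\phi_{V_N}$ and $\phi_{V_S}$ there.

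The main obstacle is not any single computation but the \emph{logical inversion} relative to Lemma \ref{exp_ddbar_lem}: whereas the abstract $\partial\bar{\partial}$-lemma produces a chain by an infinite recursion whose associated $\zeta$-series need not converge, here convergence must be guaranteed from the outset. The key is therefore to extract the chain from two genuinely existing $\mathcal{Z}$-potentials obtained from a single application of the local $\partial_{\mathcal{Z}}\bar{\partial}_{\mathcal{Z}}$-lemma, so that the $\zeta$-series are convergent by construction; the substantive work then lies in verifying that the Laurent coefficients so extracted do assemble into a bona fide recursive chain, which is precisely the content of Lemma \ref{different_version} and its connecting-sets argument, and which I would simply cite.
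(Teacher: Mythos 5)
Your proposal is correct and follows essentially the same route as the paper, which in fact states Proposition \ref{lulu} not with a standalone proof but explicitly as a summary of the preceding construction (``we have proved the following result''): the local $\partial_{\mathcal{Z}}\bar{\partial}_{\mathcal{Z}}$\pt-\hp lemma on $\mathcal{Z}$ around $(x,\zeta=0)$ and $(x,\zeta=\infty)$, the analytic structure \eqref{zeta_hol_str} forced by the constraints \eqref{pot_constrs}, the antipodally paired coefficients \eqref{pot-phi}, Lemma \ref{different_version} for the chain property, and the equations \eqref{leeloo} and \eqref{hyper-pots} for the hyperpotential and overlap statements. Your emphasis on the ``logical inversion'' relative to Lemma \ref{exp_ddbar_lem} --- extracting the chain from genuinely existing, convergent twistor-space potentials rather than hoping the recursion's series converges --- is precisely the point the paper itself makes in the paragraph immediately preceding the proposition.
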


\subsection{Global issues}  \label{ssec:global_iss} \hfill \medskip

Let us assume now that $\sigma$ is a \textit{globally-defined} real-valued closed hyper $(1,1)$ form on $M$. Then the pullback form $p^*\sigma$ gives a globally-defined real-valued closed $(1,1)$ form on $\mathcal{Z}$. A well-known result states that if a closed 2-form of type $(1,1)$ on a complex manifold $\mathcal{Z}$, divided by $2\pi$, belongs to an \textit{integral} cohomology class\,---\,that is, a cohomology class in the image of the natural morphism $H^2(\mathcal{Z},\mathbb{Z}) \rightarrow H^2(\mathcal{Z},\mathbb{R})$\,---\,then one can regard it as the curvature of a Hermitian connection on a holomorphic line bundle over $\mathcal{Z}$. In the absence of the integrality condition one can still make a number of weaker but nevertheless interesting and useful claims. Let us see what these are.

Repeated applications of the local \mbox{$\partial\bar{\partial}$\pt-\hp lemma} for the form $p^*\sigma$ around various points of $\mathcal{Z}$ can be used to construct an open covering $\mathscr{V}$ of $\mathcal{Z}$ with the property that each element $V \in \mathscr{V}$ has an associated local real potential $\phi_V(\mathbf{u})$ such that
\begin{equation}
p^*\sigma |_{V}^{\phantom{I}} = i \pt \partial_{\mathcal{Z}}\bar{\partial}_{\mathcal{Z}} \phi_V(\mathbf{u}) \mathrlap{.}
\end{equation}
Through appropriate refinements this cover can be chosen so as to have contractible intersections. On non-empty double intersections $U \cap V$, the difference between the potentials associated to the two intersecting sets are local pluriharmonic functions on $\mathcal{Z}$, that is,
\begin{equation} \label{pot_gluing}
\phi_U(\mathbf{u}) - \phi_V(\mathbf{u}) = \phi_{UV}(\zeta) + \overline{\phi_{UV}(\zeta)}
\end{equation}
for some functions $\phi_{UV}(\zeta)$ holomorphic with respect to the twistor complex structure, single-valued (by the assumption of contractibility for $U \cap V$), and defined only up to constant imaginary shifts $\phi_{UV}(\zeta) \mapsto \phi_{UV}(\zeta) + i \pt c_{UV}$.  On non-empty triple intersections $U \cap V \cap W$ the trivial identity $\phi_U(\mathbf{u}) - \phi_V(\mathbf{u}) + \phi_V(\mathbf{u}) - \phi_W(\mathbf{u}) + \phi_W(\mathbf{u}) - \phi_U(\mathbf{u}) = 0$ imposes the constraint
\begin{equation}
\phi_{UV}(\zeta) + \phi_{VW}(\zeta) + \phi_{WU}(\zeta) + \text{c.c.} = 0 \mathrlap{.}
\end{equation}
Together with holomorphicity this implies in turn that additional real constants $c_{UVW}$ exist such that 
\begin{equation} \label{pre-cycle}
\phi_{UV}(\zeta) + \phi_{VW}(\zeta) + \phi_{WU}(\zeta) = i \pt c_{UVW} \mathrlap{.}
\end{equation}
Due to the ambiguity in the definition of the holomorphic functions these constants are defined only modulo shifts $c_{UVW} \mapsto c_{UVW}  + c_{UV} + c_{VW} + c_{WU}$, so they represent in fact a class in $H^2(\mathcal{Z},\mathbb{R})$. 
%\textcolor{blue}{This is of course precisely what one expects from the de Rham theorem.} 
The non-vanishing of this class is the obstruction for $\phi_{UV}(\zeta)$ to form a cocycle. By contrast, $d_{\mathcal{Z}}\phi_{UV}(\zeta)$ always satisfies the cocycle conditions and determines a class in $H^1(\mathcal{Z},d\mathcal{O}_{\mathcal{Z}})$, where $d\mathcal{O}_{\mathcal{Z}}$ denotes the sheaf of germs of closed holomorphic 1-forms on $\mathcal{Z}$ (the notation reflects the fact that closed 1-forms are locally exact). Thus what this argument shows is that to every closed real-valued 2-form on $\mathcal{Z}$ of $(1,1)$ type one can naturally associate a 1-cocycle of closed holomorphic 1-forms, that is to say, an element of $H^1(\mathcal{Z},d\mathcal{O}_{\mathcal{Z}})$. 

Let us look at this sheaf cohomology group more closely. First, note that we have the following morphisms:
\begin{equation}
\begin{tikzcd}
0 \rar & H^1(\mathcal{Z},d\mathcal{O}_{\mathcal{Z}}) \dar \rar & H^1(\mathcal{Z},\Omega_{\mathcal{Z}})  \\
& H^2(\mathcal{Z},\mathbb{C}) &
\end{tikzcd}
\end{equation}
where $\Omega_{\mathcal{Z}}$ stands for the sheaf of germs of holomorphic 1-forms on $\mathcal{Z}$. The vertical morphism is the second connecting morphism from the long exact sequence associated to the short exact sequence of sheaves
$0 \longrightarrow \mathbb{C} \longrightarrow \mathcal{O}_{\mathcal{Z}} \longrightarrow d\mathcal{O}_{\mathcal{Z}} \longrightarrow 0$.
%\begin{tikzcd}
%0 \rar & \mathbb{C} \rar & \mathcal{O}_{\mathcal{Z}} \rar & d\mathcal{O}_{\mathcal{Z}} \rar & 0.
%\end{tikzcd}
The horizontal ones, on the other hand, come from the long exact sequence associated to the short exact sequence of sheaves
$0 \longrightarrow d\mathcal{O}_{\mathcal{Z}} \longrightarrow \Omega_{\mathcal{Z}} \longrightarrow d\Omega_{\mathcal{Z}} \longrightarrow 0$,
%\begin{tikzcd}
%0 \rar & d\mathcal{O}_{\mathcal{Z}} \rar & \Omega_{\mathcal{Z}} \rar & d\Omega_{\mathcal{Z}} \rar & 0,
%\end{tikzcd}
where we took also into account the observation, due to Hitchin \cite{MR3116317}, that for twistor spaces of hyperk\"ahler manifolds one has $H^0(\mathcal{Z},d\Omega_{\mathcal{Z}}) = 0$. (The argument in \cite{MR3116317} goes as follows: the holomorphic cotangent bundle of $\mathcal{Z}$ splits naturally along horizontal twistor lines and their normal bundles into \mbox{$T_{\mathcal{Z}}^* \cong \pi^* T^*_{\hp\smash{\mathbb{CP}^1}} \oplus \mathbb{C}^{2m} \otimes \pi^*\mathcal{O}(-1)$}. Moreover, $T^*_{\hp\smash{\mathbb{CP}^1}} \cong \mathcal{O}(-2)$, so it is clear that there are no non-trivial globally-defined holomorphic forms, and in particular no non-trivial globally-defined \textit{closed} holomorphic forms on $\mathcal{Z}$.) By standard homological algebra arguments (see \textit{e.g.}~\cite{MR0086359}) we have, furthermore,
\begin{equation}
H^1(\mathcal{Z},\Omega_{\mathcal{Z}}) \cong H^1(\mathcal{Z},\Hom(T_{\mathcal{Z}},\mathcal{O}_{\mathcal{Z}})) \cong \Ext^1(T_{\mathcal{Z}},\mathcal{O}_{\mathcal{Z}})
\end{equation}
so we conclude that each element of $H^1(\mathcal{Z},d\mathcal{O}_{\mathcal{Z}})$ determines \textit{uniquely} an extension~class
\begin{equation} \label{Atiyah_seq}
\begin{tikzcd}
0 \rar & \mathcal{O}_{\mathcal{Z}} \rar & E \rar & T_{\mathcal{Z}} \rar & 0
\end{tikzcd}
\end{equation}
characterized by a  class in $H^2(\mathcal{Z},\mathbb{C})$. 

In particular, when this latter class is integral there exists a holomorphic line bundle $\cramped{L_{\mathcal{Z}} \rightarrow \mathcal{Z}}$ for which $E$ is the so-called \textit{Atiyah algebroid} \cite{MR0086359}. (Atiyah's original paper was concerned with complex principal bundles; here we use the closely related concept from the theory of complex vector bundles.) The importance of Atiyah algebroids stems from the following property: the holomorphic line bundle $L_{\mathcal{Z}}$ admits a holomorphic connection if and only if the corresponding short exact sequence \eqref{Atiyah_seq} splits. 

The emergence of the holomorphic line bundle can be seen explicitly in our case as follows: the integrality condition implies that the real constants $c_{UVW}$ can all be chosen to be integer multiples of $2\pi$ simultaneously. So, in view of this, if we define
\begin{equation} \label{hh-trans-fcts}
g_{UV} = \exp [\hp \phi_{UV}(\zeta)]
\end{equation}
by the equation \eqref{pre-cycle} these form a multiplicative 1-cocycle of non-vanishing holomorphic functions, which then through a canonical construction in complex geometry determine a holomorphic line bundle over $\mathcal{Z}$ for which they play the role of transition functions. This line bundle is trivial on each horizontal twistor line, and so by the hyperk\"ahler version of the Atiyah\pt-Ward correspondence it descends to a hyperholomorphic line bundle over M. (By definition, a vector bundle over $M$ is called \textit{hyperholomorphic} if it is holomorphic with respect to every hyperk\"ahler complex structure on $M$.)

\section{Tri-Hamiltonian and Killing tensor symmetries} \label{sec:Tri-Ham}

\subsection{Tri-Hamiltonian vector fields} \label{ssec:tri-Ham} \hfill \medskip

As we move towards applications of this general theory let us begin this section with an easy but very instructive exercise and review briefly a few well-known facts about tri-Hamiltonian vector fields in the language of hyperpotentials. Let $M$ denote again a hyperk\"ahler manifold, assumed now in addition to have vanishing first cohomology group, $H^1(M,\mathbb{R}) = 0$. By definition, a tri-Hamiltonian vector field on $M$ is a vector field preserving each element of the standard basis of hyperk\"ahler symplectic forms of $M$ (and therefore \textit{every} hyperk\"ahler symplectic form of $M$): 
\begin{equation}
\mathcal{L}_X\omega_1 = \mathcal{L}_X\omega_2 = \mathcal{L}_X\omega_3 = 0 \mathrlap{.}
\end{equation}
Such a vector field is automatically Killing. One can associate to it a triplet of momenta \mbox{$\mu_1, \mu_2, \mu_3: M \rightarrow \mathbb{R}$} such that
\begin{equation}
\iota_X\omega_1 = d\mu_1
\qquad\qquad
\iota_X\omega_2 = d\mu_2
\qquad\qquad
\iota_X\omega_3 = d\mu_3 \mathrlap{.}
\end{equation}
Via the expressions \eqref{i=oo} for the hyperk\"ahler complex structures in terms of the symplectic forms we have the relations
$d\mu_1 = d\mu_2 I_3$,
$d\mu_2 = d\mu_3 I_1$,
$d\mu_3 = d\mu_1 I_2$. 
By acting on them from the right with various $I_k$'s and using the quaternionic properties of the complex structures one can generate more relations of this type, in particular the equalities \mbox{$d\mu_1 I_1 = d\mu_2 I_2 = d\mu_3 I_3$}. 

If we introduce, alternatively, the complex spherical components $\mu_{\pm} = \pm\frac{1}{2}(\mu_1 \pm i\mu_2)$ and $\mu_0 = \mu_3$, then from the definition of the Dolbeault operator and the relations satisfied by the momenta we have, successively, 
\begin{equation}
\bar{\partial}_{\Io} \mu_+ = d\mu_+ P^{0,1}_{\Io} = \frac{1}{2}(d\mu_1 +id\mu_2)\frac{1}{2}(1+iI_3) = 0 \rlap{.}
\end{equation}
That is, $\mu_+$ is holomorphic relative to $I_0 \equiv I_3$. Similarly or by complex conjugation, $\mu_-$ is anti-holomorphic relative to $I_0$. Moreover, by virtue of the same relations above,
\begin{equation}
d\mu_-I_+ + d\mu_0 I_0 + d\mu_+I_- = 0 \rlap{.}
\end{equation}
These properties may be rephrased jointly as the statement that 
\begin{equation*}
0 \,\noarrow\, \mu_- \,\noarrow\, \mu_0 \,\noarrow\, \mu_+ \,\noarrow\, 0 \\[1pt]
\end{equation*}
forms a recursive chain of hyperpotentials with respect to $I_0$. 

On the twistor space we can arrive at the same conclusion by means of a holomorphicity argument. Tri-Hamiltonian vector fields are automatically tri-holomorphic on $M$ and lift trivially to holomorphic fiber-supported vector fields on $\mathcal{Z}$ which preserve the canonical holomorphic 2-form, also supported on the fibers. As such, they come with associated holomorphic Hamiltonian functions. More precisely, with our usual parametrization of $\mathbb{CP}^1$ and trivialization of $\mathcal{Z}$, we have
\begin{equation} \label{Ham-zeta}
\mathcal{L}_X \omega(\zeta) = 0
\end{equation}
from which we get that $\iota_X \omega(\zeta) = d\mu(\zeta)$, with
\begin{equation}
\mu(\zeta) = \frac{\mu_+}{\zeta \ } + \mu_0 + \zeta \mu_-
\end{equation}
holomorphic not only on the fiber but in fact on $\mathcal{Z}$, where it can be viewed as the tropical component of a section of the bundle $\pi^*\mathcal{O}(2)$. Proposition \ref{hol-hyper(1,1)} guarantees then that the Laurent coefficients of its \mbox{$\zeta${\hp-\hp}expansion} form a recursive chain of hyperpotentials with respect to $I_0$.

\subsection{The \mbox{$E \otimes\npt H$} formalism} \label{ssec:EH} \hfill \medskip

Before we deepen our incursion into the realm of hyperk\"ahler symmetries, it is useful to take a detour in order to review Salamon's \mbox{$E \otimes\npt H$} formalism \cite{MR664330}. This is a higher-dimensional generalization of the four-dimensional two-component spinor formalism \cite{MR776784}, which emerged originally in connection to quaternionic K\"ahler manifolds, understood in the broader sense which includes rather than excludes hyperk\"ahler manifolds. For the time being we will carry out the discussion in quaternionic K\"ahler terms, although eventually we will specialize to the hyperk\"ahler case. 

A quaternionic K\"ahler manifold $M$ is a Riemannian manifold of real dimension $4m$ whose holonomy group is, for $m>1$, a subgroup of $\smash{ Sp(1)Sp(m) = Sp(1) \times_{\mathbb{Z}_2} Sp(m) }$. The statements to follow hold all the same if one considers alternatively pseudo-Riemannian manifolds and non-compact versions of $Sp(m)$. Assuming no cohomological Marchiafava-Romani obstruction in $H^2(M,\mathbb{Z}_2)$, the complexified tangent bundle of $M$ splits locally~as
\begin{equation} \label{E-H}
T_{\mathbb{C}}M = E \otimes H
\end{equation}
where $E$ and $H$ represent locally-defined complex vector bundles of ranks $2m$ and $2$ underlying the standard representations of $Sp(m)$ and $Sp(1)$ on $\mathbb{H}^m$ and $\mathbb{H}$, respectively. For \mbox{$m=1$} the isomorphism $Sp(1)Sp(1) \cong SO(4)$ renders the holonomy group characterization trivial as it includes all oriented four-dimensional manifolds. The natural analogues of quaternionic K\"ahler manifolds in four dimensions are the Einstein self-dual manifolds, and one takes in this case $E$ and $H$ to be the spinor bundles $S_+$ and $S_-$. 

After choosing local frames on each bundle, the local isomorphism \eqref{E-H} is represented concretely by a so-called vielbein. Suppose for instance that $\frac{\partial}{\partial x^{\alpha}}$ is a local coordinate frame on $TM$. Then one can trade tangent space indices for pairs of indices of the type $AA'$, with $A$ $2m$-valued and $A'$ two-valued, by contracting with a vielbein $e^{\A\A'}{}_{\alpha}$. Dually, cotangent space indices can be similarly converted to a two-index notation by contracting with the inverse vielbein $e^{\alpha}{}_{\A\A'}$. Corresponding to these one has the solder form and its dual 
\begin{equation}
e^{\A\A'} = e^{\A\A'}{}_{\alpha} dx^{\alpha}
\qquad\qquad
e^{\vee}_{\A\A'} = e^{\alpha}{}_{\A\A'} \frac{\partial}{\partial x^{\alpha}} \mathrlap{.}
\end{equation}

The Levi-Civita connection on $TM$ induces on the reduced frame bundle a Cartan connection. The Cartan structure equations for the corresponding connection 1-forms $\theta^{\A\A'}{}_{\B\B'}$ read
\begin{equation}
\begin{aligned}
de^{\A\A'} + \theta^{\A\A'}{}_{\B\B'} \!\wedge e^{\B\B'} & = 0 \\
d\theta^{\A\A'}{}_{\B\B'} + \theta^{\A\A'}{}_{\C\C'} \!\wedge \theta^{\C\C'}{}_{\B\B'} & = R^{\A\A'}{}_{\B\B'} \mathrlap{.}
\end{aligned}
\end{equation}
The Levi-Civita connection preserves the $\cramped{E-H}$ decomposition \eqref{E-H} in all dimensions. Accordingly, the Cartan connection 1-forms take the form
\begin{equation}
\theta^{\A\A'}{}_{\B\B'} = \theta^{\A'}{}_{\B'} \delta^{\A}{}_{\B} + \theta^{\A}{}_{\B} \delta^{\A'}{}_{\B'}
\end{equation}
entailing an analogous local decomposition for the curvature 2-form
\begin{equation}
R^{\A\A'}{}_{\B\B'} = R^{\A'}{}_{\B'} \delta^{\A}{}_{\B} + R^{\A}{}_{\B} \delta^{\A'}{}_{\B'}
\end{equation}
with $R^{\A'}{}_{\B'} = d\theta^{\A'}{}_{\B'} + \theta^{\A'}{}_{\C'} \!\wedge \theta^{\C'}{}_{\B'}$ and similarly for $R^{\A}{}_{\B}$.

The bundles $E$ and $H$ come equipped with natural symplectic metrics\,---\,covariantly constant anti-symmetric tensors $\varepsilon_{\A\B}$ and $\varepsilon_{\A'\B'}$ from $\mathscr{A}^2(E)$ and $\mathscr{A}^2(H)$, respectively. These can be used to raise and lower $E$ and $H$-indices. Here we will use the index-raising and lowering conventions from Penrose and Rindler \cite[p.104 \& ff.]{MR776784}. 

As the group $Sp(1)$ is a double cover of $SO(3)$ and the adjoint representation of $Sp(1)$ is isomorphic to the vector representation of $SO(3)$, we are allowed to trade an $SO(3)$ vector index\,---\,such as for instance the one carried by the almost K\"ahler structures $\omega_k$ (for quaternionic K\"ahler manifolds these form the components of a section of an $SO(3)$-bundle over $M$ on which they provide a frame)\,---\,for two primed $H$-indices. This is achieved in practice by means of the Pauli matrices as follows: $\omega^{\A'}{}_{\B'} = \frac{1}{2} (\sigma_k)^{\A'}{}_{\B'} \hp \omega_k$ (here and throughout this section a summation over the repeated index $k$ is implied) and, conversely, $\omega_k = (\sigma_k)^{\B'}{}_{\A'} \omega^{\A'}{}_{\B'}$. Recall that, besides the Pauli algebra, the Pauli matrices satisfy also the orthogonality and completeness relations 
\begin{equation} \label{Pauli_orthog}
\begin{aligned}
(\sigma_k)^{\A'}{}_{\B'} (\sigma_j)^{\B'}{}_{\A'} & = 2\pt \delta_{kj} \\[1pt]
(\sigma_k)^{\A'}{}_{\B'} (\sigma_k)^{\C'}{}_{\D'} & =  2\pt \delta^{\A'}{}_{\D'}\delta^{\C'}{}_{\B'} - \delta^{\A'}{}_{\B'}\delta^{\C'}{}_{\D'} \mathrlap{.}
\end{aligned}
\end{equation}

In the Cartan frame one has the following further decomposition formulas:
\begin{equation} \label{g_I_o}
\begin{tabular}{lr@{=}l}
metric & $\displaystyle g_{\A\A',\B\B'}\ $ & $\displaystyle \ \varepsilon_{\A\B} \varepsilon_{\A'\B'}$ \\[4pt]
almost complex structures & $\displaystyle (I_k)^{\A\A'}{}_{\B\B'}\ $ & $\displaystyle \ -i \hp (\sigma_k)^{\A'}{}_{\B'} \delta^{\A}{}_{\B}$ \\[5pt]
almost K\"ahler structures & $\displaystyle (\omega_{\C'\npt\D'})_{\A\A',\B\B'} \ $ & $\displaystyle \ i \pt \varepsilon_{\A'(\C'}\varepsilon_{\D')\B'}\varepsilon_{\A\B} \mathrlap{.}$ 
\end{tabular}
\end{equation}
The expressions for the metric and almost complex structures are natural choices satisfying the requisite algebraic and Hermiticity properties. The remaining expression for the almost K\"ahler structures can be derived from the first two. By definition we have $\smash{ (\omega_k)_{\A\A',\B\B'} = g_{\A\A',\C\C'} (I_k)^{\C\C'}{}_{\B\B'} }$, and then the formula follows after some standard $\varepsilon$-tensor gymnastics, upon converting the $k$ index into two primed indices and making use of the second Pauli matrix property above.

Quaternionic K\"ahler manifolds are automatically Einstein. What is more, the $Sp(1)$ component of their curvature is proportional to the almost K\"ahler forms, \textit{i.e.},
\begin{equation}
R^{\A'}{}_{\B'} = s \pt \omega^{\A'}{}_{\B'} \mathrlap{,}
\end{equation}
with the proportionality factor $s$ equal to a positive dimension-dependent fractional multiple of the scalar curvature. Hyperk\"ahler manifolds, the subclass of quaternionic K\"ahler manifolds of interest to us here, are in particular Ricci-flat, and therefore for them $s=0$. That is, the connection induced on the $H$-bundle is in their case locally flat. If $M$ is in addition simply connected then this implies that the $H$-bundle is trivial. From here onwards we will assume $M$ to be again hyperk\"ahler (which in practice means that we can drop the ``almost" modifiers from the statements above) and, moreover, simply connected.

\subsection{Killing tensors} \label{ssec:K-tensors} \hfill \medskip

With these general considerations in place, we focus now on a class of hyperk\"ahler spaces possessing a certain type of hidden symmetry, closely related in some way we will make precise later on to tri-Hamiltonian symmetries, first defined and studied by Dunajski and Mason in \cite{MR1785432, MR2006758}.

\begin{definition}
Given a positive integer $j$, a totally symmetric section $\mu_{\A'_1\cdots\A'_{2j}} = \mu_{(\A'_1\cdots\A'_{2j})}$ of $S^{2j}H$ is termed a \textit{valence $(0,2j)$ Killing tensor} (or \textit{Killing spinor}, in four dimensions) if it satisfies the equation
\begin{equation} \label{(0,2j)_Killing_sp_eq}
\nabla_{\A(\A'} \mu_{\A'_1 \cdots \A'_{2j})} = 0 \mathrlap{.}
\end{equation}
\end{definition}

Observe that if the covariant derivative of $\mu_{\pt \cdots}$ is of the form
\begin{equation} \label{nabla_mu}
\nabla^{\A}{}_{\A'} \mu_{\A'_1\cdots\A'_{2j}} = - i \pt \varepsilon_{\A'(\A'_1} X^{\A}{}_{\A'_2\cdots\A'_{2j})}
\end{equation}
for some valence $(1,2j-1)$ tensor $X^{\hp ^{.}}{}_{\!\cdots}$, then $\mu_{\pt \cdots}$ satisfies automatically the Killing tensor equation, as follows from a simple symmetrization argument. (The imaginary factor has been introduced for subsequent convenience. For the general rules and conventions regarding index symmetrization and anti-symmetrization see \textit{e.g.}~\cite[p.132 \& ff.]{MR776784}.) In fact, the covariant derivative of a valence $(0,2j)$ Killing tensor can \textit{always} be written in this form, with
\begin{equation}
X^{\A}{}_{\A'_2\cdots\A'_{2j}} = - i \hp \frac{2j}{2j+1} \nabla^{\A\A'_1} \mu_{\A'_1\A'_2\cdots \A'_{2j}} \mathrlap{,}
\end{equation}
see \textit{e.g.}~\cite[eqs.~(2.8) through (3.1), with the last one needing an obvious correction]{MR0425802}. 

The integrability conditions for the first-order differential equation \eqref{(0,2j)_Killing_sp_eq} lead to constraints on the covariant derivatives of the tensor $X^{\hp ^{.}}{}_{\!\cdots}$. To see this, note first that the commutator of two covariant derivatives acting on $\mu_{\pt \cdots}$ vanishes due to the vanishing of the curvature component along the $H$-bundle:
\begin{equation}
[\nabla_{\A\A'},\nabla_{\B\B'}] \hp \mu_{\A'_1\cdots\A'_{2j}} = \sum_{k=1}^{2j} (R^{\C'}{}_{\A'_k})_{\A\A',\B\B'} \hp \mu_{\A'_1\cdots\C'\cdots\A'_{2j}} = 0 \mathrlap{.}
\end{equation}
This allows us to write further the second-order differential equations
\begin{equation}
\begin{aligned}
\nabla_{(\A}{}^{\A'} \nabla_{\B)\A'}^{\phantom{CC}} \mu_{\A'_1\cdots\A'_{2j}} & = - \frac{1}{2}\varepsilon^{\A'\B'} [\nabla_{\A\A'}, \nabla_{\B\B'}] \hp \mu_{\A'_1\cdots\A'_{2j}} = 0 \\[0pt]
\nabla_{\A(\A'} \nabla^{\A}{}_{\B')} \mu_{\A'_1\cdots\A'_{2j}} & = \phantom{+} \frac{1}{2} \varepsilon^{\A\B} \hspace{5.5pt} [\nabla_{\A\A'}, \nabla_{\B\B'}] \hp \mu_{\A'_1\cdots\A'_{2j}} = 0 \mathrlap{.}
\end{aligned}
\end{equation}
Using then the relation \eqref{nabla_mu} and, in the second case, the third equation \eqref{g_I_o}, we arrive after some straightforward manipulations to the following two constraints:
\begin{gather}
\nabla^{(\A}{}_{(\A'_1} X^{\B)}{}_{\A'_2 \cdots \A'_{2j})} = 0 \\
(\omega_{\A'\B'})_{\C\C',\A(\A'_1} \nabla^{\C\C'} X^{\A}{}_{\A'_2 \cdots \A'_{2j})} = 0 \mathrlap{.}
\end{gather}
The first constraint is simply the valence $(1,2j-1)$ Killing tensor equation. By definition, we call a tensor satisfying both constraints a \textit{tri-Hamiltonian Killing tensor}. This terminology is justified partly by the fact that, in particular, for $j=1$, the two constraints imply that $X^{\A\A'} e^{\vee}_{\A\A'}$ is a tri-Hamiltonian Killing vector field. We leave the proof of this statement as an exercise for the reader and refer to \cite{MR551471} for some useful guidance. So, we have shown that

\begin{proposition} \label{higher_K_tens}
On a hyperk\"ahler manifold, a valence $(0,2j)$ Killing tensor gives rise to a valence $(1,2j-1)$ tri-Ha\-mil\-to\-ni\-an Killing tensor.
\end{proposition}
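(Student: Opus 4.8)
The plan is to derive the two defining constraints of a tri-Hamiltonian Killing tensor directly from the integrability conditions of the valence $(0,2j)$ Killing tensor equation \eqref{(0,2j)_Killing_sp_eq}, taking for $X^{\A}{}_{\A'_2\cdots\A'_{2j}}$ the tensor furnished by the relation \eqref{nabla_mu}. The one geometric input that drives the argument is that $M$ is hyperk\"ahler, hence Ricci-flat with $s=0$, so that the $Sp(1)$ part of the curvature vanishes, $R^{\A'}{}_{\B'} = 0$. Since $\mu_{\A'_1\cdots\A'_{2j}}$ is a section of $S^{2j}H$ carrying only primed $H$-indices, the only curvature terms that could appear upon commuting two covariant derivatives on it are the $R^{\C'}{}_{\A'_k}$, all of which now vanish; hence $[\nabla_{\A\A'},\nabla_{\B\B'}]\hp\mu_{\A'_1\cdots\A'_{2j}} = 0$.

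First I would isolate this commutator in two inequivalent ways. Decomposing the second covariant derivative $\nabla_{\A\A'}\nabla_{\B\B'}\mu$ according to symmetry in the unprimed pair $\A\B$ and, independently, in the primed pair $\A'\B'$, both the combination symmetric in $\A\B$ and skew in $\A'\B'$ and the combination skew in $\A\B$ and symmetric in $\A'\B'$ reduce to the commutator. Contracting the skew slots with the appropriate $\varepsilon$-tensor then yields the two vanishing second-order expressions
\begin{equation*}
\nabla_{(\A}{}^{\A'}\nabla_{\B)\A'}\hp\mu_{\A'_1\cdots\A'_{2j}} = 0
\qquad\text{and}\qquad
\nabla_{\A(\A'}\nabla^{\A}{}_{\B')}\hp\mu_{\A'_1\cdots\A'_{2j}} = 0 .
\end{equation*}

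The heart of the proof is then to feed \eqref{nabla_mu} into these identities and read off the constraints on $X$. For the first, I would substitute \eqref{nabla_mu} for the inner covariant derivative and use that the symplectic form $\varepsilon_{\A'\B'}$ is covariantly constant; the contracted primed index collapses the $\varepsilon$-pair carried by \eqref{nabla_mu}, and after reorganizing the nested symmetrizations one is left with the valence $(1,2j-1)$ Killing tensor equation $\nabla^{(\A}{}_{(\A'_1}X^{\B)}{}_{\A'_2\cdots\A'_{2j})} = 0$. For the second identity, the analogous substitution produces a contraction over the unprimed index paired with a symmetrization over the primed pair; the $\varepsilon$-structure that survives is precisely the one appearing in the third line of \eqref{g_I_o}, so that it can be repackaged as the K\"ahler form $(\omega_{\A'\B'})_{\C\C',\A(\A'_1}$, delivering the second constraint $(\omega_{\A'\B'})_{\C\C',\A(\A'_1}\nabla^{\C\C'}X^{\A}{}_{\A'_2\cdots\A'_{2j})} = 0$.

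I expect the main obstacle to be purely computational bookkeeping: keeping track of the $\varepsilon$-tensor contractions and the several nested index symmetrizations when \eqref{nabla_mu} is inserted, and in particular verifying that the symmetrization built into the definition of $X$ is compatible with the symmetrizations imposed on the second-derivative identities. The subtlest point will be reproducing the K\"ahler-form structure of \eqref{g_I_o} with the correct numerical factor, which is where the Pauli orthogonality and completeness relations \eqref{Pauli_orthog} enter. Once both displayed constraints are verified, $X^{\A}{}_{\A'_2\cdots\A'_{2j}}$ satisfies by definition the conditions of a tri-Hamiltonian Killing tensor, which is exactly the assertion of the proposition.
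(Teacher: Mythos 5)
Your proposal is correct and takes essentially the same route as the paper's proof: the flatness of the $H$-bundle connection ($s=0$ in the hyperk\"ahler case) kills the commutator $[\nabla_{\A\A'},\nabla_{\B\B'}]\hp\mu_{\A'_1\cdots\A'_{2j}}$, the two $\varepsilon$-contracted second-order identities you display are exactly those in the text, and substituting \eqref{nabla_mu}\,---\,with the third formula \eqref{g_I_o} in the second case\,---\,yields the same pair of constraints defining a tri-Hamiltonian Killing tensor. No gaps to report.
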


For the next step it is convenient to recast the triplet of complex structures $I_k$ in the equivalent representation $I_{\A'\B'}$ by trading the $SO(3)$ index for two primed indices in the manner described above. Thus, similarly to the K\"ahler forms which can be viewed as the components of a section of $S^2H \otimes \mathscr{A}^2(M,\mathbb{C})$, the complex structures can be viewed as the components of a section of $S^2H \otimes \text{End}(T_{\mathbb{C}}M)$. This allows us to formulate the following important observation: 

\begin{lemma} \label{pic}
Let $\mu_{\A'_1\cdots\A'_{2j}}$ be a valence $(0,2j)$ Killing tensor. Then
\begin{equation} \label{nabla_mu_I}
\nabla \mu_{(\A'_1 \cdots \A'_{2j}} I_{\B'\npt\C')} = 0 \mathrlap{.}
\end{equation}
\end{lemma}

\begin{proof}
By definition, we have
{\allowdisplaybreaks
\begin{equation}
\begin{aligned}
\nabla \mu_{\A'_1 \cdots \A'_{2j}} I_{\B'\npt\C'} & = \nabla_{\A\A'} \mu_{\A'_1 \cdots \A'_{2j}} (I_{\B'\npt\C'})^{\A\A'}{}_{\D\D'} e^{\D\D'} \\[2pt]
& = e_{\A(\B'} \varepsilon_{\C')(\A'_1} X^{\A}{}_{\A'_2 \cdots \A'_{2j})} \mathrlap{.}
\end{aligned}
\end{equation}
}The second line is obtained by switching the positions of the double index $AA'$ from lowered to raised and vice versa and then using the relation \eqref{nabla_mu} and the third formula \eqref{g_I_o}. The final expression vanishes at the total symmetrization of the primed indices due to the presence of the $\varepsilon$-tensor.
\end{proof}

This formula plays a key role in bridging the current discussion with our previous considerations regarding hyperpotentials. Assuming that the primed indices take the values $0'$ and $1'$, let us define the \textit{components} of the $(0,2j)$ tensor $\mu_{\A'_1 \cdots \A'_{2j}}$ as follows
{\allowdisplaybreaks
\begin{gather}
\mu_{\hp n} = (-)^n C^{\hp n}_{2j} \, \mu_{\hp\underbrace{\0 \cdots \0}_\text{$j+n$}\underbrace{\1 \cdots \1}_\text{$j-n$}} \\
C^{\hp n}_{2j} = \dbinom{2j}{j+n} \text{ for $n \in [-j,j\hp ]$ and $0$ otherwise.} \nonumber
\end{gather}
}%
The order of the indices is of course irrelevant. Notice that this definition makes sense for any $n \in \mathbb{Z}$: when the values in the combinatorial factor are out of range, the corresponding component is by definition equal to zero. A $(0,2j)$ tensor has $2j+1$ non-zero components. 

Taking exactly $j+1+n$ of the indices in the equation \eqref{nabla_mu_I} to be equal to $0'$, we get 
\begin{equation}
\nabla \mu_{\hp n-1} I_{\0\0} + 2 \nabla \mu_{\hp n} \hp I_{\0\1} + \nabla \mu_{\hp n+1} I_{\1\1} = 0 \mathrlap{.}
\end{equation}
For $n = -j-1, \dots, j+1$ these equations form a recursive system for the components of the $(0,2j)$ tensor. Let us consider now the following particular representation for the Pauli matrices
\begin{equation}
(\sigma_k)^{\A'}{}_{\B'} :
\quad
\sigma_1 = \begin{pmatrix*}[r] \phantom{\,} 0 & 1 \phantom{\,} \\ \phantom{\,} 1 & 0 \phantom{\,} \end{pmatrix*},
\quad
\sigma_2 = \begin{pmatrix*}[r] \phantom{\,} 0 & \!\! -\hp i \phantom{\,} \\ \phantom{\,} i & \!\! 0 \phantom{\,} \end{pmatrix*},
\quad
\sigma_3 = \begin{pmatrix*}[r] \phantom{\,} 1 & \!\! 0 \phantom{\,} \\ \phantom{\,} 0 & \!\! -1 \phantom{\,} \end{pmatrix*} 
\end{equation}
for which we have
\begin{equation}
I_{\A'\npt\B'} = \frac{1}{2} (\sigma_k)_{\A'\B'} I_k \ : 
\quad
\begin{pmatrix*} \phantom{\pt} - I_+ & \!\! \frac{1}{2} \hp I_0 \phantom{\,} \\[1pt] \phantom{\pt} \frac{1}{2} \hp I_0  & \!\! - \pt I_- \phantom{\,} \end{pmatrix*} \mathrlap{.}
\end{equation}
Moreover, let us choose this frame in such a way that the connection 1-forms $\theta^{\A'}{}_{\B'}$ vanish. Since for simply-connected hyperk\"ahler manifolds the $H$-bundle is trivial, this is always possible globally. The covariant derivatives can then be replaced with simple derivatives and the above equation becomes
\begin{equation}
d\mu_{\hp n-1} I_+ + d\mu_{\hp n} \hp I_0 + d\mu_{\hp n+1} I_- = 0 \mathrlap{.}
\end{equation}
This is precisely a recursive system of the type \eqref{chain_recursion_2}, so what we have shown is this: 

\begin{proposition} \label{Kill_tens_chain}
Let $\mu_{\A'_1\cdots\A'_{2j}}$ be a valence $(0,2j)$ Killing tensor and consider its components in a frame chosen as above. Then
\begin{equation*}
0 \, \noarrow \, \mu_{-j} \, \noarrow \, \cdots \, \noarrow \, \mu_0 \, \noarrow \, \cdots \, \noarrow \, \mu_{+j} \, \noarrow \, 0 \\[1pt]
\end{equation*} 
forms a recursive chain of hyperpotentials with respect to the complex structure $I_0$.
\end{proposition}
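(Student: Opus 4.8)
The plan is to extract the recursion \eqref{chain_recursion_2} directly from the covariant-constancy identity \eqref{nabla_mu_I}. By the definition of a recursive chain of hyperpotentials, together with the observation recorded immediately after it — that every member of such a chain is automatically a hyperpotential by the equivalences $1\Leftrightarrow 3$ and $1\Leftrightarrow 4$ of Proposition \ref{criterion_2} — it suffices to check that the three-term relation \eqref{chain_recursion_2} holds among the components $\mu_{\hp n}$ for all $n\in\mathbb{Z}$. The whole geometric input is thus concentrated in the identity $\nabla\mu_{(\A'_1\cdots\A'_{2j}}I_{\B'\C')}=0$, and everything else is bookkeeping.

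First I would saturate this totally symmetrized identity by fixing exactly $j+1+n$ of its $2j+2$ free primed indices equal to $\0$ and the remaining $j+1-n$ equal to $\1$. Because the expression is symmetric in all of these indices, its value depends only on how the two indices carried by $I_{\B'\C'}$ are distributed, and just three cases survive: $\B'\C'=\0\0$, leaving $\mu$ with the index content of $\mu_{\hp n-1}$; $\B'\C'=\0\1$ (equivalently $\1\0$), contributing with multiplicity two and leaving the content of $\mu_{\hp n}$; and $\B'\C'=\1\1$, leaving that of $\mu_{\hp n+1}$. Once the binomial multiplicities produced by the symmetrization are absorbed into the normalization $(-)^n C^{\hp n}_{2j}$ built into the definition of the components, this collapses to $\nabla\mu_{\hp n-1}I_{\0\0}+2\,\nabla\mu_{\hp n}I_{\0\1}+\nabla\mu_{\hp n+1}I_{\1\1}=0$.

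Next I would pass to the chosen Pauli frame, in which the matrix of $I_{\A'\B'}$ computed above reads $I_{\0\0}=-I_+$, $I_{\0\1}=\tfrac{1}{2} I_0$, $I_{\1\1}=-I_-$, and invoke that for a simply connected Ricci-flat $M$ the $H$-bundle is trivial and carries a flat connection, so the frame may be taken with $\theta^{\A'}{}_{\B'}=0$ and every $\nabla$ replaced by $d$. Substituting the frame data \eqref{g_I_o} turns the relation into $d\mu_{\hp n-1}I_+ + d\mu_{\hp n}I_0 + d\mu_{\hp n+1}I_-=0$, which is exactly \eqref{chain_recursion_2}. Since $C^{\hp n}_{2j}=0$ for $|n|>j$, the components vanish outside $\{-j,\dots,j\}$, so the recursion extends trivially to all $n\in\mathbb{Z}$; evaluating it at $n=j+1$ and at $n=-j-1$ leaves $d\mu_{\hp j}I_+=0$ and $d\mu_{-j}I_-=0$, which — after right-multiplication by $I_-$ respectively $I_+$ and use of $I_+I_-=P^{0,1}_{\Io}$, $I_-I_+=P^{1,0}_{\Io}$ — say that $\mu_{\hp j}$ is holomorphic and $\mu_{-j}$ anti-holomorphic with respect to $I_0$. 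These are precisely the right- and left-boundary conditions encoded by the terminal zeros of the displayed chain, and the identification with a recursive chain of hyperpotentials with respect to $I_0$ is complete.

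The step I expect to be the real obstacle is the combinatorial part of the first extraction: one must verify that the symmetrization distributes the binomial weights $\binom{2j}{j+n}$ hidden in the definition of the components so as to reproduce exactly the coefficients $1,2,1$, and that the $(-)^n$ in the component definition together with the sign in \eqref{nabla_mu} conspires to give \eqref{chain_recursion_2} with all three terms of the same sign. Once these multiplicities and signs are pinned down, the two remaining steps are a direct substitution using the frame formulas \eqref{g_I_o} and the triviality of the $H$-bundle.
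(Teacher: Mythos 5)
Your proposal is correct and follows the paper's own route essentially verbatim: saturating the symmetrized identity \eqref{nabla_mu_I} with $j+1+n$ indices set to $\0$, absorbing the binomial multiplicities into the normalization $(-)^n C^{\hp n}_{2j}$ of the components to obtain the $1,2,1$ relation, and then passing to the Pauli frame with $\theta^{\A'}{}_{\B'} = 0$ (available globally since the $H$-bundle of a simply-connected hyperk\"ahler manifold is flat and trivial) to land on the recursion \eqref{chain_recursion_2}. Your explicit verification of the boundary cases $n = \pm(j+1)$, giving the holomorphicity of $\mu_{+j}$ and anti-holomorphicity of $\mu_{-j}$ via $I_+I_- = P^{0,1}_{\Io}$ and $I_-I_+ = P^{1,0}_{\Io}$, is a welcome touch the paper leaves implicit in its general remarks on bounded chains.
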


Recursive chains of hyperpotentials, we have seen, are closely related to holomorphic functions on the twistor space. In particular, \textit{bounded} recursive chains of hyperpotentials always have holomorphic functions associated to them as the defining series is finite and there are no convergence issues to worry about. Accordingly, this Proposition has the immediate corollary that valence $(0,2j)$ Killing tensors give rise to holomorphic sections of the pullback bundle $\pi^*\mathcal{O}(2j)$ over $\mathcal{Z}$.

For $j=1$ this recursive chain of hyperpotentials is the same one that we have encountered in \S\,\ref{ssec:tri-Ham}. There we saw that the associated holomorphic function can be interpreted as a Hamiltonian function with respect to the twisted symplectic form on each twistor fiber. We will now show that a similar statement holds for general values of $j$. To this purpose, it is convenient to view the valence $(1,2j-1)$ tensor field $X^{\hp ^{.}}{}_{\!\cdots}$ as a $S^{2j-2}H$-valued vector field
\begin{equation}
X_{\A'_1\cdots\A'_{2j-2}} = X^{\A\A'}{}_{\A'_1\cdots\A'_{2j-2}} e^{\vee}_{\A\A'} 
\end{equation}
where $e^{\vee}_{\A\A'}$ stands for the vector frame dual to the coframe $e^{\A\A'}$.  

\begin{lemma} \label{poc}
With the definitions and assumptions above, the following formula holds:
\begin{equation} \label{i_X_o_symm}
\iota_{X_{(\A'_1\cdots\A'_{2j-2}}} \omega_{_{\A'_{2j-1}\A'_{2j})}} = \nabla \mu_{\A'_1 \cdots \A'_{2j}} \mathrlap{.}
\end{equation}
\end{lemma}

\begin{proof}

From the definitions, then the third formula \eqref{g_I_o}, and finally the relation \eqref{nabla_mu} we have successively
{\allowdisplaybreaks
\begin{align}
\iota_{X_{(\A'_1\cdots\A'_{2j-2}}} \omega_{_{\A'_{2j-1}\A'_{2j})}} & = X^{\B\B'}{}_{(\A'_1\cdots\A'_{2j-2}} (\omega_{\A'_{2j-1}\A'_{2j})})_{\B\B',\C\C'} e^{\C\C'} \\[1pt]
& = - i \pt \varepsilon_{\C'(\A'_{2j}} X^{\B}{}_{\A'_{2j-1}\A'_1\cdots\A'_{2j-2})} \varepsilon_{\B\C} \pt e^{\C\C'} \nonumber \\[3pt]
& = \nabla_{\C\C'} \mu_{\A'_1 \cdots \A'_{2j}} e^{\C\C'} \nonumber
\end{align}
}which proves the statement.
\end{proof}

Proceeding as before, we introduce now vector field-valued components defined as follows:
{\allowdisplaybreaks
\begin{gather}
X_{n} = (-)^n C^{\hp n}_{2j-2} \pt X_{\hp\underbrace{\0 \cdots \0}_\text{$j\!-\!1\!+\!n$}\underbrace{\1 \cdots \1}_\text{$j\!-\!1\!-\!n$}} \\
C^{\hp n}_{2j-2} = \dbinom{2j-2}{j-1+n} \text{ for $n \in [1-j,j-1]$ and $0$ otherwise.} \nonumber
\end{gather}
}%
The components for which the parameters of the corresponding combinatorial factor are out of range vanish by definition, leaving $2j-1$ non-vanishing components. The component of equation \eqref{i_X_o_symm} having exactly $j+n$ indices equal to $0'$ reads then
\begin{equation} 
\iota_{X_{n-1}} \omega_{\0\0} + 2\hp \iota_{X_{n}} \omega_{\0\1} + \iota_{X_{n+1}} \omega_{\1\1} = \nabla \mu_{\hp n}
\end{equation}
for any $n = -j, \dots, j$. When choosing the same frame as in Proposition~\ref{Kill_tens_chain} this equation becomes
\begin{equation} \label{i_X_M}
\iota_{X_{n-1}} \omega_+ + \iota_{X_{n}} \omega_0 + \iota_{X_{n+1}} \omega_- = d\mu_{\hp n} \mathrlap{.}
\end{equation}
Acting on this relation with an exterior derivative and using the fact that the hyperk\"ahler 2-forms are all closed, we get via Cartan's formula for the Lie derivative
\begin{equation} \label{L_X_M}
\mathcal{L}_{X_{n-1}} \omega_+ + \mathcal{L}_{X_{n}} \omega_0 + \mathcal{L}_{X_{n+1}} \omega_- = 0 \mathrlap{.}
\end{equation}
%\textcolor{blue}{Note that the converse statement also holds. Given a set of vector fields satisfying this last property, based on the closure of the hyperk\"ahler 2-forms one can retrieve locally by means of the Poincar\'e lemma the potentials $\mu_{\hp n}$ and the equations \eqref{i_X_M}. }

These results are best understood if we adopt a twistor space perspective. The vector fields $X_n$ can be viewed as the components of a section of $T_F\mathcal{Z} \otimes \pi^*\mathcal{O}(2j-2)$ over $\mathcal{Z}$. In a certain local trivialization of $\mathcal{Z}$ this section takes the form
\begin{equation} \label{twisted_X}
X(\zeta) = \frac{X_{j-1}}{\zeta^{j-1}} + \cdots + X_0 + \cdots + \zeta^{j-1} X_{-j+1}
\end{equation}
with $\zeta$ an inhomogeneous complex coordinate on the twistor base chosen such that $\zeta = 0$ labels the complex structure $I_0$. Correspondingly, the holomorphic section of $\pi^*\mathcal{O}(2j)$ associated to the recursive chain of hyperpotentials of Proposition \ref{Kill_tens_chain} reads
\begin{equation}
\mu(\zeta) = \frac{\mu_{+j}}{\zeta^j} + \cdots + \mu_0 + \cdots + \zeta^j \mu_{-j} \mathrlap{.}
\end{equation}
The equations \eqref{i_X_M} assemble naturally into the \textit{fiberwise} moment map equation 
\begin{equation} \label{twist_mom-map}
\iota_{X(\zeta)} \omega(\zeta) = d\mu(\zeta) 
\end{equation}
\noindent while the equations \eqref{L_X_M} give similarly the \textit{fiberwise} symplectic invariance condition 
\begin{equation} \label{twist_tri-Ham}
\mathcal{L}_{X(\zeta)} \omega(\zeta) = 0 \mathrlap{.}
\end{equation}

\begin{remark}
For $j=1$ this is the equation \eqref{Ham-zeta} from the tri-Hamiltonian case. We thus see that the ``hidden" symmetries labeled by $j>1$ correspond naturally to a \textit{twisting} of the tri-Ha\-mil\-to\-nian condition. For this reason we say that the symmetries associated to Killing tensors proper belong to a \textit{trans-tri-Ha\-mil\-to\-nian hidden series}.
\end{remark}

Observe that, unlike in the $j=1$ case, in the $j>1$ one the generator $X(\zeta)$ is \textit{not} uniquely defined. In the latter case, a redefinition of $X(\zeta)$ of the form %$X(\zeta) \longmapsto X(\zeta) + I(\zeta)Y(\zeta)$~with 
\begin{equation}
X(\zeta) \longmapsto X(\zeta) + I(\zeta)Y(\zeta)
\quad\text{with}\quad
Y(\zeta) = \sum_{n =2-j}^{j-2} Y_n \hp \zeta^{-n}
\end{equation}
%\begin{equation}
%Y(\zeta) = \sum_{n =2-j}^{j-2} Y_n \hp \zeta^{-n}
%\end{equation}
leaves the formulas \eqref{twist_mom-map} and \eqref{twist_tri-Ham} invariant for any choice of vector fields $Y_n \in T_{\mathbb{C}}M$ subject only to the reality condition $\bar{Y}_{n} = (-)^n Y_{-n} $, required to preserve the reality property of $X(\zeta)$. Since \eqref{twist_tri-Ham} is implied by \eqref{twist_mom-map}, it suffices to show that $\iota_{X(\zeta)}\omega(\zeta)$ is invariant. This follows immediately from the fact that $I(\zeta)Y(\zeta)$ and $\omega(\zeta)$, viewed as a vector field respectively a 2-form on $M$, are of type $(0,1)$ respectively $(2,0)$ relative to $\IU$, and thus their contraction vanishes trivially. Equivalently, the two sets of formulas \eqref{i_X_M} and \eqref{L_X_M}  are invariant at redefinitions \mbox{$X_n \longmapsto X_n + I_+Y_{n-1} + I_0 Y_n + I_- Y_{n+1}$}. %for arbitrary choices of vector fields $Y_n$ subject to the above reality constraint and the requirement that $Y_n = 0$ for $n$ outside the interval $[2-j,j-2]$. 
Rather than consider it a drawback, we can use this freedom to choose a convenient representative for the system of vector fields $X_n$. Thus, for any vector field $X(\zeta)$ of the type \eqref{twisted_X}, performing a transformation with 
\begin{equation}
Y(\zeta) = - \sum_{n=0}^{j-2} ( I_-X_{n+1} \zeta^{-n} + I_+ X_{-n-1} \zeta^n)
\end{equation}
brings it to the form
\begin{equation}
X^{\star}(\zeta) = \sum_{n=0}^{j-1} [P^{1,0}_{\Io} (X_n - i \hp I_-X_{n+1}) \hp \zeta^{-n } + P^{0,1}_{\Io} (X_{-n} + i \hp I_+X_{-n-1}) \hp \zeta^n] \mathrlap{.}
\end{equation}
This shows that

\begin{lemma} \label{canon_pres}
Given a trans-tri-Hamiltonian hidden action, we can always redefine its generators $X_n$ so that they become sections of the bundle $T^{1,0}_{\Io}M$ if \mbox{$n > 0$} and of the bundle $T^{0,1}_{\Io}M$ if \mbox{$n < 0$}.
\end{lemma}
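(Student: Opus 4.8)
The plan is to obtain the statement as an immediate by-product of the gauge transformation exhibited just above it, so that the proof reduces to verifying the Hodge type of each coefficient of the displayed generator $X^{\star}(\zeta)$. First I would record the residual freedom already established: since adding $I(\zeta)Y(\zeta)$ to $X(\zeta)$ leaves \eqref{twist_mom-map} and \eqref{twist_tri-Ham} invariant, at the level of Laurent coefficients it acts by $X_n \mapsto X_n + I_+ Y_{n-1} + I_0 Y_n + I_- Y_{n+1}$. Expanding the prescribed $Y(\zeta) = -\sum_{n=0}^{j-2}(I_- X_{n+1}\zeta^{-n} + I_+ X_{-n-1}\zeta^{n})$ into coefficients, one finds $Y_k = -I_- X_{k+1}$ for $k \geq 1$, $Y_k = -I_+ X_{k-1}$ for $k \leq -1$, and $Y_0 = -I_- X_1 - I_+ X_{-1}$, with all out-of-range components of $X$ understood to vanish.

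The core step is then a direct reduction of $X^{\star}_n$ using the quaternionic identities $I_+ I_- = P^{0,1}_{\Io}$, $I_- I_+ = P^{1,0}_{\Io}$, $I_0 I_- = i I_-$, $I_0 I_+ = -i I_+$ and $I_{\pm}^2 = 0$, all immediate from the definitions of $I_{\pm}, I_0$ and the imaginary-quaternion algebra \eqref{i=oo}. For $n>0$ the generic substitution gives $X^{\star}_n = X_n - I_+ I_- X_n - I_0 I_- X_{n+1} - I_-^2 X_{n+2} = (1 - P^{0,1}_{\Io}) X_n - i I_- X_{n+1} = P^{1,0}_{\Io}(X_n - i I_- X_{n+1})$, where the last equality uses $P^{1,0}_{\Io} I_- = I_-$; this manifestly lies in $T^{1,0}_{\Io}M$. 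The conjugate computation, substituting the $k \leq -1$ form of $Y_k$ and invoking $I_- I_+ = P^{1,0}_{\Io}$, $I_0 I_+ = -iI_+$, $I_+^2 = 0$ and $P^{0,1}_{\Io} I_+ = I_+$, yields $X^{\star}_{-n} = P^{0,1}_{\Io}(X_{-n} + i I_+ X_{-n-1}) \in T^{0,1}_{\Io}M$ for $n>0$. Reassembling all coefficients reproduces the displayed $X^{\star}(\zeta)$, and since $P^{1,0}_{\Io}$ and $P^{0,1}_{\Io}$ project onto $T^{1,0}_{\Io}M$ and $T^{0,1}_{\Io}M$ respectively, the claim follows.

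The only delicate part, and the closest thing to an obstacle, is the bookkeeping at the ends of the index range and at $n=0$. One must check that the endpoint coefficients ($n=1$ and $n=j-1$) reduce by the very same identities once the vanishing contributions ($I_{\pm}^2 = 0$ together with the out-of-range components) are discarded, so that the uniform formula $X^{\star}_n = P^{1,0}_{\Io}(X_n - iI_- X_{n+1})$ persists to the boundary. At $n=0$ the various pieces combine into $X^{\star}_0 = X_0 - i I_- X_1 + i I_+ X_{-1}$, a vector field of no definite type relative to $I_0$; this is exactly why the statement imposes conditions only for $n \neq 0$ and leaves $X_0$ untouched. Beyond this indexing care, every manipulation is routine quaternionic algebra.
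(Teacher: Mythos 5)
Your proof is correct and is essentially the paper's own argument: the paper establishes the lemma precisely by applying the gauge transformation with the displayed $Y(\zeta)$, and your coefficient-wise reduction via $I_{\pm}^2=0$, $I_+I_-=P^{0,1}_{\Io}$, $I_-I_+=P^{1,0}_{\Io}$, $I_0I_{\pm}=\mp\, i\hp I_{\pm}$, $P^{1,0}_{\Io}I_-=I_-$, $P^{0,1}_{\Io}I_+=I_+$ (including the endpoint and $n=0$ bookkeeping) is exactly the verification the paper leaves implicit. The only point worth adding is a one-line check that this $Y(\zeta)$ is an admissible redefinition, \textit{i.e.}\ that it obeys the required reality condition $\bar{Y}_n=(-)^nY_{-n}$, which follows immediately from $\bar{X}_n=(-)^nX_{-n}$ together with $\bar{I}_{\pm}=-I_{\mp}$.
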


The conclusions of this discussion can be summed up as follows:

\begin{theorem} \label{Kill_spins_tw_sp}
A valence $(0,2j)$ Killing tensor on a simply-connected hyperk\"ahler manifold $M$ with twistor space $\mathcal{Z}$ gives rise to a global section of $T_F\mathcal{Z} \otimes \pi^*\mathcal{O}(2j-2)$ whose fiberwise Lie derivative preserves the symplectic form induced on each fiber by the canonical holomorphic 2-form of $\mathcal{Z}$, and for which the holomorphic $\pi^*\mathcal{O}(2j)$ section associated to the recursive chain of hyperpotentials of Proposition \ref{Kill_tens_chain} plays on each fiber the role of Hamiltonian function.
\end{theorem}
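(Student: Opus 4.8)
The plan is to assemble the theorem from the component-level identities established above; the only genuinely new task is to recast those identities as statements about \emph{global} sections of twisted bundles over $\mathcal{Z}$. First I would invoke the preceding Proposition, which upgrades a valence $(0,2j)$ Killing tensor $\mu_{\A'_1\cdots\A'_{2j}}$ to a valence $(1,2j-1)$ tri-Hamiltonian Killing tensor $X^{\A}{}_{\A'_2\cdots\A'_{2j}}$ through the relation \eqref{nabla_mu}. Packaging the symmetrized primed indices into the $S^{2j-2}H$-valued vector field $X_{\A'_1\cdots\A'_{2j-2}}$ and retaining $\mu_{\A'_1\cdots\A'_{2j}}$ as an $S^{2j}H$-valued scalar, I would then pass to the twistor picture. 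The key move is to contract every primed index with the tautological spinor parametrizing the fiber sphere $\mathbb{P}(H)$: doing so converts the $S^{2j-2}H$ (respectively $S^{2j}H$) weight precisely into the $\mathcal{O}(2j-2)$ (respectively $\mathcal{O}(2j)$) twisting, and reproduces the local expansions $X(\zeta)$ and $\mu(\zeta)$ recorded in \eqref{twisted_X}. The individual components $X_n$ and $\mu_n$ are globally-defined tensor fields on $M$ precisely because, $M$ being simply connected and hyperk\"ahler, the $H$-bundle is trivial and the frame in which the connection forms $\theta^{\A'}{}_{\B'}$ vanish exists globally; this is the sole place where simple-connectedness is used.

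With $X(\zeta)$ and $\mu(\zeta)$ so interpreted, the two symplectic properties follow immediately from what has already been proved. The fiberwise moment-map identity \eqref{twist_mom-map}, which is nothing but the $\zeta$-assembly of the component equations \eqref{i_X_M} coming from Lemma \eqref{i_X_o_symm}, exhibits $\mu(\zeta)$ as a Hamiltonian for $X(\zeta)$ with respect to the fiber form $\omega(\zeta)$. Differentiating it and using Cartan's formula together with the closure of the hyperk\"ahler 2-forms yields the invariance \eqref{twist_tri-Ham}, namely $\mathcal{L}_{X(\zeta)}\omega(\zeta)=0$, which is exactly the assertion that the fiberwise Lie derivative of the section $X(\zeta)$ preserves the symplectic structure induced on each fiber by the canonical holomorphic 2-form of $\mathcal{Z}$.

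Finally, Proposition \ref{Kill_tens_chain} identifies the coefficients $\mu_n$ as a \emph{bounded} recursive chain of hyperpotentials with respect to $I_0$. Since the chain is finite, the associated series $\mu(\zeta)=\sum_n \mu_n\zeta^{-n}$ has no convergence issues, so by the first part of Proposition \ref{hol-hyper(1,1)} it is a genuinely holomorphic object on $\mathcal{Z}$ — a holomorphic section of $\pi^*\mathcal{O}(2j)$ — and this is precisely the section playing the role of fiber Hamiltonian above. Collecting these three observations proves the theorem.

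I expect the only real work, beyond citing the earlier results, to lie in justifying that the locally-defined components genuinely patch into global sections of the twisted bundles: one must match the powers of $\zeta$ against the transition behavior under $\zeta\mapsto1/\zeta$ and confirm the fiberwise identification $S^kH \leftrightarrow \mathcal{O}(k)$. This is standard twistor bookkeeping rather than a deep difficulty, but it is where care is needed about conventions, in particular the alternating reality condition and the placement of factors of $\zeta$ versus $\zeta^c$ that fix the signs and binomial weights entering the definitions of $X_n$ and $\mu_n$.
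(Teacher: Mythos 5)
Your proposal is correct and follows essentially the same route as the paper, whose ``proof'' of this theorem is precisely the preceding component-level development you cite: the contraction identity \eqref{i_X_o_symm} yielding the component moment-map equations \eqref{i_X_M}, their Cartan-formula consequence \eqref{L_X_M}, global triviality of the $H$-bundle from simple-connectedness, and the boundedness of the chain in Proposition \ref{Kill_tens_chain} guaranteeing, via Proposition \ref{hol-hyper(1,1)}, that $\mu(\zeta)$ is a genuine holomorphic section of $\pi^*\mathcal{O}(2j)$, all assembled into \eqref{twist_mom-map} and \eqref{twist_tri-Ham}. Your phrasing of the globalization via contraction with the tautological spinor on $\mathbb{P}(H)$ is only cosmetically different from the paper's explicit $\zeta$-expansions in a flat frame with the alternating reality condition.
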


\begin{remark}
Our considerations here can be viewed as an explicit demonstration of how Dunajski and Mason's Killing spinor ideas \cite{MR1785432, MR2006758} connect with Bielawski's notion of \textit{twistor group actions} \cite{MR1848654}. The latter describe situations when there is no group action preserving the hyperk\"ahler structure, but where for each complex structure there is an action of a complex group preserving the corresponding complex symplectic structure. 
\end{remark}

A large class of examples of hyperk\"ahler metrics with trans-tri-Hamiltonian hidden symmetries is given by Lindstr\"om and Ro\v{c}ek's \textit{generalized Legendre transform metrics} \cite{MR929144}. These are metrics for which the holomorphic twistor projection factorizes through a direct sum of line bundles over $\mathbb{CP}^1$ of positive even degree:
\begin{center}
\begin{tikzcd}
\mathcal{Z} \rar & \displaystyle \bigoplus_{\I=1}^{\dim_{\pt \mathbb{H}} M} \! \mathcal{O}(2j_{\I}) \rar & \mathbb{CP}^1 \rlap{.}
\end{tikzcd}
\end{center}
For a recent review including a detailed discussion of their hidden symmetries in a similar vein as here we refer the reader to \cite{MR3681388} (esp.~\S\,3.3).

\section{Rotational and trans-rotational symmetries} \label{sec:Trans-rot}

\subsection{Quasi-rotational vector fields}

\subsubsection{}

According to their action on the 2-sphere of hyperk\"ahler symplectic structures, isometric vector field actions on hyperk\"ahler manifolds fall into one of two categories: 
\begin{itemize}
\setlength{\itemsep}{2pt}
\item[1.] tri-Hamiltonian actions, for which every point of the sphere is a fixed point;
\item[2.] rotational actions, with two antipodally-opposite fixed points.
\end{itemize}
In the previous section we have seen that the first class of actions admits twisted versions forming a trans-tri-Hamiltonian class of hidden symmetries. In this section we will pursue the question of whether similar twistings are possible in the second case as well.

\begin{definition}
Let $M$ be a hyperk\"ahler manifold. We call a vector field $X \in TM$ \textit{rotational} if its Lie action generates a rotation of the 2-sphere of hyperk\"ahler symplectic structures about a fixed axis, as viewed from the perspective of its natural embedding in $\mathbb{R}^3$.
\end{definition}

A vector field $X$ whose Lie action on the standard basis of hyperk\"ahler symplectic structures takes the form 
\begin{equation} \label{axial_rot}
\mathcal{L}_X \omega_1 = \omega_2
\qquad\quad
\mathcal{L}_X \omega_2 = - \, \omega_1
\qquad\quad
\mathcal{L}_X \omega_3 = 0
\end{equation}
is rotational with respect to the 3-axis. The first two conditions imply by way of the third formula \eqref{i=oo} that $\mathcal{L}_XI_3 = 0$, and since $X$ also preserves the symplectic structure $\omega_3$ it follows that $X$ is a Killing vector field for the hyperk\"ahler metric on $M$. This argument carries over for any choice of rotation axis to show that actions generated by rotational vector fields are necessarily isometric. 

Note that in the alternative complexified basis given by $\omega_+$, $\omega_0$, $\omega_-$ the conditions \eqref{axial_rot} assume the diagonal form 
\begin{equation} \label{eigenform}
\begin{aligned}
\mathcal{L}_X\omega_{\pm} & = \mp \hp i \pt \omega_{\pm}  \\
\mathcal{L}_X\omega_0 \, & = 0 \rlap{.}
\end{aligned}
\end{equation}
With this in mind, let us introduce the following relaxation of the notion of rotational vector field:  

\begin{definition}
We call a vector field $X \in TM$ \textit{quasi-rotational (relative to the 3-axis)} if there exists a complex-valued 2-form $\sigma_+$ on $M$ of type $(1,1)$ with respect to the complex structure $I_3 \equiv I_0$ such that
\begin{equation} \label{quasi_rot}
\begin{aligned}
\mathcal{L}_X\omega_{\pm} & = \mp \hp i \hp (\hp \omega_{\pm} - \sigma_{\pm}) \\[0pt]
\mathcal{L}_X\omega_0 \, & = 0 
\end{aligned}
\end{equation}
where, by definition, $\sigma_- = - \, \bar{\sigma}_+$.
\end{definition}

\begin{remark}
This definition was considered by Korman in \cite{MR3625762}, albeit in a form requiring that $\sigma_+$ be of hyper $(1,1)$ type rather than just of simple $(1,1)$ type with respect to a single complex structure. We will see immediately that the less restrictive condition we consider here implies the one in \cite{MR3625762}.
\end{remark}

\begin{examples} \hfill \smallskip

1.~An important class of examples is provided by Korman in \textit{op.~cit.}, who considers what happens to a \textit{spectator} $S^1$-action of rotational type under hyperk\"ahler reduction\,---\,that is, to an $S^1$-action which commutes with the tri-Hamiltonian Lie group action $G$ with respect to which the hyperk\"ahler quotient is being taken. What he finds is that the spectator $S^1$-action does not always descend to a similarly rotational $S^1$-action on the reduced hyperk\"ahler manifold, but rather it descends in general to a deformed version of such an action, of which the above quasi-rotational actions are an example. By observing that the conditions on the generating vector field $X$ for the original $S^1$-action can be equivalently reformulated as a set of conditions for the 1-form $\smash{ \alpha = \iota_X\omega_0 }$, Korman is also able to generalize his arguments to an infinite-dimensional setting where such a vector field might not exist. \hfill \smallskip

2.~\noindent The next example, inspired by considerations in \cite{Lindstrom:2008gs}, shows that, \textit{locally}, vector fields with the quasi-rotational property occur quite naturally on hyperk\"ahler manifolds. For any function \mbox{$f \in \mathscr{A}^0(M)$}, where $M$ represents here a generic hyperk\"ahler manifold, let \mbox{$X_f \in TM$} denote the symplectic gradient of $f$ with respect to the hyperk\"ahler symplectic form $\omega_0$, that is,
\begin{equation} 
\iota_{X_f}\omega_0 = df \mathrlap{.}
\end{equation}
We also have then
\begin{equation} \label{mambo}
\begin{aligned}
& \iota_{X_f}\omega_+ = df \omega_0^{-1}\omega_+ = i \hp df I_+ \\[1pt]
& \iota_{X_f} g = - \, (\iota_{X_f}\omega_0) I_0 = - \, df I_0 \mathrlap{.}
\end{aligned}
\end{equation}
Let $\kappa_0$ be a real K\"ahler potential for $\omega_0$, that is, $\omega_0 = - \hp i \pt \partial\bar{\partial} \kappa_0$ on some open set in $M$. Then the locally-defined symplectic gradient  $X_{-\kappa_0/2}$ \pagebreak is a quasi-rotational vector field relative to the 3-axis. More precisely, the following two formulas hold:%\pt\footnote{\pt The action on $\omega_-$ can be trivially obtained from that on $\omega_+$ by (alternating) complex conjugation.}
\begin{equation} \label{X_k0/2}
\begin{aligned}
\mathcal{L}_{X_{-\kappa_0/2}} \omega_+ & = -i \hp [ \hp \omega_+ + F_+(\kappa_0/2)] \\[1pt]
\mathcal{L}_{X_{-\kappa_0/2}} \omega_0 \, & = 0 \mathrlap{.}
\end{aligned}
\end{equation}
(The action on $\omega_-$ can be trivially obtained from that on $\omega_+$ by alternating complex conjugation.) Indeed, the second formula is a direct consequence of the closure of $\omega_0$ by way of Cartan's homotopy formula:
\begin{equation}
\mathcal{L}_{X_{-\kappa_0/2}} \omega_0 = d(\iota_{X_{-\kappa_0/2}}\omega_0) = - \, d^2 (\kappa_0/2) = 0 \rlap{.}
\end{equation}
On the other hand, by a similar first step and then resorting successively to the first relation \eqref{mambo} and the definitions \eqref{Fs_and_Hs} we get
\begin{equation}
\mathcal{L}_{X_{-\kappa_0/2}} \omega_+ = d(\iota_{X_{-\kappa_0/2}}\omega_+)  = - i \hp d\hp [d(\kappa_0/2)I_+] = -i \hp [H_+(\kappa_0/2) + F_+(\kappa_0/2) ] \mathrlap{.}
\end{equation}
Furthermore, choosing for convenience an arbitrary local coordinate system holomorphic with respect to $I_0$ and using the first integrability condition \eqref{HK_integrability} yields for the first term 
{\allowdisplaybreaks
\begin{align} \label{H+(K)}
H_+(\kappa_0/2) & = \frac{1}{2} \partial(\bar{\partial}\kappa_0 I_+) \\
& = \frac{1}{2} \partial \pt [ \hp \partial_{\bar{\rho}} \kappa_0 (I_+)^{\bar{\rho}}{}_{\nu} \pt dx^{\nu} ] \nonumber \\
& = \frac{1}{2} [ \underbracket[0.4pt][4pt]{\partial_{\mu}\partial_{\bar{\rho}} \kappa_0}_{\text{\normalsize{$g_{\mu\bar{\rho}}$}} \rule[6pt]{0pt}{0pt}} (I_+)^{\bar{\rho}}{}_{\nu} + \partial_{\bar{\rho}} \kappa_0 \underbracket[0.4pt][4pt]{\partial_{\mu}(I_+)^{\bar{\rho}}{}_{\nu}}_{\text{vanishes}}] \pt dx^{\mu} \!\wedge dx^{\nu} \nonumber \\[-8pt]
& = \frac{1}{2} \omega_{+\mu\nu} \pt dx^{\mu} \!\wedge dx^{\nu} \nonumber \\[6pt]
& = \omega_+ \rlap{.} \nonumber
\end{align}
}%
This proves the remaining formula. For reasons to become soon apparent let us also mention an equivalent reformulation of the equations \eqref{X_k0/2}. From the generic hyperk\"ahler formula \eqref{HK_formula} we have: 
\begin{equation} \label{grad_quasi_rot}
\begin{aligned}
\mathcal{L}_{I_0X_{-\kappa_0/2}} \omega_+ & = i \hp \mathcal{L}_{X_{-\kappa_0/2}} \omega_+ = \omega_+ + F_+(\kappa_0/2) \\
\mathcal{L}_{I_0X_{-\kappa_0/2}} \omega_0 \, & = d(\iota_{X_{-\kappa_0/2}}g) = \frac{1}{2} d(d\kappa_0I_0) = \omega_0 \rlap{.}
\end{aligned}
\end{equation}
In the last derivation we made use of the second relation \eqref{mambo}.

\end{examples}

%\smallskip

Returning now to the definition, notice that due to the fact that $\omega_+$ and $\omega_-$ are of type $(2,0)$ respectively $(0,2)$ relative to the complex structure $I_0$\,---\,or equivalently, by the formula \eqref{HK_formula}\,---\,we may write the first equation \eqref{quasi_rot} in the form
\begin{equation} \label{Lolek}
\mathcal{L}_{I_0X}\omega_{\pm} = \omega_{\pm} - \sigma_{\pm} \rlap{.}
\end{equation}
In parallel to this, let us \textit{define} an additional 2-form $\sigma_0$ by 
\begin{equation} \label{Bolek}
\mathcal{L}_{I_0X}\omega_0 = \omega_0 \pt - \pt \sigma_0 \mathrlap{.}
\end{equation}

\begin{remarks} \hfill
\begin{itemize}
\setlength{\itemsep}{3pt}
\item[1.] Any rotational vector field is also quasi-rotational, with
\begin{equation}
\sigma_+ = \sigma_- = 0
\end{equation}
and in general a non-trivial $\sigma_0$.
\item[2.] At the opposite end, in the second example above the formulas \eqref{grad_quasi_rot} show for instance that $X_{-\kappa_0/2}$ is a locally-defined quasi-rotational vector field relative to the 3-axis with  
\begin{equation}
\sigma_{\pm} = - \, F_{\pm}(\kappa_0/2)
\quad\text{and}\quad
\sigma_0 = 0
\rlap{.}
\end{equation}
%\begin{equation}
%\sigma_+ = - F_+(\kappa_0/2)
%\qquad\quad
%\sigma_0 = 0
%\qquad\quad
%\sigma_- = - F_-(\kappa_0/2)
%\rlap{.}
%\end{equation}
\end{itemize}
\end{remarks}
 
\medskip

\noindent The following key property generalizes Proposition 1 of \cite{MR3116317} to the class of quasi-rotational vector fields:

\begin{proposition} \label{quasi_rot_hyperpot}
$\sigma_+$, $\sigma_0$, $\sigma_-$ thus defined are all closed hyper $(1,1)$ forms.
\end{proposition}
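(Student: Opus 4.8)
The plan is to reduce the whole statement to the coordinate criterion recorded after Lemma~\ref{criterion_1} together with the integrability constraints \eqref{HK_integrability}, reusing the mechanism of Lemma~\ref{F+F-}. Write $\alpha := \iota_X g$ for the metric dual $1$-form of $X$, with Hodge splitting $\alpha = \alpha^{1,0} + \alpha^{0,1}$ relative to $I_0$. From $\omega_m(\cdot\,,\cdot) = g(\cdot\,,I_m\cdot)$ and the quaternionic relations one gets the three contraction identities $\iota_X\omega_{\pm} = \alpha I_{\pm}$ (of type $(1,0)$ resp.\ $(0,1)$ for $I_0$), $\iota_X\omega_0 = \alpha I_0 = i\hp(\alpha^{1,0}-\alpha^{0,1})$, and $\iota_{I_0X}\omega_0 = \alpha$. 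By Cartan's formula and closure of the $\omega_m$ these give $\mathcal{L}_X\omega_{\pm} = d(\alpha I_{\pm})$ and $\mathcal{L}_{I_0X}\omega_0 = d\alpha$. Closure of the $\sigma_m$ is then immediate: by \eqref{Lolek}--\eqref{Bolek} each $\sigma_m = \omega_m - \mathcal{L}_{I_0X}\omega_m$, and $d$ commutes with $\mathcal{L}_{I_0X}$ while $d\omega_m = 0$, so $d\sigma_m = 0$.

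The crucial structural input is the scalar invariance $\mathcal{L}_X\omega_0 = 0$. Feeding $\iota_X\omega_0 = i\hp(\alpha^{1,0}-\alpha^{0,1})$ into Cartan's formula yields $d\alpha^{1,0} = d\alpha^{0,1}$, and matching Hodge types with respect to $I_0$ gives
\[
\partial\alpha^{1,0} = 0, \qquad \bar\partial\alpha^{0,1} = 0, \qquad \bar\partial\alpha^{1,0} = \partial\alpha^{0,1}.
\]
In particular $d\alpha = \mathcal{L}_{I_0X}\omega_0$ is of pure type $(1,1)$ relative to $I_0$, so $\sigma_0 = \omega_0 - d\alpha$ is $(1,1)$ with respect to $I_0$. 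These vanishings are what will play, in the present non-exact setting, the role that the symmetry of mixed second derivatives of a function played in the proof of Lemma~\ref{F+F-}.

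With this in hand I would finish case by case. For $\sigma_+$: comparing Hodge types in $\mathcal{L}_X\omega_+ = d(\alpha I_+)$ with $\mathcal{L}_X\omega_+ = -i\hp(\omega_+ - \sigma_+)$ and using that $\sigma_+$ is $(1,1)$ by hypothesis \eqref{quasi_rot} identifies $\partial(\alpha I_+) = -i\hp\omega_+$ and $\sigma_+ = -i\hp\bar\partial(\alpha I_+)$. Written out in an $I_0$-holomorphic coframe, the two vanishing conditions after Lemma~\ref{criterion_1} each split into an ``integrability'' piece killed by \eqref{HK_integrability} and a piece proportional to $\bar\partial\alpha^{0,1}$, which vanishes by the display above; this is line-for-line the computation of Lemma~\ref{F+F-} with $\bar\partial f$ replaced by $\alpha^{0,1}$, and shows $\sigma_+$ is hyper $(1,1)$. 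Then $\sigma_- = -\bar\sigma_+$ is hyper $(1,1)$ by (alternating) complex conjugation. For $\sigma_0 = \omega_0 - d\alpha$ I would check the first vanishing condition directly: the $\omega_0$-part contributes $-2i\hp\omega_+$ (via $(\omega_0)_{\mu\bar\eta}(I_+)^{\bar\eta}{}_{\nu} = -i\hp\omega_{+\mu\nu}$), while the $d\alpha$-part, after discarding the $\partial I_+$ terms through \eqref{HK_integrability} and substituting the relation $\partial_{\bar\eta}\alpha_{\mu} = -\partial_{\mu}\alpha_{\bar\eta}$ together with $\partial(\alpha I_+) = -i\hp\omega_+$, also contributes $-2i\hp\omega_+$; the two cancel, and the conjugate condition follows by conjugation.

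The main obstacle is the second paragraph: recognizing that the single equation $\mathcal{L}_X\omega_0 = 0$ is precisely what forces $d\alpha$ to be of type $(1,1)$, and hence what upgrades all three forms from merely $(1,1)$ in the manifest structure to genuinely hyper $(1,1)$. The remaining difficulty is purely the Hodge-type bookkeeping for $\sigma_0$, where one must separately track the $\omega_+$ terms arising from $\omega_0$ and from $d\alpha$ and verify their cancellation; the $\sigma_{\pm}$ cases, by contrast, become automatic once they are recast as $\mp i\hp\bar\partial(\alpha I_{\pm})$ and compared with Lemma~\ref{F+F-}.
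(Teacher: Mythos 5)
Your proof is correct; I checked the contraction identities, the Hodge-type bookkeeping (including the factors of $2$ in the $\sigma_0$ cancellation), and the claim that the computation of Lemma~\ref{F+F-} survives the substitution $\bar{\partial}f \rightsquigarrow \alpha^{0,1}$, and all of it goes through. The skeleton is the same as the paper's\,---\,derive $\sigma_m = \omega_m - d(\iota_{I_0X}\omega_m)$, get closure for free, match Hodge types in the $m=1$ equation using the hypothesis that $\sigma_+$ is $(1,1)$, then verify the hyper $(1,1)$ property via \eqref{HK_integrability}\,---\,but your execution is potential-free and genuinely diverges at both verification steps. The paper invokes the Poincar\'e lemma to introduce a local moment map $\mu$ with $\iota_X\omega_0 = d\mu$ (eq.~\eqref{mom_map_3}), rewrites everything as $\sigma_m = \omega_m + d(d\mu\hp I_m)$ (eq.~\eqref{sgm-omg}), and concludes $\sigma_+ = F_+(\mu)$ is hyper $(1,1)$ by citing Lemma~\ref{F+F-} as a black box; your three relations $\partial\alpha^{1,0}=0$, $\bar{\partial}\alpha^{0,1}=0$, $\bar{\partial}\alpha^{1,0}=\partial\alpha^{0,1}$ are exactly the local content of $\alpha^{0,1} = i\hp\bar{\partial}\mu$ without ever naming $\mu$, and your observation that the proof of Lemma~\ref{F+F-} uses only $\bar{\partial}(\bar{\partial}f)=0$\,---\,so it applies verbatim to any $\bar{\partial}$-closed $(0,1)$-form\,---\,is a valid mild generalization (the $\bar{\partial}$-Poincar\'e lemma would reduce it back to the lemma as stated). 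The larger divergence is $\sigma_0$: the paper introduces a K\"ahler potential $\kappa_0$, uses the computation $H_+(\kappa_0/2)=\omega_+$ of eq.~\eqref{H+(K)} to obtain $H_{\pm}(\kappa_0+2\mu)=0$, identifies $\sigma_0 = -i\hp\partial\bar{\partial}(\kappa_0+2\mu)$ (eq.~\eqref{sgm0_pot}), and invokes the equivalence $1 \Leftrightarrow 2$ of Proposition~\ref{criterion_2}; you instead prove the same vanishing by a direct contraction computation in which the pure quaternionic identity $\omega_0(\cdot\,,I_+\cdot) = -i\hp\omega_+(\cdot\,,\cdot)$ plays the role of \eqref{H+(K)} and the $(2,0)$-matching $\partial(\alpha I_+) = -i\hp\omega_+$\,---\,which uses the $(1,1)$ hypothesis on $\sigma_+$ exactly where the paper does\,---\,supplies the cancellation. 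What the paper's route buys is its byproducts: $\mu$, $\varphi_+$, and the hyperpotential $\phi_0 = -(\kappa_0+2\mu)$ are reused immediately afterwards to build the recursive chains attached to the action, so producing them inside the proof is economical for the subsequent development. What your route buys is self-containedness and a sharper diagnosis: the proposition is exhibited as a first-order, pointwise consequence of quaternionic algebra, covariant constancy \eqref{HK_integrability}, and the single equation $\mathcal{L}_X\omega_0 = 0$, with no appeal to local potentials or to Proposition~\ref{criterion_2} at all.
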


\begin{proof}

% Closure is immediately apparent from the definitions and is due to the closure of the hyperk\"ahler symplectic forms.

Let us begin by observing that for any vector field $Y \in TM$ and any $m \in \{-1,0,1\}$ we have $\omega_m(I_0X,Y) = g(I_0X,I_mY) = g(I_mY,I_0X) = \omega_0(I_mY,X) = - \, \omega_0(X,I_mY)$, which may be expressed equivalently as $\iota_{I_0X}\omega_m = -\, (\iota_X\omega_0) I_m$. Using Cartan's formula, the equations \eqref{Lolek}--\eqref{Bolek} yield then the relations
\begin{equation} \label{[sgm]=[omg]}
\sigma_m = \omega_m  + d[(\iota_X\omega_0) I_m] \rlap{.}
\end{equation}
The closure of the $\sigma_m$ follows from the closure of the hyperk\"ahler symplectic forms.

The second equation \eqref{quasi_rot} states that the action of $X$ is symplectic with respect to the hyperk\"ahler symplectic structure $\omega_0$, implying that locally on $M$ there must exist a real-valued function $\mu$ defined up to constant shift such that
\begin{equation} \label{mom_map_3}
\iota_X\omega_0 = d\mu \mathrlap{.}
\end{equation}
If we assume that $H^1(M,\mathbb{R}) = 0$ then the action of $X$ is Hamiltonian and the moment map $\mu$ can be defined globally on $M$. The last two relations imply together that
\begin{equation} \label{sgm-omg}
\sigma_m = \omega_m  + d(d\mu I_m) \rlap{.}
\end{equation}

The $m = 1$ component of this equation has the structure
\begin{equation}
\underbracket[0.4pt][4pt]{\sigma_+}_{\text{$_{1,1}$}} = \underbracket[0.4pt][4pt]{\omega_+}_{\text{$_{2,0}$}} + \, d(\underbracket[0.4pt][4pt]{d\mu \hp I_+}_{\text{$_{1,0}$}}) 
\mathrlap{\hspace{5pt} \underbracket[0pt][7.2pt]{\phantom{}}_{\text{w.r.t. $I_0$}}}
\end{equation}
By matching Hodge types with respect to the complex structure $I_0$ we obtain 
\begin{equation}
\begin{aligned}
\sigma_+ & = \phantom{+\,} \bar{\partial}(\bar{\partial}\mu \hp I_+) = \phantom{+} \hspace{3.8pt} F_+(\mu) \\[1pt]
\omega_+ & = -\, \partial(\bar{\partial}\mu \hp I_+) = -\, H_+(\mu) \mathrlap{.}
\end{aligned}
\end{equation}
The first equation implies immediately, via Lemma \ref{F+F-}, that $\sigma_+$ must be a hyper $(1,1)$ form. On the other hand, from \eqref{H+(K)} we have $H_+(\kappa_0/2) = \omega_+$, which, together with the second equation, gives us $H_+(\kappa_0 + 2 \mu) = 0$. Complex conjugation yields in turn $H_-(\kappa_0 + 2 \mu) = 0$. Note in addition that the $m=0$ component of the equation \eqref{sgm-omg} gives locally
\begin{equation} \label{sgm0_pot}
\sigma_0 = -i \hp \partial\bar{\partial} (\kappa_0 + 2\mu) \rlap{.}
\end{equation}
The equivalence $1 \Leftrightarrow 2$ of Proposition \ref{criterion_2} guarantees then that $\sigma_0$ must be a hyper $(1,1)$ form as well. 
\end{proof}

\begin{remark}
If the vector field $X$ is defined globally on $M$, the relations \eqref{[sgm]=[omg]} imply that, for each value of $m$, $\sigma_m$ and $\omega_m$ belong to the same cohomology class in $H^2(M,\mathbb{C})$.
\end{remark}

\subsubsection{}

According to Lemma \ref{exp_ddbar_lem}, to any closed hyper $(1,1)$ form one can associate locally a recursive chain of hyperpotentials. \pagebreak Thus, to $\sigma_+$, $\sigma_-$ and $\sigma_0$ we can associate respectively two mutually conjugated ``complex" recursive chains and one self-conjugated one. Let us focus now for a moment on $\sigma_+$. As a closed hyper $(1,1)$ form, $\sigma_+$ can be locally derived from a complex hyperpotential $\varphi_+$ with respect to the complex structure $I_0$ through the formula
\begin{equation} \label{ragu}
\sigma_+ = i \hp \partial\bar{\partial} \hp \varphi_+ \mathrlap{.}
\end{equation}
Using this in the first equation \eqref{quasi_rot} yields
\begin{equation}
d(\underbracket[0.4pt][4pt]{\iota_X\omega_+ - \partial\varphi_+}_{\text{$_{1,0}$}}) = -i \npt \underbracket[0.4pt][4pt]{\omega_+}_{\text{$_{2,0}$}} 
\mathrlap{\hspace{5pt} \underbracket[0pt][7.2pt]{\phantom{}}_{\text{w.r.t. $I_0$}}}
\end{equation}
By separating this equality into a $(1,1)$ part and a $(2,0)$ part with respect to $I_0$, we infer that locally there exists a symplectic 1-form potential $\theta_+$ for $\omega_+$ holomorphic relative to $I_0$ (that is, a locally-defined 1-form $\theta_+ \in \mathscr{A}^{1,0}_{\scriptscriptstyle \hp I_0}(M)$ satisfying the two conditions: \mbox{$\omega_+ = d \theta_+$} and $\bar{\partial}\theta_+ = 0$) such that 
\begin{equation} \label{quasi_mom_map_+}
\iota_X\omega_+ = \partial\varphi_+ - i \hp \theta_+ \mathrlap{.}
\end{equation}
This can be regarded as a counterpart to the moment map equation \eqref{mom_map_3}. Together, via the first property \eqref{mambo}, they imply in particular that
\begin{equation} \label{mom-pot}
d\varphi_+ - i \hp \theta_+ = i\hp d\mu I_+ + \bar{\partial} \varphi_+  \mathrlap{.}
\end{equation}
Define now a local vector field $S \in T^{1,0}_{\scriptscriptstyle I_0}M$ holomorphic relative to $I_0$ through the condition \mbox{$\iota_{S \hp} \omega_+ = \theta_+$}. By expressing $\omega_0$ in terms of a local K\"ahler potential $\kappa_0$ and exploiting the holomorphicity property of $S$ we can show that $\iota_{S \hp}\omega_0 = - i \hp \bar{\partial} \hp [S(\kappa_0)]$. These last two relations enable us to recast the pair of equations \eqref{quasi_mom_map_+} and \eqref{mom_map_3} into the form 
\begin{equation} \label{double_grad}
\begin{aligned}
\iota_{P^{1,0}_{\Io}X + i S \pt} \omega_+ & = \partial \varphi_+ \\[-2pt]
\iota_{P^{1,0}_{\Io}X + i S \pt} \omega_0 \, & = \bar{\partial} \varphi_0
\end{aligned}
\end{equation}
where $P^{1,0}_{\Io}X$ is the $(1,0)$ component of $X$ with respect to $I_0$ and, by definition,
\begin{equation}
\varphi_0 = \mu + S(\kappa_0) \mathrlap{.}
\end{equation}
The equivalence $1 \Leftrightarrow 5$ of Proposition \ref{criterion_2} ensures then that $\varphi_0$ and $\varphi_+$ are adjacent hyperpotentials in a recursive chain of hyperpotentials with respect to $I_0$, that is, we have symbolically
\begin{equation*}
\cdots \,\noarrow\, \varphi_0 \,\noarrow\, \varphi_+ \,\noarrow\, \cdots \\[2pt]
\end{equation*}
By the equation \eqref{ragu} and the chain recurrence relations we have $\sigma_+ = F_0(\varphi_+) = F_+(\varphi_0)$, and so this chain is associated to $\sigma_+$ in the sense of Lemma \ref{exp_ddbar_lem}.

\subsection{Twisted rotational actions}

\subsubsection{}

With the help of these concepts we are now ready to tackle the issue of trans-rotational symmetries. To define these, we begin with the observation that in the case when $X$ is a rotational vector field with respect to the 3-axis the corresponding equations \eqref{eigenform} can be assembled on the twistor space into a single formula as follows
\begin{equation}
\Big( \! - i \hp \zeta \frac{\partial}{\partial \zeta} + \mathcal{L}_{X} \Big) \omega(\zeta) = 0 
\end{equation}
with the Lie derivative taken fiberwise. The analogous formula in the tri-Hamiltonian case is equation \eqref{Ham-zeta}. In \S\,\ref{ssec:K-tensors} we have seen that trans-tri-Hamiltonian hidden symmetries are characterized by a twisted version of the tri-Hamiltonian formula, namely, equation \eqref{twist_tri-Ham}. The twist consists in formally replacing $X$ with a multi-component fiberwise-acting vector field $X(\zeta)$ of the form \eqref{twisted_X}. In this analogy, the formula above opens up the intriguing possibility of a similar twisting in the rotational case. In what follows we will show that such twisted actions not only can be defined, but they also lead to interesting, non-trivial, new hyperk\"ahler symmetries forming, in parallel with the trans-tri-Hamiltonian twisted series, a \textit{trans-rotational twisted series}. 

So, in accordance with this heuristic argument, for any given integer $j \geq 1$ let us consider now a condition of the type
\begin{equation} \label{twist_ax_rot_1}
\Big(\! - i \hp \zeta \frac{\partial}{\partial \zeta} + \mathcal{L}_{X(\zeta)} \Big) \omega(\zeta) = 0
\end{equation}
with the Lie derivative being taken along the fiber directions and $X(\zeta)$ the tropical component of a section of the bundle $T_F\mathcal{Z} \otimes \pi^*\mathcal{O}(2j-2)$, assumed real with respect to the real structure on $\mathcal{Z}$. More precisely, in an appropriate local trivialization of $\mathcal{Z}$,
\begin{equation} \label{twist_X}
X(\zeta) = \sum_{n =1-j}^{j-1} X_n \hp \zeta^{-n}
\end{equation}
with the components $X_n$ forming a system of $2j-1$ vector fields in $T_{\mathbb{C}}M$ obeying the alternating reality condition $\bar{X}_{n} = (-)^n X_{-n}$. In purely hyperk\"ahler space terms the condition \eqref{twist_ax_rot_1} is equivalent to requiring that the vector fields $X_n$ satisfy the property
\begin{equation} \label{twist_ax_rot_2}
\mathcal{L}_{X_{n-1}} \omega_+ + \mathcal{L}_{X_{n}} \omega_0 + \mathcal{L}_{X_{n+1}} \omega_- = - \pt i \hp n \pt \omega_n \rlap{.}
\end{equation}
%\begin{equation} \label{twist_ax_rot_2}
%\mathcal{L}_{X_{n-1}} \omega_+ + \mathcal{L}_{X_{n}} \omega_0 + \mathcal{L}_{X_{n+1}} \omega_- =
%\begin{cases}
%\displaystyle{-i \pt \omega_+} & \text{if $n = +1$} \\
%\displaystyle{\phantom{+}i \pt \omega_-} & \text{if $n = -1$} \\
%\ \ \ 0 & \text{otherwise} \mathrlap{.}
%\end{cases}
%\end{equation}
To simplify the appearance of our formulas we adopt the convention that $X_n$ is defined for all $n \in \mathbb{Z}$ but such that $X_n = 0$ when $n$ is outside the symmetric interval $[1-j,j-1]$, and similarly for $\omega_n$, which are assumed to vanish for all $n \in \mathbb{Z}$ outside the interval $[-1,1]$. Note that for $j=1$ the relations \eqref{twist_ax_rot_2} reduce to the rotational equations \eqref{eigenform}.

\begin{definition}
We refer to a system of vector fields on $M$ having this property for some $j > 1$ as generating an \textit{$\mathcal{O}(2j-2)$-twisted rotational action relative to the 3-axis}.
\end{definition}

\begin{remark}
To facilitate a unitary treatment, in what follows we will in effect often extend this definition to include the limit case $j=1$, that is, the rotational case proper.
\end{remark}

By substituting for the Lie derivatives Cartan's formula it is easy to see that due to the closure of the hyperk\"ahler symplectic forms the left-hand side of the equation \eqref{twist_ax_rot_2} is an exact 2-form, and so based on Poincar\'e's lemma we can infer that there exist complex potentials $\varphi_n$, a real-valued potential $\mu$, and a 1-form symplectic potential $\theta_+$ (for $\omega_+$) holomorphic relative to $I_0$ such that 
\begin{equation} \label{gen_mom_maps}
\iota_{X_{n-1}}\omega_+ + \iota_{X_{n}}\omega_0 + \iota_{X_{n+1}}\omega_- =
\begin{cases}
d\varphi_n & \text{if $n = 2,...\, , j$} \\
d\varphi_+ \!- i \hp \theta_+ & \text{if $n = 1$} \\
d\mu & \text{if $n=0$} \rlap{.}
\end{cases}
\end{equation}
The equations for negative values of $n$ can be obtained from the corresponding positive-value equations by complex conjugation. \pagebreak We will assume here that $H^1(M,\mathbb{R}) = 0$, in which case the potentials $\mu$ and $\varphi_n$ with $n=2,\dots,j$ are defined \textit{globally} on $M$. Moreover, note that even though neither $\varphi_+$ nor $\theta_+$ are globally defined, the combination $d\varphi_+ \!- i\hp \theta_+$ is.

\subsubsection{}

An important difference between the purely rotational case and the twisted one is that in the latter case the generator $X(\zeta)$ is not uniquely defined. Indeed, for $j>1$, the defining condition \eqref{twist_X} remains invariant at any redefinition of the type
\begin{equation}
X(\zeta) \longmapsto X(\zeta) + I(\zeta)Y(\zeta)
\quad\text{with}\quad
Y(\zeta) = \sum_{n =2-j}^{j-2} Y_n \hp \zeta^{-n}
\end{equation}
such that $Y_n \in T_{\mathbb{C}}M$ are subject to the reality condition \mbox{$\bar{Y}_{n} = (-)^n Y_{-n}$} but otherwise arbitrary. 
An equivalent statement is that the two sets of formulas \eqref{twist_ax_rot_2} and \eqref{gen_mom_maps}  are invariant at redefinitions \mbox{$X_n \longmapsto X_n + I_+Y_{n-1} + I_0 Y_n + I_- Y_{n+1}$}. This follows by precisely the same argument which precedes Lemma~\ref{canon_pres} from the previous section. In fact, in the same way as there we can show that 

\begin{lemma} \label{canon_pres_rot}
Given a twisted rotational action relative to the 3-axis we can always redefine the generators $X_n$ so that they become sections of the bundle $T^{1,0}_{\Io}M$ if \mbox{$n > 0$} and of the bundle $T^{0,1}_{\Io}M$ if \mbox{$n < 0$}.
\end{lemma}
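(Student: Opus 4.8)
The plan is to strip the statement down to its purely algebraic content and observe that this content is completely insensitive to \emph{which} equations the generators $X_n$ obey. The redefinition freedom $X(\zeta) \mapsto X(\zeta) + I(\zeta)Y(\zeta)$, equivalently the shift $X_n \mapsto X_n + I_+Y_{n-1} + I_0 Y_n + I_- Y_{n+1}$, has already been shown to leave both \eqref{twist_ax_rot_2} and \eqref{gen_mom_maps} invariant by the argument immediately preceding the lemma. Since the canonical form we are after is a condition on the vector fields $X_n$ alone, and the admissible gauge transformations are literally the same as in the trans-tri-Hamiltonian setting, the problem reduces verbatim to the computation already carried out for Lemma~\ref{canon_pres}: it suffices to exhibit one admissible $Y(\zeta)$ realizing the desired types.

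Concretely, I would take the explicit gauge parameter
\begin{equation*}
Y(\zeta) = - \sum_{n=0}^{j-2} (\hp I_-X_{n+1} \hp \zeta^{-n} + I_+ X_{-n-1} \hp \zeta^n)
\end{equation*}
and substitute it into $X_n \mapsto X_n + I_+Y_{n-1} + I_0 Y_n + I_- Y_{n+1}$. The algebraic inputs are the quaternionic relations $I_-I_+ = P^{1,0}_{\Io}$ and $I_+I_- = P^{0,1}_{\Io}$, the nilpotence $I_+^2 = I_-^2 = 0$, and the annihilation identities $P^{1,0}_{\Io}I_+ = I_+P^{0,1}_{\Io} = 0$ together with their conjugates. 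Collecting powers of $\zeta$, these collapse the transformed generator to
\begin{equation*}
X^{\star}(\zeta) = \sum_{n=0}^{j-1} [\hp P^{1,0}_{\Io} (X_n - i \hp I_-X_{n+1}) \hp \zeta^{-n } + P^{0,1}_{\Io} (X_{-n} + i \hp I_+X_{-n-1}) \hp \zeta^n]
\end{equation*}
exactly as for Lemma~\ref{canon_pres}. By idempotence of the projectors, the coefficient of $\zeta^{-n}$ for $n>0$ then lies manifestly in $T^{1,0}_{\Io}M$ and the coefficient of $\zeta^{n}$ for $n>0$ in $T^{0,1}_{\Io}M$, which is precisely the claim.

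The step that could in principle fail\,---\,and hence the one I would check most carefully\,---\,is whether the gauge invariance genuinely transfers, given that the right-hand sides of \eqref{gen_mom_maps} differ from their trans-tri-Hamiltonian counterparts. It does not fail: the invariance rests only on $\iota_{I(\zeta)Y(\zeta)}\omega(\zeta) = 0$, which holds because $I(\zeta)Y(\zeta)$ is of type $(0,1)$ and $\omega(\zeta)$ of type $(2,0)$ relative to $\IU$\,---\,a fact independent of the twisted rotational equations. The remaining verifications are routine: that $Y(\zeta)$ itself respects the reality condition $\bar{Y}_n = (-)^n Y_{-n}$ (which follows from $\bar{X}_n = (-)^n X_{-n}$ together with $\bar{I}_{\pm} = - I_{\mp}$), and that its index range keeps the shift inside the admissible band $2-j \le n \le j-2$. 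I therefore expect no genuine obstacle here; the essence of the proof is the structural observation that reduces it to the earlier algebraic identity.
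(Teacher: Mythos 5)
Your proposal is correct and is essentially the paper's own proof: the paper also reduces Lemma~\ref{canon_pres_rot} to the argument preceding Lemma~\ref{canon_pres}, noting that the gauge invariance rests solely on $\iota_{I(\zeta)Y(\zeta)}\hp\omega(\zeta) = 0$ (a Hodge-type fact independent of which moment map equations the $X_n$ satisfy), and then applies the identical explicit gauge parameter $Y(\zeta)$ to arrive at the same canonical form $X^{\star}(\zeta)$. Your additional checks of the reality condition for $Y(\zeta)$ and of the index range are sound but routine, exactly as you say.
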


\begin{definition}
We will henceforth refer to such a choice of representative for the equivalence class of generators as a \textit{canonical presentation} of the action. Generators in a canonical presentation will be marked with a star superscript.
\end{definition} 

Observe then that $\mathcal{L}_{X^{\star}_{n>0}} \omega_- = 0$ and $\mathcal{L}_{X^{\star}_{n<0}} \omega_+ = 0$, with one term in the corresponding Cartan formulas vanishing due to the closure of the forms and the other due to the mismatch between the Hodge types with respect to $I_0$ of the vector fields and forms being contracted. Consequently, when the generators are in the canonical presentation some of the terms in the equations \eqref{twist_ax_rot_2} vanish and we are left with 
{\allowdisplaybreaks
\begin{alignat}{3}
\mathcal{L}_{X^{\star}_{j-1}} \omega_+ && & = 0 \nonumber \\[1pt]
\mathcal{L}_{X^{\star}_{j-2}} \omega_+ &\,+\pt\pt& \mathcal{L}_{X^{\star}_{j-1}} \omega_0 & = 0 \nonumber \\[-1pt]
&& \vdots & \label{red_twist_ax_rot} \\
\mathcal{L}_{X^{\star}_+} \omega_+ &\,+\pt\pt& \mathcal{L}_{X^{\star}_{++}} \omega_0 & = 0 \nonumber \\[1pt]
\mathcal{L}_{X^{\star}_0} \omega_+ &\,+\pt\pt& \mathcal{L}_{X^{\star}_+} \omega_0 & = -i \hp \omega_+ \nonumber \\[1pt]
&& \mathcal{L}_{X^{\star}_0} \omega_0 & = 0 \mathrlap{.} \nonumber
\end{alignat}
}%
Again, the remaining non-trivial equations can be obtained from these by conjugation. In the spirit of our previous conventions we toggle freely between the numerical and the \mbox{$+/-$} notation for the indices. Let us concentrate for a moment on the structure of the second to last equation:
\begin{equation}
\underbracket[0.4pt][2.3pt]{\mathcal{L}_{X^{\star}_0} \omega_+}_{\text{$_{2,0 \pt + \pt 1,1}$}} + 
\underbracket[0.4pt][1.4pt]{\mathcal{L}_{X^{\star}_+} \omega_0}_{\text{$_{1,1 \pt + \pt 0,2}$}} = - i \npt 
\underbracket[0.4pt][4pt]{\omega_+}_{\text{$_{2,0}$}} 
\mathrlap{\hspace{5pt} \underbracket[0pt][7.2pt]{\phantom{}}_{\text{w.r.t. $I_0$}}}
\end{equation}
From matching Hodge types with respect to the complex structure $I_0$ it is clear that the $(0,2)$ component of $\mathcal{L}_{X^{\star}_+} \omega_0$ must vanish, which then implies that this is purely of type $(1,1)$. Comparing now with this in mind the last two equations in the list with the two equations \eqref{quasi_rot} we arrive at the following crucial observation: 

\begin{proposition} \label{middle_q-rot}
The $X^{\star}_0$ generator in the canonical presentation of a twisted rotational action relative to the 3-axis is a vector field of quasi-rotational type relative to the same axis, with
\begin{equation}
\sigma_+ = i\hp \mathcal{L}_{X^{\star}_+} \omega_0 = i\hp d(\iota_{X^{\star}_+}\omega_0) \mathrlap{.}
\end{equation}
\end{proposition}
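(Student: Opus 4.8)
The plan is to verify the two defining conditions of a quasi-rotational vector field directly against the reduced system \eqref{red_twist_ax_rot}, reading off the candidate two-form $\sigma_+$ from its second-to-last equation. The canonical presentation is exactly what makes this work: it is the device that collapses the full twisted system \eqref{twist_ax_rot_2} to \eqref{red_twist_ax_rot}, so that the two equations carrying the index $n=0$ involve only $X^{\star}_0$ and $X^{\star}_+$ and nothing else.

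The condition $\mathcal{L}_{X^{\star}_0}\omega_0 = 0$ is literally the last line of \eqref{red_twist_ax_rot}, so the second quasi-rotational equation requires no work. For the first, I would take the second-to-last line, $\mathcal{L}_{X^{\star}_0}\omega_+ + \mathcal{L}_{X^{\star}_+}\omega_0 = -i\hp\omega_+$, and solve for the Lie derivative along $X^{\star}_0$:
\begin{equation*}
\mathcal{L}_{X^{\star}_0}\omega_+ = -i\hp\omega_+ - \mathcal{L}_{X^{\star}_+}\omega_0 = -i\hp\big(\omega_+ - i\hp\mathcal{L}_{X^{\star}_+}\omega_0\big).
\end{equation*}
Matching this against the defining relation $\mathcal{L}_X\omega_+ = -i(\omega_+ - \sigma_+)$ identifies the candidate $\sigma_+ = i\hp\mathcal{L}_{X^{\star}_+}\omega_0$; since $\omega_0$ is closed, Cartan's formula turns this into $\sigma_+ = i\hp d(\iota_{X^{\star}_+}\omega_0)$, which is the second expression in the statement.

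It remains to check that $\sigma_+$ is genuinely of type $(1,1)$ with respect to $I_0$, and to produce the conjugate $\omega_-$ equation. The type claim is exactly the Hodge-matching observation recorded immediately before the Proposition: comparing types in the same equation forces the $(0,2)$-component of $\mathcal{L}_{X^{\star}_+}\omega_0$ to vanish (here the canonical presentation enters again, placing $X^{\star}_+$ in $T^{1,0}_{\Io}M$), so $\mathcal{L}_{X^{\star}_+}\omega_0$---and hence $\sigma_+$---is purely $(1,1)$. For the $\omega_-$ branch I would use that the reality condition $\bar{X}_n = (-)^n X_{-n}$ at $n=0$ makes $X^{\star}_0$ a real vector field, then complex-conjugate the relation just obtained and apply the identity $\bar{\omega}_+ = -\omega_-$; this converts $\mathcal{L}_{X^{\star}_0}\omega_+ = -i(\omega_+ - \sigma_+)$ into $\mathcal{L}_{X^{\star}_0}\omega_- = i(\omega_- - \sigma_-)$ with $\sigma_- = -\bar{\sigma}_+$, matching the remaining branch of the definition.

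I do not expect a genuine obstacle: essentially all of the content has already been engineered into the reduced system \eqref{red_twist_ax_rot} and the preceding type analysis. The only point deserving a moment's care is the claim that $\sigma_+$ is honestly $(1,1)$ rather than merely $i$ times a Lie derivative, and that is settled by the Hodge-matching remark; everything else is bookkeeping with Cartan's formula, closure of the hyperk\"ahler forms, and complex conjugation.
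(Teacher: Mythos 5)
Your proof is correct and follows the paper's own argument essentially verbatim: the paper establishes the Proposition precisely by Hodge-type matching on the second-to-last equation of the reduced system \eqref{red_twist_ax_rot} (with the canonical presentation guaranteeing $X^{\star}_+ \in T^{1,0}_{\Io}M$, so that $\mathcal{L}_{X^{\star}_+}\omega_0$ has no $(2,0)$ part), followed by direct comparison with the quasi-rotational definition \eqref{quasi_rot}. Your explicit conjugation step for the $\omega_-$ branch is the only addition, and the paper leaves exactly that to its standing remark that the remaining equations follow by conjugation.
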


\noindent  By Proposition \ref{quasi_rot_hyperpot} this is then automatically a closed hyper $(1,1)$ form on $M$. Actually, since the formula holds globally, $\sigma_+$ is in this case not only closed but also exact. In light of equation \eqref{[sgm]=[omg]} we hence have

\begin{corollary} \label{transv-triv}
A hyperk\"ahler manifold may possess a twisted rotational action relative to the 3-axis only if the cohomology classes of both $\omega_1$ and $\omega_2$ are trivial.
\end{corollary}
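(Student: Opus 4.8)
The plan is to show that the transversal form $\omega_+$ is cohomologically trivial; by reality this immediately forces both $[\omega_1]$ and $[\omega_2]$ to vanish. The argument is purely cohomological and amounts to combining the two descriptions of $\sigma_+$ that are now in hand: the one furnished by the preceding Proposition, which exhibits $\sigma_+$ as a \emph{global} exact form, and the one furnished by equation \eqref{[sgm]=[omg]}, which relates $\sigma_+$ to $\omega_+$ modulo an exact term.

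First I would invoke the preceding Proposition, which asserts that the canonical generator $X^{\star}_0$ is quasi-rotational relative to the 3-axis and that $\sigma_+ = i\, d(\iota_{X^{\star}_+}\omega_0)$. Since $X^{\star}_+$ is a globally-defined component of the action and $\omega_0$ is globally defined, the $1$-form $\iota_{X^{\star}_+}\omega_0$ is global and this formula displays $\sigma_+$ as globally exact; in particular $[\sigma_+]=0$ in $H^2(M,\mathbb{C})$. Next I would apply equation \eqref{[sgm]=[omg]} to the very same quasi-rotational vector field, i.e.\ with $X=X^{\star}_0$, in its $m=+$ component, giving
\begin{equation*}
\sigma_+ = \omega_+ + d\big[(\iota_{X^{\star}_0}\omega_0)\, I_+\big].
\end{equation*}
Here again $X^{\star}_0$ and $\omega_0$ are global, so the correction term is globally exact and hence $[\sigma_+]=[\omega_+]$ in $H^2(M,\mathbb{C})$. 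Comparing the two facts yields $[\omega_+]=0$.

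Finally I would extract the real content. Since $\omega_+=\tfrac12(\omega_1+i\,\omega_2)$ with $\omega_1,\omega_2$ real-valued, the vanishing $[\omega_+]=0$ reads $[\omega_1]+i\,[\omega_2]=0$ in $H^2(M,\mathbb{C})=H^2(M,\mathbb{R})\otimes_{\mathbb{R}}\mathbb{C}$; as $[\omega_1]$ and $[\omega_2]$ both lie in the real summand, each must vanish on its own. There is no deep obstacle here: the corollary is essentially an assembly of results already established, namely the preceding Proposition together with equation \eqref{[sgm]=[omg]} and Proposition \ref{quasi_rot_hyperpot}. The only points that genuinely require care are the passage from local to \emph{global} exactness — which is legitimate precisely because the generators $X^{\star}_n$ and the forms $\omega_m$ are globally defined, independently of any cohomological hypothesis on $M$ — and the final reality splitting of the class $[\omega_+]$. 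It is worth remarking that the standing assumption $H^1(M,\mathbb{R})=0$, needed elsewhere to globalize the moment maps, plays no role in this particular conclusion.
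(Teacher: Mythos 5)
Your proof is correct and follows the same route as the paper: the Proposition preceding the corollary makes $\sigma_+ = i\,d(\iota_{X^{\star}_+}\omega_0)$ globally exact, equation \eqref{[sgm]=[omg]} applied to the globally defined quasi-rotational $X^{\star}_0$ gives $[\sigma_+]=[\omega_+]$ in $H^2(M,\mathbb{C})$, and the reality splitting yields $[\omega_1]=[\omega_2]=0$ (the paper compresses all this into the remark after Proposition \ref{quasi_rot_hyperpot} plus one sentence before the corollary). Your closing observation that the standing hypothesis $H^1(M,\mathbb{R})=0$ is not actually used here is also accurate, since \eqref{[sgm]=[omg]} rests only on Cartan's formula and the algebraic identity $\iota_{I_0X}\omega_m = -(\iota_X\omega_0)I_m$, not on the existence of global moment maps.
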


\noindent By the same Proposition, in addition to the complex-valued closed hyper $(1,1)$ form $\sigma_+$ and its conjugate form $\sigma_-$ we can also define a third real-valued closed hyper $(1,1)$ form $\sigma_0$. Furthermore, by Lemma~\ref{exp_ddbar_lem}, locally these forms have associated recursive chains of hyperpotentials. In the next part of our discussion we will be concerned with \mbox{characterizing these}. 

Similarly to above we have $\smash{ \iota_{X^{\star}_{n>0}} \omega_- = 0 }$ and $\smash{ \iota_{X^{\star}_{n<0}} \omega_+ = 0 }$, and so if we take the generators of the twisted action to be in the canonical presentation, the moment map equations \eqref{gen_mom_maps} assume the reduced form
{\allowdisplaybreaks
\begin{alignat}{3}
\iota_{X^{\star}_{j-1}} \omega_+ && & = d\varphi_j \nonumber \\[1pt]
\iota_{X^{\star}_{j-2}} \omega_+ &\,+\pt\pt& \iota_{X^{\star}_{j-1}} \omega_0 & = d\varphi_{j-1} \nonumber\\[-1pt]
&&  \vdots & \label{red_eqs} \\
\iota_{X^{\star}_+} \omega_+ &\,+\pt\pt& \iota_{X^{\star}_{++}} \omega_0 & = d\varphi_{++} \nonumber\\[1pt]
\iota_{X^{\star}_0} \omega_+ &\,+\pt\pt& \iota_{X^{\star}_+} \omega_0  & = d\varphi_+ - i \hp \theta_+ \nonumber\\[1pt]
&& \iota_{X^{\star}_0} \omega_0 & = d\mu \mathrlap{.} \nonumber
\end{alignat}
}%
By separating each of these equations into $(1,0)$ and $(0,1)$ components with respect to $I_0$ we obtain the equivalent system
{\allowdisplaybreaks
\begin{equation} \label{sys_pairs}
\begin{array}{clcl}
\bullet & \displaystyle \bar{\partial} \varphi_j = 0 & \multicolumn{2}{l}{\text{\textit{i.e.} $\varphi_j$ is holomorphic with respect to $I_0$}} \\[7pt] \allowbreak
\bullet & \displaystyle \iota_{X^{\star}_n} \omega_+ = \partial \varphi_{n+1} & \multirow{2}{*}[-0.3em]{$n = 1,...\pt , j-1$} & \\[5pt] 
& \displaystyle \iota_{X^{\star}_n} \omega_0 \hspace{2.3pt} = \bar{\partial} \varphi_n & & \\[7pt] \allowbreak
\bullet & \displaystyle  \iota_{X^{\star}_0 \pt} \omega_+ = \partial \varphi_+ - i\hp \theta_+ & \multirow{2}{*}[-0.3em]{$\Longleftrightarrow$}& \displaystyle \iota_{P^{1,0}_{\Io}X^{\star}_0 + i S \pt} \omega_+ = \partial \varphi_+ \\[5pt]
& \displaystyle \iota_{X^{\star}_0 \pt} \omega_0 \hspace{2.3pt} = d \mu & & \displaystyle \iota_{P^{1,0}_{\Io}X^{\star}_0 + i S \pt} \omega_0 \hspace{2.3pt} = \bar{\partial} \varphi_0 \mathrlap{.}
\end{array}
\end{equation}
}%
The proof of the last equivalence goes along the same lines as the proof of the equivalence between the equations \eqref{quasi_mom_map_+} and \eqref{mom_map_3} on one hand and the equations \eqref{double_grad} on the other. Thus, what we obtain is a recursive system of pairs of symplectic gradient equations of the type featuring in part 5 of Proposition~\ref{criterion_2}, which we can then immediately interpret in the sense that
\begin{equation*}
\cdots \,\noarrow\, \varphi_0 \,\noarrow\, \varphi_{+} \,\noarrow\, \varphi_{++} \,\noarrow\, \cdots \,\noarrow\, \varphi_{j-1} \,\noarrow\, \varphi_{j} \,\noarrow\, 0 \\[1pt]
\end{equation*}
forms a right-bounded recursive chain of hyperpotentials with respect to $I_0$. \pagebreak By noticing that on the domain of definition of $\varphi_+$ we have $\sigma_+ = i \hp \partial\bar{\partial} \hp \varphi_+$, we can identify this as a recursive chain locally associated to the closed hyper $(1,1)$ form $\sigma_+$ in the sense of the extended \mbox{$\partial\bar{\partial}$\pt-\hp Lemma}~\ref{exp_ddbar_lem}. 

%Such recursive chains always exist to which we can associate functions
%\begin{equation}
%\varphi_{\hp V_N}\npt(\zeta) = \sum_{n=-\infty}^j \varphi_n \hp \zeta^{-n}
%\quad \text{and} \quad
%\varphi_{\hp V_S}\npt(\zeta) = - \sum_{n=-j}^{\infty} \varphi^c_n \hp \zeta^{-n}
%\end{equation}
%which are holomorphic in a twistor space sense on open sets $V_N, V_S \subset \mathcal{Z}$\,---\,outside, that is, of their intersections with the fibers over $\zeta = 0$ and $\infty$, respectively, where they are singular.

For the next step we need the following generic technical result: 

\begin{lemma} 
For any right-bounded chain of hyperpotentials with respect to $I_0$ of the form
\begin{equation*}
\cdots \,\noarrow\, \varphi_{+} \,\noarrow\, \varphi_{++} \,\noarrow\, \cdots \,\noarrow\, \varphi_{j-1} \,\noarrow\, \varphi_{j} \,\noarrow\, 0 \\[1pt]
\end{equation*}
the following identities hold:
{\allowdisplaybreaks
\begin{equation} \label{hb_chains}
\begin{aligned}
\bar{\partial} \varphi_+ + \sum_{n=2}^j \frac{d\varphi_{n}}{\zeta^{n-1}} & = - i \hp d \Big(\sum_{n=2}^j \frac{n-1}{\zeta^{n-1}} \varphi_{n} \Big) I(\zeta) \\
\bar{\partial} \varphi_+ + \sum_{n=2}^j \frac{1}{2} \Big(\hp \frac{1}{\zeta^{n-1}} + (-\bar{\zeta})^{n-1} \Big) d\varphi_{n} & = d \Big[\sum_{n=2}^j \frac{1}{2i} \Big(\hp \frac{1}{\zeta^{n-1}} - (-\bar{\zeta})^{n-1} \Big) \varphi_{n} \Big] \IU \mathrlap{.}
\end{aligned}
\end{equation}
}%
\end{lemma}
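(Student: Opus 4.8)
The plan is to derive both identities directly from the chain recursion relations \eqref{chain_recursion}, reorganized as $\zeta$-weighted sums. The computational engine is the action of the mirror frame on the differential of a hyperpotential: since $\partial\varphi_n$ and $\bar{\partial}\varphi_n$ are of pure type with respect to $I_0$, the relations $P^{1,0}_{\Io}I_+ = P^{0,1}_{\Io}I_- = 0$ combined with \eqref{chain_recursion} give
\[
d\varphi_n I_+ = \bar{\partial}\varphi_n I_+ = -i\,\partial\varphi_{n+1}, \qquad d\varphi_n I_- = \partial\varphi_n I_- = i\,\bar{\partial}\varphi_{n-1}, \qquad d\varphi_n I_0 = i(\partial - \bar{\partial})\varphi_n .
\]
Right-boundedness supplies the two boundary facts I will use repeatedly: $\varphi_{j+1}=0$, and $\bar{\partial}\varphi_j = 0$ (the latter because $\bar{\partial}\varphi_j I_+ = -i\,\partial\varphi_{j+1}=0$ forces $\bar{\partial}\varphi_j = \bar{\partial}\varphi_j I_+ I_- = 0$ via $I_+I_- = P^{0,1}_{\Io}$).

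For the first identity I would expand the right-hand side. Since $\zeta$ is merely a parameter for the exterior derivative on $M$, it equals $-i\sum_{n=2}^j (n-1)\,\zeta^{-(n-1)}\,d\varphi_n\,I(\zeta)$, and inserting $I(\zeta) = \zeta^{-1}I_+ + I_0 + \zeta I_-$ together with the three contraction rules converts it into $\sum_{n=2}^j (n-1)\big(-\zeta^{-n}\partial\varphi_{n+1} + \zeta^{-(n-1)}(\partial-\bar{\partial})\varphi_n + \zeta^{-(n-2)}\bar{\partial}\varphi_{n-1}\big)$. Reindexing the outer two sums so that every term carries a common $\varphi_m$, the weights telescope: the coefficient of $\zeta^{-(m-1)}\partial\varphi_m$ collapses from $-(m-2)+(m-1)$ to $1$, and the coefficient of $\zeta^{-(m-1)}\bar{\partial}\varphi_m$ collapses from $-(m-1)+m$ to $1$ for $2\le m\le j-1$; the stray $m=1$ term reproduces $\bar{\partial}\varphi_+$, while the stray $m=j$ term carries $\bar{\partial}\varphi_j$ and so vanishes. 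The sum is exactly $\bar{\partial}\varphi_+ + \sum_{n=2}^j \zeta^{-(n-1)}d\varphi_n$, which is the left-hand side.

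For the second identity I would first record the expansion of the genuine complex structure in the mirror frame, obtained from the linear decomposition \eqref{lin_decomp_proj} and $\IU = -i(2P^{0,1}_{\Iu}-1)$:
\[
\IU = (\rho_N - \rho_S)\,I_0 + 2\rho_N\zeta\,I_- - \frac{2\rho_S}{\zeta}\,I_+ .
\]
Writing $c_m = \zeta^{-(m-1)}$ and $d_m = (-\bar{\zeta})^{m-1}$, applying $d\varphi_n\,\IU$ through the same three contraction rules, and collecting coefficients of $\partial\varphi_m$ and $\bar{\partial}\varphi_m$, the claim reduces to two families of purely scalar identities in $\zeta,\bar{\zeta}$,
\[
(c_m - d_m)a - (c_{m-1}-d_{m-1})e = c_m + d_m, \qquad -(c_m-d_m)a + (c_{m+1}-d_{m+1})b = c_m + d_m,
\]
with $a = \rho_N-\rho_S$, $b = 2\rho_N\zeta$, $e = -2\rho_S/\zeta$. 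Each follows in one line from the power-shift relations $c_{m\mp1} = \zeta^{\pm1}c_m$, $d_{m\mp1} = (-\bar{\zeta})^{\mp1}d_m$ together with $\rho_N+\rho_S=1$ and $\zeta\bar{\zeta}=|\zeta|^2$: the $(1+|\zeta|^2)$ denominators cancel and leave precisely $c_m+d_m$. Adopting the convention $c_1 = d_1 = 1$ makes the $m=2$ and $m=1$ boundary cases special instances of these same identities, while the single mismatched coefficient, that of $\bar{\partial}\varphi_j$, is once more annihilated by $\bar{\partial}\varphi_j=0$.

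The main obstacle is bookkeeping rather than conceptual content: one must keep scrupulous track of the three boundary indices $m=1,2,j$ and check that every stray term is either the advertised $\bar{\partial}\varphi_+$ contribution or is suppressed by right-boundedness. The heaviest single piece is the verification of the two scalar families, but because these are elementary rational-function identities the $|\zeta|^2$-dependence cancels cleanly; this cancellation is exactly the statement that the antiholomorphic $(-\bar{\zeta})^{n-1}$ pieces are precisely what is required to trade the nilpotent transition element $I(\zeta)$ of the first identity for the genuine complex structure $\IU$ in the second.
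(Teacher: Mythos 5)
Your proof is correct and takes essentially the same route as the paper's: both expand $d\big(\sum_n a_n\varphi_n\big)I(\zeta)$ (respectively $d\big(\sum_n b_n\varphi_n\big)\IU$) using the chain recursion relations in the mirror frame, with the boundary terms killed by $\varphi_{j+1}=0$ and the holomorphicity of $\varphi_j$. The only difference is logical direction: the paper treats the coefficients as unknowns and derives them by solving the second-order recurrences $\frac{a_{n-1}}{\zeta}-2a_n+a_{n+1}\zeta=0$ and $x_+b_{n+1}+x_0\hp b_n+x_-b_{n-1}=0$ with initial conditions, whereas you verify the stated solutions directly\,---\,your two scalar families are precisely these recurrences evaluated on the solutions, so the computations coincide.
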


\begin{proof} 

Consider a point $\mathbf{u} \in S^2$ with complex inhomogeneous coordinate $\zeta \neq 0,\infty$. For any set of functions $\{a_n \}_{n = 2, \dots, j}$ assumed to depend solely on $\mathbf{u}$, using the second expression \eqref{def_I(zeta)} for $I(\zeta)$ and the chain's recursion relations we have 
{\allowdisplaybreaks
\begin{align}
d\Big(\sum_{n=2}^j a_n \varphi_n \Big) I(\zeta) & = 
i \sum_{n=2}^j [ (a_n - \frac{a_{n-1}}{\zeta}) \hp \partial \varphi_n + (a_{n+1}\zeta - a_n) \hp \bar{\partial}\varphi_n ] \\
& \phantom{=} + i \hp a_2 \hp \zeta \hp \bar{\partial} \varphi_+ + i \hp \frac{a_1}{\zeta \,} \partial \varphi_{++}  \rlap{.} \nonumber
\end{align}
}%
If we want the coefficients of $\partial\varphi_n$ and $\bar{\partial}\varphi_n$ in the sum on the right-hand side to be equal\,---\,so that they yield together a multiple of $d\varphi_n$\,---\,we need to have
\begin{equation}
\frac{a_{n-1}}{\zeta} - 2 \hp a_n + a_{n+1}\zeta = 0 \mathrlap{.}
\end{equation}
This is a second-order linear recurrence relation. To determine it we need in addition two initial conditions. Guided by the last two terms above we choose as initial conditions $a_1 = 0$ and $a_2 = - i/ \zeta$. The solution of the recurrence is then
\begin{equation}
a_n = - i \hp \frac{n-1}{\zeta^{n-1}}
\end{equation}
and the first identity of the Lemma follows promptly. 

The second identity can be derived in much the same way. For any choice of coefficients $\{b_n\}_{n = 2,\dots,j}$ depending at most on $\mathbf{u}$, writing $\IU = x_0 I_0 - 2 \hp x_-I_+ \!- 2 \hp x_+I_-$ and resorting to the chain's recurrence relations gives us
{\allowdisplaybreaks
\begin{align}
d\Big(\sum_{n=2}^j b_n \varphi_n \Big) \IU & = i \sum_{n=2}^j [(x_0b_n + 2x_-b_{n-1}) \partial\varphi_n - (x_0b_n + 2x_+b_{n+1})\bar{\partial}\varphi_n] \\
& \phantom{=} - 2i \hp x_+ b_2 \hp \bar{\partial} \varphi_+ - 2i \hp x_-b_1 \partial\varphi_{++} \mathrlap{.} \nonumber
\end{align}
}%
By following a similar rationale as above we require that
\begin{equation}
x_+ b_{n+1} + x_0 b_n + x_- b_{n-1} = 0
\end{equation}
with initial conditions $b_1 = 0$ and $b_2 = i/ (2 x_+)$. This new recurrence has the solution 
\begin{equation}
b_n = \frac{1}{2i} \Big(\hp \frac{1}{\zeta^{n-1}} - (-\bar{\zeta})^{n-1} \Big) \mathrlap{.}
\end{equation}
Substituting it back into the equation yields the desired identity.
\end{proof}

Returning to our discussion, let us assemble now the relations \eqref{sgm-omg} corresponding to the case when $X = X^{\star}_0$ into the following two equations:
\begin{equation}
\begin{aligned}
\frac{\sigma_+}{\zeta\ } + \sigma_0 + \zeta \hp \sigma_- & = \omega(\zeta) \hp + d[d\mu \hp I(\zeta)] \\
x_1\hp\sigma_1 + x_2 \hp\sigma_2 + x_3 \hp\sigma_3 & = \omega(\nhp\mathbf{u}) + d[d\mu \IU] \mathrlap{.}
\end{aligned}
\end{equation} 
The real components $\sigma_1$, $\sigma_2$, $\sigma_3$ are related to the complex ``spherical" ones $\sigma_+$, $\sigma_0$, $\sigma_-$ in the usual way. Acting with the exterior derivative of $M$ on each of the two identities of the previous Lemma and recalling that $\sigma_+ = i \hp \partial\bar{\partial}\hp \varphi_+$ gives us two formulas for $\sigma_+$\,---\,and, through antipodal conjugation, two corresponding ones for $\sigma_-$. By substituting then in each equation one of the two matching sets of formulas for $\sigma_+$ and $\sigma_-$ (after rewriting the left-hand side of the second equation as $x_0 \hp \sigma_0 - 2 \hp x_- \sigma_+ \!- 2 \hp x_+\sigma_-$) we can cast them in the form
\begin{equation} \label{mojo}
\begin{aligned}
\sigma_0 & = \omega(\zeta) \hp + d[d\mu(\zeta) \hp I(\zeta)]   \\[2pt]
x_0 \hp \sigma_0 & = \omega(\nhp\mathbf{u}) + d[d\mu(\hspace{-0.6pt}\mathbf{u} \hspace{-0.2pt}) \IU] 
\end{aligned}
\end{equation}
where, by definition,
\begin{equation} \label{mu-zeta}
\mu(\zeta) = \mu - \sum_{n=2}^j (n-1) ( \varphi_{n} \hp \zeta^{-n} + \varphi^c_{-n} \zeta^n )
\end{equation}
and
{\allowdisplaybreaks
\begin{align}
\mu(\mathbf{u}) & = \mu + \sum_{n=2}^j \frac{|\zeta|^2 + (-)^n|\zeta|^{2n}}{1+|\zeta|^2} ( \varphi_{n} \hp \zeta^{-n} + \text{c.c.}) \\
& = \mu - \sum_{n=2}^j \Big(\pt \sum_{k=1}^{n-1} (-)^k |\zeta|^{2k} \pt\Big) ( \varphi_{n} \hp \zeta^{-n} + \text{c.c.}) \mathrlap{.} \nonumber
\end{align}
}%
The two equations \eqref{mojo} will play a central role in our analysis. They hold for any $\zeta \neq 0, \infty$ respectively any $\mathbf{u} \neq \mathbf{u}_N, \mathbf{u}_S$. The two \mbox{$\mu$\pt-\hp functions}, regarded as functions on the twistor space, are defined everywhere on $\mathcal{Z}$ except on the fibers over these two points.  

\begin{lemma} \label{qmm_twist_mu}
The function $\mu(\zeta)$ thus defined satisfies the following quasi-moment map equation on $M$:
\begin{equation}
\iota_{X(\zeta)}\omega(\zeta) = d\mu(\zeta) - i \hp \zeta \partial_{\zeta}[d\mu(\zeta)I(\zeta)] \mathrlap{.}
\end{equation}
\end{lemma}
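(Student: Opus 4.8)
The plan is to establish the identity as an equality of Laurent series in $\zeta$ with $M$-valued $1$-form coefficients, matching the coefficient of each power $\zeta^{-p}$ on the two sides. Since the quasi-moment map equation is invariant under the gauge freedom $X(\zeta) \mapsto X(\zeta) + I(\zeta)Y(\zeta)$ (the extra contraction $\iota_{I(\zeta)Y(\zeta)}\omega(\zeta)$ vanishing because $I(\zeta)Y(\zeta)$ is of type $(0,1)$ while $\omega(\zeta)$ is of type $(2,0)$ relative to $\IU$), I would work throughout in the canonical presentation, so that the reduced relations \eqref{red_eqs}--\eqref{sys_pairs} and the quasi-rotational identity \eqref{mom-pot} are at my disposal.

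First I would expand the left-hand side. Writing $X(\zeta) = \sum_n X^{\star}_n \zeta^{-n}$ and $\omega(\zeta) = \omega_+ \zeta^{-1} + \omega_0 + \omega_- \zeta$, the coefficient of $\zeta^{-p}$ in $\iota_{X(\zeta)}\omega(\zeta)$ is $C_p := \iota_{X^{\star}_{p-1}}\omega_+ + \iota_{X^{\star}_p}\omega_0 + \iota_{X^{\star}_{p+1}}\omega_-$. The moment map equations \eqref{gen_mom_maps} evaluate these immediately: $C_p = d\varphi_p$ for $2 \le p \le j$, $C_1 = d\varphi_+ - i\hp\theta_+$, and $C_0 = d\mu$, while the alternating reality of the data forces $C_{-p} = (-)^p\overline{C_p}$.

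Next I would expand the right-hand side. Setting $d\mu(\zeta) = \sum_p m_p \zeta^{-p}$, so that $m_0 = d\mu$, $m_p = -(p-1)\hp d\varphi_p$ for $p \ge 2$, $m_{\pm 1} = 0$, and $m_{-p} = (-)^p \overline{m_p}$, a direct computation gives that the coefficient of $\zeta^{-p}$ in $d\mu(\zeta) - i\hp\zeta\hp\partial_\zeta[d\mu(\zeta)I(\zeta)]$ equals $R_p := m_p + i\hp p\hp(m_{p-1}I_+ + m_p I_0 + m_{p+1}I_-)$. The heart of the matter is the algebraic identity, valid for $2 \le p \le j$,
\[
d\varphi_{p-1}I_+ - d\varphi_{p+1}I_- = -\,i\hp d\varphi_p,
\]
which I would derive from the chain recursion \eqref{chain_recursion} together with the vanishings $\partial f\hp I_+ = \bar\partial f \hp I_- = 0$ (immediate from $P^{1,0}_{I_0}I_+ = P^{0,1}_{I_0}I_- = 0$): these give $d\varphi_{p-1}I_+ = \bar\partial\varphi_{p-1}I_+ = -i\hp\partial\varphi_p$ and $d\varphi_{p+1}I_- = \partial\varphi_{p+1}I_- = i\hp\bar\partial\varphi_p$. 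Substituting $m_{p-1}, m_p, m_{p+1}$ into $R_p$ and regrouping the combinatorial weights so as to expose the chain relation \eqref{chain_recursion_2} $d\varphi_{p-1}I_+ + d\varphi_p I_0 + d\varphi_{p+1}I_- = 0$, the bracket collapses to $-i\hp d\varphi_p$ and one is left with $R_p = -(p-1)d\varphi_p + i\hp p(-i\hp d\varphi_p) = d\varphi_p = C_p$. Equivalently, the preceding Lemma's identities \eqref{hb_chains} package exactly this step.

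It then remains to treat the boundary powers and reduce the negative ones. For $p=0$ one has $R_0 = m_0 = d\mu = C_0$ trivially. The one genuinely delicate case is $p=1$: there $R_1 = i\hp d\mu\,I_+ + \bar\partial\varphi_+$ (using $m_2 I_- = -d\varphi_2 I_- = -i\hp\bar\partial\varphi_+$), which matches $C_1 = d\varphi_+ - i\hp\theta_+$ precisely by the quasi-rotational relation \eqref{mom-pot}; this is the single point where the inhomogeneous, only-combination-globally-defined pair $(\varphi_+,\theta_+)$ must conspire correctly. Finally, for $p<0$ I would check directly that $R_{-p} = (-)^p\overline{R_p}$ (using the alternating reality $\bar I_+ = -I_-$, $\bar I_- = -I_+$, $\bar I_0 = I_0$ and $m_{-p} = (-)^p\overline{m_p}$), which together with $C_{-p} = (-)^p\overline{C_p}$ reduces all negative powers to the already-checked non-negative ones. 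I expect the principal obstacle to be organizational rather than conceptual: aligning the combinatorial weights $(p-1)$ and $i\hp p$ with the two-step recursion so that the sum telescopes, and making sure the exceptional $p=1$ term involving the symplectic potential $\theta_+$ closes up exactly.
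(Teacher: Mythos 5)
Your proof is correct and is essentially the paper's own argument: the paper likewise verifies the identity as a Laurent-series computation, first assembling $d\mu(\zeta)I(\zeta)$ into the single explicit formula \eqref{d_mu_I} via the identity \eqref{hb_chains} (whose role in packaging the chain recursions you explicitly note), using \eqref{mom-pot} and its conjugate at the orders $\pm 1$, and then recognizing the coefficients of $d\mu(\zeta) - i\hp\zeta\partial_\zeta[d\mu(\zeta)I(\zeta)]$ as the right-hand sides of \eqref{gen_mom_maps}. Your coefficient-by-coefficient matching, including the regrouping that collapses $m_{p-1}I_+ + m_pI_0 + m_{p+1}I_-$ to $-i\hp d\varphi_p$, the special treatment of $p=1$ via \eqref{mom-pot}, and the reduction of negative powers by alternating reality, is just an unpacked version of the same computation.
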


\begin{proof}

By subtracting from the first identity \eqref{hb_chains}, divided by $\zeta$, its antipodal conjugate, comparing the result with the definition of $\mu(\zeta)$, and then using in the  terms of order 1 and $-1$ in $\zeta$ the relation \eqref{mom-pot} and its conjugate, we get
%\begin{equation} \label{d_mu_I}
%i \hp d\mu(\zeta)I(\zeta) = i \hp d\mu I_0 + (d\varphi_+ \!- i\hp \theta_+)\zeta^{-1} \!- (d\varphi^c_- \!+ i \hp \theta_-) \hp \zeta + \sum_{n=2}^j(d\varphi_n\zeta^{-n} - d\varphi^c_{-n}\zeta^n) \rlap{.}
%\end{equation}
%\begin{equation} \label{d_mu_I}
%d\mu(\zeta)I(\zeta) = d\mu I_0 - i \hp (d\varphi_+ \!- i\hp \theta_+)\zeta^{-1} \!+ i \hp (d\varphi^c_- \!+ i \hp \theta_-) \hp \zeta - i \sum_{n=2}^j(d\varphi_n\zeta^{-n} - d\varphi^c_{-n}\zeta^n) \rlap{.}
%\end{equation}
\begin{equation} \label{d_mu_I}
d\mu(\zeta)I(\zeta) = d\mu I_0 - \frac{\theta_+\! + i \hp d\varphi_+}{\zeta} - (\theta_-\! - i \hp d\varphi^c_-) \hp \zeta - i \sum_{n=2}^j\Big(\frac{d\varphi_n}{\zeta^{n}} - d\varphi^c_{-n}\zeta^n\Big) \rlap{.}
\end{equation}
From this we deduce successively \smallskip
{\allowdisplaybreaks
\begin{align}
& d\mu(\zeta) - i \hp \zeta \partial_{\zeta}[d\mu(\zeta)I(\zeta)] = \\
& \quad = (d\varphi_+ \!- i\hp \theta_+)\zeta^{-1} \!+ d\mu +(d\varphi^c_- \!+ i \hp \theta_-) \hp \zeta + \sum_{n=2}^j(d\varphi_n\zeta^{-n} \! + d\varphi^c_{-n}\zeta^n) \nonumber \\[-7pt]
& \quad = \sum_{n=-j}^j (\iota_{X_{n-1}}\omega_+ + \iota_{X_{n}}\omega_0 + \iota_{X_{n+1}}\omega_-) \zeta^{-n} \nonumber \\[1pt]
& \quad = \iota_{X(\zeta)} \omega(\zeta) \mathrlap{.} \nonumber
\end{align}
}%
The third line  follows from the equations \eqref{gen_mom_maps}, and the last one is an immediate consequence of the definitions of $X(\zeta)$ and $\omega(\zeta)$.
\end{proof}

The following table lists the three closed hyper $(1,1)$ forms which characterize the twisted rotational action together with a corresponding set of local recursive chains of hyperpotentials with respect to the complex structure $I_0$: 
\begin{equation*}
\begin{array}{c|c|c|c}
\text{Hyper $(1,1)$ form} & \text{Associated local chain}  &  \text{Globally-defined elements} & \text{Related by} \\[1pt] \hline
\rule[13pt]{0pt}{0pt}
\sigma_+  & \{\varphi^{\phantom{c}}_n\}^{\phantom{c}}_{n \leq j\phantom{+}} & \varphi_{j}, \dots, \varphi_2  & \ \sigma_+ = i \hp \partial\bar{\partial} \varphi_+  \\[4pt]
\sigma_0\phantom{\hspace{2.7pt}} & \{\phi^{\phantom{c}}_n = \phi^c_n\}^{\phantom{c}}_{n \in \mathbb{Z}} & & \ \hspace{-2.7pt} \sigma_0 \hspace{2.5pt} = i \hp \partial\bar{\partial} \phi_{\hp 0} \\[4pt]
\sigma_- 	& \{\varphi^c_n\}^{\phantom{c}}_{n \geq -j} & \varphi^c_{-j}, \dots, \varphi^c_{-2} & \ \sigma_- = i \hp \partial\bar{\partial} \varphi^c_- \\[4pt] \hline
\end{array}
\vspace{2.5pt}
\end{equation*}
In light of the equation \eqref{sgm0_pot} we may identify $\phi_0 = - (\kappa_0 + 2\mu)$, and so the recursive chain associated to $\sigma_0$ can be thought of as the (self-conjugated) chain generated locally by this hyperpotential. It is important to remember that since the chains of hyperpotentials arise recursively by means of existence arguments based on the holomorphic Poincar\'e lemma, none of their elements, with the exception of the globally-defined ones, are uniquely defined. In fact, even these are defined only up to possible constant shifts. 

Let us consider the local aspects a little more carefully. Implicit in the table above is the fact that (most of) the hyperpotentials listed are local, and as such they should really be labeled by the corresponding open set on which they are defined. As we have explained in \mbox{\S\,\ref{ssec:global_iss}}, to $\sigma_0$ one can associate an open cover $\mathscr{V}$ of $\mathcal{Z}$ with each element $V \in \mathscr{V}$ supporting a real-valued potential $\phi_V(\mathbf{u})$ such that  \mbox{$\sigma_0 = i \hp \partial_{\Iu} \bar{\partial}_{\Iu} \phi_V(\mathbf{u})$} on some local domain on $M$ for each $\mathbf{u} \in \pi(V)$. The covering $\mathscr{V}$ may always be chosen to be invariant under the action of the antipodal conjugation-induced real structure on $\mathcal{Z}$. Let \mbox{$\mathscr{V}_N = \{ V \in \mathscr{V} \, | \, \mathbf{u}_N \in \pi(V) \}$} and \mbox{$\mathscr{V}_S = \{ V \in \mathscr{V} \, | \, \mathbf{u}_S \in \pi(V) \}$} denote the collections of elements of the cover which project down to arctic and antarctic regions on the twistor sphere, respectively. As before, we will denote generic elements of $\mathscr{V}_N$ and $\mathscr{V}_S$ by $V_N$ and $V_S$. For any such pair of open sets $V_N$ and $V_S$ interchanged by the real structure of $\mathcal{Z}$ Proposition~\ref{lulu} guarantees that a local recursive chain $\{\phi_n\}_{n \in \mathbb{Z}}$ exists such that $\phi_{V_N}\npt(\mathbf{u})$ and $\phi_{V_S}\npt(\mathbf{u})$ are given in terms of its elements by the two series \eqref{phi_VNetco}. Similarly, a local recursive chain $\{\varphi_n\}_{n \leq j}$ associated to $\sigma_+$  can always be chosen such that the two series 
\begin{equation} \label{varphi_NS}
\varphi_{\hp V_N}\npt(\zeta) = \sum_{n=-\infty}^j \varphi_n \hp \zeta^{-n}
\quad \text{and} \quad
\varphi_{\hp V_S}\npt(\zeta) = - \sum_{n=-j}^{\infty} \varphi^c_n \hp \zeta^{-n}
\end{equation}
converge on $V_N$ and $V_S$\,---\,outside, that is, of their intersections with the fibers over $\zeta = 0$ respectively $\infty$ where they are singular\,---\,thereby defining meromorphic functions in a twistor space sense by Proposition~\ref{hol-hyper(1,1)}. (In order for $V_N$ and $V_S$ to be able to accommodate the additional functions one might need to shrink them through a refinement of the cover.) We then extend this collection of polar meromorphic functions trivially to the whole cover $\mathscr{V}$ by setting
\begin{equation}
\varphi_V(\zeta) =
\begin{cases}
\varphi_{V_N\npt}(\zeta) & \text{if $V=V_N \in \mathscr{V}_N$} \\[1pt]
\varphi_{V_S}(\zeta) & \text{if $V=V_S \, \in \mathscr{V}_S$} \\[1pt]
0 & \text{otherwise} \rlap{.}
\end{cases}
\end{equation} 
For quick reference we summarize this discussion in yet another table:
\begin{equation*}
\begin{array}{c|cl|l}
\text{Hyper $(1,1)$ form(s)} & \text{Associated local functions}  &  & \text{Polar components} \\[1pt] \hline
\rule[13pt]{0pt}{0pt}
\sigma_+, \, \sigma_-  &  \{ \varphi_V(\zeta) \}_{V \in \mathscr{V}} & \text{meromorphic} & \qquad \text{eqs.\,\eqref{varphi_NS}} \\[4pt]
\sigma_0\phantom{\hspace{2.7pt}} &  \{ \phi_V(\mathbf{u}) \}_{V \in \mathscr{V}} & \text{real potentials} & \qquad \text{eqs.\,\eqref{phi_VNetco}}  \\[4pt] \hline
\end{array}
\vspace{7pt}
% \bigskip
\end{equation*}

\subsubsection{}

Next, we turn our attention to examining the \textit{local} implications of the two equations \eqref{mojo}. Let us begin with the second one:
\begin{equation} 
x_0 \hspace{-1.2pt} \underbracket[0.4pt][5.2pt]{\sigma_0}_{\text{$_{1,1}$}} = \underbracket[0.4pt][4pt]{\omega(\mathbf{u})}_{\text{$_{1,1}$}} + \underbracket[0.4pt][4pt]{d[d\mu(\mathbf{u}) \IU]}_{\text{$_{1,1}$}}
\mathrlap{\hspace{5pt} \underbracket[0pt][7.2pt]{\phantom{}}_{\text{w.r.t. $\IU$}}}
\end{equation}
Each constituent term is a closed form of type $(1,1)$ with respect to the complex structure $\IU$, and as such can be derived by virtue of the corresponding \mbox{$\partial\bar{\partial}$\pt-\hp lemma} from a local potential. It thus follows immediately that
\begin{equation} \label{K-pot}
\kappa_V\nhp(\mathbf{u}) = -\, 2 \mu(\mathbf{u}) - x_0 \hp \phi_V(\mathbf{u})
\end{equation}
defines a local K\"ahler potential in the complex structure $\IU$. As before, $x_0$ denotes the component of $\mathbf{u}$ along the rotation axis (\textit{i.e.} the height function).

Observe, however, that at the \textit{north} and \textit{south poles} of the twistor sphere this formula is not well-defined due to the fact that $\mu(\mathbf{u})$ is singular there. This problem can be rectified by a redefinition of the K\"ahler potential in the arctic and antarctic neighborhoods through the addition of a local pluriharmonic counter\-term which cancels the singular part while preserving the K\"ahler potential property: 
\begin{equation}
\kappa^{\text{reg}}_V\nhp(\mathbf{u}) = \kappa^{\phantom{g}}_V\nhp(\mathbf{u}) - (x^{\phantom{g}}_0 - x^{\phantom{g}}_V) [\varphi^{\phantom{g}}_{V}(\zeta) + \hp \overline{\varphi^{\phantom{.}}_{V}(\zeta)}]
\end{equation}
where $x_{V_N} = 1$ and $x_{V_S} = -1$ are the height functions of $\mathbf{u}_N$ and $\mathbf{u}_S$ for any $V_N \in \mathscr{V}_N$ and $V_S \in \mathscr{V}_S$. Noting that $\phi_{\hp V_N}\npt(\mathbf{u}_N) = \phi_0 = -(\kappa_0 + 2\mu)$, one can check that with this new definition we have $\kappa^{\text{reg}}_{V_N}\npt(\mathbf{u}) \mapsto \kappa_0$ as $\mathbf{u} \mapsto \mathbf{u}_N$. Similarly, $\kappa^{\text{reg}}_{V_S}\npt(\mathbf{u}) \mapsto \kappa_0$ as $\mathbf{u} \mapsto \mathbf{u}_S$, and so the corrected potential is indeed regular at the poles. 
 
On another hand, from the first formula it is clear that at the \textit{equator} of the twistor sphere (defined by $x_0 = 0$), $-2 \hp \mu(\mathbf{u})$ is a K\"ahler potential for $\omega(\mathbf{u})$. On the basis of this observation we have: 

\begin{corollary} \label{poles-equator}
If a hyperk\"ahler manifold $M$ with $H^1(M,\mathbb{R}) = 0$ has an $\mathcal{O}(2j-2)$-twisted rotational action relative to the 3-axis, then for any $\gamma \in [0,2\pi)$ the Fourier superposition 
\begin{equation}
\kappa_{\hp \textup{equator}}(\gamma) = - \pt 2\mu - 2 \sum_{\substack{n=2 \\ \textup{even}}}^j (\varphi_n e^{-i n \gamma} + \bar{\varphi}_n \hp e^{i n \gamma})
\end{equation}
with $\mu$ and $\varphi_n$ Hamiltonian functions characterizing the action via the equations \eqref{gen_mom_maps} gives a K\"ahler potential for the K\"ahler form $\omega(\mathbf{u})$ at the point $\mathbf{u}$ on the equator of the twistor 2-sphere with longitude angle $\gamma$.
\end{corollary}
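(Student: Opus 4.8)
The plan is to obtain the corollary as the equatorial specialization of the fact, established just above, that $-2\mu(\mathbf{u})$ is a K\"ahler potential for $\omega(\mathbf{u})$ on the locus $x_0 = 0$; the only genuinely new content is to make this potential explicit as a function of the longitude. For completeness I would first re-derive the potential claim from the second equation \eqref{mojo}: at $x_0 = 0$ it reads $\omega(\mathbf{u}) = -\,d[d\mu(\mathbf{u})\IU]$, and since $\IU$ acts from the right on $(1,0)$ forms as multiplication by $i$ and on $(0,1)$ forms as $-i$ (relative to $\IU$), one finds $d(df\,\IU) = -2i\,\partial_{\Iu}\bar{\partial}_{\Iu}f$ for every function $f$. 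Hence $\omega(\mathbf{u}) = -\,i\,\partial_{\Iu}\bar{\partial}_{\Iu}(-2\mu(\mathbf{u}))$, which exhibits $-2\mu(\mathbf{u})$ as a K\"ahler potential in the sign convention $\omega = -i\,\partial_{\Iu}\bar{\partial}_{\Iu}\kappa$. I would stress here that, unlike the potential \eqref{K-pot} valid near the poles, this equatorial potential is \emph{globally} defined: the local ambiguity sits entirely in the term $-x_0\phi_V(\mathbf{u})$ of \eqref{K-pot}, which is proportional to $x_0$ and therefore vanishes on the equator, leaving only the globally defined Hamiltonians $\mu$ and $\varphi_n$ guaranteed by $H^1(M,\mathbb{R}) = 0$.

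Next I would evaluate $-2\mu(\mathbf{u})$ on the equator. By the stereographic map \eqref{stereo_map} the condition $x_3 = 0$ is equivalent to $|\zeta| = 1$, so $|\zeta|^2 = |\zeta|^{2n} = 1$ and the weight appearing in the defining series for $\mu(\mathbf{u})$ collapses to $\frac{|\zeta|^2 + (-)^n|\zeta|^{2n}}{1+|\zeta|^2} = \frac{1+(-)^n}{2}$. This equals $1$ for even $n$ and $0$ for odd $n$, so the odd modes drop out entirely and $\mu(\mathbf{u})\big|_{\mathrm{eq}} = \mu + \sum_{\substack{n=2\\ \mathrm{even}}}^{j} (\varphi_n\,\zeta^{-n} + \text{c.c.})$.

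Finally I would fix the longitude. Writing the equatorial point of longitude $\gamma$ as $(x_1,x_2,x_3) = (\cos\gamma,\sin\gamma,0)$, the map \eqref{stereo_map} gives $\zeta = -e^{i\gamma}$, whence $\zeta^{-n} = (-1)^n e^{-in\gamma} = e^{-in\gamma}$ for even $n$, and correspondingly $\overline{\zeta^{-n}} = e^{in\gamma}$. Substituting into the previous display and multiplying by $-2$ produces exactly $\kappa_{\hp\mathrm{equator}}(\gamma) = -2\mu - 2\sum_{\substack{n=2\\ \mathrm{even}}}^{j}(\varphi_n e^{-in\gamma} + \bar{\varphi}_n e^{in\gamma})$, which is therefore a K\"ahler potential for $\omega(\mathbf{u})$ at that point.

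The only delicate step is this last one: matching the phase convention of the stereographic coordinate to the chosen orientation of the longitude $\gamma$ so that the exponents come out with the stated signs. Conveniently, the evenness of the surviving indices annihilates the factor $(-1)^n$ produced by $\zeta = -e^{i\gamma}$, so the sign bookkeeping is harmless once an orientation is fixed; the collapse of the odd modes and the K\"ahler-potential property itself are routine.
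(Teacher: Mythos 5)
Your proposal is correct and follows essentially the same route as the paper: the paper obtains the result by setting $x_0 = 0$ in the K\"ahler potential formula \eqref{K-pot} (itself derived from the second equation \eqref{mojo}), then evaluating the series for $\mu(\mathbf{u})$ at $|\zeta| = 1$ and $\zeta = -e^{i\gamma}$, exactly as you do, with the odd modes killed by the weight $\tfrac{1}{2}(1+(-)^n)$ and the even modes absorbing the sign from $\zeta = -e^{i\gamma}$. Your direct verification that $d(df\,\IU) = -2i\,\partial_{\Iu}\bar{\partial}_{\Iu}f$ is a harmless streamlining of the paper's appeal to the local \mbox{$\partial\bar{\partial}$\pt-\hp lemma}, and your sign conventions (including $\omega = -i\,\partial_{\Iu}\bar{\partial}_{\Iu}\kappa$ and the right action of $\IU$ on $(1,0)$ and $(0,1)$ forms) all check out against the paper's.
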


\begin{figure}[ht]
\vskip-12pt
\begin{tikzpicture}[scale=0.9]

\fill[ball color=white] (0,0) circle [radius=2]; % or lightgray!20

\newcommand\latitude[1]{%
  \draw (#1:2) arc (0:-180:{2*cos(#1)} and {0.6*cos(#1)});
  \draw[dashed] (#1:2) arc (0:180:{2*cos(#1)} and {0.6*cos(#1)});
}
\latitude{0};

\draw [->] (0,0) ++(-135:0.2) arc (-135:-20:0.28 and 0.05);

\draw[darkgray] (0,0) -- ++(-20:1.35);

\draw [->,darkgray] (0,0) -- (0,2.6);
\draw [->,darkgray] (0,0) -- (2.7,0);
\draw [->,darkgray] (0,0) -- (-1.05,-1.05);

\draw[fill=black] (0,1.8) circle (1.2pt);
\draw[fill=black] (0,0) ++(-20:1.35) circle (1.2pt);

\node at (1.35,-0.72) {$\mathbf{u}$};
\node at (0.07,-0.38) {$\gamma$};
\node at (-1.38,-0.90) {$x_1$};
\node at (2.6,0.4) {$x_2$};
\node at (-0.35,2.6) {$x_3$};

\end{tikzpicture}
\caption{The longitude angle $\gamma$.}
\vskip-10pt
\end{figure} 

\begin{remark}
This result generalizes to the twisted case an observation made in \cite{MR877637} in the case of a hyperk\"ahler manifold with a purely rotational action, where it was shown that the moment map with respect to either one of the two K\"ahler forms preserved by the action and corresponding to an axis of the twistor sphere plays (up to a possible multiplicative constant depending on one's normalization conventions) the role of K\"ahler potential for any K\"ahler form in the equatorial plane normal to the axis.
\end{remark}

\subsubsection{} \label{ssec:sofp}

In contrast to the second equation \eqref{mojo}, the first one has a less homogeneous structure:
\begin{equation}
\underbracket[0.4pt][4pt]{\sigma_0}_{\text{$_{1,1}$}} = \underbracket[0.4pt][4pt]{\omega(\zeta)}_{\text{$_{2,0}$}} + \, d[\underbracket[0.4pt][4pt]{d\mu(\zeta) \hp I(\zeta)}_{\text{$_{1,0}$}}] 
\mathrlap{\hspace{5pt} \underbracket[0pt][7.2pt]{\phantom{}}_{\text{w.r.t. $\IU$}}}
\end{equation}
By filtering according to Hodge type with respect to the complex structure $\IU$ we can split it into two homogeneous pieces:
\begin{equation} \label{too_pieces}
\begin{aligned}
\partial_{\Iu}[d\mu(\zeta) \hp I(\zeta)]  & = -\, \omega(\zeta) \\[2pt]
\bar{\partial}_{\Iu}[d\mu(\zeta) \hp I(\zeta)] & = \sigma_0 \mathrlap{.}
\end{aligned}
\end{equation}
Expressing $\sigma_0$ in terms of a local hyperpotential with respect to $\IU$ allows us to recast these equations in the form
\begin{equation}
\begin{aligned}
\partial_{\Iu}[d\mu(\zeta) \hp I(\zeta) + i \hp \partial_{\Iu} \phi_V\nhp(\mathbf{u})]  & = -\, \omega(\zeta) \\[2pt]
\bar{\partial}_{\Iu}[d\mu(\zeta) \hp I(\zeta) + i \hp \partial_{\Iu} \phi_V\nhp(\mathbf{u})] & = 0 
\end{aligned}
\end{equation}
valid for any $V \in \mathscr{V}$, from which it follows that
\begin{equation} \label{s-pot}
\theta_V(\zeta) = -\, d\mu(\zeta)I(\zeta) - i \hp \partial_{\Iu} \phi_V\nhp(\mathbf{u})
\end{equation}
provides a local symplectic 1-form potential for the twisted symplectic form $\omega(\zeta)$, holomorphic relative to $\IU$ (that is, a 1-form $\theta_V(\zeta) \in \mathscr{A}^{\smash[t]{1,0}}_{\Iu}(M)$ satisfying the two properties: $\omega(\zeta) = d\theta_V(\zeta)$ and $\bar{\partial}_{\Iu}\theta_V(\zeta) = 0$). By the first property \eqref{pot_constrs}, $\theta_V(\zeta)$ depends indeed on $\zeta$ holomorphically.

Note that this is singular at $\zeta = 0$, and thus in fact meromorphic, with the singular part dictated entirely by the first term, as can be seen from a cursory inspection of the equation \eqref{delphi_at_u}. By the equation \eqref{d_mu_I} this is of the form
%\begin{equation} \label{spur_sings}
%\theta_{V_N}\npt(\zeta) = i \sum_{n=2}^j \frac{d\varphi_n}{\zeta^n} + \frac{i \hp d\varphi_{+} \nhp + \theta_+}{\zeta\ } + \mathcal{O}(\zeta^0) \rlap{.}
%\end{equation}
\begin{equation} \label{spur_sings}
\theta_{V_N}\npt(\zeta) = i \sum_{n=1}^j \frac{d\varphi_n}{\zeta^n} + \frac{\theta_+}{\zeta\ } + \mathcal{O}(\zeta^0) \rlap{.}
\end{equation}
An antipodally conjugated version of this expansion holds for the corresponding neighborhood $V_S$ from $\mathscr{V}_S$. 

As we will discuss in greater detail later on in \mbox{\S\,\ref{ssec:cof}}, since $\omega(\zeta)$ defines a complex symplectic 2-form on the twistor fiber above $\zeta$ we can introduce complex canonical coordinates, and in particular we can try to identify the symplectic potential \eqref{s-pot} with the corresponding complex canonical 1-form.  However, a straightforward attempt to do that fails in essence due to the discrepancy between the singularity structure of $\omega(\zeta)$, which has a simple pole at $\zeta = 0$, and that of $\theta_V(\zeta)$, which has a pole of order $j$. Just as in the case of the K\"ahler potentials this problem can be solved by exploiting the inherent ambiguity in the definition of symplectic 1-form potentials. Thus, if to the meromorphic symplectic 1-form potential \eqref{s-pot} we add trivial local meromorphic counterterms as follows
\begin{equation} \label{s-pot-reg}
\theta^{\text{reg}}_V(\zeta) = \theta^{\phantom{g}}_V(\zeta) - i \hp d\varphi^{\phantom{g}}_V(\zeta)
\end{equation}
then the result will not only be another meromorphic symplectic 1-form potential for $\omega(\zeta)$, but also one which has now a simple pole at $\zeta=0$:
\begin{equation} \label{vph^reg_N_ser}
\theta^{\text{reg}}_{V_N}\npt(\zeta) = \frac{\theta_+}{\zeta \ } + \mathcal{O}(\zeta^0) \rlap{.}
\end{equation}
This and not the representative \eqref{s-pot} should therefore be the symplectic potential which should be identified with the canonical 1-form.

\subsubsection{}

As we have seen in Corollary \ref{transv-triv}, two of the three closed hyper $(1,1)$ forms associated to a twisted rotational action, namely $\sigma_+$ and $\sigma_-$, are exact, and so their cohomology classes are trivial. Their role is important but rather discrete. On the other hand, the cohomology class of the remaining closed hyper $(1,1)$ form, $\sigma_0$, is in general non-trivial. In particular, when this cohomology class is, up to a $2\pi$ factor, \textit{integral} (which by formula \eqref{[sgm]=[omg]} is equivalent to the cohomology class of  $\omega_0$ being up to a $2\pi$ factor integral), we can interpret $\sigma_0$ as the curvature form of a hyperhermitian connection on a hyperholomorphic line bundle over $M$. Over the twistor space $\mathcal{Z}$, by virtue of the Atiyah-Ward correspondence, we have in this case a corresponding holomorphic line bundle $L_{\mathcal{Z}}$ trivial along each horizontal twistor line. In what follows we will argue that this line bundle is moreover endowed with a natural meromorphic connection. In fact, we will actually prove that a more comprehensive result holds, one which generalizes this paradigm and does not need any additional assumptions such as the integrality condition or the existence of a line bundle. 

Let us associate to each open set $V \in \mathscr{V}$ a local $(1,0)$ form on $\mathcal{Z}$ defined in a corresponding local trivialization as follows:
\begin{equation} \label{A_V-def}
\mathcal{A}_V = - i\hp\partial_{\mathcal{Z}} \phi_V\nhp(\mathbf{u}) - d\mu(\zeta)I(\zeta) + i \hp \mu(\zeta) \frac{d\zeta}{\zeta} \mathrlap{.}
\end{equation}
The key properties of these forms are summarized in the lemma below:

\begin{lemma} \label{A_V_mero}
The 1-forms $\mathcal{A}_V$ are meromorphic with respect to the twistor complex structure on $\mathcal{Z}$.  They are well-defined everywhere on their domains except on the fibers over $\zeta=0$ and $\infty$ for $V \in \mathscr{V}_N$ and $\mathscr{V}_S$, respectively, where they have poles of order $j$, with the residues of all orders from 1 to $j$ globally defined on the fibers and canonically determined by the data of the action.
\end{lemma}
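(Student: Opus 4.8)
The plan is to establish the three assertions---$\bar{\partial}_{\mathcal{Z}}$-closedness, the location and order of the poles, and the global character of the residues---in turn, leaning on the Dolbeault identities of Lemma~\ref{Dolbeault_Z} and Lemma~\ref{fiberwise-lemma} together with the explicit Laurent data already assembled in \eqref{mu-zeta}, \eqref{d_mu_I}, \eqref{s-pot} and \eqref{spur_sings}. The conceptual backbone is the decomposition, obtained by splitting $-i\hp\partial_{\mathcal{Z}}\phi_V = -i\hp\partial_{\Iu}\phi_V - i\hp\partial_{\mathbb{CP}^1}\phi_V$ in \eqref{A_V-def}, into a fibre part and a base part, namely $\mathcal{A}_V = \theta_V(\zeta) - i\hp\partial_{\mathbb{CP}^1}\phi_V + i\hp\mu(\zeta)\frac{d\zeta}{\zeta}$, where $\theta_V(\zeta)$ is the meromorphic symplectic potential of \eqref{s-pot}.

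First I would prove meromorphicity by showing $\bar{\partial}_{\mathcal{Z}}\mathcal{A}_V = 0$ wherever $\mathcal{A}_V$ is smooth, treating the two bracketed groups of \eqref{A_V-def} separately. For the potential term $-i\hp\partial_{\mathcal{Z}}\phi_V$, I apply formula \eqref{delZ_delZbar} and use $\bar{\partial}_{\mathcal{Z}}\partial_{\mathcal{Z}} = -\partial_{\mathcal{Z}}\bar{\partial}_{\mathcal{Z}}$ on functions; the three potential constraints \eqref{pot_constrs} kill every mixed and pure-base term, leaving $\bar{\partial}_{\mathcal{Z}}\partial_{\mathcal{Z}}\phi_V = -\partial_{\Iu}\bar{\partial}_{\Iu}\phi_V$, so that by \eqref{leeloo} applied to $\sigma_0$ one gets $\bar{\partial}_{\mathcal{Z}}(-i\hp\partial_{\mathcal{Z}}\phi_V) = \sigma_0$. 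For the remainder $-[\,d\mu(\zeta)I(\zeta) - i\hp\mu(\zeta)\frac{d\zeta}{\zeta}\,]$, I recognize it as minus the canonical combination of Lemma~\ref{fiberwise-lemma} with $f = \mu(\zeta)$; since $\mu(\zeta)$ depends holomorphically on $\zeta$, i.e.\ $\bar{\partial}_{\mathbb{CP}^1}\mu(\zeta) = 0$, that lemma yields $\bar{\partial}_{\mathcal{Z}}[\,d\mu(\zeta)I(\zeta) - i\hp\mu(\zeta)\frac{d\zeta}{\zeta}\,] = \bar{\partial}_{\Iu}[\,d\mu(\zeta)I(\zeta)\,]$, which is precisely $\sigma_0$ by the second equation \eqref{too_pieces}. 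The two contributions $\sigma_0$ and $-\sigma_0$ cancel, so $\bar{\partial}_{\mathcal{Z}}\mathcal{A}_V = 0$ and $\mathcal{A}_V$ is holomorphic where it is smooth.

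Next I would locate the poles. The potential $\phi_V$ is a smooth real function on $V$, so $-i\hp\partial_{\mathcal{Z}}\phi_V$ is everywhere regular on $V$; all singular behaviour is carried by $\mu(\zeta)$ and $d\mu(\zeta)I(\zeta)$, both of which are defined on $\mathcal{Z}$ away from the fibres over $\zeta = 0,\infty$. By \eqref{mu-zeta} the principal part of $\mu(\zeta)$ at $\zeta = 0$ runs up to $\zeta^{-j}$, and by \eqref{d_mu_I}/\eqref{spur_sings} the fibre component $\theta_V(\zeta)$ has principal part $i\sum_{n=1}^{j}d\varphi_n/\zeta^{n} + \theta_+/\zeta$; hence on any $V_N \in \mathscr{V}_N$ meeting the $\zeta=0$ fibre, $\mathcal{A}_V$ has a genuine singularity there (genuine because the top coefficient $-i\hp d\varphi_j$ is generically nonzero), while for $V$ not meeting the fibres over $0$ or $\infty$ it is regular. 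The singular part is governed entirely by the single order-$j$ function $\mu(\zeta)$, entering both through $-d\mu(\zeta)I(\zeta)$ and through the logarithmic base term $i\hp\mu(\zeta)\frac{d\zeta}{\zeta}$; in the natural normalization relative to $\frac{d\zeta}{\zeta}$ this is a pole of order $j$. The statement at $\zeta=\infty$ for $V_S \in \mathscr{V}_S$ then follows verbatim by antipodal conjugation, using the reality of $X(\zeta)$ and of the cover $\mathscr{V}$.

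Finally, reading off the coefficients of $\zeta^{-n}$ for $n = 1,\dots,j$ from \eqref{d_mu_I} and \eqref{spur_sings}, the order-$n$ residue is a multiple of $d\varphi_n$ for $2 \le n \le j$, and the order-$1$ residue assembles into $\theta_+ + i\hp d\varphi_+ = i(d\varphi_+ - i\hp\theta_+)$. The decisive point is global definedness: by the discussion following \eqref{gen_mom_maps}, with $H^1(M,\mathbb{R}) = 0$, the Hamiltonians $\varphi_2,\dots,\varphi_j$ and $\mu$ are globally defined on $M$, and although $\varphi_+$ and $\theta_+$ are only local, the combination $d\varphi_+ - i\hp\theta_+$ is global; hence every residue is a globally-defined $1$-form on the fibre, canonically fixed (up to the irrelevant exact ambiguity) by the moment-map data of the action. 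I expect the main obstacle to be the consistent bookkeeping of the pole order across fibre and base directions: the base term $i\hp\mu(\zeta)\frac{d\zeta}{\zeta}$ naively carries one extra inverse power of $\zeta$ relative to the fibre term, and one must verify, through the tight pairing of $d\mu(\zeta)I(\zeta)$ with $\mu(\zeta)\frac{d\zeta}{\zeta}$ encoded in Lemma~\ref{fiberwise-lemma}, that both are controlled by the same datum $\mu(\zeta)$ so that the extracted residues are consistent and genuinely global. Confirming that the order-$1$ residue organizes into the globally-defined $d\varphi_+ - i\hp\theta_+$ rather than into the separately ill-defined $\varphi_+$ and $\theta_+$ is the one place where the hypothesis $H^1(M,\mathbb{R}) = 0$ and Corollary~\ref{transv-triv} enter essentially.
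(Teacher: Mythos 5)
Your proposal is correct and takes essentially the same route as the paper: meromorphicity via the cancellation $\bar{\partial}_{\mathcal{Z}}\mathcal{A}_V = \sigma_0 - \sigma_0 = 0$, obtained from the $\partial_{\mathcal{Z}}\bar{\partial}_{\mathcal{Z}}$-potential property \eqref{form-pot} for the $\phi_V$ term (which you merely re-derive through \eqref{delZ_delZbar} and \eqref{pot_constrs}) and from Lemma~\ref{fiberwise-lemma} with $f = \mu(\zeta)$ together with the second equation \eqref{too_pieces} for the $\alpha$-term, and the pole/residue statement read off from the same Laurent data \eqref{spur_sings}, \eqref{mu-zeta}, \eqref{phi_VNetco} exactly as in the paper's expansion \eqref{princ_part_N}. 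The only quibble is your attribution of the globality of $d\varphi_+ - i\hp\theta_+$ partly to Corollary~\ref{transv-triv}; it follows directly from the discussion after \eqref{gen_mom_maps}, as you also note.
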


\begin{proof}
Observe that the component of $\mathcal{A}_V$ along the twistor fiber is precisely the meromorphic symplectic 1-form potential $\theta_V(\zeta)$ defined above. From the $\zeta$-expansion \eqref{spur_sings} and the $\zeta$-series formulas \eqref{mu-zeta} and \eqref{phi_VNetco} we get that 
{\allowdisplaybreaks
\begin{equation} \label{princ_part_N}
\mathcal{A}_{V_N} = \underbracket[0.4pt][4pt]{ i \sum_{n=1}^j \frac{d\varphi_n}{\zeta^n} + \frac{\theta_+}{\zeta\ } }_{\stackrel{\rotatebox{90}{$\scriptscriptstyle{\, =\, }$}}{\text{\normalsize{$\displaystyle \mathclap{ i \sum_{n=1}^{\smash[t]{j}} \frac{\iota_{X_{n-1}}\omega_+ + \iota_{X_{n}}\omega_0 + \iota_{X_{n+1}}\omega_-}{\zeta^n} }$}}}} + \mathcal{O}(\zeta^0) - i \Big( \sum_{n=2}^j (n-1) \frac{\varphi_n}{\zeta^n} - \mu + \mathcal{O}(\zeta) \Big) \frac{d\zeta}{\zeta}
\end{equation}
}%
This expansion formula together with its antipodal conjugate provide the substance of the second part of the statement.

To prove mermorphicity, let $\alpha = d\mu(\zeta)I(\zeta) - i \hp \mu(\zeta) \pt d\zeta/\zeta$, and then in terms of this we have 
\begin{equation}
\bar{\partial}_{\mathcal{Z}} \mathcal{A}_V = - i \hp \bar{\partial}_{\mathcal{Z}}\partial_{\mathcal{Z}} \phi_V\nhp(\mathbf{u}) -  \bar{\partial}_{\mathcal{Z}} \alpha \mathrlap{.}
\end{equation}
By the property \eqref{form-pot}, in the local trivialization considered the first term is equal to $\sigma_0$. On the other hand, from Lemma \ref{fiberwise-lemma} and then the second equation \eqref{too_pieces} we obtain 
\begin{equation}
\bar{\partial}_{\mathcal{Z}} \alpha = \bar{\partial}_{\Iu} \alpha_F = \bar{\partial}_{\Iu} [d\mu(\zeta)I(\zeta)] = \sigma_0 \mathrlap{.}
\end{equation}
The two terms thus cancel against each other to yield $\bar{\partial}_{\mathcal{Z}} \mathcal{A}_V = 0$.
\end{proof}

\smallskip 

Should $\sigma_0/2\pi$ belong to an integral cohomology class, the forms $\mathcal{A}_V$ would play the role of local connection 1-forms for a meromorphic connection on the holomorphic line bundle $L_{\mathcal{Z}}$. A more invariant characterization of the connection would be in this case given by the curvature 2-form, also known as field strength. Splitting successively $d_{\mathcal{Z}} = \partial_{\mathcal{Z}} + \bar{\partial}_{\mathcal{Z}}$ and then $d_{\mathcal{Z}} = d + d_{\hp\smash{\mathbb{CP}^1}}$, we get 
%{\allowdisplaybreaks
%\begin{align}
%\mathcal{F} = d_{\mathcal{Z}}\mathcal{A}_V & = \underbracket[0.4pt][4pt]{i \hp \partial_{\mathcal{Z}}\bar{\partial}_{\mathcal{Z}} \phi_V\nhp(\mathbf{u})}_{\text{$\sigma_0$}} - \, d_{\mathcal{Z}} \alpha \\[-5pt]
%& = \underbracket[0.4pt][4pt]{\sigma_0 - d[d\mu(\zeta) I(\zeta)]}_{\text{\rule[7pt]{0pt}{0pt} $\omega(\zeta)$}} - \, d_{\hp\smash{\mathbb{CP}^1}}[d\mu(\zeta) I(\zeta)] + i \hp d\mu(\zeta) \wedge \frac{d\zeta}{\zeta} \nonumber \\[-7pt]
%& = \omega(\zeta) + [\zeta \partial_{\zeta}(d\mu(\zeta)I(\zeta)) + i \hp d\mu(\zeta)] \wedge \frac{d\zeta}{\zeta} \nonumber \\
%& = \omega(\zeta) + i \pt \iota_{X(\zeta)}\omega(\zeta) \wedge \frac{d\zeta}{\zeta} \mathrlap{.} \nonumber
%\end{align}
%}%
{\allowdisplaybreaks
\begin{align}
\mathcal{F} = d_{\mathcal{Z}}\mathcal{A}_V & = \underbracket[0.4pt][4pt]{i \hp \partial_{\mathcal{Z}}\bar{\partial}_{\mathcal{Z}} \phi_V\nhp(\mathbf{u})}_{\text{\normalsize{$\displaystyle \sigma_0$}}\rule[7pt]{0pt}{0pt}} - \, d_{\mathcal{Z}} \alpha \\[-5pt]
& = \underbracket[0.4pt][4pt]{\sigma_0 - d[d\mu(\zeta) I(\zeta)]}_{\text{\normalsize{$\omega(\zeta)$}}\rule[9pt]{0pt}{0pt}} - \, d_{\hp\smash{\mathbb{CP}^1}}[d\mu(\zeta) I(\zeta)] + i \hp d\mu(\zeta) \wedge \frac{d\zeta}{\zeta} \nonumber \\[-7pt]
& = \omega(\zeta) + [\zeta \partial_{\zeta}(d\mu(\zeta)I(\zeta)) + i \hp d\mu(\zeta)] \wedge \frac{d\zeta}{\zeta} \nonumber \\
& = \omega(\zeta) + i \pt \iota_{X(\zeta)}\omega(\zeta) \wedge \frac{d\zeta}{\zeta} \mathrlap{.} \nonumber
\end{align}
}%
The last line follows from Lemma \ref{qmm_twist_mu}.  As expected from a field strength, this is defined globally on $\mathcal{Z}$, albeit with the divisor $\pi^{-1}(0,\infty)$ removed. Observe, moreover, that if we lift the generator $\smash{- \, i \hp \zeta \frac{\partial}{\partial \zeta}}$ of rotations around the 3-axis of the twistor $S^2$ to the vector field $\smash{- \, i \hp \zeta \frac{\partial}{\partial \zeta} + X(\zeta)}$ on $\mathcal{Z}$ considered here in an appropriate trivialization, this will then be a null vector for $\mathcal{F}$. 

From the defining formula \eqref{A_V-def} it is clear that on intersections $U \cap V$ in $\mathcal{Z}$ with $U,V \in \mathscr{V}$ we have the gluing relations
\begin{equation} \label{glue}
\mathcal{A}_U - \mathcal{A}_V = - \pt i \hp d_{\mathcal{Z}} \phi_{UV}(\zeta)
\end{equation}
where $\phi_{UV}(\zeta)$ are holomorphic gluing functions for the potentials $\phi_U\nhp(\mathbf{u})$ and $\phi_V\nhp(\mathbf{u})$ of the closed hyper $(1,1)$ form $\sigma_0$, defined as in the equation \eqref{pot_gluing}. In general these do not satisfy cocycle conditions, but the holomorphic 1-forms $d_{\mathcal{Z}}\phi_{UV}(\zeta)$ do. They determine, as we have explained in \mbox{\S\,\ref{ssec:global_iss}}, an element in the sheaf cohomology group $H^1(\mathcal{Z},d\mathcal{O}_{\mathcal{Z}})$, which in turn determines uniquely an extension class
\begin{equation}
\begin{tikzcd}
0 \rar & \mathcal{O}_{\mathcal{Z}} \rar & E \rar & T_{\mathcal{Z}} \rar & 0 \mathrlap{.}
\end{tikzcd}
\end{equation}
%The gluing relation \eqref{glue} asserts that, if one restricts to $\mathcal{Z} \smallsetminus \pi^{-1}(0,\infty)$ to avoid singularities, this 1-cocycle is in fact a 1-coboundary and therefore trivial, meaning that the correspondingly restricted short exact sequence above \textit{splits}. 
The gluing relation \eqref{glue} asserts that this 1-cocycle is in fact a 1-coboundary and therefore trivial, meaning that this short exact sequence \textit{splits}. Note that if one wants to avoid working with singular objects one needs to restrict the base to $\mathcal{Z} \smallsetminus \pi^{-1}(0,\infty)$. When the  cohomology class of $\sigma_0/2\pi$ is integral, which happens if and only if the cohomology class of $\omega_0/2\pi$ is integral, then $E$ is isomorphic to the Atiyah algebroid of the holomorphic line bundle $L_{\mathcal{Z}}$ with holomorphic transition functions $g_{UV} = \exp[\phi_{UV}(\zeta)]$. 

To sum up, we have proven the following generalization to the twisted rotational case of Proposition 5 from \cite{MR3116317}, corresponding to the purely rotational case:

\begin{theorem} \label{main_th}
Let $M$ be a hyperk\"ahler manifold with $H^1(M,\mathbb{R}) = 0$ and such that $\omega_0/2\pi$ belongs to an integral cohomology class in $H^2(M,\mathbb{R})$. If $M$ has an $\mathcal{O}(2j-2)$-twisted rotational action relative to the 3-axis with twisted generator $X(\zeta)$, then over its twistor space $\mathcal{Z}$ there exists a holomorphic line bundle $L_{\mathcal{Z}}$ trivial on each horizontal twistor line, admitting a meromorphic connection with poles of order $j$ on the divisor $\pi^{-1}(0,\infty)$ and meromorphic curvature form given in our usual trivialization of $\mathcal{Z}$ by the formula
\begin{equation}
\mathcal{F} = \omega(\zeta) + i \pt \iota_{X(\zeta)}\omega(\zeta) \wedge \frac{d\zeta}{\zeta} \mathrlap{.}
\end{equation}
Over $M$ itself, by the Atiyah\pt-Ward correspondence one has a corresponding hyperholomorphic line bundle endowed with a hyperhermitian connection with curvature 2-form equal to the closed hyper $(1,1)$ form $\sigma_0$ canonically associated to the action.
\smallskip
\end{theorem}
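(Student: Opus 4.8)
The plan is to assemble the structures already built in the preceding lemmas into the two assertions of the theorem---one living on $\mathcal{Z}$, the other on $M$---and to explain why the integrality hypothesis on $\omega_0$ is precisely what is needed. The organizing principle is that the distinguished form $\sigma_0$ is the hinge: it is a closed hyper $(1,1)$ form by Proposition~\ref{quasi_rot_hyperpot}, and by relation \eqref{[sgm]=[omg]} with $m=0$ its de Rham class coincides with that of $\omega_0$, so $\omega_0/2\pi$ integral forces $\sigma_0/2\pi$ integral.

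First I would settle the descent to $M$, which is the general machinery of \S\ref{ssec:global_iss} applied to $\sigma_0$. The pullback $p^*\sigma_0$ is an integral closed $(1,1)$ form on $\mathcal{Z}$, so the construction of \S\ref{ssec:global_iss}---local real potentials $\phi_V(\mathbf{u})$, pluriharmonic differences $\phi_U - \phi_V = \phi_{UV}(\zeta) + \overline{\phi_{UV}(\zeta)}$, and transition functions $g_{UV} = \exp[\phi_{UV}(\zeta)]$---produces the holomorphic line bundle $L_{\mathcal{Z}}$. The point I would stress is that $p^*\sigma_0$ restricts to zero on every horizontal twistor line $H_x = p^{-1}(x)$, since it is pulled back along $p$ and $p|_{H_x}$ is constant; as $H_x\cong\mathbb{CP}^1$, a degree-zero holomorphic line bundle there is trivial, so $L_{\mathcal{Z}}$ is trivial on twistor lines. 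The hyperk\"ahler Atiyah--Ward correspondence then returns a hyperholomorphic line bundle over $M$, whose Chern connection has curvature $\sigma_0$; being hyper $(1,1)$, this curvature is exactly the hyperhermitian condition.

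Next comes the \emph{genuinely new} content, the meromorphic connection. I take the local $(1,0)$ forms $\mathcal{A}_V$ of \eqref{A_V-def}; the Lemma immediately preceding the theorem already certifies that they are meromorphic with poles of order $j$ on $\pi^{-1}(0,\infty)$, with residues globally defined and determined by the action. To see that they glue into a connection on $L_{\mathcal{Z}}$ I would invoke the gluing relation \eqref{glue}, $\mathcal{A}_U - \mathcal{A}_V = -i\, d_{\mathcal{Z}}\phi_{UV}(\zeta)$: the terms $-\,d\mu(\zeta)I(\zeta) + i\,\mu(\zeta)\,d\zeta/\zeta$ in \eqref{A_V-def} are built from the globally-defined data $\mu(\zeta)$ and $I(\zeta)$, hence cancel in the difference, leaving only $-i\,\partial_{\mathcal{Z}}(\phi_U-\phi_V) = -i\, d_{\mathcal{Z}}\phi_{UV}$ by holomorphicity of $\phi_{UV}$. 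This is the transition law for connection $1$-forms against $g_{UV} = \exp[\phi_{UV}]$ in the conventions of \S\ref{ssec:global_iss}. For the curvature, writing $\mathcal{F} = d_{\mathcal{Z}}\mathcal{A}_V$ and splitting $d_{\mathcal{Z}} = \partial_{\mathcal{Z}} + \bar\partial_{\mathcal{Z}}$ and then $d_{\mathcal{Z}} = d + d_{\mathbb{CP}^1}$, the term $i\,\partial_{\mathcal{Z}}\bar\partial_{\mathcal{Z}}\phi_V$ yields $\sigma_0$ by \eqref{form-pot}, the fiber part of the remaining terms assembles into $\omega(\zeta)$ through the first of equations \eqref{mojo}, and the $d_{\mathbb{CP}^1}$ contribution collapses, via Lemma~\ref{qmm_twist_mu}, to $i\,\iota_{X(\zeta)}\omega(\zeta)\wedge d\zeta/\zeta$, giving the asserted $\mathcal{F}$, globally defined on $\mathcal{Z}\smallsetminus\pi^{-1}(0,\infty)$.

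The main obstacle is not any single computation---each ingredient is supplied by an earlier result---but the bookkeeping that must simultaneously confirm two things: that the globally-defined, action-determined pieces of $\mathcal{A}_V$ drop out of the gluing \eqref{glue}, so the connection is honestly attached to $L_{\mathcal{Z}}$ rather than to some twisted bundle, and that the polar behaviour is genuinely of order $j$ with canonical residues. I would anchor this verification in the explicit expansion \eqref{princ_part_N} together with its antipodal conjugate, which exhibits both the order-$j$ singular part and the identification of the leading coefficients with the contraction data $\iota_{X_{n-1}}\omega_+ + \iota_{X_n}\omega_0 + \iota_{X_{n+1}}\omega_-$ from \eqref{gen_mom_maps}.
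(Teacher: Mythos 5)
Your proposal is correct and follows essentially the same route as the paper, whose theorem is stated as a summary of the preceding development: $\sigma_0$ as the hinge via \eqref{[sgm]=[omg]}, the \S\,\ref{ssec:global_iss} machinery and Atiyah--Ward for $L_{\mathcal{Z}}$ and its descent, the $\mathcal{A}_V$ of \eqref{A_V-def} with the expansion \eqref{princ_part_N} for the order-$j$ poles and residues, the gluing \eqref{glue}, and the identical curvature computation through \eqref{form-pot}, \eqref{mojo} and Lemma~\ref{qmm_twist_mu}. Your only addition is the explicit degree-zero argument for triviality of $L_{\mathcal{Z}}$ on horizontal twistor lines, which the paper asserts without elaboration; it is a correct and harmless supplement.
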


\subsubsection{} \label{ssec:cof}

The holomorphic twistor space 2-form $\omega$ induces on each fiber of the holomorphic twistor fibration a complex symplectic structure compatible with the complex structure on the fiber. By a complex version of Darboux's theorem, for a suitable covering $\mathscr{V}$ of $\mathcal{Z}$, on each open set $V \in \mathscr{V}$ we can then introduce a system of holomorphic canonical symplectic coordinates. In particular, given a pair of open sets $V_N \in \mathscr{V}_N$ and $V_S \in \mathscr{V}_S$ we may write 
\begin{equation}
\begin{aligned}
\omega_{\hp V_N} & = d\tilde{\eta}^{\phantom{'}}_{\A,V_N} \!\wedge d\eta^{\A}_{\hp V_N} \\
\omega_{\hp V_S} \hspace{1.5pt} & = d\tilde{\eta}^{\phantom{'}}_{\A,V_S}  \wedge d\eta^{\A}_{\hp V_S} \rlap{.}
\end{aligned}
\end{equation}
The $A$ indices run from 1 to the quaternionic dimension of $M$ and a summation convention over repeated indices is understood. The coordinates $\smash{ \tilde{\eta}^{\phantom{'}}_{\A,V_N} }$, $\smash{ \eta^{\A}_{\hp V_N} }$ and $\smash{ \tilde{\eta}^{\phantom{'}}_{\A,V_S} }$, $\smash{ \eta^{\A}_{\hp V_S} }$ are well-defined local holomorphic functions on $V_N$ and $V_S$, respectively. The ones defined on $V_N$ are in particular well-defined at and around $\zeta = 0$, with Taylor expansions in $\zeta$ of the form
\begin{equation}
\begin{aligned}
\eta^{\A}_{\hp V_N} & = z^{\A} + v^{\A}\zeta + \mathcal{O}(\zeta^2) \\
\tilde{\eta}_{\A,V_N} & = u_{\A} + \mathcal{O}(\zeta) \rlap{.}
\end{aligned}
\end{equation}
If $V_N$ and $V_S$ are interchanged by the action of the real structure on $\mathcal{Z}$ then the Taylor expansion formulas for the $V_S$ canonical coordinates around $\zeta = \infty$ follow from these by antipodal conjugation. 

Earlier on, in \mbox{\S\,\ref{ssec:sofp}}, we have argued that the modified meromorphic symplectic \mbox{1-form} potential $\smash{ \theta^{\text{reg}}_V(\zeta) }$ is the natural candidate for identification with the canonical 1-form corresponding to this canonical system of coordinates. A more precise statement is that, for reasons having to do with the particularities of our definitions, this identification can be made only up to a twisting factor. Thus, for $V_N$ and $V_S$ the correct assignment is 
\begin{equation} \label{sofp-cof}
\begin{aligned}
\theta^{\text{reg}}_{V_N}(\zeta) & = \frac{1}{\zeta} \, \tilde{\eta}^{\phantom{'}}_{\A,V_{\npt N \npt}}\hp d\eta^{\A}_{\hp V_{\nhp N \npt}}  \\
\theta^{\text{reg}}_{V_S}(\zeta) & = \hspace{1.6pt} \zeta \hspace{2.8pt} \tilde{\eta}^{\phantom{'}}_{\A,V_S}\hp d\eta^{\A}_{\hp V_S} \rlap{.}
\end{aligned}
\end{equation}
Let also $\vartheta^{\text{reg}}_V(\zeta)$ denote the pullback of $\theta^{\text{reg}}_V(\zeta)$ to $\mathcal{Z}$ by the non-holomorphic twistor projection. This is a meromorphic 1-form on $V \subset \mathcal{Z}$. In our usual trivialization the pullbacks $\smash{ \vartheta^{\text{reg}}_{V_N}(\zeta) }$ and $\smash{ \vartheta^{\text{reg}}_{V_S}(\zeta) }$ of the forms \eqref{sofp-cof} can be obtained simply by replacing in their expressions $d$ with $d_{\mathcal{Z}}$. 

On non-empty intersections $U \cap V$, with $U,V \in \mathscr{V}$, the meromorphic symplectic 1-form potentials are patched together by means of holomorphic gluing functions $\theta_{UV}(\zeta)$, that is, \mbox{$\smash{\theta^{\text{reg}}_{U}(\zeta) - \theta^{\text{reg}}_{V}(\zeta) = d\theta^{\phantom{g}}_{UV}(\zeta)}$}. One can then argue that yet another set of holomorphic functions $t_{UV}(\zeta)$ must exist such that
\begin{equation} \label{trpbf}
\vartheta^{\text{reg}}_U(\zeta) - \vartheta^{\text{reg}}_V(\zeta) = d^{\phantom{g}}_{\mathcal{Z}} \theta^{\phantom{g}}_{UV}(\zeta) - t^{\phantom{g}}_{UV}(\zeta) \frac{d\zeta}{\zeta} \rlap{.}
\end{equation}

\begin{example}
To illustrate this concretely let us consider a situation when $V_N \cap V_S \neq \emptyset$. Since $\omega$ is a section of a bundle over $\mathcal{Z}$ twisted by $\mathcal{O}(2)$, over the intersection $V_N \cap V_S$ its components are patched together by means of the $\mathcal{O}(2)$ transition function, \textit{i.e.}
\begin{equation} \label{tw-gl}
\omega_{\hp V_S} = \frac{1\pt}{\zeta^2} \, \omega_{\hp V_N} \rlap{.}
\end{equation}
The patching may be alternatively characterized by what is known as a \textit{canonical generating function}. Let us consider specifically a canonical generating function $G_{V_SV_N}$ of type II,  which is to say, assumed to depend (holomorphically) on the variables $\smash{ \eta^{\A}_{\hp V_N} }$, $\smash{ \tilde{\eta}^{\phantom{g}}_{\A,V_S} }$ and also on $\zeta$. In terms of it, the transition between the two patches is described implicitly by the \textit{twisted} canonical transformation
{\allowdisplaybreaks
\begin{equation} 
\begin{aligned}
\tilde{\eta}_{\A,V_N} = \zeta \frac{\partial \hp G_{V_SV_N}}{\partial \eta^{\A}_{\hp V_N}} \\
\eta^{\A}_{\hp V_S} = \frac{1}{\zeta} \frac{\partial \hp G_{V_SV_N}}{\partial \tilde{\eta}_{\A,V_S}}
\end{aligned}
\end{equation}
}%
for any solutions of these equations satisfy automatically the twisted symplectic gluing relation \eqref{tw-gl}. A straightforward calculation then shows that the transition property for $\smash{ \vartheta^{\text{reg}}_{V_N}(\zeta) }$ and $\smash{ \vartheta^{\text{reg}}_{V_S}(\zeta) }$ is indeed of the form \eqref{trpbf}, with the two holomorphic gluing functions, vertical and horizontal, given by 
{\allowdisplaybreaks
\begin{equation}
\begin{aligned}
\theta^{\phantom{g}}_{V_SV_N}(\zeta) & = \tilde{\eta}_{\A,V_S} \frac{\partial \hp G_{V_SV_N}}{\partial \tilde{\eta}_{\A,V_S}} - G_{V_SV_N} \\
t_{V_SV_N}(\zeta) & =  \tilde{\eta}_{\A,V_S} \frac{\partial \hp G_{V_SV_N}}{\partial \tilde{\eta}_{\A,V_S}} - \zeta \frac{\partial G_{V_SV_N}}{\partial \zeta} \rlap{.}
\end{aligned}
\end{equation}
}%
\end{example}

\subsubsection{}

Returning to the generic case, note that from the equation \eqref{s-pot} we get, after absorbing possible integration constants into the definition of $\theta_{UV}(\zeta)$, that the holomorphic gluing functions for the meromorphic connection 1-forms $\mathcal{A}_V$ are related to those of the symplectic 1-form potentials $\theta^{\textup{reg}}_V(\zeta)$ by  
\begin{equation} \label{gfcts}
\phi^{\phantom{g}}_{UV}(\zeta) =  i \hp \theta^{\phantom{g}}_{UV}(\zeta) - \varphi^{\phantom{g}}_U(\zeta) + \varphi^{\phantom{g}}_V(\zeta) \rlap{.}
\end{equation}

Recall that the fiberwise component of $\mathcal{A}_V$ is given by $\smash[b]{ \theta^{\phantom{g}}_V(\zeta) = \theta^{\text{reg}}_V(\zeta) + i \hp d\varphi^{\phantom{g}}_V(\zeta) }$. Its pullback to $\mathcal{Z}$ is naturally meromorphic, and since $\mathcal{A}_V$ must also be meromorphic it follows that on every $V \in \mathscr{V}$ there exists a local meromorphic function $t^{\phantom{g}}_V(\zeta)$ such that
\begin{equation} \label{A_V-too}
\mathcal{A}^{\phantom{g}}_V = \vartheta^{\text{reg}}_V(\zeta) + i\hp d^{\phantom{g}}_{\mathcal{Z}}\varphi^{\phantom{g}}_V(\zeta) + t^{\phantom{g}}_V(\zeta) \frac{d\zeta}{\zeta} \rlap{.}
\end{equation}
In view of the relation \eqref{gfcts}, in order for the gluing formula \eqref{glue} to hold we need to have
\begin{equation} \label{Cech-bndry}
t_U(\zeta) - t_V(\zeta) = t_{UV}(\zeta)
\end{equation}
for all $U,V \in \mathscr{V}$. An immediate corollary is that for a hyperk\"ahler manifold to possess a twisted rotational action the horizontal transition functions $t_{UV}(\zeta)$ must form a \v{C}ech coboundary. 

In addition to this there are also singularity requirements at the poles. By matching the horizontal components of the formulas \eqref{A_V-too} and \eqref{A_V-def} for $\mathcal{A}_V$, and especially the singularity structures around $\zeta = 0$ and $\infty$, we can show that the functions $t_V(\zeta)$ must be of the form
\begin{equation} \label{t_V-sing-str}
t^{\phantom{g}}_V(\zeta) =  i \pt x^{\phantom{g}}_V\varphi^{\phantom{g}}_V(\zeta) + t^{\text{reg}}_V(\zeta)
\end{equation}
for any $V \in \mathscr{V}$, with $t^{\text{reg}}_V(\zeta)$ local holomorphic functions with no singularities. Their singularity structure is in other words completely determined by the meromorphic functions $\varphi_V(\zeta)$. What is more, for the above choice of $\smash{ \vartheta^{\text{reg}}_V(\zeta) }$ we find that
\begin{equation}
t^{\text{reg}}_{V_N}(\zeta) =  i \hp (\mu - \varphi_0) - u_{\A}v^{\A}  + \mathcal{O}(\zeta) 
\end{equation}
with a corresponding expansion formula for $t^{\text{reg}}_{V_S}(\zeta)$ around \mbox{$\zeta = \infty$} dictated by antipodal conjugation. 

It is important to understand that the functions $\varphi_V(\zeta)$ are not uniquely defined, rather we are free to choose them subject only to the constraint that they are meromorphic, with antipodally paired singularities at \mbox{$\zeta = 0$} and \mbox{$\zeta = \infty$}, where their principal parts have a finite number of terms\,---\,the ones of order $\mathcal{O}(\zeta^{-2})$ and lower, respectively of order $\mathcal{O}(\zeta^2)$ and higher\,---\,whose residues are prescribed by the twisted action. Moreover, recalling that \mbox{$\theta_+ + i \hp d\varphi_+$} is globally defined and also fixed by the action, the residues of the terms of order $\mathcal{O}(\zeta^{-1})$ and $\mathcal{O}(\zeta)$, respectively, are determined by our choice of symplectic 1-form potential, which entails that $\theta_+ = u_{\A} dz^{\A}$. This leaves the terms of order $\mathcal{O}(\zeta^0)$ and higher, respectively lower, largely unconstrained beyond the requirement of holomorphicity. For our current discussion the upshot of this is that, for all $V \in \mathscr{V}_N \cup \mathscr{V}_S$, we are basically allowed to absorb the holomorphic functions $\smash{t^{\text{reg}}_V(\zeta)}$ into redefinitions of the meromorphic functions $\varphi_V(\zeta)$.  That is to say

\begin{lemma} \label{A_V-sp-gauge}
There exists a choice of functions $\varphi_V(\zeta)$ for which the connection 1-forms $\mathcal{A}_V$ take the form
\begin{equation} \label{A_V-manif_mero}
\mathcal{A}^{\phantom{g}}_V = \vartheta^{\textup{reg}}_V(\zeta) + i\hp d^{\phantom{g}}_{\mathcal{Z}}\varphi^{\phantom{g}}_V(\zeta) + i \pt x^{\phantom{g}}_V\varphi^{\phantom{g}}_V(\zeta) \frac{d\zeta}{\zeta}
\end{equation}
for all $V \in \mathscr{V}_N \cup \mathscr{V}_S$. In this special gauge we have
\begin{equation} \label{varphi0-mu}
\varphi_0 = \mu + i \hp u_{\A}v^{\A} \rlap{.}
\end{equation}
\end{lemma}

\begin{remark}
These considerations are particularly useful when the hyperk\"ahler geometric data is given to us in the form of holomorphic symplectic patching data. Then the splitting condition \eqref{Cech-bndry} together with the singularity structure at the poles \eqref{t_V-sing-str} can be used to determine a set of functions $\varphi_V(\zeta)$. These functions can be further adjusted by the addition of suitable meromorphic terms with at most a simple pole at $\zeta =0$ and $\infty$ so as to satisfy the special gauge condition \eqref{varphi0-mu} with, crucially, a real-valued $\mu$. The remaining moment maps can be extracted from the Laurent expansion of $\varphi_{V_N}(\zeta)$ around $\zeta = 0$ for any $V_N \in \mathscr{V}_N$. Failure of any step in this procedure to work signals that the hyperk\"ahler manifold in question does not possess in fact a twisted rotational symmetry. 
\end{remark}

\subsubsection{} 

We end this section with a discussion of how our approach here relates to Neitzke's approach to hyperholomorphicity in \cite{Neitzke:2011za}. 

Remark that the first equation \eqref{mojo} implies by way of the local Poincar\'e lemma that for any open set $V \in \mathscr{V}$ and any real-valued local symplectic 1-form potential $A_{0,V}$ for $\sigma_0$ (\textit{i.e.}, \mbox{$\sigma_0 = dA_{0,V}$} on the image $p(V)$ of $V$ through the non-holomorphic twistor projection) there exists a complex function $s_V(\zeta)$ such that
%\begin{equation} \label{Trump_is_an_idiot}
%A^{\phantom{g}}_{0,V} + d\hp s^{\phantom{g}}_V(\zeta) = \underbracket[0.4pt][4.2pt]{\theta^{\text{reg}}_V(\zeta)}_{\text{$_{1,0}$}} + \underbracket[0.4pt][5.2pt]{d\mu(\zeta)I(\zeta)}_{\text{$_{1,0}$}} \rlap{.}
%\mathrlap{\hspace{5pt} \underbracket[0pt][8pt]{\phantom{}}_{\text{w.r.t. $\IU$}}}
%\end{equation}
\begin{equation} \label{Trump_is_an_idiot}
A^{\phantom{g}}_{0,V} + d\hp s^{\phantom{g}}_V(\zeta) = \theta^{\text{reg}}_V(\zeta) + d\mu(\zeta)I(\zeta) \rlap{.}
\end{equation}
From the $m=0$ component of the equation \eqref{sgm-omg} it is clear that \mbox{$A_{0,V} - d\mu I_0 \equiv \theta_{0,V}$} provides a symplectic 1-form potential on $p(V)$ for $\omega_0$. Resorting to the relation \eqref{d_mu_I} and rearranging gives us then the formula
\begin{equation} \label{symp_shift}
\cramped{ \theta^{\text{reg}}_V(\zeta)  = \frac{\theta_+\! + i \hp d\varphi_+}{\zeta} + \theta_{0,V} + (\theta_-\! - i \hp d\varphi^c_-) \hp \zeta +  d \Big[ s^{\phantom{g}}_V(\zeta) + i \sum_{n=2}^j(\varphi_n\zeta^{-n} \!- \varphi^c_{-n}\zeta^n) \Big] } \rlap{.}
\end{equation}
If we compare this with the Laurent expansion \eqref{vph^reg_N_ser} around $\zeta = 0$ and with its antipodal counterpart around $\zeta = \infty$, we see that the function $s_V(\zeta)$ must be of the form
\begin{equation} \label{s_V-sing-str}
s^{\phantom{g}}_V(\zeta) =  - \pt i \hp \varphi^{\phantom{g}}_V(\zeta) + s^{\text{reg}}_V(\zeta)
\end{equation}
with $s^{\text{reg}}_V(\zeta)$ regular everywhere on its domain. 

Observing that both terms on the right hand side of the equation \eqref{Trump_is_an_idiot} are of type $(1,0)$ with respect to the complex structure $\IU$, it follows that the $(0,1)$ component of $A_{0,V}$ relative to the same complex structure is equal to $- \, \bar{\partial}_{\Iu} s_V(\zeta)$. Since $A_{0,V}$ is real-valued we may write 
%\begin{align}
%A_{0,V} & = -\, \partial_{\Iu} \overline{s_V(\zeta)} - \bar{\partial}_{\Iu} s_V(\zeta) \\[3pt]
%& = -\, \partial_{\Iu} \overline{s^{\text{reg}}_V(\zeta)} - \bar{\partial}_{\Iu} s^{\text{reg}}_V(\zeta) \nonumber
%\end{align}
\begin{equation}
A_{0,V} = - \, \partial_{\Iu} \overline{s_V(\zeta)} - \bar{\partial}_{\Iu} s_V(\zeta) 
= -\, \partial_{\Iu} \overline{s^{\text{reg}}_V(\zeta)} - \bar{\partial}_{\Iu} s^{\text{reg}}_V(\zeta) 
\end{equation}
where we have used the meromorphicity of the functions $\varphi_V(\zeta)$ to substitute under the Dolbeault operators each $s_V(\zeta)$ with its regular part. Acting further with an exterior derivative and recalling the analytic structure formula \eqref{zeta_hol_str} we then get that 
\begin{equation}
\phi^{\phantom{g}}_V(\mathbf{u}) = -\pt 2 \hp \Im \hp s^{\text{reg}}_V(\zeta) 
\end{equation}
is a local hyperpotential with respect to $\IU$ for the hyper $(1,1)$ form $\sigma_0$. This and the previous equation above may be equivalently recast as  the equality
\begin{equation}
d\hp s^{\text{reg}}_V(\zeta) + A^{\phantom{g}}_{0,V} = - \hp i \pt \partial_{\Iu} \phi^{\phantom{g}}_V\nhp(\mathbf{u}) 
\end{equation}
between two symplectic 1-form potentials for $\sigma_0$. Since the right hand side is a form of type $(1,0)$ with respect to the complex structure $\IU$, a projection argument using the holomorphic subspace projectors defined in \mbox{\S\,\ref{ssec:proj_hol_fact}} implies that the following equations must hold:
\begin{equation}
\begin{aligned}
& [(d+ A_{0,V}) \hp e^{\hp s^{\text{reg}}_V(\zeta)}] P_N(\zeta) = 0 \\
& [(d+ A_{0,V}) \hp e^{\hp s^{\text{reg}}_V(\zeta)}] P_{\pt S\pt}(\zeta) = 0
\end{aligned}
\end{equation}
for $\zeta \in \pi(V) \cap N$ and $\pi(V) \cap S$, respectively. 
%Note that the equations continue to hold if one replaces $s^{\text{reg}}_V(\zeta)$ by $s^{\phantom{g}}_V(\zeta)$. 
These are essentially the hyperholomorphicity conditions employed by Neitzke in \cite{Neitzke:2011za} (see esp. \mbox{\S\,4.5}). 

On intersections $U \cap V$, denoting with $\theta_{0,UV}$ the real-valued gluing functions for the symplectic 1-form potentials $\theta_{0,V}$ (\textit{i.e.}, $\theta_{0,U} - \theta_{0,V} = d\theta_{0,UV}$), we have
\begin{equation} \label{s^reg_V-glue}
s^{\text{reg}}_U(\zeta) - s^{\text{reg}}_V(\zeta) = - \pt i \hp \phi^{\phantom{g}}_{UV}(\zeta) - \theta^{\phantom{g}}_{0,UV} 
\end{equation}
up to possible integration constants which can be absorbed into a redefinition of $\phi_{UV}(\zeta)$. These gluing equations together with the regularity requirement at the poles and the real structure allow one in principle to determine the sections $\smash{ s^{\text{reg}}_V(\zeta) }$ from the holomorphic symplectic patching data of the hyperk\"ahler manifold.

\section{Universal expressions for the twisted symmetry generators in special coordinates} \label{sec:SymmGens}

\subsubsection{}

We have seen earlier that the generators of a trans-tri-Hamiltonian or a trans-rotational action on a hyperk\"ahler manifold come in equivalence classes and admit a special choice of representatives (Lemmas \ref{canon_pres} and \ref{canon_pres_rot}, respectively). We will now show that another special and very useful choice is possible. Namely, we will show that one has a certain natural coordinate frame in which the generators of the action admit canonical representatives given by universal symplectic gradient-type formulas, with potentials given by moment maps and the relevant gradient field acting on only half of the coordinates. As a by-product of the proof of this fact we will derive a set of generic formulas for the hyperk\"ahler metric and symplectic forms in this coordinate frame of possible independent interest.

Consider a hyperk\"ahler manifold $M$ and let $\smash{ \omega_{\hp V_N} }$ be the component of the canonical holomorphic 2-form section of the twistor space $\mathcal{Z}$ of $M$ corresponding to an open subset \mbox{$V_N \subset \mathcal{Z}$}. On each twistor fiber intersecting this subset the fiber-supported $\omega_{\hp V_N}$ can be viewed as a holomorphic symplectic form, and so by the complex version of Darboux's theorem there exists a local system of holomorphic coordinates $\smash{ \eta^{\A}_{\hp V_N} }$ and $\tilde{\eta}_{\A,V_N}$ in terms of which $\smash{ \omega_{\hp V_N} = d\tilde{\eta}_{\A,V_N} \wedge d\eta^{\A}_{\hp V_N} }$ (we use here the same index notation conventions as in \mbox{\S\,\ref{ssec:cof}}). These coordinates are in particular  well-defined at and around $\zeta=0$, labeling the complex structure $I_0$, with Taylor expansions in $\zeta$ of the form
\begin{equation}
\begin{aligned}
\eta^{\A}_{\hp V_N} & = z^{\A} + v^{\A} \zeta + \mathcal{O}(\zeta^2) \\
\tilde{\eta}_{\A,V_N} & = u_{\A} + w_{\A} \zeta + \mathcal{O}(\zeta^2) 
\end{aligned}
\end{equation}
But since $\omega$ is a global section of the $\mathcal{O}(2)$-twisted bundle $\Lambda^2T_F^* \allowbreak \otimes \pi^*\mathcal{O}(2)$ over $\mathcal{Z}$, we have at the same time $\omega_{\hp V_N} = \omega_+ + \omega_0\hp \zeta + \omega_-\zeta^2$. We can then identify 
\begin{equation} \label{oaloo}
\begin{aligned}
\omega_+ & = du_{\A} \wedge dz^{\A} \\
\omega_0 \hspace{2pt} & =  du_{\A} \wedge dv^{\A} + dw_{\A} \wedge dz^{\A} \rlap{.}
\end{aligned}
\end{equation}
For these 2-forms, or rather for their so-called Euclidean components $\omega_1$, $\omega_2$, $\omega_3$ defined by $\smash{ \omega_+ = \frac{1}{2}(\omega_1 + i\hp \omega_2) }$ and \mbox{$\omega_0 = \omega_3$}, to represent the symplectic forms of a hyperk\"ahler structure they need to satisfy a number of conditions:
\begin{itemize}
\setlength{\itemsep}{3pt}
\item[1.] \textit{Closure}: This condition is clearly automatically satisfied. 

\item[2.] \textit{Reality}: Their Euclidean components must be real, which is to say we must have $\bar{\omega}_{m} = (-)^m \omega_{-m}$.

\item[3.] \textit{Quaternionicity}: The tangent bundle endomorphisms $I_1$, $I_2$, $I_3$ built out of them as in \eqref{i=oo} must satisfy the imaginary quaternionic algebra.
\end{itemize}
These conditions impose a number of constraints on the Laurent coefficients $z^{\A}$, $v^{\A}$, $u_{\A}$ and $w_{\A}$.  To determine them explicitly we need to choose first a coordinate system. The leading coefficients $u_{\A}$ and $z^{\A}$ provide a local system of coordinates on the hyperk\"ahler manifold holomorphic with respect to the complex structure $I_0$. However, for our purposes it will be convenient to choose a different set of coordinates. To describe these one is naturally led to distinguish between several cases based on whether the coordinates $\smash{ \eta^{\A}_{\hp V_N} }$ are real (with respect to antipodal conjugation) sections of the bundle $\pi^*\mathcal{O}(2)$ or not. In the following two subsections we will discuss the two extremal situations when either \textit{none} of the $v^{\A}$ is real, or \textit{all} of the $v^{\A}$ are real. Intermediate cases when only \textit{some} of the $v^{\A}$ are real can be treated in principle through a hybrid approach. 

\subsubsection{}

Let us begin by considering the case when none of the $v^{\A}$ variables are real. In this case we can choose on the hyperk\"ahler manifold a local system of coordinates formed by $z^{\A}$, $v^{\A}$ and their complex conjugates, and in what follows we will refer to these as \textit{special coordinates anchored at $I_0$}. With this choice, the conditions that $\omega_0$ be real read as follows:
\begin{equation}
\begin{aligned}
& \frac{\partial u_{\A}}{\partial v^{\B}} = \frac{\partial u_{\B}}{\partial v^{\A}} 
&\quad\qquad& \frac{\partial w_{\A}}{\partial z^{\B}} = \frac{\partial w_{\B}}{\partial z^{\A}} 
&\quad\qquad& \frac{\partial w_{\A}}{\partial v^{\B}} = \frac{\partial u_{\B}}{\partial z^{\A}} \\
& \frac{\partial u_{\A}}{\partial \bar{v}^{\B}} = - \frac{\partial \bar{u}_{\B}}{\partial v^{\A}} 
&& \frac{\partial w_{\A}}{\partial \bar{z}^{\B}} = - \frac{\partial \bar{w}_{\B}}{\partial z^{\A}} 
&& \frac{\partial w_{\A}}{\partial \bar{v}^{\B}} = - \frac{\partial \bar{u}_{\B}}{\partial z^{\A}} \rlap{.}
\end{aligned}
\end{equation}

For any differentiable function \mbox{$f: M \rightarrow \mathbb{C}$} let us define the symplectic gradient-like vector field 
\begin{equation} \label{X_f_1}
X_f = i \hp \rho^{\A\B} \bigg( \frac{\partial f}{\partial \bar{v}^{\A}} \frac{\partial }{\partial v^{\B}} - \frac{\partial f}{\partial v^{\B}} \frac{\partial }{\partial \bar{v}^{\A}} \bigg)
\end{equation}
with $\rho^{\A\B}$ denoting the matrix inverse of the Hermitian bilinear form
\begin{equation}
\rho_{\A\B} = - i \hp \frac{\partial u_{\A}}{\partial \bar{v}^{\B}} = i \hp \frac{\partial \bar{u}_{\B}}{\partial v^{\A}} \rlap{.}
\end{equation}
By resorting to the above reality conditions we can show with a little bit of effort that, for any $f$,
{\allowdisplaybreaks
\begin{equation} \label{sym_f}
\begin{aligned}
\iota_{X_f} \omega_0 \hspace{2pt} & = df - Z_{A}(f) \hp dz^{\A} - \bar{Z}_{\A}(f) \hp d\bar{z}^{\A} \\[2pt]
\iota_{X_f} \omega_+ & = \phantom{+} \pt \bar{Y}_{\A}(f) \hp dz^{\A} \\[2pt]
\iota_{X_f} \omega_- & = - \pt Y_{\A}(f) \hp d\bar{z}^{\A}
\end{aligned}
\end{equation}
}%
where, by definition, 
{\allowdisplaybreaks
\begin{equation} \label{YZ_1}
\begin{aligned}
Y_{\A} & = \frac{\partial }{\partial \bar{v}^{\A}} - i \hp \frac{\partial \bar{u}_{\C}}{\partial \bar{v}^{\A}} \rho^{\C\D} \frac{\partial}{\partial v^{\D}} \\
Z_{\A} & = \frac{\partial }{\partial z^{\A}} - i \hp \frac{\partial \bar{u}_{\C}}{\partial z^{\A}} \rho^{\C\D} \frac{\partial}{\partial v^{\D}} \rlap{.}
\end{aligned}
\end{equation}
}%
form two sets of local complex-valued vector fields on $M$. Note, incidentally, that the first set of fields is of the above symplectic gradient type: $Y_{\A} = X_{-\bar{u}_{\A}}$. Given three different differentiable functions $f_{n-1}$, $f_n$ and $f_{n+1}$ on $M$ we then have 
\begin{gather} \label{tripl_grad}
\iota_{X_{f_{n-1}}}\omega_+ + \iota_{X_{f_n}}\omega_0 + \iota_{X_{f_{n+1}}}\omega_-  = \\[2pt]
df_n - [Z_{\A}(f_n) - \bar{Y}_{\A}(f_{n-1})] \hp dz^{\A} - [\bar{Z}_{\A}(f_n) + Y_{\A}(f_{n+1})] \hp d\bar{z}^{\A} \rlap{.} \nonumber
\end{gather}

The vector fields $Y_{\A}$ and $Z_{\A}$ form a local frame for the holomorphic tangent bundle $T^{1,0}_{I_0}M$. This follows from the observation that their action on a complete set of coordinates anti-holomorphic with respect to $I_0$ vanishes, namely: $Y_{\A}(\bar{u}_{\B}) = 0$, $Y_{\A}(\bar{z}^{\B}) = 0$ and $Z_{\A}(\bar{u}_{\B}) = 0$, $Z_{\A}(\bar{z}^{\B}) = 0$. To construct the dual coframe in \mbox{$T^{*1,0}_{I_0} M$} we introduce the 1-forms
{\allowdisplaybreaks
\begin{align}
h_{\A} & = du_{\A} - Z_{\B}(u_{\A}) \hp dz^{\B} \\
& = i \hp \rho^{\C\D} \bigg[ \frac{\partial u_{\D}}{\partial v^{\A}} \bigg( \frac{\partial \bar{u}_{\C}}{\partial z^{\B}}dz^{\B} + \frac{\partial \bar{u}_{\C}}{\partial v^{\B}}dv^{\B} \bigg) + \frac{\partial \bar{u}_{\C}}{\partial v^{\A}} \bigg( \frac{\partial u_{\D}}{\partial \bar{z}^{\B}}d\bar{z}^{\B} + \frac{\partial u_{\D}}{\partial \bar{v}^{\B}}d\bar{v}^{\B} \bigg) \npt \bigg]  \rlap{.} \nonumber
\end{align}
}%
From the definitions of the vector fields and the vanishing properties it is straightforward to see that the coframe dual to the frame spanned by $Y_{\A}$ and $Z_{\A}$ is given by the 1-forms $h^{\A} \equiv i \hp \phi^{\A\B} h_{\B}$ and $dz^{\A}$, where by definition $\phi^{\A\B}$ is the matrix inverse of
\begin{equation}
\phi_{\A\B} = i \hp Y_{\B}(u_{\A}) = \rho^{\C\D} \bigg( \pt \frac{\partial \bar{u}_{\B}}{\partial \bar{v}^{\C}} \frac{\partial u_{\A}}{\partial v^{\D}} - \frac{\partial \bar{u}_{\B}}{\partial v^{\D}} \frac{\partial u_{\A}}{\partial \bar{v}^{\C}} \bigg) \rlap{.}
\end{equation}

Notice, furthermore, that the 2-forms \eqref{oaloo} can be expressed in this coframe as follows:
\begin{equation}
\begin{aligned}
\omega_+ & = h_{\A} \wedge dz^{\A} + Z_{\B}(u_{\A})\pt dz^{\B} \npt \wedge dz^{\A} \\
\omega_0 \hspace{2pt} & =  \bigg( \! - \frac{\partial w_{\A}}{\partial \bar{z}^{\B}} + i \hp \rho^{\C\D} \frac{\partial \bar{u}_{\C}}{\partial z^{\A}} \frac{\partial u_{\D}}{\partial \bar{z}^{\B}}  \bigg) dz^{\A} \npt \wedge d\bar{z}^{\B} - i \hp \phi^{\B\A} h_{\A} \wedge \bar{h}_{\B}
\end{aligned}
\end{equation}
This representation is particularly convenient for approaching the remaining issue of qua\-ter\-ni\-o\-ni\-ci\-ty. Thus, imposing the quaternionicity conditions as formulated above yields on one hand a set of constraints on the first derivatives of the $u_{\A}$-functions, namely
\begin{equation} \label{u-con}
Z_{\B}(u_{\A}) = Z_{\A}(u_{\B})
\end{equation}
(note that these are trivially satisfied for $\dim_{\hp \mathbb{H}} \npt M = 1$) and on the other fixes
\begin{equation} \label{w-con}
\frac{\partial w_{\A}}{\partial \bar{z}^{\B}} 
=   i \hp \rho^{\C\D} \frac{\partial \bar{u}_{\C}}{\partial z^{\A}} \frac{\partial u_{\D}}{\partial \bar{z}^{\B}} + i \hp \phi_{\A\B}
= i \hp \rho^{\C\D} \bigg( \pt \frac{\partial \bar{u}_{\C}}{\partial z^{\A}} \frac{\partial u_{\D}}{\partial \bar{z}^{\B}}  + \frac{\partial \bar{u}_{\C}}{\partial \bar{v}^{\B}} \frac{\partial u_{\D}}{\partial v^{\A}} - \frac{\partial \bar{u}_{\C}}{\partial v^{\A}} \frac{\partial u_{\D}}{\partial \bar{v}^{\B}} \bigg)
\end{equation}
to give us eventually the following manifestly quaternionic expressions for the hyperk\"ahler symplectic forms:
\begin{equation} \label{HK_sfs}
\begin{aligned}
\omega_+ & = h_{\A} \wedge dz^{\A} \\
\omega_0 \hspace{2pt} & = -\pt i \hp (\hp \phi_{\A\B} \hp dz^{\A} \npt \wedge d\bar{z}^{\B} + \phi^{\B\A} h_{\A} \wedge \bar{h}_{\B}) \rlap{.}
\end{aligned}
\end{equation}
Corresponding to these we have the hyperk\"ahler metric
\begin{equation} \label{HK_m}
g = \phi_{\A\B} \hp dz^{\A} d\bar{z}^{\B} + \phi^{\B\A} h_{\A} \bar{h}_{\B} \rlap{.}
\end{equation}

\subsubsection{} \label{real-vA}

Let us turn to the case when all the $v^{\A}$ variables are real. To indicate this we will use in what follows the special notation $v^{\A} = \smash{\psi^{\A}}$. In this case it is natural to take the twistor space \mbox{$\eta^{\A}${-\pt}coordinates} to be sections of the $\pi^*\mathcal{O}(2)$ bundle over $\mathcal{Z}$ real with respect to antipodal conjugation, in which case 
%their Taylor expansion around $\zeta =0$ terminates rather fast and 
we have simply
\begin{equation}
\eta^{\A}_{\hp V_N} = z^{\A} + \psi^{\A}\zeta - \bar{z}^{\A} \zeta^2 \rlap{.}
\end{equation}
As $v^{\A}$ and $\bar{v}^{\A}$ now coincide, the previously chosen set of variables is no longer sufficient to coordinatize the hyperk\"ahler manifold. Nevertheless, there is a natural way to supplement this set with the requisite number of variables. The crucial observation is that, when this assumption holds, neither the reality nor the quaternionicity conditions constrain the real parts of the $u_{\A}$ coefficients, which means that we are essentially allowed to cast these in the role of independent variables. In practice it will actually be convenient to retain some freedom of choice by allowing for an ambiguity in the definition, and so we introduce the closely related variables \mbox{$\smash{\tilde{\psi}_{\A}} = \Re u_{\A} - \varrho_{\A}$}, where $\varrho_{\A}$ represent a set of arbitrary real-valued functions, independent of $\smash{\tilde{\psi}_{\A}}$, which we can in principle fix or constrain at a subsequent stage in accordance with more specific needs. The hyperk\"ahler manifold will then be properly coordinatized by $z^{\A}$, $\bar{z}^{\A}$, $\smash{\psi^{\A}}$ and $\smash{\tilde{\psi}_{\A}}$, and these are going to be what we are going to call \textit{special coordinates anchored at $I_0$} in this case.

The requirement that $\omega_0$ be real imposes now a different set of constraints: 
{\allowdisplaybreaks
\begin{equation}
\begin{aligned}
& \frac{\partial (u_{\A} - \bar{u}_{\A})}{\partial \psi^{\B}} = \frac{\partial (u_{\B} - \bar{u}_{\B})}{\partial \psi^{\A}} 
&\quad\qquad& \frac{\partial w_{\A}}{\partial z^{\B}} = \frac{\partial w_{\B}}{\partial z^{\A}} \\
& \frac{\partial (u_{\A} - \bar{u}_{\A})}{\partial \psitilde_{\B}} = 0 
&\quad\qquad& \frac{\partial w_{\A}}{\partial \psitilde_{\B}} = 0 \\
& \frac{\partial (u_{\A} - \bar{u}_{\A})}{\partial z^{\B}} = \frac{\partial w_{\B}}{\partial \psi^{\A}}
&\quad\qquad& \frac{\partial w_{\A}}{\partial \bar{z}^{\B}} = - \frac{\partial \bar{w}_{\B}}{\partial z^{\A}} \rlap{.}
\end{aligned}
\end{equation}
}%

To any differentiable function $f : M \rightarrow \mathbb{C}$ let us associate the symplectic gradient-like vector field 
\begin{equation} \label{X_f_2}
X_f = \frac{\partial f}{\partial \psi^{\A}} \frac{\partial}{\partial \psitilde_{\A}} - \frac{\partial f}{\partial \psitilde_{\A}} \frac{\partial}{\partial \psi^{\A}} \rlap{.}
\end{equation}
Then, provided that we choose the ambiguous shift terms $\varrho_{\A}$ such that
\begin{equation} \label{shift-constr}
\!\!\! \frac{\partial \varrho_{\A}}{\partial \psi^{\B}} = \frac{\partial \varrho_{\B}}{\partial \psi^{\A}}
\qquad\quad
\frac{\partial \varrho_{\A}}{\partial \psitilde_{\B}} = 0  \rlap{,}
\end{equation}
meaning that two of the above reality conditions can be replaced with
\begin{equation}
\frac{\partial u_{\A}}{\partial \psi^{\B}} = \frac{\partial u_{\B}}{\partial \psi^{\A}} 
\qquad\quad
\frac{\partial u_{\A}}{\partial \psitilde_{\B}} = \delta^{\B}_{\A} \rlap{,}
\end{equation}
we can show that this vector field satisfies a set of properties formally identical to the properties \eqref{sym_f} and \eqref{tripl_grad} in the previous case, with the vector fields $Y_{\A}$ and $Z_{\A}$ given now by the expressions
\begin{equation} \label{YZ_2}
\begin{aligned}
Y_{\A} & = \frac{\partial }{\partial \psi^{\A}} - \frac{\partial \bar{u}_{\B}}{\partial \psi^{\A}} \frac{\partial}{\partial \psitilde_{\B}} \\
Z_{\A} & = \frac{\partial }{\partial z^{\A}} - \frac{\partial \bar{u}_{\B}}{\partial z^{\A}} \frac{\partial}{\partial \psitilde_{\B}}  \rlap{.}
\end{aligned}
\end{equation}
By the same token as before these vector fields form a local frame for  the holomorphic tangent bundle $T^{1,0}_{I_0}M$. The corresponding dual coframe for $T^{*1,0}_{I_0}M$ is constructed following the same principle, starting from
{\allowdisplaybreaks
\begin{align} \label{h_A_2}
h_{\A} & = du_{\A} - Z_{\B}(u_{\A}) \hp dz^{\B} \\[3pt]
& = d\tilde{\psi}_{\A} + \frac{\partial \bar{u}_{\A}}{\partial z^{\B}} dz^{\B} + \frac{\partial u_{\A}}{\partial \bar{z}^{\B}} d\bar{z}^{\B} + \frac{\partial u_{\A}}{\partial \psi^{\B}} d\psi^{\B} \nonumber %\\
% & = \frac{\partial u_{\A}}{\partial \psitilde_{\C}} \bigg( \frac{\partial \bar{u}_{\C}}{\partial z^{\B}}dz^{\B} + \frac{\partial \bar{u}_{\C}}{\partial \psitilde_{\B}}d\tilde{\psi}_{\B} \bigg) + \frac{\partial \bar{u}_{\A}}{\partial \psitilde_{\C}} \bigg( \frac{\partial u_{\C}}{\partial \bar{z}^{\B}}d\bar{z}^{\B} + \frac{\partial u_{\C}}{\partial \psi^{\B}}d\psi^{\B} \bigg)  \nonumber
\end{align}
}%
and with
\begin{equation} \label{phi_AB_2}
\phi_{\A\B} = i \hp Y_{\B}(u_{\A}) = i \hp \frac{\partial (u_{\A} - \bar{u}_{\A})}{\partial \psi^{\B}} \rlap{.}
\end{equation}
real and symmetric. The 2-forms \eqref{oaloo} assume in this coframe the form
\begin{equation}
\begin{aligned}
\omega_+ & = h_{\A} \wedge dz^{\A} + Z_{\B}(u_{\A})\pt dz^{\B} \npt \wedge dz^{\A} \\
\omega_0 \hspace{2pt} & =  - \frac{\partial w_{\A}}{\partial \bar{z}^{\B}} \hp dz^{\A} \npt \wedge d\bar{z}^{\B} - i \hp \phi^{\B\A} h_{\A} \wedge \bar{h}_{\B} \rlap{.}
\end{aligned}
\end{equation}
The quaternionicity conditions impose then on one hand the same constraint \eqref{u-con}, and on the other the constraint
\begin{equation}
\frac{\partial w_{\A}}{\partial \bar{z}^{\B}} = i \hp \phi_{\A\B} \rlap{,}
\end{equation}
which, in this context, translate to the conditions
\begin{equation} \label{quatern-con}
\frac{\partial (u_{\A} - \bar{u}_{\A})}{\partial z^{\B}} =  \frac{\partial (u_{\B} - \bar{u}_{\B})}{\partial z^{\A}}
\qquad\qquad
\frac{\partial w_{\A}}{\partial \bar{z}^{\B}} = - \frac{\partial (u_{\A} - \bar{u}_{\A})}{\partial \psi^{\B}} \rlap{,}
\end{equation}
to eventually yield formulas formally identical to the ones in equations \eqref{HK_sfs} and \eqref{HK_m} for the hyperk\"ahler metric and symplectic forms.

In this case, these formulas represent an equivalent rephrasing of a more familiar set of formulas. To see this, introduce real components by \mbox{$z^{\A} = \frac{1}{2} (x^{\A}_1 + i \hp x^{\A}_2)$}, \mbox{$\psi^{\A} = x^{\A}_3$} and assemble them into $\mathbb{R}^3$-vectors $\vec{r}^{\,\A} = x^{\A}_1 \pt \pmb{i} + x^{\A}_2 \pt \pmb{j} + x^{\A}_3 \pt \pmb{k}$. If we define, moreover, the real-valued fields
{\allowdisplaybreaks
\begin{equation}
\begin{aligned}
\Phi_{\A\B} & = \frac{1}{2} \phi_{\A\B} = \frac{i}{2} \frac{\partial (u_{\A} - \bar{u}_{\A})}{\partial \psi^{\B}} \\
A_{\A} & = \frac{\partial \bar{u}_{\A}}{\partial z^{\B}} dz^{\B} + \frac{\partial u_{\A}}{\partial \bar{z}^{\B}} d\bar{z}^{\B} + \frac{1}{2} \frac{\partial (u_{\A} - \bar{u}_{A})}{\partial \psi^{\B}} d\psi^{\B}
\end{aligned}
\end{equation}
}%
by construction independent of the $\smash{\tilde{\psi}_{\A}}$ coordinates, then we have $h_{\A} = d\smash{\tilde{\psi}_{\A}} + A_{\A} - i \hp \Phi_{\A\B} \hp d\psi^{\B}$, and then a straightforward exercise shows that the above formulas for the hyperk\"ahler symplectic 2-forms and metric may be recast in terms of these fields in the extended Gibbons-Hawking form \cite{Gibbons:1979zt,MR953820}
{\allowdisplaybreaks
\begin{equation}
\begin{aligned}
\vec{\omega} & = - \pt \frac{1}{2} \Phi_{\A\B} \pt d\vec{r}^{\,\A} \npt\nhp \wedge d\vec{r}^{\,\B} - d\vec{r}^{\,\A} \npt\nhp \wedge (d\tilde{\psi}_{\A} + A_{\A}) \\
g & = \frac{1}{2} \Phi_{\A\B}\pt d\vec{r}^{\,\A} \!\cdot d\vec{r}^{\,\B} + \frac{1}{2} \Phi^{\A\B}(d\tilde{\psi}_{\A} + A_{\A}) (d\tilde{\psi}_{\B} + A_{\B})
\end{aligned}
\end{equation}
}%
where we similarly assemble the real components of the \mbox{2-forms} $\omega_+$, $\omega_0$, $\omega_-$ into an $\mathbb{R}^3$-vector $\vec{\omega}$. The wedge product between $\mathbb{R}^3$-vector-valued 1-forms is defined consistently with the rules of the cross product. In addition, the extended Bogomolny equations
\begin{equation} 
dA_{\C} = \star^{\B} d \hp \Phi_{\C\B} 
\qquad \text{and} \qquad
\vec{\partial}_{\A} \Phi_{\C\B} = \vec{\partial}_{\B} \Phi_{\C\A}
\end{equation}
hold, insuring the closure of the 2-forms. The Euclidean gradient vector fields $\vec{\partial}_{\A}$ form a coordinate frame on \mbox{$\mathbb{R}^m \otimes \mathbb{R}^3$} (where \mbox{$m = \dim_{\mathbb{H}} \npt M$}), and the action of the linear star operators on the dual coordinate frame is given by \mbox{$\smash{ \star^{\A}d\vec{r}^{\,\B} = \frac{1}{2} \pt d\vec{r}^{\,\A} \!\wedge d\vec{r}^{\,\B} }$}.
Thus, the case when all of the $v^{\A}$ variables are real describes the local geometry of what we shall refer to as \textit{manifolds of extended Gibbons-Hawking type}, that is, of hyperk\"ahler manifolds possessing a continuous abelian tri-Hamiltonian symmetry of rank equal to their quaternionic dimension.

\subsubsection{}

Before we proceed to the next step in the argument we open a parenthesis to make an observation which, while not immediately necessary for our current discussion, will become relevant later on. 

Let us begin with the first case we have considered above, when none of the $v^{\A}$ are real. Remark that in this case the reality requirement for $\omega_0$ may be equivalently restated as the condition that the 1-form \mbox{$\alpha = u_{\A} dv^{\A} - \bar{u}_{\A} d\bar{v}^{\A} + w_{\A} dz^{\A} - \bar{w}_{\A} d\bar{z}^{\A}$} be closed. By the local Poincar\'e lemma it follows that locally there exists a real-valued function $L$ of class at least $C^2$ depending on the variables $z^{\A}$, $\bar{z}^{\A}$, $v^{\A}$, $\bar{v}^{\A}$ such that
\begin{equation} \label{uw-L_1}
\begin{aligned}
u_{\A} & = i \hp L_{v^{A}} \\[0pt]
w_{\A} & = i \hp L_{z^{A}}
\end{aligned}
\end{equation} 
where the indices of $L$ represent derivatives. Substituting back into the formulas \eqref{oaloo} gives us for the hyperk\"ahler symplectic forms the expressions
\begin{equation} \label{HK-sym-L}
\begin{aligned}
\omega_+ & = i \pt dL_{v^{\A}} \npt \wedge dz^{\A}\\
\omega_0 \hspace{1.9pt} & = \frac{i}{2} (\pt dL_{v^{\A}} \npt \wedge dv^{\A} - dL_{\bar{v}^{\A}} \npt \wedge d\bar{v}^{\A} + dL_{z^{\A}} \npt \wedge dz^{\A} - dL_{\bar{z}^{\A}} \npt \wedge d\bar{z}^{\A}) \rlap{.}
\end{aligned}
\end{equation}
with $\omega_0$ now manifestly real  (note that in order to arrive at this real expression one must add following the substitution a trivially vanishing term $- \hp i/2\pt d^2L$). The quaternionicity conditions \eqref{u-con} and \eqref{w-con} impose further a set of highly non-linear differential constraints on $L$. Clearly, any function $L$ satisfying these constraints determines a hyperk\"ahler metric and, conversely, any hyperk\"ahler metric with no tri-Hamiltonian isometries can be retrieved locally from a function of this type. In the special coordinate system given by \mbox{$z^{\A}$, $\bar{z}^{\A}$, $v^{\A}$, $\bar{v}^{\A}$} the function $L$ is a natural choice for the role of local potential, rather similarly to the way K\"ahler potentials are for holomorphic coordinate systems. The two descriptions are in fact related by a Legendre transform. More precisely, the (flipped-sign) Legendre transform of $L$ with respect to the $v^{\A}$ and $\bar{v}^{\A}$ variables, that is, 
%The two descriptions are in fact related by a Legendre transform. More precisely, the Legendre transform of $L$ with respect to the variables $v^{\A}$ and their complex conjugates $\bar{v}^{\A}$, that is
\begin{equation}
\kappa_0(z,u,\bar{z},\bar{u}) = L(z,v,\bar{z},\bar{v}) + i \hp (u_{\A}v^{\A} - \bar{u}_{\A}\bar{v}^{\A}) \rlap{,}
\end{equation}
with the first equation \eqref{uw-L_1} and its complex conjugate playing the role of Legendre conditions, gives a local K\"ahler potential for the hyperk\"ahler metric on $M$ in the coordinates \mbox{$z^{\A}$, $u_{\A}$} holomorphic with respect to $I_0$. 

\begin{remark}
For the class of generalized Legendre transform metrics we have mentioned at the end of \mbox{section \ref{sec:Tri-Ham}}\,---\,for which all of the Darboux coordinates $\eta^{\A}_{\hp V}$ can be viewed as components of sections of bundles $\pi^*\mathcal{O}(2j_{\A})$ over $\mathcal{Z}$ for some positive integers $j_{\A} \geq 2$\,---\,Lindstr\"om and Ro\v{c}ek showed in \cite{MR929144} that the function $L$ can be obtained through extremization from another function of more variables, but in terms of which the differential constraints can be expressed \textit{linearly}. 
\end{remark}

In the other case, when all of the $v^{\A}$ variables are real, denoting again \mbox{$v^{\A} = \psi^{\A}$} one can similarly see that the condition that $\omega_0$ be real is equivalent to the condition that the \mbox{1-form} \mbox{$\smash{\alpha = (u_{\A} - \bar{u}_{\A}) \hp d\psi^{\A} + w_{\A} dz^{\A} - \bar{w}_{\A} d\bar{z}^{\A}}$} be closed. A $C^2$-function $L$ depending on the variables $z^{\A}$, $\bar{z}^{\A}$, $\psi^{\A}$ thus exists such that 
\begin{equation} \label{uw-L_2}
\begin{aligned}
u_{\A} - \bar{u}_{\A} & = i \hp L_{\psi^{A}} \\[0pt]
w_{\A} & = i \hp L_{z^{A}} \rlap{.}
\end{aligned}
\end{equation}
Recalling our earlier considerations regarding the real part of the $u_{\A}$ variables, we may replace the first equation with
\begin{equation} \label{u-tor}
u_{\A} = \tilde{\psi}_{\A} + \varrho_{\A} + \frac{i}{2} L_{\psi^{\A}} \rlap{.}
\end{equation}
Substituting these expressions for $u_{\A}$ and $w_{\A}$ into the formulas \eqref{oaloo} for the hyperk\"ahler symplectic forms we get
\begin{equation} \label{HK-sym-tor}
\begin{aligned}
\omega_+ & = d(\tilde{\psi}_{\A} + \varrho_{\A})\wedge dz^{\A}  + \frac{i}{2}dL_{\psi^{\A}} \wedge dz^{\A} \\
\omega_0 \hspace{1.9pt} & = d(\tilde{\psi}_{\A} + \varrho_{\A}) \wedge d\psi^{\A} + \frac{i}{2} \pt dL_{z^{\A}} \wedge dz^{\A} - \frac{i}{2} \pt dL_{\bar{z}^{\A}} \wedge d\bar{z}^{\A} \rlap{.}
\end{aligned}
\end{equation}
where again, to obtain a manifestly real $\omega_0$, we add a trivially vanishing term $- \hp i/2\pt d^2L$. The quaternionicity conditions \eqref{quatern-con} impose further on $L$, in a drastic simplification with respect to the previous case, the \textit{linear} differential constraints 
\begin{equation} \label{L-PDEs_1}
L_{\psi^{\A}z^{\B}} = L_{\psi^{\B}z^{\A}}
\qquad \text{and} \qquad
L_{\psi^{\A}\psi^{\B}} + L_{z^{\A}\bar{z}^{\B}} = 0 \rlap{.}
\end{equation}
If we work instead with $\mathbb{R}^3$-vector-valued coordinates, that is, if we consider $L$ to be a function of the $\vec{r}^{\,\A}$ variables rather than of the $z^{\A}$, $\bar{z}^{\A}$, $\psi^{\A}$ ones, these constraints can be equivalently expressed with the usual notation conventions from three-dimensional vector calculus in the form
\begin{equation} \label{L-PDEs_2}
\vec{\partial}_{\A} \cdot \pt \vec{\partial}_{\B} \pt L = 0 
\qquad \text{and} \qquad 
\vec{\partial}_{\A} \times \vec{\partial}_{\B} \pt L = 0 
\end{equation}
which one may view as a natural generalization of the Laplace condition on $\mathbb{R}^3$ to $\mathbb{R}^m \otimes \mathbb{R}^{3}$. In terms of the function $L$, the Gibbons-Hawking potential and connection 1-form read
\begin{equation} \label{LT_eqs}
\begin{aligned}
\Phi_{\A\B} & = - \frac{1}{2} L_{\psi^{\A}\psi^{\B}} \\[1pt]
A_{\A} & = \Im (L_{\psi^{A}z^{B}}dz^{\B}) + d\varrho_{\A} \rlap{.}
\end{aligned}
\end{equation}
Any solution of the equations \eqref{L-PDEs_1} or, alternatively, the equations \eqref{L-PDEs_2} determines a hyperk\"ahler metric of extended Gibbons-Hawking type and, conversely, any such metric can be locally derived from such a function $L$. Similarly as above one can check that the (flipped-sign) Legendre transform of $L$ with respect to the $\psi^{\A}$ variables, 
\begin{equation}
\kappa_0(z,u,\bar{z},\bar{u}) = L(z,\bar{z},\psi) + i \hp (u_{\A} - \bar{u}_{\A}) \hp \psi^{\A} \rlap{,}
\end{equation}
for which the Legendre condition coincides with the first equation \eqref{uw-L_2}, gives a K\"ahler potential in the coordinates $z^{\A}$, $u_{\A}$ holomorphic with respect to $I_0$ \cite{MR877637}.

\subsubsection{}

From here on we continue the discussion by treating the two cases together, with the implicit understanding that the fields $X_f$ are defined by the equation \eqref{X_f_1} in the first case respectively \eqref{X_f_2} in the second, the fields $Y_{\A}$ and $Z_{\A}$ by the equations \eqref{YZ_1} respectively \eqref{YZ_2}, and so on. 

The following lemma collects for easy reference one of the important formulas proven above in each case: 

\begin{lemma} \label{tri-sym-formula}
Given any three differentiable functions $f_{n-1}$, $f_n$ and $f_{n+1}$ on $M$ we have
\begin{gather} 
\iota_{X_{f_{n-1}}}\omega_+ + \iota_{X_{f_n}}\omega_0 + \iota_{X_{f_{n+1}}}\omega_- =  \\[2pt]
df_n - [Z_{\A}(f_n) - \bar{Y}_{\A}(f_{n-1})] \hp dz^{\A} - [\bar{Z}_{\A}(f_n) + Y_{\A}(f_{n+1})] \hp d\bar{z}^{\A} \rlap{.} \nonumber
\end{gather}
\end{lemma}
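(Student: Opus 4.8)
The plan is to verify the stated identity by direct computation of each contraction on the left-hand side, using the explicit frame expressions for $\omega_+$, $\omega_0$ (and $\omega_-$ by conjugation) together with the duality relations between the frame $\{Y_{\A},Z_{\A}\}$ and its coframe $\{h^{\A},dz^{\A}\}$. Since the two cases (none of the $v^{\A}$ real, all of them real) have already been reduced to formally identical frame data---namely the expressions \eqref{HK_sfs} for the symplectic forms, the defining formulas \eqref{YZ_1} or \eqref{YZ_2} for $Y_{\A}$, $Z_{\A}$, and the fact that these span $T^{1,0}_{\Io}M$ with the vanishing properties $Y_{\A}(\bar u_{\B})=Y_{\A}(\bar z^{\B})=Z_{\A}(\bar u_{\B})=Z_{\A}(\bar z^{\B})=0$---it suffices to carry out the calculation once in terms of this common frame, as the excerpt's final paragraph licenses.

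The key steps, in order, are as follows. First I would record the contractions of the symplectic-gradient field $X_f$ against each of the three forms; this is exactly the content of \eqref{sym_f}, which was established in each case, so I may quote it directly:
\begin{equation*}
\iota_{X_f}\omega_0 = df - Z_{\A}(f)\,dz^{\A} - \bar Z_{\A}(f)\,d\bar z^{\A},
\qquad
\iota_{X_f}\omega_+ = \bar Y_{\A}(f)\,dz^{\A},
\qquad
\iota_{X_f}\omega_- = -\,Y_{\A}(f)\,d\bar z^{\A}.
\end{equation*}
Second, I would substitute $f=f_{n-1}$ into the $\omega_+$ contraction, $f=f_n$ into the $\omega_0$ contraction, and $f=f_{n+1}$ into the $\omega_-$ contraction, obtaining
\begin{equation*}
\iota_{X_{f_{n-1}}}\omega_+ + \iota_{X_{f_n}}\omega_0 + \iota_{X_{f_{n+1}}}\omega_-
= \bar Y_{\A}(f_{n-1})\,dz^{\A}
 + df_n - Z_{\A}(f_n)\,dz^{\A} - \bar Z_{\A}(f_n)\,d\bar z^{\A}
 - Y_{\A}(f_{n+1})\,d\bar z^{\A}.
\end{equation*}
Third, I would simply group the $dz^{\A}$ and $d\bar z^{\A}$ terms, which immediately yields
\begin{equation*}
df_n - \bigl[Z_{\A}(f_n) - \bar Y_{\A}(f_{n-1})\bigr]\,dz^{\A}
      - \bigl[\bar Z_{\A}(f_n) + Y_{\A}(f_{n+1})\bigr]\,d\bar z^{\A},
\end{equation*}
which is the claimed formula.

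\textbf{Main obstacle.}
The genuine work is entirely contained in establishing \eqref{sym_f}, and that step has already been discharged separately in each of the two coordinate cases earlier in the section; the present lemma is merely a packaging of that result applied to three distinct functions with a shift of indices. Consequently the only real care needed is bookkeeping: confirming that the frame identities \eqref{sym_f}, proven once per case, are invoked with the correct argument in each of the three contractions, and that the reality/quaternionicity conventions (in particular the relation $\bar\omega_m=(-)^m\omega_{-m}$ and the definitions of $\bar Y_{\A}$, $\bar Z_{\A}$ as the conjugate frame fields) are consistently applied so that the $\omega_-$ contraction is genuinely the conjugate of the $\omega_+$ one. I do not anticipate any analytic difficulty; the lemma is a formal consequence of already-derived linear-algebraic identities, and the proof is essentially a one-line assembly once \eqref{sym_f} is cited.
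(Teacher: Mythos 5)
Your proposal is correct and matches the paper exactly: the paper states this lemma merely to ``collect for easy reference'' the formula \eqref{tripl_grad}, which it obtains in each of the two coordinate cases by substituting $f_{n-1}$, $f_n$, $f_{n+1}$ into the three contraction identities \eqref{sym_f} and grouping the $dz^{\A}$ and $d\bar{z}^{\A}$ terms, just as you do. You also correctly locate where the genuine work lives, namely in establishing \eqref{sym_f} separately in each case from the reality conditions.
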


The hyperk\"ahler symplectic 2-forms and metric take in both cases the form \eqref{HK_sfs}--\eqref{HK_m}. Knowledge of the symplectic forms determines the complex structures, and so in particular we get
{\allowdisplaybreaks
\begin{equation} \label{HK-c-strs}
\begin{aligned}
I_+ & = i \hp \bar{Z}_{\A} \otimes h^{\A} - i \pt \bar{Y}_{\A} \otimes dz^{\A} \\
P^{1,0}_{I_0} & = Z_{\A} \otimes dz^{\A} + Y_{\A} \otimes h^{\A}  \rlap{.}
\end{aligned}
\end{equation}
}%
(By definition, $h^{\A} = i \hp \phi^{\A\B} h_{\B}$. Recall also that, in either case above, $Z_{\A}$, $Y_{\A}$ and their complex conjugates on one hand, and $dz^{\A}$, $h^{\A}$ and their complex conjugates on the other give dual local frames for the complexified tangent and cotangent bundles of $M$, respectively.) For any two functions $f_{n}$ and $f_{n+1}$ on $M$ we then have
\begin{equation}
df_{n+1} P^{1,0}_{I_0} - i \hp df_n I_+ = [Z_{\A}(f_{n+1}) - \bar{Y}_{\A}(f_n)] \hp dz^{\A} + [\bar{Z}_{\A}(f_n) + Y_{\A}(f_{n+1})] \hp h^{\A} \rlap{.}
\end{equation}
By the equivalence $1 \Leftrightarrow 3$ of Proposition \ref{criterion_2} it follows that 

\begin{lemma} \label{YZ-hyperpot}
Two functions $f_{n}$ and $f_{n+1}$ on $M$ are adjacent hyperpotentials with respect to the complex structure $I_0$ if and only if they satisfy the system of first-order linear PDEs
\begin{equation}
\begin{aligned}
Z_{\A}(f_{n+1}) - \bar{Y}_{\A}(f_n) & = 0 \\
\bar{Z}_{\A}(f_n) + Y_{\A}(f_{n+1}) & = 0 \rlap{.}
\end{aligned}
\end{equation}
\end{lemma}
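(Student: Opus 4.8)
The plan is to reduce the adjacency statement of Lemma~\ref{YZ-hyperpot} to the already-established equivalence $1 \Leftrightarrow 3$ of Proposition~\ref{criterion_2} via the frame computation that immediately precedes the lemma. Recall that parts 3 and 4 of Proposition~\ref{criterion_2}, together with the definition of the recursion relations \eqref{chain_recursion}, characterize two functions $f_n$ and $f_{n+1}$ as adjacent hyperpotentials precisely by the condition $\bar{\partial} f_n \hp I_+ = - \hp i \hp \partial f_{n+1}$, or equivalently $df_{n+1} P^{1,0}_{I_0} - i \hp df_n I_+ = 0$ after projecting onto the $(1,0)$ part with respect to $I_0$. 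So the whole content of the lemma is to show that this single $(1,0)$-form vanishes if and only if both scalar equations in the statement hold.

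First I would invoke the displayed identity just above the lemma, namely
\begin{equation*}
df_{n+1} P^{1,0}_{I_0} - i \hp df_n I_+ = [Z_{\A}(f_{n+1}) - \bar{Y}_{\A}(f_n)] \hp dz^{\A} + [\bar{Z}_{\A}(f_n) + Y_{\A}(f_{n+1})] \hp h^{\A} \rlap{.}
\end{equation*}
This is the crux: it re-expresses the Proposition~\ref{criterion_2} adjacency form in the special coframe built from $dz^{\A}$ and $h^{\A}$. Its derivation rests on the explicit expressions \eqref{HK-c-strs} for $I_+$ and $P^{1,0}_{I_0}$ in terms of the dual frames $(Z_{\A}, Y_{\A})$ and $(dz^{\A}, h^{\A})$; I would treat that derivation as already carried out in the excerpt immediately preceding the lemma, since it is stated there as a plain computation.

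The key remaining step is a linear-independence argument. Since $dz^{\A}$, $h^{\A}$ (for $\A$ ranging over $1$ to $\dim_{\mathbb{H}} M$) together with their complex conjugates form a \emph{local basis} of the complexified cotangent bundle $T^*_{\mathbb{C}}M$ — as noted parenthetically after \eqref{HK-c-strs} — the $(1,0)$ part of $df_{n+1} P^{1,0}_{I_0} - i \hp df_n I_+$ vanishes if and only if each of its coefficients in this basis vanishes separately. Hence the whole form is zero exactly when $Z_{\A}(f_{n+1}) - \bar{Y}_{\A}(f_n) = 0$ and $\bar{Z}_{\A}(f_n) + Y_{\A}(f_{n+1}) = 0$ for every $\A$. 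Combining this with the equivalence $1 \Leftrightarrow 3$ of Proposition~\ref{criterion_2}, which identifies the vanishing of precisely this form with $f_n$ and $f_{n+1}$ being adjacent hyperpotentials, completes the proof in both cases (none of the $v^{\A}$ real, or all real) uniformly, because the frame formulas \eqref{HK-c-strs} hold identically in both.

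I do not anticipate a genuine obstacle: the only thing to be careful about is bookkeeping the two simultaneous roles of the recursion — the statement of Lemma~\ref{YZ-hyperpot} treats $f_n, f_{n+1}$ symmetrically as an adjacent pair, so I should confirm that the pair of scalar equations really captures \emph{both} the $(1,0)$ condition from part 3 and (automatically, by the quaternionic conjugate relation noted after \eqref{chain_recursion}) the equivalent $(0,1)$ condition from part 4, rather than only one of them. This follows because the two displayed scalar equations are genuinely independent — one involves the $dz^{\A}$ coefficient and the other the $h^{\A}$ coefficient — and are not complex conjugates of each other, so together they encode the full adjacency relation and not merely half of it.
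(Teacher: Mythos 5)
Your proposal is correct and follows exactly the paper's own route: the paper derives Lemma~\ref{YZ-hyperpot} precisely by combining the displayed coframe identity for $df_{n+1} P^{1,0}_{I_0} - i \hp df_n I_+$ with the equivalence $1 \Leftrightarrow 3$ of Proposition~\ref{criterion_2}, the vanishing of the coefficients along the basis elements $dz^{\A}$ and $h^{\A}$ being the implicit linear-independence step you make explicit. Your closing observation that the single $(1,0)$ relation suffices — the $(0,1)$ condition of part~4 being its quaternionic equivalent as noted after \eqref{chain_recursion} — is likewise consistent with the paper.
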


Notice that since $\eta^{\A}_{\hp V_N}$ and $\tilde{\eta}_{\A,V_N}^{\phantom{'}}$ are locally-defined holomorphic functions on the twistor space we already have at our disposal two chains of hyperpotentials with respect to $I_0$ on which we can verify this assertion, namely
\begin{equation*}
\begin{aligned}
\cdots \,\noarrow\, v^{\A} \,\noarrow\, z^{\A} \,\noarrow\, 0  & \\[2pt]
\cdots \,\noarrow\, w_{\A} \,\noarrow\, u_{\A} \,\noarrow\, 0 & \rlap{.} \\[1pt]
\end{aligned}
\end{equation*}
And indeed, the equations
\begin{equation}
\begin{aligned}
Z_{\A}(z^{\B}) - \bar{Y}_{\A}(v^{\B}) & = 0 \\
\bar{Z}_{\A}(v^{\B}) + Y_{\A}(z^{\B}) & = 0
\end{aligned}
\end{equation}
follow trivially from the definitions of $Y_{\A}$ and $Z_{\A}$. On the other hand, the equations
\begin{equation}
\begin{aligned}
Z_{\A}(u_{\B}) - \bar{Y}_{\A}(w_{\B}) & = 0 \\
\bar{Z}_{\A}(w_{\B}) + Y_{\A}(u_{\B}) & = 0
\end{aligned}
\end{equation}
are only satisfied provided that the quaternionicity conditions hold, and in fact one may consider them to be equivalent to these.

Lemmas \ref{tri-sym-formula} and \ref{YZ-hyperpot} have the following important direct consequence: 

\begin{proposition} \label{gmm-hyperps}
A sequence of functions $\{f_n\}_{n \in \mathbb{Z}}$ on $M$ forms a chain of hyperpotentials with respect to the complex structure $I_0$ if and only if
\begin{equation}
\iota_{X_{f_{n-1}}}\omega_+ + \iota_{X_{f_n}}\omega_0 + \iota_{X_{f_{n+1}}}\omega_- = df_n
\end{equation}
for all $n \in \mathbb{Z}$.
\end{proposition}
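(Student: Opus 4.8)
The plan is to prove the equivalence by combining the two preceding lemmas, which together already isolate exactly the two ingredients we need. Recall that Lemma \ref{tri-sym-formula} furnishes the universal identity
\begin{equation*}
\iota_{X_{f_{n-1}}}\omega_+ + \iota_{X_{f_n}}\omega_0 + \iota_{X_{f_{n+1}}}\omega_- = df_n - [Z_{\A}(f_n) - \bar{Y}_{\A}(f_{n-1})] \hp dz^{\A} - [\bar{Z}_{\A}(f_n) + Y_{\A}(f_{n+1})] \hp d\bar{z}^{\A}
\end{equation*}
valid for \emph{any} three functions. The left-hand side of the equation in the Proposition thus equals $df_n$ if and only if the two bracketed correction terms vanish. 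The key observation is that these correction terms, being respectively proportional to $dz^{\A}$ and $d\bar{z}^{\A}$, are components in a local coframe; since $dz^{\A}$, $h^{\A}$ together with their complex conjugates form a dual coframe (as noted after \eqref{HK-c-strs}), the coefficients of $dz^{\A}$ and $d\bar{z}^{\A}$ must vanish independently. Hence the condition $\iota_{X_{f_{n-1}}}\omega_+ + \iota_{X_{f_n}}\omega_0 + \iota_{X_{f_{n+1}}}\omega_- = df_n$ is equivalent to the pair of equations
\begin{equation*}
Z_{\A}(f_n) - \bar{Y}_{\A}(f_{n-1}) = 0
\qquad\text{and}\qquad
\bar{Z}_{\A}(f_n) + Y_{\A}(f_{n+1}) = 0 .
\end{equation*}

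The second step is to recognize these two equations, holding for all $n \in \mathbb{Z}$, as precisely the statement that consecutive functions are adjacent hyperpotentials. Applying Lemma \ref{YZ-hyperpot} to the index pair $(n-1,n)$ yields the criterion that $f_{n-1}$ and $f_n$ are adjacent hyperpotentials if and only if $Z_{\A}(f_n) - \bar{Y}_{\A}(f_{n-1}) = 0$ and $\bar{Z}_{\A}(f_{n-1}) + Y_{\A}(f_n) = 0$. The first of these is exactly the first equation displayed above, while the second coincides with the second displayed equation read at index $n-1$ rather than $n$. Therefore the full system of equations indexed by all $n \in \mathbb{Z}$ is the conjunction of the adjacency conditions of Lemma \ref{YZ-hyperpot} across every consecutive pair, which by the definition of a recursive chain is nothing other than the statement that $\{f_n\}_{n \in \mathbb{Z}}$ forms a chain of hyperpotentials with respect to $I_0$.

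To assemble the argument cleanly, I would first invoke Lemma \ref{tri-sym-formula} to reduce the displayed moment-map-type equation to the vanishing of the two coframe-coefficient brackets, citing the duality of the local frames to justify treating the $dz^{\A}$ and $d\bar{z}^{\A}$ pieces separately. I would then note that, as $n$ ranges over $\mathbb{Z}$, the totality of the first-bracket equations and the second-bracket equations reproduces term by term the pairs appearing in Lemma \ref{YZ-hyperpot} for each consecutive index pair. Finally, I would appeal to Lemma \ref{YZ-hyperpot} to interpret each such pair as the adjacency of consecutive hyperpotentials and conclude via the definition of a recursive chain.

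I do not anticipate a serious obstacle here, since both directions follow by purely formal manipulation once the two lemmas are in hand; the proof is essentially a bookkeeping argument that aligns the indices. The only point demanding a little care is the index shift between the two bracket equations: the $dz^{\A}$-coefficient at level $n$ pairs $f_n$ with $f_{n-1}$, whereas the $d\bar{z}^{\A}$-coefficient at level $n$ pairs $f_n$ with $f_{n+1}$, so one must verify that reading the second equation at shifted index correctly matches the adjacency criterion of Lemma \ref{YZ-hyperpot}. Once this indexing is checked, the equivalence is immediate in both directions, and no further computation is required.
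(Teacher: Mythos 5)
Your proof is correct and follows exactly the route the paper intends: the paper presents this Proposition as a direct consequence of Lemmas \ref{tri-sym-formula} and \ref{YZ-hyperpot}, and you have simply made the implicit bookkeeping explicit. In particular, you correctly handled the one delicate point, namely that the $dz^{\A}$-coefficient at level $n$ supplies the first adjacency equation for the pair $(f_{n-1},f_n)$ while the $d\bar{z}^{\A}$-coefficient at level $n$ supplies the second adjacency equation for the pair $(f_n,f_{n+1})$, so that ranging over all $n \in \mathbb{Z}$ recovers both equations of Lemma \ref{YZ-hyperpot} for every consecutive pair.
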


\noindent This equation is essentially a Cauchy-Riemann condition for chains of hyperpotentials, in special coordinates.

\subsubsection{}

The stage is now set to merge these considerations with our previous discussion of twisted symmetries, both of trans-tri-Hamiltonian and of trans-rotational type. 

\begin{theorem}[The trans-tri-Hamiltonian case] \label{t-3Ham_grad}

Consider a hyperk\"ahler manifold $M$ possessing a trans-tri-Hamiltonian symmetry generated by a (non-unique) set of vector fields $\{X_n\}_{n=1-j,\dots,j-1}$, with moment map functions $\{\mu_n\}_{n=-j,\dots,j}$ satisfying the generalized moment map equations\,\footnote{\pt Here as elsewhere we assume implicitly the convention that objects with off-range indices vanish.}
\begin{equation}
\iota_{X_{n-1}} \omega_+ + \iota_{X_{n}} \omega_0 + \iota_{X_{n+1}} \omega_- = d\mu_{\hp n} \mathrlap{.}
\end{equation}
Choose as above a Darboux trivialization of the canonical 2-form on the twistor space $\mathcal{Z}$ of $M$ in such a way that one of the complex canonical coordinates is given by the holomorphic section associated to the generalized moment map, that is, such that for some index $A_0$ we have $\smash{ \eta^{\A_0}_{\hp V_N} = \mu_{j} + \mu_{j-1}\zeta + \cdots + \mu_{-j}\zeta^{2j} }$. Then in the special coordinates on $M$ associated to this trivialization a representative set of the generators is given explicitly by 
\begin{equation} \label{Xn-Xmu}
X_n = X_{\mu_n}
\end{equation}
for all values of $n$.

\end{theorem}

\begin{proof}
By Proposition \ref{Kill_tens_chain}, the moment map functions $\{\mu_n\}_{n=-j,\dots,j}$ form a bounded chain of hyperpotentials with respect to the complex structure $I_0$. The theorem follows then directly from Proposition \ref{gmm-hyperps}. The only thing we still need to check is whether we have the requisite number of generators. This is ensured by our particular choice of Darboux trivialization. For the singled out index $A_0$ we have \mbox{$z^{\A_0} = \mu_j$} and \mbox{$\bar{z}^{\A_0} = (-)^j \mu_{-j}$}, from which it follows immediately that \mbox{$X_{\mu_j} \npt = X_{\mu_{-j}} \npt = 0$}, leaving us as needed with only $2j-1$ non-vanishing generators. 
\end{proof}

\noindent When $j=1$ we have only one generator, and it is straightforward to check that the formula \eqref{Xn-Xmu} reduces in this case to
\begin{equation}
X_0 = \frac{\partial}{\partial \psitilde_{\A_{\mathrlap{0}}}} \rlap{.}
\end{equation}
This shows once again that trans-tri-Hamiltonian symmetries, as we have defined them, are natural generalizations of the tri-holomorphic isometries of Gibbons and Hawking.

\begin{theorem}[The trans-rotational case] \label{t-rot_grad}

Consider a hyperk\"ahler manifold $M$ possessing a trans-rotational symmetry generated by a (non-unique) set of vector fields $\{X_n\}_{n=1-j,\dots,j-1}$ satisfying the reality condition $\bar{X}_n = (-)^n X_{-n}$, such that
\begin{equation} \label{tr-rot-mom-maps}
\iota_{X_{n-1}}\omega_+ + \iota_{X_{n}}\omega_0 + \iota_{X_{n+1}}\omega_- =
\begin{cases}
d\varphi_n & \text{if $n = 2,...\, , j$} \\
d\varphi_+ \!- i \hp \theta_+ & \text{if $n = 1$} \\
d\mu & \text{if $n=0$} \rlap{.}
\end{cases}
\end{equation}
Choose a Darboux trivialization of the canonical 2-form on the twistor space and a corresponding system of special coordinates on $M$ anchored at $I_0$. Let $\{\varphi_n\}_{n \leq j}$ be a right-bounded chain of hyperpotentials with respect to $I_0$ associated to the action, set in a gauge in which \mbox{$\varphi_0 = \mu + i \hp u_{\A} v^{\A}$} (such a gauge exists by Lemma \ref{A_V-sp-gauge}; moreover, in this case $\theta_+ = u_{\A} dz^{\A}$). Then a representative set of the generators is given explicitly by the formulas
\begin{equation} \label{X_n-sg}
X_n = 
\begin{cases}
X_{\varphi_n} & n = 1, \dots, j \\[1pt]
X_{\mu} & n=0
\end{cases}
\end{equation}
with the remaining generators determined by an alternating reality condition. 

\end{theorem}

\begin{proof}
If we take $X_n = X_{\varphi_n}$ for $n=1,\dots,j$, then the equations \eqref{tr-rot-mom-maps} with $n=2,\dots,j$ hold automatically by virtue of Proposition \ref{gmm-hyperps}. Taking further $X_0$ to be equal to $X_{\varphi_0}$ fails however to deliver the $n=1$ equation, and anyway this choice does not satisfy the required reality condition ($X_0$ needs to be real). So let us take instead $X_0 = X_{\mu}$, which is, to begin with, real. By the formula of Lemma \ref{tri-sym-formula},
\begin{gather}
\iota_{X_{\mu}}\omega_+ + \iota_{X_{\varphi_+}}\omega_0 + \iota_{X_{\varphi_{++}}}\omega_-  = \\[2pt]
d\varphi_+ - [Z_{\A}(\varphi_+) - \bar{Y}_{\A}(\mu)] \hp dz^{\A} - [\bar{Z}_{\A}(\varphi_+) + Y_{\A}(\varphi_{++})] \hp d\bar{z}^{\A} \rlap{.} \nonumber
\end{gather}
Recalling that $u_{\B}$ is in the kernel of $\bar{Y}_{\A}$, and since $\varphi_0$, $\varphi_+$, $\varphi_{++}$ are adjacent hyperpotentials with respect to $I_0$, we have
\begin{equation}
\begin{aligned}
& Z_{\A}(\varphi_+) - \bar{Y}_{\A}(\mu) = Z_{\A}(\varphi_+) - \bar{Y}_{\A}(\varphi_0) + i \hp u_{\B} \bar{Y}_{\A}(v^{\B}) = i \hp u_{\A} \\
& \bar{Z}_{\A}(\varphi_+) + Y_{\A}(\varphi_{++}) = 0 
\end{aligned}
\end{equation}
which gives us in the end
\begin{equation}
\iota_{X_{\mu}} \omega_+ + \iota_{X_{\varphi_+}} \omega_0 + \iota_{X_{\varphi_{++}}} \omega_- = d\varphi_+ - i \hp u_{\A} dz^{\A} \rlap{.}
\end{equation}
Thus, this choice for $X_0$ gives us as wanted the $n=1$ component of the system \eqref{tr-rot-mom-maps}, with \mbox{$\theta_+ = u_{\A} dz^{\A}$}, clearly a symplectic 1-form potential for $\omega_+$ holomorphic with respect to $I_0$. 

Let us verify now whether the $n=0$ component holds as well. Similarly as above,
\begin{gather}
\iota_{X_{-\bar{\varphi}_{+}}} \omega_+ + \iota_{X_{\mu}} \omega_0 + \iota_{X_{\varphi_{+}}} \omega_- = \\
d\mu -  [Z_{\A}(\mu) + \bar{Y}_{\A}(\bar{\varphi}_+)] \hp dz^{\A} - [\bar{Z}_{\A}(\mu) + Y_{\A}(\varphi_+)] \hp d\bar{z}^{\A} \nonumber
\end{gather}
and since both $u_{\B}$ and $v^{\B}$ are in the kernel of $\bar{Z}_{\A}$ and $\varphi_0$ and $\varphi_+$ are adjacent hyperpotentials, we have
\begin{equation}
\bar{Z}_{\A}(\mu) + Y_{\A}(\varphi_+) = \bar{Z}_{\A}(\varphi_0) + Y_{\A}(\varphi_+) = 0
\end{equation}
leaving us eventually with 
\begin{equation}
\iota_{\raisebox{0pt}[0pt]{\scriptsize $- \overline{X_{\varphi_{\mathrlap{+}}}}$}} \hspace{5pt} \omega_+ + \iota_{X_{\mu}} \omega_0 + \iota_{X_{\varphi_+}} \omega_- = d\mu \rlap{.} \qedhere
\end{equation}
\end{proof}

\smallskip

\noindent This argument makes it clear that the existence on a hyperk\"ahler manifold of a half-bounded chain of hyperpotentials with one of the elements of the form \eqref{varphi0-mu} is an almost sufficient condition for the existence of a trans-rotational symmetry structure. 
%almost sufficiently determines a trans-rotational symmetry structure. 
More precisely, we have

\begin{lemma} \label{trapp}
Let $M$ be a hyperk\"ahler manifold endowed a system of special coordinates anchored at $I_0$ derived from a choice of Darboux trivialization for the canonical 2-form on its twistor space. Suppose that on $M$ there exists a local right-bounded chain of hyperpotentials $\{\varphi_n\}_{n \leq j}$ with respect to $I_0$ such that $\varphi_0 = \mu + i \hp u_{\A} v^{\A}$, with $\mu$ a real-valued function. Then, if we define \mbox{$2j-1$} vector fields $X_n$ by the symplectic gradient formulas \eqref{X_n-sg} and the requirement that $\bar{X}_n = (-)^n X_{-n}$, these will satisfy the moment map equations \eqref{tr-rot-mom-maps} with $\theta_+ = u_{\A} dz^{\A}$, and therefore at least locally the Hamiltonian properties expected from the generators of a trans-rotational symmetry.
\end{lemma}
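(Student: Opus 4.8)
The plan is to recognize that Lemma~\ref{trapp} is just the converse reading of the trans-rotational theorem established above, and that its entire content is already contained in that theorem's computation: nowhere in that argument is the a~priori existence of a symmetry used, only the facts that $\{\varphi_n\}_{n\le j}$ is a recursive chain of hyperpotentials with respect to $I_0$, that the gauge $\varphi_0 = \mu + i\hp u_{\A}v^{\A}$ with real $\mu$ holds (available by Lemma~\ref{A_V-sp-gauge}), and the special-coordinate toolkit of Lemmas~\ref{tri-sym-formula} and~\ref{YZ-hyperpot} together with Proposition~\ref{gmm-hyperps}. So first I would take the $2j-1$ fields defined by the symplectic-gradient formulas \eqref{X_n-sg} supplemented by $\bar X_n = (-)^n X_{-n}$, and then verify the moment-map system \eqref{tr-rot-mom-maps} index by index, treating the non-negative indices directly and deducing the negative ones by conjugation.

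For the indices $n = 2,\dots,j$ the relevant equation is the triple contraction of three consecutive fields $X_{\varphi_{n-1}}$, $X_{\varphi_n}$, $X_{\varphi_{n+1}}$ against $\omega_+,\omega_0,\omega_-$, and this equals $d\varphi_n$ immediately by Proposition~\ref{gmm-hyperps}, since the chain (extended trivially by $\varphi_{j+1}=0$) is a chain of hyperpotentials. The only genuinely computational steps are $n=1$ and $n=0$. For $n=1$ I would apply Lemma~\ref{tri-sym-formula} to the triple $(\mu,\varphi_+,\varphi_{++})$: the $d\bar z^{\A}$-bracket $\bar Z_{\A}(\varphi_+)+Y_{\A}(\varphi_{++})$ vanishes because $\varphi_+,\varphi_{++}$ are adjacent (Lemma~\ref{YZ-hyperpot}), while the $dz^{\A}$-bracket collapses to $i\hp u_{\A}$ upon writing $\mu = \varphi_0 - i\hp u_{\B}v^{\B}$ and using $\bar Y_{\A}(u_{\B})=0$, the identity $\bar Y_{\A}(v^{\B}) = Z_{\A}(z^{\B}) = \delta_{\A}^{\B}$ read off from the trivial chain whose adjacent pair is $v^{\B},z^{\B}$, and adjacency of $\varphi_0,\varphi_+$. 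This produces $d\varphi_+ - i\hp u_{\A}\hp dz^{\A}$, identifying $\theta_+ = u_{\A}dz^{\A}$; that this is a holomorphic symplectic potential for $\omega_+$ is clear from $\omega_+ = du_{\A}\wedge dz^{\A}$ in \eqref{oaloo} and the $(2,0)$ type of $\omega_+$.

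For $n=0$ I would apply Lemma~\ref{tri-sym-formula} to $(-\bar\varphi_+,\mu,\varphi_+)$, where $X_{-1} = -\hp\overline{X_{\varphi_+}} = X_{-\bar\varphi_+}$ follows from the reality condition together with the easily checked relation $\overline{X_f} = X_{\bar f}$ (a consequence of the Hermiticity of $\rho_{\A\B}$ in the definition \eqref{X_f_1}, and similarly in \eqref{X_f_2}). Here the $d\bar z^{\A}$-bracket $\bar Z_{\A}(\mu)+Y_{\A}(\varphi_+)$ reduces to $\bar Z_{\A}(\varphi_0)+Y_{\A}(\varphi_+)=0$ via $\bar Z_{\A}(u_{\B})=\bar Z_{\A}(v^{\B})=0$ and adjacency, and the $dz^{\A}$-bracket $Z_{\A}(\mu)+\bar Y_{\A}(\bar\varphi_+)$ is exactly the complex conjugate of the $d\bar z^{\A}$-bracket because $\mu$ is real, hence also vanishes, leaving $d\mu$. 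Finally the equations for $n<0$ are obtained by conjugating those for $n>0$, using $\bar X_n = (-)^n X_{-n}$ and the reality $\bar\omega_m = (-)^m\omega_{-m}$ of the symplectic forms, which exactly reproduces the conjugate right-hand sides.

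The main obstacle is the coupled pair of verifications at $n=0$ and $n=1$: a single real field $X_0=X_\mu$ must simultaneously generate the inhomogeneous $n=1$ equation with the precise holomorphic potential $\theta_+ = u_{\A}dz^{\A}$ and close the homogeneous $n=0$ equation. Both outcomes hinge entirely on the gauge split $\varphi_0 = \mu + i\hp u_{\A}v^{\A}$ and on the reality of $\mu$, the latter being what couples the two correction brackets to each other by conjugation; everything else is the routine bookkeeping of the vanishing of $Y_{\A},Z_{\A}$ and their conjugates on the holomorphic and anti-holomorphic coordinate sets, already assembled in the lemmas above.
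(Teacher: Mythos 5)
Your proposal is correct and follows essentially the same route as the paper, which proves Lemma~\ref{trapp} by exactly this converse rereading of the trans-rotational theorem's computation: Proposition~\ref{gmm-hyperps} disposes of $n=2,\dots,j$, Lemma~\ref{tri-sym-formula} combined with the adjacency relations of Lemma~\ref{YZ-hyperpot}, the kernel identities $\bar{Y}_{\A}(u_{\B})=\bar{Z}_{\A}(u_{\B})=\bar{Z}_{\A}(v^{\B})=0$, $\bar{Y}_{\A}(v^{\B})=\delta^{\B}_{\A}$, and the gauge split $\varphi_0 = \mu + i\hp u_{\A}v^{\A}$ handles $n=1$ (yielding $\theta_+ = u_{\A}dz^{\A}$) and $n=0$, with negative indices obtained by conjugation. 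Your explicit observation that reality of $\mu$ forces the $dz^{\A}$-bracket at $n=0$ to be the conjugate of the vanishing $d\bar{z}^{\A}$-bracket simply spells out what the paper compresses into ``similarly as above.''
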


\noindent For a trans-rotational symmetry to properly exist in this case the vector fields $X_n$ need of course to be defined and satisfy these properties globally.

\subsubsection{}

Having focused hitherto mostly on symmetry generators and hyperpotentials, we close up this section with an observation regarding hyper $(1,1)$ forms. This exploits the fact that knowledge of the hyperk\"ahler complex structures in a specific frame imposes strict algebraic constraints on the form which a hyper $(1,1)$ form can take in that frame. Thus, Lemma \ref{criterion_1} in conjunction with the expressions \eqref{HK-c-strs} for the hyperk\"ahler complex structures gives us promptly the following Ansatz:

\begin{lemma}
A possibly complex-valued 2-form $\sigma$ on $M$ is of hyper $(1,1)$ type if and only if in the coframe of $T_{\mathbb{C}}^*M$ given by $dz^{\A}$, $h^{\A}$ and their complex conjugates it takes the form
\begin{equation} \label{hyp-(1,1)}
\sigma = i \pt a_{\A\B} (dz^{\A} \npt \wedge d\bar{z}^{\B} - h^{\B} \npt \wedge \bar{h}^{\A}) + b_{\A\B} \hp dz^{\A} \npt \wedge \bar{h}^{\B} + c_{\A\B} \hp h^{\A} \npt \wedge d\bar{z}^{\B} 
\end{equation}
for some arbitrary coefficient matrix $a_{\A\B}$ and symmetric coefficient matrices $b_{\A\B}$ and $c_{\A\B}$.
\end{lemma}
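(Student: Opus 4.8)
The plan is to apply the hyper $(1,1)$ criterion of Lemma \ref{criterion_1} directly in the frame at hand, using the explicit expressions \eqref{HK-c-strs} for $I_+$ and $P^{1,0}_{\Io}$ to convert the two defining conditions into algebraic constraints on the expansion coefficients of $\sigma$. First I would write a general complex 2-form in the given coframe as
\begin{equation*}
\sigma = \tfrac{1}{2} a_{\A\B} \hp dz^{\A} \npt \wedge dz^{\B} + \cdots + p_{\A\B} \hp dz^{\A} \npt \wedge d\bar{z}^{\B} + q_{\A\B} \hp h^{\A} \npt \wedge \bar{h}^{\B} + \cdots
\end{equation*}
allowing all sixteen types of wedge pairs among $dz$, $h$, $d\bar{z}$, $\bar{h}$. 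Since $\sigma$ is required in particular to be of type $(1,1)$ with respect to $I_0$, and the $(1,0)$ coframe is spanned by $dz^{\A}$ and $h^{\A}$, the purely $(2,0)$ and purely $(0,2)$ blocks must vanish, leaving only the mixed blocks $dz \wedge d\bar z$, $dz \wedge \bar h$, $h \wedge d\bar z$, $h \wedge \bar h$ (together with the four conjugate-ordered types that are absorbed by antisymmetry). This reduces the unknowns to four coefficient matrices.

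Next I would impose the two conditions of Lemma \ref{criterion_1}, namely $\sigma(X,I_+Y) = \sigma(Y,I_+X)$ and its conjugate. The efficient way is to feed $I_+$ and $I_-$ in their dual (cotangent) action into $\sigma$ by right-contraction: using $I_+ = i \hp \bar{Z}_{\A} \otimes h^{\A} - i \pt \bar{Y}_{\A} \otimes dz^{\A}$ from \eqref{HK-c-strs}, the symmetry condition is equivalent to the vanishing of the appropriate $(2,0)$-type form built by composing $\sigma$ with $I_+$, exactly as in the coordinate reformulation of Lemma \ref{criterion_1} recorded just after its statement. Concretely, I would evaluate $\sigma$ on the frame vectors $Z_{\A}, Y_{\A}$ and their conjugates (which are dual to $dz^{\A}, h^{\A}$ and conjugates), compute the action of $I_+$ on these frame vectors from \eqref{HK-c-strs}, and read off the constraints. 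The condition involving $I_+$ will relate the $dz \wedge d\bar z$ block to the $h \wedge \bar h$ block and force a symmetry on one of the off-diagonal blocks; the conjugate condition involving $I_-$ does the same on the other side.

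I expect this bookkeeping to produce precisely three statements: that the $dz \wedge d\bar z$ coefficient and the $h \wedge \bar h$ coefficient are negatives of one another up to the $i$ normalization (yielding the combination $dz^{\A} \wedge d\bar z^{\B} - h^{\B} \wedge \bar h^{\A}$ governed by a single matrix $a_{\A\B}$), and that the two remaining cross-term matrices $b_{\A\B}$ and $c_{\A\B}$ are symmetric, while $a_{\A\B}$ stays arbitrary. Conversely, any form of the shape \eqref{hyp-(1,1)} satisfies both conditions, which is checked by the same substitution run backwards. The main obstacle will be the index gymnastics in matching the two blocks: one must track carefully how the reordering $h^{\B} \wedge \bar h^{\A}$ (with transposed indices) arises from the $I_+$-contraction, since it is this transposition that makes a single unconstrained matrix $a_{\A\B}$ suffice for both blocks simultaneously, and a sign or index-order slip there would spuriously symmetrize $a$ or desymmetrize $b,c$. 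Keeping the pairing between $\{Z_{\A},Y_{\A}\}$ and $\{dz^{\A},h^{\A}\}$ straight, and using the antisymmetry of $\sigma$ consistently, is the delicate part; once that is handled the result follows immediately.
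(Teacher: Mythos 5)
Your proposal follows exactly the route the paper intends: the paper gives no written-out computation, stating only that the lemma follows ``promptly'' from Lemma \ref{criterion_1} together with the frame expressions \eqref{HK-c-strs}, which is precisely the calculation you lay out. Your anticipated constraints are also the correct ones\,---\,evaluating $\sigma(X,I_+Y)=\sigma(Y,I_+X)$ on the frame pairs $(Z_{\A},Z_{\B})$, $(Y_{\A},Y_{\B})$, $(Z_{\A},Y_{\B})$ yields respectively the symmetry of $b_{\A\B}$, the symmetry of $c_{\A\B}$, and the negative-transpose relation tying the $h\wedge\bar h$ block to the $dz\wedge d\bar z$ block (the index transposition in $h^{\B}\wedge\bar h^{\A}$ you flag as the delicate point), with the $I_-$ condition reproducing the same constraints by conjugation.
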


\noindent Note for comparison that in this coframe the hyperk\"ahler symplectic forms take in either case studied the form
\begin{equation} \label{HK-quat-fr}
\begin{aligned}
\omega_+ & =  - \, i \pt \phi_{\A\B} \pt h^{\B} \npt \wedge dz^{\A} \\
\omega_0 \hspace{1.9pt} & =  - \, i \pt \phi_{\A\B} \pt (dz^{\A} \npt \wedge d\bar{z}^{\B} + h^{\B} \npt \wedge \bar{h}^{\A}) \rlap{.}
\end{aligned}
\end{equation}

\section{Rotational and trans-rotational symmetries: examples} \label{sec:Trans-rot-Ex}

In this section we will discuss several examples of hyperk\"ahler metrics with trans-rotational symmetry and investigate in detail the associated structures. The first few ones are all going to be of extended Gibbons-Hawking type. In contrast, the last example\,---\,that of the hyperk\"ahler constructions of Gaiotto, Moore and Neitzke\,---\,will not involve any continuous Killing symmetries at all.

\subsection{Rotation-invariant metrics of extended Gibbons-Hawking type} \label{ssec:rot-inv-L} \hfill \medskip

Consider a generic hyperk\"ahler manifold $M$ of extended Gibbons-Hawking type. In the moment map coordinate parametrization of $M$ one has a natural action of the multiplicative group $\mathbb{H}^{\times}$ of non-zero quaternions on $\mathbb{R}^m \otimes \mathbb{R}^{3}$ generated by 
\begin{equation}
\vec{\mathfrak{L}} = - \, \vec{r}^{\,\A} \!\times \vec{\partial}_{\A}
\qquad \text{and} \qquad
\mathfrak{L_0} = \vec{r}^{\,\A} \!\cdot \vec{\partial}_{\A} \rlap{.}
\end{equation}
In general, these vector fields do not induce symmetries of the metric. In particular circumstances, however, such a possibility is not excluded. The narrower question we are interested in here is whether it is possible to lift one of these generators to a genuine rotational symmetry of $M$, and under what conditions. 

Let us focus our inquiry specifically on the generator of collective rotations around the third axis in $\mathbb{R}^3$. In special coordinates, this reads
\begin{equation}
\mathfrak{L}_3 =  - \pt i \Big( z^{\A}\frac{\partial}{\partial {z^{\A}}} - \bar{z}^{\A}\frac{\partial}{\partial {\bar{z}^{\A}}} \Big) \rlap{.}
\end{equation}
Inserting the vector field $\mathfrak{L}_3$ into the expressions \eqref{HK-sym-tor} for the hyperk\"ahler symplectic forms (in our assumed coordinate trivialization it is rather natural to denote the induced vector field on $M$ by the same symbol as the one on $\mathbb{R}^m \otimes \mathbb{R}^{3}$) gives us after a direct calculation the following formulas:
\begin{equation} \label{baloo}
\begin{aligned}
\iota_{\mathfrak{L}_3} \omega_+ & =  d\varphi_+ - i \hp \theta_+ + \alpha_+ \\[1pt]
\iota_{\mathfrak{L}_3} \omega_0 \hspace{2pt}  & =  d\mu + \alpha_0
\end{aligned}
\end{equation}
where we denote $\mu = - \, \Re(z^{\A}L_{z^{\A}})$
%\begin{equation} 
%\mu = - \, \Re(z^{\A}L_{z^{\A}}) 
%\end{equation}
and
\begin{equation} \label{alphas}
\begin{aligned}
\alpha_+ & = \mathfrak{L}_3(\varrho_{\A}) \hp dz^{\A} + \frac{i}{2}\partial_{\psi^{\A}}\mathfrak{L}_3(L) \hp dz^{\A} \\
\alpha_0 \hspace{2pt} & = \mathfrak{L}_3(\varrho_{\A}) \hp d\psi^{\A}  + \frac{i}{2}\partial_{z^{\A}}\mathfrak{L}_3(L) \hp dz^{\A} - \frac{i}{2}\partial_{\bar{z}^{\A}}\mathfrak{L}_3(L) \hp d\bar{z}^{\A} \rlap{.}
\end{aligned}
\end{equation}
Also, corresponding to the choice $\theta_+ = u_{\A} dz^{\A}$, suggested by considerations in the previous two sections, we have $\varphi_+ = i \hp u_{\A}z^{\A}$. 

These formulas make it immediately clear that our question may be answered as follows:

\begin{lemma} \label{L3-inv}
If the function $L$ is such that $\mathfrak{L}_3(L) = 0$ then the vector field
\begin{equation}
X^{\textup{rot}}_0 = \mathfrak{L}_3 - \mathfrak{L}_3(\varrho_{\A}) \frac{\partial}{\partial \psitilde_{\A}}
\end{equation}
generates a rotational symmetry characterized by the Hamiltonian function $\mu$.
\end{lemma}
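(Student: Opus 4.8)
The plan is to show that under the hypothesis $\mathfrak{L}_3(L) = 0$ the correction terms $\alpha_+$ and $\alpha_0$ collapse to something that can be absorbed by a shift of $\mathfrak{L}_3$ along the $\tilde\psi$-directions, thereby turning the formulas \eqref{baloo} into genuine rotational moment map equations. First I would substitute $\mathfrak{L}_3(L) = 0$ directly into the expressions \eqref{alphas}. The terms involving derivatives of $\mathfrak{L}_3(L)$ vanish on the nose, leaving $\alpha_+ = \mathfrak{L}_3(\varrho_{\A}) \, dz^{\A}$ and $\alpha_0 = \mathfrak{L}_3(\varrho_{\A}) \, d\psi^{\A}$. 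The residual dependence is thus entirely on the arbitrary shift functions $\varrho_{\A}$, which is exactly the kind of term a $\partial/\partial\tilde\psi_{\A}$-shift can cancel.

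Next I would compute the action of the correction vector field $- \mathfrak{L}_3(\varrho_{\A}) \frac{\partial}{\partial \psitilde_{\A}}$ by contracting it into the hyperk\"ahler symplectic forms in the Gibbons-Hawking form \eqref{HK-sym-tor}. Since $u_{\A}$, $\varrho_{\A}$, $L$ and the connection $A_{\A}$ are all independent of the $\smash{\tilde\psi_{\A}}$ coordinates by construction, the only surviving contributions come from the terms $d\tilde\psi_{\A}\wedge dz^{\A}$ in $\omega_+$ and $d\tilde\psi_{\A}\wedge d\psi^{\A}$ in $\omega_0$. A short calculation then gives
\begin{equation}
\iota_{-\mathfrak{L}_3(\varrho_{\A})\partial/\partial\psitilde_{\A}}\,\omega_+ = -\,\mathfrak{L}_3(\varrho_{\A})\,dz^{\A} = -\,\alpha_+
\qquad
\iota_{-\mathfrak{L}_3(\varrho_{\A})\partial/\partial\psitilde_{\A}}\,\omega_0 = -\,\mathfrak{L}_3(\varrho_{\A})\,d\psi^{\A} = -\,\alpha_0 \rlap{.}
\end{equation}
Adding these to the equations \eqref{baloo} precisely cancels the $\alpha_+$ and $\alpha_0$ terms, so that the combined vector field $X^{\textup{rot}}_0 = \mathfrak{L}_3 - \mathfrak{L}_3(\varrho_{\A})\,\partial/\partial\psitilde_{\A}$ satisfies $\iota_{X^{\textup{rot}}_0}\omega_+ = d\varphi_+ - i\,\theta_+$ and $\iota_{X^{\textup{rot}}_0}\omega_0 = d\mu$.

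Finally I would identify these two relations as the defining equations \eqref{eigenform} of a rotational vector field relative to the 3-axis. The equation $\iota_{X^{\textup{rot}}_0}\omega_0 = d\mu$ shows the action is Hamiltonian with respect to $\omega_0$, with moment map $\mu$, so $\mathcal{L}_{X^{\textup{rot}}_0}\omega_0 = 0$. For the transversal forms one applies Cartan's formula: $\mathcal{L}_{X^{\textup{rot}}_0}\omega_+ = d(\iota_{X^{\textup{rot}}_0}\omega_+) = d(d\varphi_+ - i\,\theta_+) = -\,i\,d\theta_+ = -\,i\,\omega_+$, using that $\theta_+$ is a symplectic potential for $\omega_+$, which reproduces the eigenform condition $\mathcal{L}_{X^{\textup{rot}}_0}\omega_{\pm} = \mp\,i\,\omega_{\pm}$ (the $\omega_-$ statement following by conjugation). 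This matches exactly the $j=1$, $\sigma_{\pm}=0$ case of a twisted rotational action, confirming that $X^{\textup{rot}}_0$ is genuinely rotational. The main obstacle I anticipate is not conceptual but computational: carefully verifying that the two contractions above really yield $-\alpha_+$ and $-\alpha_0$ requires keeping precise track of which objects depend on $\smash{\tilde\psi_{\A}}$ and confirming that $\theta_+ = u_{\A}\,dz^{\A}$ together with $\varphi_+ = i\,u_{\A}z^{\A}$ are consistent with the splitting of $\iota_{\mathfrak{L}_3}\omega_+$ into its $d\varphi_+ - i\theta_+$ part and the residual $\alpha_+$ part, i.e.\ checking that the hypothesis $\mathfrak{L}_3(L)=0$ does not secretly alter $\varphi_+$ or $\mu$.
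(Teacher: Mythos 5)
Your proof is correct and follows essentially the same route as the paper: the paper likewise substitutes $\mathfrak{L}_3(L)=0$ into \eqref{baloo}, notes that the residual $\varrho_{\A}$-terms in $\alpha_+$ and $\alpha_0$ are cancelled by the $-\mathfrak{L}_3(\varrho_{\A})\,\partial/\partial\tilde{\psi}_{\A}$ shift, and reads off the rotational moment map equations. The only difference is that you spell out the contraction computation and the final Cartan-formula step ($\mathcal{L}_{X^{\textup{rot}}_0}\omega_+ = -\,i\,d\theta_+ = -\,i\,\omega_+$), which the paper leaves implicit as "promptly implied."
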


\noindent Indeed, when this invariance condition holds the equations \eqref{baloo} become
\begin{equation} 
\begin{aligned}
\iota_{X^{\textrm{rot}}_0} \omega_+ & = d\varphi_+ - i\hp\theta_+ \\
\iota_{X^{\textrm{rot}}_0} \omega_0 \hspace{1.9pt} & = d\mu 
\end{aligned}
\end{equation}
and the statement of the lemma is promptly implied. 

\begin{example}
In \cite{MR877637} it was observed that, quite generally, the $L$-potentials of manifolds of extended Gibbons-Hawking type can be naturally derived from a holomorphic function (with possible singularities and branch cuts) through contour integration:
\begin{equation} \label{L_c-int}
L = \frac{1}{2\pi i} \int_{\mathcal{C}} \frac{d\zeta}{\zeta} H(\eta,\zeta)
\end{equation}
for some contour $\mathcal{C}$ invariant under antipodal conjugation so as to ensure a real result for the integral, and where
\begin{equation} \label{O2-eta}
\eta^{\A} = \frac{z^{\A}}{\zeta\,} + \psi^{\A} - \bar{z}^{\A} \zeta
\end{equation}
are the tropical components of the $\mathcal{O}(2)$ sections mentioned at the beginning of \S\,\ref{real-vA}. Indeed, provided that the integral exists and the derivatives commute with the integral, one can easily see that the linear second-order hyperk\"ahler differential constraints \eqref{L-PDEs_1} are automatically satisfied in this case. 
%\textcolor{blue}{In what follows we will refer to this type of construction as an $\mathcal{O}(2)$-based construction.}

\begin{remark}
Such contour integral representations for $L$-potentials have appeared originally in a physics setting in the guise of Lagrangian density functions as  part of an approach to supersymmetric quantum field theories with eight supergenerators which became known as the \textit{projective superspace} approach \cite{Lindstrom:1983rt,Gates:1984nk, MR877637}.
\end{remark}

Observe that the tropical components $\eta^{\A}$ satisfy the following first order partial linear differential equation:
\begin{equation} \label{L3-rot-eta}
\Big(\! - i \hp \zeta \frac{\partial}{\partial \zeta} + \mathfrak{L}_3 \Big) \eta^{\A} = 0 \rlap{.}
\end{equation}
Let us assume now that the function $H$ does not have an explicit dependence on $\zeta$, that is to say, it depends on $\zeta$ only through the $\eta^{\A}$ variables. By commuting a differential operator with the integral and using the above differential equation we can then write successively
\begin{equation}
\mathfrak{L}_3(L) 
= \frac{1}{2\pi i} \int_{\mathcal{C}} \frac{d\zeta}{\zeta} \frac{\partial H(\eta)}{\partial \eta^{\A}} \mathfrak{L}_3(\eta^{\A})
= \frac{1}{2\pi} \int_{\mathcal{C}} d\zeta \frac{d}{d\zeta} H(\eta) = \frac{1}{2\pi} H(\eta) \Big|_{\partial\mathcal{C}} \rlap{.}
\end{equation}
Thus, the condition of Lemma \ref{L3-inv} is satisfied in this case provided that the boundary term vanishes. This is in particular automatic when the integration contour $\mathcal{C}$ is closed. This simple criterion gives us a large class of rotation-invariant extended Gibbons-Hawking metrics. However, not all rotation-invariant extended Gibbons-Hawking metrics are of this type, and we will soon encounter a counterexample (see Lemma \ref{rot-inv-Fs-lemm} later on). 
\end{example}

The existence of such an action gives rise, as shown by Haydys in \cite{MR2394039} and Hitchin in \cite{MR3116317}, to a hyperholomorphic line bundle over $M$ and a corresponding holomorphic line bundle over the twistor space of $M$ endowed with a meromorphic connection with poles of order one on the fibers over $\zeta = 0$ and $\infty$ (more rigorously put, it determines a splitting of the Atiyah-like short exact sequence \eqref{Atiyah_seq} defined in \S\,\ref{ssec:global_iss}). Let us describe now these structures in the case at hand using the theory we have developed. 

For the arctic meromorphic potential $\smash{ \varphi_{V_N}(\zeta)}$ the above expression for $\varphi_+$ suggests we should take
\begin{equation}
\varphi_{V_N}(\zeta) = i \pt \frac{\tilde{\eta}^{\phantom{'}}_{\A,V_N} \eta^{\A}_{\hp V_N}}{\zeta} \rlap{.}
\end{equation}
By design this has a pole of order one at $\zeta = 0$, with residue $\varphi_+$. Furthermore, its Laurent coefficient of order zero is \mbox{$\varphi_0 = i \hp (w_{\A}z^{\A} + u_{\A}\psi^{\A})$}, and so in view of the second formula \eqref{uw-L_2} it is clear that when the invariance condition $\mathfrak{L}_3(L) = 0$ holds we have
\begin{equation} 
\varphi_0 =\mu + i \hp u_{\A}\psi^{\A} \rlap{.}
\end{equation}
That is to say, this meromorphic potential is in the special gauge of Lemma \ref{A_V-sp-gauge} (recall that in the extended Gibbons-Hawking case \mbox{$v^{\A} = \psi^{\A}$}). Meromorphicity implies that the Laurent coefficients of its $\zeta$-expansion form a chain of hyperpotentials with respect to the hyperk\"ahler complex structure labeled by $\zeta=0$. By Lemma \ref{trapp} it follows that if we let 
\begin{equation}
X_+ = X_{\varphi_+}
\qquad\qquad
X_0 = X_{\mu}
\end{equation}
(where we duly assume the definition \eqref{X_f_2} for the fields on the r.h.s.) and also \mbox{$X_- = - \bar{X}_+$}, then these vector fields satisfy the moment map equations
\begin{equation}
\iota_{X_{n-1}}\omega_+ + \iota_{X_{n}}\omega_0 + \iota_{X_{n+1}}\omega_- =
\begin{cases}
d\varphi_+ \!- i \hp \theta_+ & \text{if $n = 1$} \\
d\mu & \text{if $n=0$} \rlap{.}
\end{cases}
\end{equation}
As per our usual convention we take the fields with out-of-range indices to vanish. 

So we seem to have two different actions characterized by the same moment map functions $\mu$ and $\varphi_+$: on one hand, a rotational action generated by the vector field $X^{\textrm{rot}}_0$, and on the other, a twisted rotational action generated by the triplet of vector fields \mbox{$X_+$, $X_0$, $X_-$}. The apparent discrepancy is resolved if we observe that these two sets of generators belong to the same equivalence class. That is, letting $X(\zeta) = X_+\zeta^{-1} \nhp + X_0 + X_-\zeta$, one can verify that we have
\begin{equation}
X(\zeta) = X^{\textrm{rot}}_0 + I(\zeta)Y_0 
\qquad\text{with}\qquad
Y_0 = z^{\A}Z_{\A} + \bar{z}^{\A}\bar{Z}_{\A}
\end{equation}
the vector fields $Z_{\A}$ here being those defined by the second equation \eqref{YZ_2}. Thus, the triple generators provide an equivalent presentation of the \textit{same} purely rotational action. 

Knowledge of the moment map function $\mu$ allows us to compute, by way of formula \eqref{sgm-omg}, the closed hyper $(1,1)$ forms associated to the action. By making use of the linear invariance condition $\smash{ \mathfrak{L}_3(L) = 0 }$ and of the second-order differential hyperk\"ahler constraints \eqref{L-PDEs_1} we can check directly that in this case, as expected from generic arguments, \mbox{$\sigma_+ = \sigma_- = 0$}. On the other hand, in a similar manner we obtain
\begin{align} \label{hyp(1,1)-ri}
\sigma_0 & =  i \hp (\phi_{\A\B} + \phi_{\A\D} \hp \partial_{\psi^{\B}} (\phi^{\D\C}\mu_{\psi^{\C}}) + \phi_{\B\D} \hp \partial_{\psi^{\A}} (\phi^{\D\C}\mu_{\psi^{\C}}))(dz^{\A} \npt \wedge d\bar{z}^{\B} - h^{\B} \npt \wedge \bar{h}^{\A}) \\[1pt]
& \hspace{12pt} -  2 \hp i \pt \phi_{\B\D} \hp \partial_{z^{\A}} (\phi^{\D\C}\mu_{\psi^{\C}}) \pt dz^{\A} \npt \wedge \bar{h}^{\B} 
+ 2 \hp i \pt \phi_{\B\D} \hp \partial_{\bar{z}^{\A}} (\phi^{\D\C}\mu_{\psi^{\C}}) \pt d\bar{z}^{\A} \npt \wedge h^{\B}
\nonumber
\end{align}
where the indices of $\mu$ indicate derivatives and $\smash{ \phi_{\A\B} = - \, L_{\psi^{\A}\psi^{\B}} }$. This is conspicuously of the hyper $(1,1)$ form \eqref{hyp-(1,1)}, with the symmetry of the coefficients of the mixed-type terms following from the first property \eqref{L-PDEs_1}.

\subsection{The c-map} \label{ssec:c-map} \hfill \medskip

Affine special K\"ahler manifolds are K\"ahler manifolds whose geometry is determined by a single holomorphic function (with possible singularities and branch cuts) called \textit{prepotential}. They first emerged in physics, where they were defined in terms of certain special coordinates \cite{Gates:1983py,Sierra:1983cc}, and were subsequently given a coordinate-free geometric description in \cite{MR1695113}. The interesting thing about them, for us, is that their cotangent bundle carries a natural hyperk\"ahler structure called a \textit{c-map}. The corresponding hyperk\"ahler metrics, of extended Gibbons-Hawking type, known for reasons to become soon clear as \textit{semi-flat} metrics, admit a simple, explicit and very instructive twistor construction, which we will now review briefly. 

In the contour-integral description \eqref{L_c-int}, c-maps with holomorphic prepotential $F$ are characterized by the meromorphic function \cite{Gates:1999zv}
\begin{equation}
H^{\hp \text{sf}}(\eta,\zeta) = i \hp  \frac{F(\eta\hp\zeta)}{\zeta^2} - i \hp \zeta^2 \bar{F}(-\eta/\zeta)
\end{equation}
with the integration contour consisting of a counter-clockwise oriented loop around $\zeta=0$ for the first term and  its antipodal projection around $\zeta =\infty$ for the second. The integral can be computed explicitly, yielding the $L$-potential
\begin{equation} 
L^{\text{sf}} = 2 \hp \Im (\bar{z}^{\A}F_{\A}) - \Im F_{\A\B} \pt \psi^{\A}\psi^{\B}
\end{equation}
where $F_{\A}$, $F_{\A\B}$ denote derivatives of the prepotential $F(z)$ with respect to the $z^{\A}$ variables. If in equation \eqref{u-tor} we choose the shifts $\smash{ \varrho_{\A}^{\hp \text{sf}} = - \hp \Re F_{\A\B} \hp \psi^{\B} }$ (note that this choice satisfies the constraints \eqref{shift-constr}) we get $\smash{ u_{\A} = \tilde{\psi}_{\A} - F_{\A\B} \psi^{\B} }$. From this, by way of the second equalities \eqref{h_A_2} and \eqref{phi_AB_2} we obtain the coframe element $\smash{ h_{\A} = d\tilde{\psi}_{\A} - F_{\A\B} \hp d\psi^{\B} }$ and the bilinear form $\smash{ \phi_{\A\B} = 2 \hp \Im F_{\A\B} }$. In terms of these, the formulas  \eqref{HK_sfs} and \eqref{HK_m} give us then the hyperk\"ahler symplectic forms and metric explicitly (recall that these formulas hold all the same in the extended Gibbons-Hawking case). The emerging structure is that of a flat torus fibration over an affine special K\"ahler base, which is why one calls the metric \textit{semi-flat}.

The hyperk\"ahler symplectic forms can be equivalently recast in the form
\begin{equation} \label{HK_sym_sf}
\begin{aligned}
\omega_+ & = d\tilde{\psi}_{\A} \wedge dz^{\A} + dF_{\A} \npt\wedge d\psi^{\A} \\
\omega_0 \hspace{2pt} & = dz^{\A} \wedge d\bar{F}_{\A} + d\bar{z}^{\A} \wedge dF_{\A} + d\tilde{\psi}_{\A} \wedge d\psi^{\A} 
\end{aligned}
\end{equation} 
Semi-flat metrics have double the number of continuous tri-Hamiltonian symmetries compared to regular extended Gibbons-Hawking metrics of same dimension: to the usual invariance at shifts in the coordinates $\smash{ \tilde{\psi}_{\A} }$ one can now add invariance at shifts in the coordinates $\smash{ \psi^{\A} }$. The symmetry doubling is due to the exceptional presence of a discrete \textit{duality symmetry}. The above hyperk\"ahler symplectic forms remain invariant if we replace the prepotential $F(z)$ and the variables $z^{\A}$ with the (flipped-sign) Legendre transformed prepotential $\smash{ \tilde{F}(\tilde{z}) = F(z) - z^{\A}\tilde{z}_{\A} }$ and the conjugate variables $\tilde{z}^{\A} = F_{\A}$, while at the same time taking $\smash{ \psi^{\A} \mapsto \tilde{\psi}_{\A} }$ and $\smash{ \tilde{\psi}_{\A} \mapsto - \psi^{\A} }$. 

This dual symmetry structure is more apparent on the twistor space where the formulas \eqref{HK_sym_sf} take the compact Darboux form
\begin{equation}
\omega(\zeta) = d\tilde{\eta}^{\phantom{'}}_{\A} \wedge d\eta^{\A}
\end{equation}
with $\eta^{\A}$ given by equation \eqref{O2-eta} and
\begin{equation} \label{etatilde-sf}
\tilde{\eta}_{\A} = \frac{F_{\A}}{\zeta} + \tilde{\psi}_{\A} - \bar{F}_{\A} \zeta \rlap{.}
\end{equation}
The twistor space can be in this case covered by as few as two open sets, $V_N$ and $V_S$, whose projections on the twistor sphere exclude the polar points $\zeta = \infty$ and $\zeta = 0$, respectively.  Each comes with a chart of complex canonical coordinates for its corresponding component of the $\mathcal{O}(2)$-twisted canonical 2-form section. But the canonical coordinates for $\omega(\zeta)$, which by the observation above are $\tilde{\eta}_{\A}$ and $\eta^{\A}$, belong to neither\,---\,rather, they live naturally on the tropical region $V_T = V_N \cap V_S$ whose projection on the twistor sphere does not contain either polar point. If we refine the minimal two-set cover by including in it the intersection $V_T$, then the transition between, say, $V_N$ and $V_T$ is given by the appropriately twisted symplectomorphism
\begin{equation} \label{N-T-trans}
\begin{aligned}
\eta^{\A} & = \frac{\eta^{\A}_{\hp V_N}}{\zeta \, } \\[-5pt]
\tilde{\eta}_{\A} & = \tilde{\eta}_{\A,V_N} + \frac{F_{\A}(\eta_{\hp V_N})}{\zeta}  \rlap{.}
\end{aligned}
\end{equation}
Note that this form ensures in particular that the canonical coordinates on $V_N$, $\tilde{\eta}_{\A,V_N}$ and $\eta^{\A}_{\hp V_N}$, have no singularities at $\zeta = 0$. The corresponding formulas for the transition between $V_S$ and $V_T$ follow by antipodal conjugation.

Let us show now that in addition to the symmetries that we have already counted, semi-flat metrics admit as well a twisted rotational symmetry. As in the first example, we will follow a two-pronged approach to this problem. Let $f(z) = z^{\A}F_{\A}(z) - 2F(z)$ be a function measuring the departure of the prepotential from being a degree-two homogeneous function of its variables, and consider the three vector fields
\begin{equation} \label{X-rot-sf}
X^{\text{rot}}_+ = i f_{\A} \frac{\partial}{\partial \psitilde_{\A}}
\quad\qquad
X^{\text{rot}}_0 = \mathfrak{L}_3
\quad\qquad
X^{\text{rot}}_- = i \bar{f}_{\A} \frac{\partial}{\partial \psitilde_{\A}}
\end{equation}
where $f_{\A}$ denotes the derivative of $f(z)$ with respect to $z^{\A}$. One can then check by a direct computation that these satisfy the following moment map equations
\begin{equation} \label{X-rot-sf-mm}
\iota_{X^{\text{rot}}_{n-1}}\omega_+ + \iota_{X^{\text{rot}}_{n}}\omega_0 + \iota_{X^{\text{rot}}_{n+1}}\omega_- =
\begin{cases}
d\varphi_{++}^{\text{sf}}  & \text{if $n = 2$} \\
d\varphi_+^{\text{sf}} - i \hp \theta_+ & \text{if $n = 1$} \\
d\mu^{\text{sf}} & \text{if $n=0$} 
\end{cases}
\end{equation}
with potentials
\begin{equation}
\varphi_{++}^{\text{sf}} = i f(z),
\quad\quad
\varphi_+^{\text{sf}} = i \hp (z^{\A}\tilde{\psi}_{\A} - F_{\A}\psi^{\A})
\qquad\text{and}\qquad
\mu^{\text{sf}} = - \hp 2 \hp \Im (\bar{z}^{\A}F_{\A}) \rlap{.}
\end{equation}
The second potential is predicated again on choosing $\theta_+ = u_{\A} dz^{\A}$. Alternatively, we can arrive at these equations in a somewhat more roundabout\,---\,but useful later on\,---\,way by resorting to the formulas \eqref{baloo}. In particular, we get
\begin{equation} \label{alphas_sf}
\begin{aligned}
\alpha_+^{\hp \text{sf}} & = i \hp \psi^{\A} \hp df_{\A}  \\
\alpha_0^{\hp \text{sf}} & = i f_{\A} \hp d\bar{z}^{\A} - i \bar{f}_{\A} \hp dz^{\A} + d [\Im(\bar{z}^{\A}f_{\A}) - \frac{1}{2} \Im f_{\A\B} \pt \psi^{\A}\psi^{\B} ] \rlap{.}
\end{aligned}
\end{equation}
In this approach the last two potentials emerge in the form
\begin{equation}
\varphi_+^{\text{sf}} = i \hp ( f_{\A} \psi^{\A} +  u_{\A}z^{\A} )
\quad\text{and}\quad
\mu^{\text{sf}} = - \pt \Re(z^{\A}L^{\text{sf}}_{z^{\A}}) + \Im(\bar{z}^{\A}f_{\A}) - \frac{1}{2} \Im f_{\A\B} \pt \psi^{\A}\psi^{\B}
\end{equation}
which, one can easily verify, is equivalent to the above one.

The latter set of expressions for the $\varphi\pt$-potentials suggest we should pick
\begin{equation} \label{arct-mer-pot-sf}
\varphi_{V_N}(\zeta) = i \pt \frac{f(\eta_{\hp V_N})}{\zeta^2} + i \pt \frac{\tilde{\eta}^{\phantom{'}}_{\A,V_N} \eta^{\A}_{\hp V_N}}{\zeta} 
\end{equation}
as arctic meromorphic potential associated to this twisted action. This has a pole of order two at $\zeta=0$, with $\smash{ \varphi_+^{\text{sf}} }$ and $\smash{ \varphi_{++}^{\text{sf}} }$ as residues of order one and two, respectively. Moreover, the next term in its $\zeta$-expansion may be shown to be of the form $\smash{ \varphi_0^{\text{sf}} = \mu^{\text{sf}} + i\hp u_{\A}\psi^{\A} }$. That is, this meromorphic potential is in the special gauge of Lemma \ref{A_V-sp-gauge}, which allows us to apply the powerful Lemma \ref{trapp}. Thus, if we take the vector fields
\begin{equation} 
X_{++} \npt = X_{\smash{ \varphi_{++}^{\hp \text{sf}} }} \!= 0
\qquad
X_+ \npt = X_{\smash{ \varphi_+^{\hp \text{sf}} }} \! \nhp = - \hp i \Big(z^{\A}\frac{\partial}{\partial \psi^{\A}} + F_{\A} \frac{\partial}{\partial \psitilde_{\A}}\Big)
\qquad
X_0 = X_{\smash{ \mu^{\text{sf}} }} = 0
\end{equation}
where we assume the definition \eqref{X_f_2} for the symplectic gradients $X_f$, and also define negative-indexed ones by alternating conjugation, then these satisfy the same moment map equations
\begin{equation} \label{X-sf-mm}
\iota_{X_{n-1}}\omega_+ + \iota_{X_{n}}\omega_0 + \iota_{X_{n+1}}\omega_- =
\begin{cases}
d\varphi_{++}^{\text{sf}}  & \text{if $n = 2$} \\
d\varphi_+^{\text{sf}} - i \hp \theta_+ & \text{if $n = 1$} \\
d\mu^{\text{sf}} & \text{if $n=0$} 
\end{cases}
\end{equation}
with the same potentials as above. The coincidence between the potentials for the two sets of generating vector fields means that they are actually different presentations of one and the same action. Indeed, if we assemble in the usual way the graded components into the twisted vector fields $X^{\textrm{rot}}(\zeta)$ and $X(\zeta)$, we have
\begin{equation} \label{X-twist-equiv}
X(\zeta) = X^{\textrm{rot}}(\zeta) + I(\zeta)Y_0 
\qquad\text{with}\qquad
Y_0 = z^{\A}Z_{\A} + \bar{z}^{\A}\bar{Z}_{\A}
\end{equation}
where in this case $Z_{\A}$ is simply $\partial_{z^{\A}}$. 

In fact, as first observed by Gaiotto, Moore, Neitzke and Pioline (see footnote 8 of \cite{MR2672801}), an even stronger result holds in this case: all the tropical canonical coordinates satisfy the same first order partial linear differential equation, namely
{\allowdisplaybreaks
\begin{equation} \label{GMN_diff_eqs}
\begin{aligned}
& \Big(\! - i \hp \zeta \frac{\partial}{\partial \zeta} + X(\zeta) \Big) \eta^{\A} = 0 \\
& \Big(\! - i \hp \zeta \frac{\partial}{\partial \zeta} + X(\zeta) \Big) \tilde{\eta}_{\A} = 0 \rlap{.}
\end{aligned}
\end{equation}
}%
Indeed, note that this by itself is a sufficient condition for a twisted action with twisted generators $X(\zeta)$ to exist. Holomorphicity ensures that the equations remain valid if one replaces $X(\zeta)$ with $\smash{ X^{\text{rot}}(\zeta) }$, as well as with any other vector field from its equivalence class. In particular, one can thus see that the first differential equation is equivalent to the differential equation \eqref{L3-rot-eta}. 

Using the formula \eqref{sgm-omg} we can compute explicitly the closed hyper $(1,1)$ forms associated to the twisted action. We obtain
{\allowdisplaybreaks
\begin{equation} \label{hyp(1,1)-sf}
\begin{aligned}
\sigma_+^{\hp \text{sf}} & = - \, d \hp (\hp f_{\A}\bar{h}^{\A})  \\
& = - \, \phi_{\B\D} \hp \partial_{z^{\A}} (\phi^{\D\C}f_{\C}) \pt dz^{\A} \npt \wedge \bar{h}^{\B} - \phi_{\A\D} \hp \partial_{\bar{z}^{\B}} (\phi^{\D\C}f_{\C}) \pt h^{\A} \npt \wedge d\bar{z}^{\B} \\[2pt]
\sigma_0^{\hp \text{sf}} & = i \hp \phi_{\A\B} \hp (\hp dz^{\A} \npt \wedge d\bar{z}^{\B} - h^{\B} \npt \wedge \bar{h}^{\A}) 
\end{aligned}
\end{equation}
}%
where, recall, \mbox{$h^{\A} = i \hp \phi^{\A\B} h_{\B}$}. These are evidently of the hyper $(1,1)$ form \eqref{hyp-(1,1)}, and it is a simple exercise to see that the coefficients of the mixed-type terms of $\smash{ \sigma_+^{\hp \text{sf}} }$ are symmetric, as required. Note, moreover, that $\smash{ \sigma_+^{\hp \text{sf}} }$ is an exact form, in agreement with the general prescription of Corollary \ref{transv-triv}. 

By the main results of section \ref{sec:Trans-rot} the twisted action gives rise to a hyperholomorphic line bundle over the hyperk\"ahler manifold endowed with a hyperhermitian connection with curvature 2-form $\smash{ \sigma_0^{\hp \text{sf}} }$. Corresponding to this, on the twistor space $\mathcal{Z}$ one has a holomorphic line bundle endowed with a meromorphic connection with poles of order two on the fibers over $\zeta = 0$ and $\infty$. For the minimal cover of $\mathcal{Z}$ given by $V_N$ and $V_S$ explicit connection 1-forms are provided in this case by Lemma \ref{A_V-sp-gauge}:
{\allowdisplaybreaks
\begin{equation}
\begin{aligned}
\mathcal{A}^{\phantom{'}}_{V_N} & = \frac{1}{\zeta} \pt \tilde{\eta}^{\phantom{'}}_{\A,V_N} d^{\phantom{'}}_{\mathcal{Z}} \eta^{\A}_{\hp V_N} + i\hp d^{\phantom{'}}_{\mathcal{Z}}\varphi^{\phantom{'}}_{V_N}(\zeta) + i \pt \varphi^{\phantom{'}}_{V_N}(\zeta) \frac{d\zeta}{\zeta} \\
\mathcal{A}^{\phantom{'}}_{V_S} \, & = \, \zeta \pt \, \tilde{\eta}^{\phantom{'}}_{\A,V_S} \, d^{\phantom{'}}_{\mathcal{Z}} \eta^{\A}_{\hp V_S} + i\hp d^{\phantom{'}}_{\mathcal{Z}}\varphi^{\phantom{'}}_{V_S}(\zeta) \, - \, i \pt \varphi^{\phantom{'}}_{V_S}(\zeta) \frac{d\zeta}{\zeta} \rlap{.}
\end{aligned}
\end{equation}
}%
A corresponding gluing function is
\begin{equation}
\phi_{V_SV_N}(\zeta) = - \pt H^{\hp \text{sf}}(\eta,\zeta) \rlap{.}
\end{equation}
If we consider the refined cover instead then we must supplement these two connection 1-forms with
\begin{equation}
\mathcal{A}_{V_T} = \tilde{\eta}^{\phantom{'}}_{\A} d^{\phantom{'}}_{\mathcal{Z}} \eta^{\A}
\end{equation}
and the gluing function with 
\begin{equation}
\begin{aligned}
\phi_{V_TV_N}(\zeta) & = \varphi_{V_N}(\zeta) +  i \hp \frac{F(\eta\hp\zeta)}{\zeta^2} \\
\phi_{V_TV_S}(\zeta) \hspace{1.5pt} & = \varphi_{V_S}(\zeta) \hspace{1.5pt} + i \hp \zeta^2 \bar{F}(-\eta/\zeta)
\end{aligned}
\end{equation}
which one can derive based on the twisted symplectic transition rules \eqref{N-T-trans} and their antipodal conjugates. 

As we have seen in the preceding example, because generators come in equivalence classes purely rotational actions admit descriptions which at first sight might seem to belong to the realm of twisted actions. It is of interest therefore to ask for what class of prepotentials the twisted action is truly twisted or simply equivalent to a purely rotational action. Concerning this question we have the following result: 

\begin{lemma} \label{rot-inv-Fs-lemm}
We have $\smash{ \mathfrak{L}_3(L^{\textup{sf}}) = 0 }$ and so a purely rotational action if and only if the prepotential is of the form 
\begin{equation} \label{rot-inv-Fs}
F(z) = i \hp c_{\A\B} z^{\A}z^{\B} \ln (d_{\C\D} z^{\C}z^{\D}) + b_{\A\B} z^{\A}z^{\B} + c
\end{equation}
for any constant $c$ and symmetric constants $b_{\A\B}$, $c_{\A\B}$, $d_{\A\B}$ such that $c_{\A\B}$ is real-valued.
\end{lemma}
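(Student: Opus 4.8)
The statement is an ``if and only if'' characterizing when the semi-flat twisted rotational action collapses to a purely rotational one, which by Lemma~\ref{L3-inv} happens precisely when $\mathfrak{L}_3(L^{\text{sf}}) = 0$. The plan is to reduce the PDE $\mathfrak{L}_3(L^{\text{sf}}) = 0$ to an explicit functional-differential condition on the prepotential $F$, and then solve that condition. First I would compute $\mathfrak{L}_3(L^{\text{sf}})$ directly. Since $\mathfrak{L}_3 = -i(z^{\A}\partial_{z^{\A}} - \bar{z}^{\A}\partial_{\bar{z}^{\A}})$ acts only on the holomorphic and antiholomorphic $z$-variables and leaves $\psi^{\A}$ fixed, and since $L^{\text{sf}} = 2\,\Im(\bar{z}^{\A}F_{\A}) - \Im F_{\A\B}\,\psi^{\A}\psi^{\B}$, applying $\mathfrak{L}_3$ produces two pieces. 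The key observation to exploit is the homogeneity structure: I expect the $\psi$-independent piece and the $\psi^{\A}\psi^{\B}$-piece to give separate conditions, because $\mathfrak{L}_3(L^{\text{sf}})$ must vanish identically in $\psi$. This separation is what makes the problem tractable.

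The main computational step is to evaluate $\mathfrak{L}_3$ on each term. For the quadratic-in-$\psi$ term, $\mathfrak{L}_3(\Im F_{\A\B}) = \Im\big(\!-i\,z^{\C}F_{\A\B\C}\big)$, which vanishes for all $\psi$ iff $z^{\C}F_{\A\B\C}$ is real-valued (equivalently, iff $z^{\C}F_{\C}$ minus a quadratic is appropriately constrained). For the $\psi$-independent term, $\mathfrak{L}_3\big(2\,\Im(\bar{z}^{\A}F_{\A})\big)$ similarly yields a condition involving $f(z) = z^{\A}F_{\A} - 2F$, the function introduced precisely to measure the failure of degree-two homogeneity, and which already appears as the residue $\varphi_{++}^{\text{sf}} = i f(z)$ in the moment-map equations \eqref{X-rot-sf-mm}. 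I anticipate that both conditions collapse to a single third-order homogeneity-type constraint of the form $z^{\C}f_{\A\B\C} = $ (something real and symmetric), i.e.\ a statement that the anti-Hermitian part of a certain Hessian of $f$ has a prescribed scaling. Solving this ODE-system along the radial/angular directions in the $z^{\A}$-variables is where the explicit logarithmic form emerges.

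For the forward direction (the explicit form implies invariance), this is a direct substitution: given $F$ of the form \eqref{rot-inv-Fs}, one computes $F_{\A\B}$, observes that the purely quadratic part $b_{\A\B}z^{\A}z^{\B} + c$ contributes nothing to $L^{\text{sf}}$ beyond real/antisymmetric pieces that drop under $\Im$, and checks that the logarithmic term $i\,c_{\A\B}z^{\A}z^{\B}\ln(d_{\C\D}z^{\C}z^{\D})$ with \emph{real} $c_{\A\B}$ is annihilated by $\mathfrak{L}_3$. The crucial point is that $\mathfrak{L}_3$ acts as an infinitesimal phase rotation $z^{\A}\mapsto e^{i\theta}z^{\A}$, under which $z^{\A}z^{\B}$ picks up $e^{2i\theta}$ and the logarithm $\ln(d_{\C\D}z^{\C}z^{\D})$ shifts by an additive constant $2i\theta$; the reality of $c_{\A\B}$ ensures the resulting variation is real and hence killed inside the $\Im$ that builds $L^{\text{sf}}$. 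I would verify this cancellation term by term. The reverse direction is the substantive one: I expect the main obstacle to be integrating the differential constraint to recover the logarithm uniquely up to the stated quadratic and constant ambiguities, and in particular establishing that the coefficient $c_{\A\B}$ of the logarithmic term must be \emph{real} (this is where the reality clause in the statement comes from) rather than an arbitrary complex symmetric matrix. The natural way to pin this down is to feed the general solution of the homogeneity ODE back into $\mathfrak{L}_3(L^{\text{sf}})=0$ and read off that any imaginary part of $c_{\A\B}$ would produce a non-vanishing real contribution, forcing $\Im c_{\A\B} = 0$.
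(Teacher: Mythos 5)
Your skeleton matches the paper's proof: compute $\mathfrak{L}_3(L^{\textup{sf}})$, separate the $\psi$-independent part from the $\psi^{\A}\psi^{\B}$ part, express both in terms of $f(z)=z^{\A}F_{\A}-2F$, and integrate the resulting Euler-type equation to produce the logarithm; your verification of the forward direction is also essentially correct. However, the reverse direction as you sketch it contains a genuine error and two missing steps.

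The error is one of parity. Since $f_{\A\B}=z^{\C}F_{\A\B\C}$, your quadratic-in-$\psi$ condition $\Im(-i\,z^{\C}F_{\A\B\C})=0$ says that $z^{\C}F_{\A\B\C}$ must be purely \emph{imaginary}, not real-valued as you wrote. This is not cosmetic: the factor of $i$ in \eqref{rot-inv-Fs} comes precisely from here, via $f_{\A\B}=4i\,c_{\A\B}$ with $c_{\A\B}$ real. Had $f_{\A\B}$ instead been real, say $f_{\A\B}=4c_{\A\B}$, the $\psi$-independent condition $\Re(\bar z^{\A}f_{\A})=0$ would then read $c_{\A\B}\bar z^{\A}z^{\B}=0$ (a real Hermitian form), forcing $c_{\A\B}=0$ and eliminating the logarithmic family entirely; propagating your slip consistently yields only quadratic prepotentials, contradicting the statement. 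Relatedly, your feed-back argument for the reality of $c_{\A\B}$ is superfluous once the parity is fixed: reality is immediate from $\Re f_{\A\B}=0$.

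The missing steps: first, the two conditions do not collapse into a single third-order constraint. The paper's computation gives $\mathfrak{L}_3(L^{\textup{sf}})=-2\Re(\bar z^{\A}f_{\A})+\Re f_{\A\B}\,\psi^{\A}\psi^{\B}$, and both conditions do independent work: integrating $f_{\A\B}=4ic_{\A\B}$ yields $f=2ic_{\A\B}z^{\A}z^{\B}+c_{\A}z^{\A}-2c$ with a \emph{linear} ambiguity, and it is the $\psi$-independent condition, reducing to $\Re(c_{\A}\bar z^{\A})=0$, that forces $c_{\A}=0$; your "up to the stated quadratic and constant ambiguities" silently discards these linear terms, which correspond to linear terms in $F$ that genuinely break invariance. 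Second, the step that converts the pointwise condition into an ODE is absent from your plan: one must observe that $f_{\A\B}$ is holomorphic, and a holomorphic function whose real part vanishes identically on a connected open set is a purely imaginary \emph{constant}. Without this Liouville-type argument, $\Re(z^{\C}F_{\A\B\C})=0$ does not reduce to the solvable Euler equation $z^{\A}F_{\A}-2F=2ic_{\A\B}z^{\A}z^{\B}-2c$, and the "radial/angular" integration you invoke has nothing definite to act on.
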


\noindent In other words,  the rotational-invariant class includes prepotentials homogeneous of degree two and, rather less obviously, prepotentials known in physics as of perturbative or one-loop Seiberg-Witten type. Note that the latter give rise to singular metrics. 

\begin{proof}

Begin by observing that we may write
\begin{equation} \label{L3L^sf}
\mathfrak{L}_3(L^{\text{sf}}) = -\, 2 \hp \Re (\bar{z}^{\A}f_{\A}) + \Re f_{\A\B} \pt \psi^{\A}\psi^{\B} \rlap{.}
\end{equation}
Let us remark in a parenthesis that, interestingly, the dependence on the function $f(z)$ can be understood as a vestige of a differential property of the meromorphic function $H^{\text{sf}}(\eta,\zeta)$. Using the equation \eqref{L3-rot-eta} one can see that this satisfies the inhomogeneous first order linear differential equation
\begin{equation}
\Big(\! - i \hp \zeta \frac{\partial}{\partial \zeta} + \mathfrak{L}_3 \Big) H^{\text{sf}} = \frac{f(\eta \hp \zeta)}{\zeta^2} + \text{a.c.}
\end{equation}
Integrating both sides on the closed contour specific to c-maps we have then
\begin{equation}
\mathfrak{L}_3(L^{\text{sf}}) = \frac{1}{2\pi \hp i} \oint_{\mathcal{C}} \frac{d\zeta}{\zeta} \mathfrak{L}_3(H^{\text{sf}}) = \frac{1}{2\pi \hp i} \oint_{\mathcal{C}} \frac{d\zeta}{\zeta} \Big( \frac{f(\eta \hp \zeta)}{\zeta^2} + \text{a.c.} \Big)
\end{equation}
from which the formula \eqref{L3L^sf} follows immediately by way of Cauchy's residue theorem. 

Returning to our problem, it is clear that the necessary and sufficient condition for $\smash{ \mathfrak{L}_3(L^{\text{sf}}) }$ to vanish is that both $\Re (\bar{z}^{\A}f_{\A})$ and $ \Re f_{\A\B}$ vanish. Exploiting holomorphicity, the second condition implies that there must exist a symmetric real-valued constant matrix $c_{\A\B}$ such that $\smash{ f_{\A\B} = 4 \hp i \pt c_{\A\B} }$, with the numerical factor chosen for ulterior convenience. Then $\smash{ f(z) = 2 \hp i \pt c_{\A\B} z^{\A}z^{\B} + c_{\A} z^{\A} - 2\hp c }$ for some complex integration constants $c_{\A}$ and $c$. Inserting this expression into the first condition yields the additional constraint $\smash{ \Re(c_{\A} \bar{z}^{\A}) = 0 }$, which is satisfied if and only if all the constants $c_{\A}$ vanish. Finally, replacing $f(z)$ with its definition leads to a first order differential equation for the prepotential $F(z)$, with the generic solution \eqref{rot-inv-Fs}. 
\end{proof}

We end this section with a remark concerning the symbolic representation of the symplectic invariance which underlies this construction. In order to render symplectic invariance more manifest it is convenient to organize the various quantities we work with into symplectic pairs, for example $\smash{  (\tilde{z}_{\A},z^{\A}) \equiv Z_{\gamma^a} }$, $\smash{ (\tilde{\psi}_{\A},\psi^{\A}) \equiv \psi_{\gamma^a} }$,  $\smash{ (\tilde{\eta}_{\A},\eta^{\A}) \equiv \eta_{\gamma^a}}$. Thus, with this notation, the two formulas \eqref{O2-eta} and \eqref{etatilde-sf} may be concisely condensed into the single formula
\begin{equation} \label{eta^sf}
\eta^{\hp \text{sf}}_{\gamma^a} = \frac{Z_{\gamma^a}}{\zeta} + \psi_{\gamma^a} - \bar{Z}_{\gamma^a} \zeta \rlap{.}
\end{equation}
To be able to rewrite some of the formulas above in a manifestly invariant form we need to introduce, moreover, a constant anti-symmetric symplectic metric $\varepsilon_{ab}$ expressed in a corresponding symplectic frame. We then have
\begin{equation}
\omega(\zeta) = \frac{1}{2} \pt \varepsilon_{ab} \, d\eta^{\hp \text{sf}}_{\gamma^a} \npt \wedge d\eta^{\hp \text{sf}}_{\gamma^{\smash b}},
\qquad\quad
\varphi_+^{\hp \text{sf}} = i \pt \varepsilon_{ab} \pt \psi_{\gamma^{\smash a}} Z_{\gamma^{\smash b}},
\qquad\quad
\mu^{\text{sf}} = i \pt \varepsilon_{ab} \pt Z_{\gamma^{\smash a}} \bar{Z}_{\gamma^{\smash b}}
\end{equation}
and so on.

\subsection{Rotation-invariant deformations of c-maps} \hfill \medskip

Hyperk\"ahler metrics of extended Gibbons-Hawking type enjoy a powerful yet highly specific property: they can be linearly superposed to obtain new metrics of the same type. More precisely, owing to the linearity of the equations \eqref{L-PDEs_1}, given two $L$-potentials corresponding to two extended Gibbons-Hawking metrics, any linear combination of them is automatically an $L$-potential for another extended Gibbons-Hawking metric. We will now take advantage of this feature to construct rotation-invariant deformations of c-map metrics. Thus, we consider $L$-potentials of the form $\smash{ L = L^{\text{sf}} + L^{\text{ri}} }$ such that $\smash{ \mathfrak{L}_3 (L^{\text{ri}}) = 0 }$. 

Let us choose for convenience non-vanishing shifts $\smash{ \varrho^{\phantom{g}}_{\A} = \varrho_{\A}^{\hp \text{sf}} }$, with the latter defined as above and $\smash{ \varrho_{\A}^{\hp \text{ri}} = 0 }$. We claim that a number of formulas from the c-map case still hold, \textit{mutatis mutandis}, in this case. Thus, if we consider precisely the same set of vector fields $\smash{ X^{\text{rot}}_n }$ defined by the formulas \eqref{X-rot-sf}, then these satisfy the equivalent of the moment map equations \eqref{X-rot-sf-mm}, with potentials
{\allowdisplaybreaks
\begin{equation}
\begin{aligned}
\varphi_{++} & = i f(z) \\[2.5pt]
\varphi_+ & = i \hp ( f_{\A} \psi^{\A} +  u_{\A}z^{\A} ) \\[-3pt]
\mu & = - \pt \Re (z^{\A}L_{z^{\A}}) + \Im(\bar{z}^{\A}f_{\A}) - \frac{1}{2} \Im f_{\A\B} \pt \psi^{\A}\psi^{\B} \rlap{.}
\end{aligned}
\end{equation}
}%
The form of the second potential is premised as usual on the choice $\theta_+ = u_{\A}dz^{\A}$. This can be easily seen if we notice that in the equations \eqref{alphas} linearity implies a split $\smash{ \alpha^{\phantom{g}}_m = \alpha^{\hp \text{sf}}_m + \alpha^{\text{ri}}_m }$. Due to our assumptions we have $\smash{ \alpha^{\text{ri}}_m = 0 }$ and then using the formulas \eqref{alphas_sf} the result follows from essentially the same argument as in the c-map case. Further manipulations give us for the potentials the alternative forms
{\allowdisplaybreaks
\begin{equation} \label{pot-sf+inv}
\begin{aligned}
\varphi_{++}^{\phantom{f}} & = \varphi_{++}^{\text{sf}} \\[2pt]
\varphi_{+}^{\phantom{f}} & = \varphi_{+}^{\text{sf}} - \frac{1}{2} z^{\A} L^{\text{ri}}_{\psi^{\A}} \\[-1pt]
\mu & = \mu^{\text{sf}} - \frac{1}{2}(z^{\A}L^{\text{ri}}_{z^{\A}} + \bar{z}^{\A}L^{\text{ri}}_{\bar{z}^{\A}}) \rlap{.}
\end{aligned}
\end{equation}
}%

For the arctic meromorphic potential  the same formula \eqref{arct-mer-pot-sf}
%\begin{equation}
%\varphi_{V_N}(\zeta) = i \pt \frac{f(\eta_{\hp V_N})}{\zeta^2} + i \pt \frac{\tilde{\eta}^{\phantom{'}}_{\A,V_N} \eta^{\A}_{\hp V_N}}{\zeta} 
%\end{equation}
gives in this case too the correct residues of both order two and order one at $\zeta = 0$ and, moreover, a zero-order term conveniently of the form $\smash{ \varphi_0 = \mu + i\hp u_{\A}\psi^{\A} }$. We can then apply Lemma \ref{trapp}, by which it follows that the vector fields
\begin{equation} \label{twisted-Xs}
X_{++}  = X_{\varphi_{++}} = 0
\qquad\quad
X_+ = X_{\varphi_+}
\qquad\quad
X_0 = X_{\mu} 
\end{equation}
and their alternating conjugates satisfy the present equivalent of the moment map equations \eqref{X-sf-mm}, with the same set of potentials as above. The equivalence between the two sets of generators reads formally the same as the c-map relation \eqref{X-twist-equiv}.

However, there are also some differences with respect to the c-map case. Based on either formula for the moment map function $\mu$, using the relation \eqref{sgm-omg} we obtain now for the closed hyper $(1,1)$ forms associated to the twisted action the more complicated expressions
{\allowdisplaybreaks
\begin{align}
\sigma_+ & =  - \, d \hp (\hp f_{\A}\bar{h}^{\A}) \\
& = \phi_{\B\D} \hp \partial_{\psi^{\A}} (\phi^{\D\C}f_{\C}) \hp (\hp dz^{\A} \npt \wedge d\bar{z}^{\B} - h^{\B} \npt \wedge \bar{h}^{\A}) \nonumber \\
& \hspace{12pt} - \, \phi_{\B\D} \hp \partial_{z^{\A}} (\phi^{\D\C}f_{\C}) \pt dz^{\A} \npt \wedge \bar{h}^{\B} - \phi_{\A\D} \hp \partial_{\bar{z}^{\B}} (\phi^{\D\C}f_{\C}) \pt h^{\A} \npt \wedge d\bar{z}^{\B} \nonumber \\[2pt]
\sigma_0 \hspace{1.9pt} & =  i \hp (\phi_{\A\B} + \phi_{\A\D} \hp \partial_{\psi^{\B}} (\phi^{\D\C}\mu_{\psi^{\C}}) + \phi_{\B\D} \hp \partial_{\psi^{\A}} (\phi^{\D\C}\mu_{\psi^{\C}}))(dz^{\A} \npt \wedge d\bar{z}^{\B} - h^{\B} \npt \wedge \bar{h}^{\A}) \\[1pt]
& \hspace{12pt} -  2 \hp i \pt \phi_{\B\D} \hp \partial_{z^{\A}} (\phi^{\D\C}\mu_{\psi^{\C}}) \pt dz^{\A} \npt \wedge \bar{h}^{\B} + 2 \hp i \pt \phi_{\B\D} \hp \partial_{\bar{z}^{\A}} (\phi^{\D\C}\mu_{\psi^{\C}}) \pt d\bar{z}^{\A} \npt \wedge h^{\B}  \nonumber
\end{align}
}%
with $\smash{ \phi_{\A\B}^{\phantom{f}} = - \pt L_{\psi^{\A}\psi^{B}} = \phi_{\A\B}^{\pt \text{sf}} + \phi_{\A\B}^{\pt \text{ri}} }$. These are as expected of the form \eqref{hyp-(1,1)}.  In this case we could not rely in our calculations on linearity in a straightforward way. Nevertheless, notice that the result coincides with both the formula \eqref{hyp(1,1)-ri} from the rotation-invariant case and the formulas \eqref{hyp(1,1)-sf} from the semi-flat case in the limit when $\smash{ L^{\text{sf}} }$ respectively $\smash{ L^{\text{ri}} }$ goes to zero. The above expressions can be recast in a more invariant form by noticing that $\smash{ f_{\C} = -\hp i \hp Z_{\C}(\varphi_{++}) = - \hp i \hp \bar{Y}_{\C}(\varphi_+) }$ (see Lemma \ref{YZ-hyperpot}), and also that the remaining derivatives with respect to $z^{\A}$, $\bar{z}^{\A}$ and $\psi^{\A}$ can be replaced by $Z_{\A}$, $\bar{Z}_{\A}$ and $Y_{\A}$ or $\bar{Y}_{\A}$ respectively (for the relevant definitions refer to \S\,\ref{real-vA}).

\subsection{The Ooguri\hp-Vafa metric} \label{ssec:O-V} \hfill \medskip

We will now discuss the Ooguri-Vafa metric as an example of this construction \cite{Ooguri:1996me,MR2672801}. This is a non-compact four-dimensional hyperk\"ahler metric of Gibbons-Hawking type characterized by the following $L$-potential:
\begin{equation}
L = \frac{1}{2\pi i} \oint_{\mathcal{C}} \frac{d\zeta}{\zeta} \Big( i \hp  \frac{F(q \hp \eta \hp \zeta)}{\zeta^2} + \text{a.c.} \nhp \Big) 
- \frac{1}{2\pi} \npt \int_{\ell_+} \! \frac{d\zeta}{\zeta} \Li_2(e^{i \hp q \hp \eta})
- \frac{1}{2\pi} \npt \int_{\ell_-} \! \frac{d\zeta}{\zeta} \Li_2(e^{- i \hp q \hp \eta}) \rlap{.}
\end{equation}
In this formula $q$ is a positive integer,
\begin{equation}
\eta = \frac{z}{\zeta} + \psi - \bar{z}\hp \zeta
\end{equation}
is the tropical component of the only $\mathcal{O}(2)$ section of type \eqref{O2-eta} one has in quaternionic dimension one, the prepotential is of the form
\begin{equation}
F(z) = \frac{i}{8\pi} \hp z^2 (\pt \ln z^2 - 3 \pt )
\end{equation}
belonging to the special rotation-invariant class \eqref{rot-inv-Fs}, the integration contour $\mathcal{C}$ is the standard c-map one, and $\ell_+$, $\ell_-$ are two oppositely-oriented rays emerging from the origin of the complex $\zeta$-plane defined by
\begin{equation}
\ell_{\pm} = \{ \zeta \in \mathbb{C}^{\times} \ | \ \frac{qz}{\zeta} = \pm \hp i \pt \mathbb{R}_+ \} \rlap{.}
\end{equation}
As the monodromy of the dilogarithm can affect $L$ by terms at most linear in $\psi$ and so does not affect the metric we may assume that the dilogarithm function is represented by its principal branch. The non-semi-flat part of $L$ consisting of the last two integrals will be denoted here by $\smash{ L^{\text{inst}} }$ rather than $\smash{ L^{\text{ri}} }$, since in the original physics context one can interpret it as being due to mutually local BPS instanton corrections to the semi-flat metric. Its rotation invariance follows immediately by the criterion laid out in the Example in \S\,\ref{ssec:rot-inv-L}. 

The semi-flat part of $L$ has been evaluated explicitly in the general case in \S\,\ref{ssec:c-map}. To evaluate the instanton part we substitute into the two integrals \mbox{$\smash{ \zeta = - i \hp s \hp z/|z| }$} and \mbox{$\smash{ \zeta = i \hp s \hp z/|z| }$} respectively, with \mbox{$s \in (0,\infty)$}, and then use the standard power series expansion of the dilogarithm to obtain
\begin{equation}
L^{\text{inst}} = - \pt \frac{2}{\pi} \sum_{k=1}^{\infty} \frac{1}{k^2} \pt K_0(2\hp k\hp |qz|) \cos(k\hp q\hp \psi)
\end{equation}
with $K_0$ a modified Bessel function of second kind. Putting together the two parts we get from the first formula \eqref{LT_eqs} the Gibbons-Hawking potential
\begin{equation}
\Phi = \frac{q^2}{4\pi} \big[ \ln |qz|^2 - 4 \sum_{k=1}^{\infty} K_0(2\hp k\hp |qz|) \cos(k\hp q\hp \psi) \hp \big] \rlap{.}
\end{equation}
This can be Poisson-resummed to yield the alternative and rather more revealing expression
\begin{equation}
\Phi = - \pt \frac{q^2}{2\pt} \sum_{n=-\infty}^{\infty} \bigg(\frac{1}{\sqrt{(q\psi + 2\pi n)^2 + 4|qz|^2}} - c_{|n|} \bigg)
\end{equation}
where the $c_n$ represent a sequence of constants given by \mbox{$c_n = 1/(2\pi n)$} for \mbox{$n \neq 0$}. Thus, the potential is a harmonic function with $q$ singularities in $\mathbb{R}^2 \times S^1$. 

The mutually local BPS instanton corrections break the continuous dual tri-Hamiltonian symmetry of the semi-flat metric down to a discrete $\mathbb{Z}$-action, but improve in the process its singular behavior. The singularity at \mbox{$z=0$} in the semi-flat metric is replaced by conical singularities of $A_{q-1}$ type at each point with \mbox{$z=0$} and \mbox{$\psi = - \hp 2\pi n/q$} in the instanton-corrected metric. In the simplest case with \mbox{$q=1$} the singularity is in fact completely smoothed out. 

What is not broken by the instanton corrections is the Gibbons-Hawking tri-Hamiltonian symmetry and, more importantly for us, a rotational symmetry. The fact that the original and the corrected metrics both have a rotational symmetry is evident at the level of their $L$-potentials: $\smash{ L^{\text{sf}} }$ is invariant by Lemma \ref{rot-inv-Fs-lemm} and $\smash{ L^{\text{inst}} }$ by the argument put forth above. The rotational invariance of the Ooguri-Vafa metric is also apparent at the level of the Gibbons-Hawking potential, with either formula above manifestly invariant at $\smash{ z \mapsto e^{i \hp \gamma} z}$ for any angle \mbox{$\gamma \in [\hp 0,2\pi)$}. In \S\,\ref{ssec:rot-inv-L} we saw that rotational symmetries admit rather unusual but nevertheless perfectly equivalent descriptions in terms of twisted generators with multiple components. In this case just such a generator is the one with components $X_+$, $X_0$ and \mbox{$X_- = - \bar{X}_+$} defined by the symplectic gradient formulas \eqref{twisted-Xs} in terms of the potentials $\varphi_+$ and $\mu$. 

\begin{remark}
One can check as an exercise that our symplectic gradient formulas for these vector fields reproduce, up to convention-harmonizing redefinitions, the corresponding formulas in \cite{MR2672801}, more precisely the three component fields in the equation (4.70), defined piecewise by the formulas (4.63) through (4.65) and (4.68). 
\end{remark}

Using the expressions \eqref{pot-sf+inv} in conjunction with the integral definition of \mbox{$\smash{ L^{\text{ri}} = L^{\text{inst}} }$} gives us immediately the following contour integral representations for the two potentials:
{\allowdisplaybreaks
\begin{gather}
\varphi_+ = \varphi_+^{\text{sf}} 
+ \frac{qz}{4\pi i} \npt \int_{\ell_+} \! \frac{d\zeta}{\zeta} \ln (1 - e^{i \hp q \hp \eta})
- \frac{qz}{4\pi i} \npt \int_{\ell_-} \! \frac{d\zeta}{\zeta} \ln (1 - e^{-i \hp q \hp \eta}) \label{OV-pot-1} \\ 
\mu = \mu^{\text{sf}} 
+ \frac{q}{4\pi i} \npt \int_{\ell_+} \! \frac{d\zeta}{\zeta} \Big(\pt \frac{z}{\zeta} - \bar{z}\hp \zeta \Big) \npt \ln (1 - e^{i \hp q \hp \eta})
- \frac{q}{4\pi i} \npt \int_{\ell_-} \! \frac{d\zeta}{\zeta} \Big(\pt \frac{z}{\zeta} - \bar{z}\hp \zeta \Big) \npt \ln (1 - e^{-i \hp q \hp \eta}) \rlap{.} \label{OV-pot-2}
\end{gather}
}%
The integrals can be evaluated by methods similar to the ones employed for the evaluation of $L$. For the second one for instance we get in this way, up to a possible constant shift,
\begin{equation}
\mu = \frac{1}{2} \sum_{n=-\infty}^{\infty} \big(\sqrt{(q\psi + 2\pi n)^2 + 4|qz|^2} - 2|qz|^2c_{|n|}^{\phantom{'}} - c_{\smash{|n|}}^{-1} \hp \big) \rlap{.}
\end{equation}

In \cite{MR2672801}, the twistor canonical coordinate conjugate to $\eta$ was shown to be given in terms of the latter by the contour integral formula\pt\footnote{\pt To get a more concrete sense of why and how such integrals arise naturally in twisted symplectic gluing problems on twistor spaces, see \textit{e.g.} \cite{MR2485482}.}
%admit the contour integral representation
\begin{equation} \label{GMN-int-OV}
\tilde{\eta}(\zeta) = \tilde{\eta}^{\hp \text{sf}}(\zeta) 
- \frac{q}{4\pi} \npt \int_{\ell_+} \! \frac{d\zeta'}{\zeta'} \frac{\zeta'+\zeta}{\zeta'-\zeta} \ln (1 - e^{i \hp q \hp \eta(\zeta')})
+ \frac{q}{4\pi} \npt \int_{\ell_-} \! \frac{d\zeta'}{\zeta'} \frac{\zeta'+\zeta}{\zeta'-\zeta} \ln (1 - e^{-i \hp q \hp \eta(\zeta')}) \rlap{.}
\end{equation}
Note first that the choice of integration contours and kernel ensures that the result obeys the usual reality condition required of such a coordinate, namely $\smash{ \overline{\tilde\eta(\zeta^c)} = \tilde\eta(\zeta) }$. Moreover, while the $\eta$ coordinate is analytic with poles of order one at \mbox{$\zeta = 0$} and $\infty$, the crucial feature of the $\tilde{\eta}$ coordinate as given by this formula is that it is analytic in $\zeta$ only away from the contours $\ell_+$ and $\ell_-$. 

The formula is best understood as providing a solution to a certain Rie\-mann-Hil\-bert problem on the twistor sphere. To formulate this problem, consider an open covering of the twistor space consisting of the following elements:
\begin{itemize}
\setlength{\itemsep}{0pt}

\item[$-$] two open sets $V_N$ and $V_S$ whose every non-vanishing intersection with a horizontal twistor line projects down to an arctic respectively antarctic cap on the twistor sphere.
\item[$-$] two open sets $V_E$ and $V_W$ whose every non-vanishing intersection with a horizontal twistor line projects down to the two hemispheres on the twistor sphere delimited by $\ell_+$ and $\ell_-$.
\end{itemize}

\begin{figure}[ht]

% \vskip-10pt

\begin{tikzpicture}[scale=0.9] % THE GLOBE

\def\R{2} % sphere radius
\def\angEl{15} % elevation angle
\filldraw[ball color=white] (0,0) circle (\R); % or lightgray!20

%\foreach \t in {-80,-60,...,80} { \DrawLatitudeCircle[\R]{\t} }
%\foreach \t in {-5,-35,...,-175} { \DrawLongitudeCircle[\R]{\t} }

\DrawLatitudeCircle[\R]{ 40}
\DrawLatitudeCircle[\R]{-40}

\DrawLongitudeCircle[\R]{-60}

\node at ( 0.00, 2.50) {$\pi(V_N)$};
\node at ( 0.00,-2.50) {$\pi(V_S)$};
\node at ( 2.80, 0.00) {$\pi(V_E)$};
\node at (-2.80, 0.00) {$\pi(V_W)$};

\node at ( 1.35, 0.00) {$\ell_+$};
\node at (-1.25, 0.00) {$\ell_-$};

\end{tikzpicture}
\caption{The twistor sphere for the Ooguri-Vafa metric.}
\end{figure}
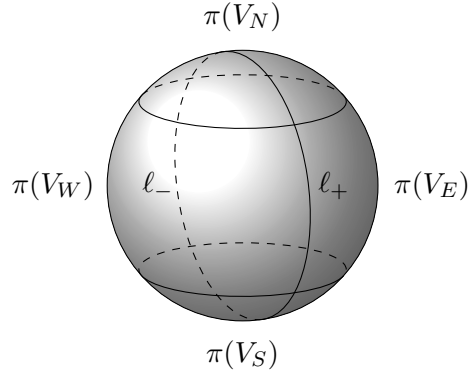

\noindent The form of the $\eta\pt$-coordinate on each set and its transition properties between different sets are in this case completely determined by the requirement that they are local components of a global $\mathcal{O}(2)$ section over the twistor space. For the $\tilde\eta\pt$-coordinates, on the other hand, the constraints take rather the form, as we have said, of a Riemann-Hilbert problem. Thus, we look for local functions $\tilde\eta_{\hp V}$ associated to each one of the sets $V$ above and analytic in $\zeta$ on them with the following properties: 
\begin{itemize}
\setlength{\itemsep}{3pt}

\item[1.] For each $V_T \in \{V_E,V_W\}$ the transition between $V_N$ and $V_T$ respectively $V_S$ and $V_T$ is given by the twisted symplectomorphisms
\begin{equation}
\begin{array}{lcl} 
\displaystyle \eta_{\hp V_T} = \frac{\eta_{\hp V_N}}{\zeta \, } & \multirow{2}{*}{\qquad and {\vrule width 0pt height 5ex depth 0ex} \qquad} & \displaystyle \eta_{\hp V_T} = \zeta \hp \eta_{\hp V_S} \\[5pt]
\displaystyle \tilde{\eta}_{\hp V_T} = \tilde{\eta}_{\hp V_N} + \frac{F' \nhp (\eta_{\hp V_N})}{\zeta} & & \displaystyle \tilde{\eta}_{\hp V_T} = \tilde{\eta}_{\hp V_S} - \zeta \hp \bar{F}' \nhp (-\eta_{\hp V_S})
\end{array}
\end{equation}
Note that $\smash{ \eta_{\hp V_T} }$ and $\smash{ \tilde\eta_{\hp V_T} }$ correspond simply to $\eta$ and $\tilde\eta$, respectively, in our previous notation. This condition is known as the ``asymptotic condition" because it regulates the behavior at $\zeta=0$ and $\infty$.

\item[2.] Over the rays $\ell_+$ and $\ell_-$ one has a discontinuity jump given by the pure (because tropical) symplectomorphisms
\begin{equation} \label{KS-OV}
\begin{aligned}
e^{i \hp \tilde{\eta}^+_{\ell_+}} & = e^{i \hp \tilde{\eta}^{- \phantom{j}}_{\ell_+}} [1 - (e^{i \hp \eta})^{+q} ]^{+q} \\[-1pt]
e^{i \hp \tilde{\eta}^+_{\ell_-}} & = e^{i \hp \tilde{\eta}^{- \phantom{j}}_{\ell_-}} [1 - (e^{i \hp \eta})^{-q}]^{-q}
\end{aligned}
\end{equation}
where $\tilde\eta{}^+_{\hp \ell}$ and $\tilde\eta{}^-_{\hp \ell}$ denote the limits of $\tilde\eta$ when $\zeta$ approaches the ray $\ell$ from the clockwise respectively counterclockwise direction.
\end{itemize}

\noindent The link between this Riemann-Hilbert problem and the integral representation \eqref{GMN-int-OV} is provided by the Sokhotski-Plemelj theorem. 

The tropical canonical coordinates $\eta$ and $\tilde\eta$ continue to satisfy in this case differential equations of the type \eqref{GMN_diff_eqs}. Let us sketch briefly the outline of a proof. First, remark that it suffices to show that both coordinates belong to the kernel of the operator
\begin{equation} \label{rot-op}
- \hp i \hp \zeta \frac{\partial}{\partial \zeta} + X^{\text{rot}}(\zeta)
\end{equation}
with the components of $\smash{ X^{\text{rot}}(\zeta) }$ defined in accordance with the considerations of the previous subsection by the quaternionic dimension one version of the c-map formulas \eqref{X-rot-sf}. Holomorphicity allows us then as before to replace the vector field $\smash{ X^{\text{rot}}(\zeta) }$ by any vector field $X(\zeta)$ from its equivalence class. The equation for $\eta$ follows from the fact that its moduli do not depend on the $\smash{ \tilde{\psi} }$ variable. Both $\smash{ X_+^{\text{rot}} }$ and $\smash{ X_-^{\text{rot}} }$ then drop out and this~reduces~to
\begin{equation} \label{eta-PDE}
\Big( \! - \hp i \hp \zeta \frac{\partial}{\partial \zeta} + \mathfrak{L}_3 \Big) \eta = 0\rlap{.}
\end{equation}
To obtain the equation for $\tilde\eta$ we act with the operator \eqref{rot-op} on the formula \eqref{GMN-int-OV}. The semi-flat term is clearly annihilated by the corresponding result from the pure c-map case.  Inside the two integrals the operators $\smash{ X_+^{\text{rot}} }$ and $\smash{ X_-^{\text{rot}} }$ can be again dropped out as the only object on which they can possibly act is $\eta(\zeta')$. On another hand, when acting on the kernel $\smash{ \frac{\zeta'+\zeta}{\zeta'-\zeta} }$, the operator $\smash{ \zeta \frac{\partial}{\partial \zeta} }$ can be exchanged for $\smash{ - \zeta' \frac{\partial}{\partial \zeta'} }$, and then an integration by parts can be performed to move this on to the right. After some further manipulations under the integrals the result follows by an obvious application of the differential equation for $\eta(\zeta')$ in the form \eqref{eta-PDE}. 

The presence of a rotational symmetry entails with it the existence of a hyperholomorphic line bundle over the Ooguri-Vafa space, and of a corresponding holomorphic line bundle over its twistor space endowed with a meromorphic connection. The connection 1-forms and transition functions can be described explicitly. However, since the Ooguri-Vafa metric is a particular case of the class of metrics we examine in the next subsection, we prefer to subsume their description to the corresponding discussion in that more general context.

\subsection{The Gaiotto\hp-Moore\hp-Neitzke hyperk\"ahler constructions} 

\subsubsection{} 

One of the key observations made in \cite{MR2672801} was that the two symplectomorphisms \eqref{KS-OV} are particular examples of what are known as Kontsevich-Soibelman transformations. We will now define what these are and then briefly describe the general features of certain extremely powerful identities that they satisfy\,---\,the classical Kontsevich-Soibelman \textit{wall-crossing formulas} \cite{Kontsevich:2008fj, MR2681792}. 

Consider a lattice $\Gamma$ of even rank equipped with a non-degenerate symplectic pairing \mbox{$\langle\cdot\pt,\cdot\rangle: \Gamma \times \Gamma \rightarrow \mathbb{Z}$} and let $\smash{ \{\gamma^a\}_{a=1,\dots,\text{rk}(\Gamma)} }$ be a choice of basis for it. To this basis we associate a set of algebraically independent indeterminates $\smash{ \{X_{\gamma^a}\}_{a=1,\dots,\text{rk}(\Gamma)} }$ which we multiply according to the commutative and associative rule
\begin{equation} \label{twist-alg-torus}
X_{\gamma}X_{\gamma'} = (-)^{\langle \gamma,\gamma' \rangle} X_{\gamma + \gamma'} \rlap{.}
\end{equation}
This is a twist on the usual group algebra for an algebraic torus. Note in particular that corresponding to the zero element of the lattice we have the multiplicative unit, which we denote by 1. For each $\gamma = \sum_a q_a \gamma^a \in \Gamma$ let $\sigma_{\gamma}$ be the sign uniquely defined by the equality
\begin{equation}
X_{\gamma} = \sigma_{\gamma} \prod_a (X_{\gamma^a})^{q_a} \rlap{.}
\end{equation}
Remark that this satisfies the property $\smash{ \sigma_{\gamma} \sigma_{\gamma'} = (-)^{\langle \gamma,\gamma' \rangle} \sigma_{\gamma+\gamma'} }$. 

Then for each $\gamma' \in \Gamma$ the corresponding Kontsevich-Soibelman transformation defined~by
\begin{equation}
T_{\gamma'} : X_{\gamma} \ \longmapsto \ X_{\gamma}(1-X_{\gamma'})^{\langle\gamma',\gamma\hp\rangle}
\end{equation}
preserves the product structure as well as the symplectic form 
\begin{equation}
\frac{1}{2} \hp \varepsilon_{ab} \, d\ln X_{\gamma^{\smash a}} \npt \wedge d\ln X_{\gamma^{\smash b}}
\end{equation}
where $\varepsilon_{ab}$ is  the inverse of $\varepsilon^{ab} = \langle \gamma^a,\gamma^b \hp \rangle$. Rather than check directly the second part of the statement one derives more benefits if one checks instead that under the action of $T_{\gamma'}$ we have 
\begin{equation} \label{KS/1-f-pot}
\frac{1}{2} \hp \varepsilon_{ab} \ln X_{\gamma^{\smash a}} d\ln X_{\gamma^{\smash b}} 
\ \longmapsto \ 
\frac{1}{2} \hp \varepsilon_{ab} \ln X_{\gamma^{\smash a}} d\ln X_{\gamma^{\smash b}} - d L_{\sigma_{\gamma'}}\npt(X_{\gamma'})
\end{equation}
where 
\begin{equation}
L_{\sigma}(z) = \Li_2(z) + \frac{1}{2} \ln (\sigma^{-1}z) \ln(1-z)
\end{equation}
is the Alexandrov-Persson-Pioline variant of the Rogers dilogarithm \cite{MR2935636}. 

As we have mentioned above, the Kontsevich-Soibelman transformations satisfy a remarkable set of identities known as wall-crossing formulas. The concrete-minded reader is urged at this point to consult for example the references \cite{Kontsevich:2008fj} and \cite{MR2672801} for a number of illuminating examples in rank 2 and keep them in mind throughout the coming discussion. To describe wall-crossing formulas we need to introduce some additional data. First, we need a group homomorphism $Z : \Gamma \rightarrow \mathbb{C}$ known as \textit{central charge} or \textit{stability function}. Thus, corresponding to the above frame of $\Gamma$ we have a set of complex numbers $Z_{\gamma^a}$ such~that
\begin{equation}
Z : \ \gamma = \sum_a q_a \gamma^a 
\ \longmapsto \ 
Z_{\gamma} = \sum_a q_a Z_{\gamma^a} \rlap{.}
\end{equation}
Given a stability function $Z$, for every non-vanishing $\gamma \in \Gamma$ let
\begin{equation}
\ell_{\gamma} = \{ \zeta \in \mathbb{C}^{\times} \, | \, \frac{Z_{\gamma}}{\zeta} = i \pt \mathbb{R}_+ \}
\end{equation}
be a corresponding ray emerging from the origin of the complex plane and going off to infinity. Obviously, every non-vanishing element of $\Gamma$ lying on the half-line starting at the zero element of $\Gamma$ and passing through $\gamma$ determines the same ray. 

Wall-crossing formulas involve ordered products of the type
\begin{equation}
T_{\hp \Lambda} = \! \prod^{\curvearrowright}_{\gamma \in Z^{-1}(\Lambda)} \! T_{\gamma}^{\pt \Omega(\gamma,Z)}
\end{equation}
with respect to the composition law, where:
\begin{itemize}[leftmargin=\leftmargin-14pt]
\setlength{\itemsep}{2pt}

\item the ordering of the factors corresponds to the clockwise ordering of the rays $\ell_{\gamma}$. 

\item $\Lambda$ is a strict half-plane in $\mathbb{C}$, that is $\Lambda  = \{ \hp z \in \mathbb{C}^{\times} \, | \, \text{Arg}\hp (z) \in (\alpha - \pi,\alpha] \}$ for some angle $\alpha \in [0,\pi]$. 

\item $\Omega(\,\cdot\,,Z)$ is a non-negative integer-valued function on $Z^{-1}(\Lambda) \subset \Gamma$ depending on $Z$ in a piecewise constant way.

\item $Z$ is assumed 
\begin{itemize}
\setlength{\itemsep}{2pt}
\item to be ``away from walls of marginal stability": \smallskip

For every $\gamma \in Z^{-1}(\Lambda)$ such that $\Omega(\gamma,Z) \neq 0$, if $\gamma' \in Z^{-1}(\ell_{\gamma})$ with $\Omega(\gamma',Z) \neq 0$ then $\gamma'$ must lie on the half-line  starting at the zero element of $\Gamma$ and passing through $\gamma$. 

\item to satisfy the following ``support property": \smallskip

There exists $K>0$ such that if we pick a positive-definite norm on $\Gamma$ we have $|Z_{\gamma}| > K \|\gamma\|$ for all $\gamma \in Z^{-1}(\Lambda)$ with $\Omega(\gamma,Z) \neq 0$. This property does not depend on the choice of norm. 
\end{itemize}
\end{itemize}

\noindent For each ray $\ell \subset \Lambda$ let
\begin{equation}
T_{\hp \ell} = \! \prod_{\gamma \in Z^{-1}(\ell) } \! T_{\gamma}^{\pt \Omega(\gamma,Z)} 
\end{equation}
be the slice of $T_{\Lambda}$ supported on its inverse image through $Z$. Rays $\ell$ for which at least one of the factors in the product is non-trivial are called \textit{BPS rays}. Note that if there are more than one non-trivial factors one needs not worry about the product order since by the first assumption about $Z$ above they all commute. The product $T_{\Lambda}$ can then be thought of as a clockwise-ordered product of BPS ray factors. 

So far we have kept the stability function $Z$ fixed, but now let us think what happens when we vary it. The BPS rays start then to rotate in the complex plane. During their rotations it may happen that all or a set of them coalesce and try to cross each other. When the cyclic ordering of the BPS rays changes one says that one has reached a \textit{wall of marginal stability}. As the name says, the question the wall-crossing formula addresses is what happens when one crosses a wall of marginal stability. And the answer is: the integer powers $\Omega(\gamma,Z)$ in the product conspire to jump at the wall in such a way as to leave the whole product $T_{\Lambda}$ unchanged! \cite{Kontsevich:2008fj}

The wall-crossing formulas which we consider here are intimately linked to certain functional identities satisfied by the classical dilogarithm function and its variants, some of which, like Abel's pentagon identity, have been known for a long time. To see how these emerge, let us consider a generic wall-crossing formula, each side of which corresponds to a stability function $Z$ and $Z'$, respectively. The products on the two sides contain either a finite number of or countably many factors. If, for simplicity, we choose an ordering from the right to the left for each product,\footnote{\pt For BPS rays with multiple non-trivial factors there is of course more than one way to assign an ordering since the factors commute, so in any choice of ordering in such a case there is an inherent arbitrariness. However, this does not significantly affect our conclusions.} then the wall-crossing formula may be represented in the form
\begin{equation}
\prod^{\longleftarrow}_{\mathclap{ r = 1, 2, \dots }} \ T_{\smash{\gamma^{\phantom{'}}_r}}^{\pt \Omega(\gamma^{\phantom{'}}_r,Z)} 
=
\ \pt \prod^{\longleftarrow}_{\mathclap{ s = 1,2, \dots }} \ T_{\gamma'_{\smash{s}}}^{\pt \smash{\Omega(\gamma'_{\smash{s}},Z')}} \rlap{.}
\end{equation}
Let us denote the images of a generic element $X_{\gamma}$ under successive Kontsevich-Soibelman transformations as follows
\begin{equation}
\begin{tikzcd}[column sep = large]
X_{\gamma} \equiv X^{(1)}_{\gamma} \rar[mapsto, "T^{\pt \Omega(\gamma_1,Z)}_{\gamma_1}"] 
& X^{(2)}_{\gamma} \rar[mapsto, "T^{\pt \Omega(\gamma_2,Z)}_{\gamma_2}"]
& X^{(3)}_{\gamma} \cdots
\end{tikzcd}
\end{equation}
and similarly for the primed versions starting again with the same $X_{\gamma}$, which this time we denote by $\smash{ X^{\prime (1)}_{\gamma} }$. Then in view of the transformation law of the symplectic 1-form potential \eqref{KS/1-f-pot} the wall-crossing formula implies the dilogarithm functional identity \cite{MR2935636} 
\begin{equation}
\sum_{\mathclap{ r = 1, 2, \dots }} \ \Omega(\gamma^{\phantom{'}}_r,Z) L_{\sigma_{\smash{\gamma^{\phantom{'}}_r}}} \npt (X^{(r)}_{\smash{\gamma^{\phantom{'}}_r}})
-
\ \sum_{\mathclap{ s = 1, 2, \dots }} \ \Omega(\gamma'_{\smash{s}},Z') L_{\sigma_{\smash{\gamma'_{\hp \smash{s}}}}} \npt (X^{\prime (s)}_{\pt \smash{\gamma'_{\smash{s}}}})
= \text{const.}
\end{equation}
This relation's invariance under monodromies $\smash{ M_{\gamma'} : X_{\gamma} \longmapsto e^{2\pi i \langle\gamma,\gamma'\rangle}X_{\gamma} }$ requires that the following $\Gamma$-valued functional identity also holds:
\begin{equation} \label{WC-id-log}
\sum_{\mathclap{ r = 1, 2, \dots }} \ \gamma^{\phantom{'}}_r \pt \Omega(\gamma^{\phantom{'}}_r,Z) \ln (1 - X^{(r)}_{\smash{\gamma^{\phantom{'}}_r}})
=
\ \sum_{\mathclap{ s = 1, 2, \dots }} \ \gamma'_s \pt \Omega(\gamma'_{\smash{s}},Z') \ln (1 - X^{\prime (s)}_{\pt \smash{\gamma'_{\smash{s}}}}) \rlap{.}
\end{equation}

Let us mention finally that for each wall-crossing formula corresponding to a strict half-plane $\Lambda$ one has a mirror wall-crossing formula corresponding to the complement of $\Lambda$ in $\mathbb{C}$. As a result, the definition of $\Omega(\,\cdot\,,Z)$ can be extended to the whole of $\Gamma$ and one has simply $\Omega(-\gamma,Z) = \Omega(\gamma,Z)$ for all $\gamma \in \Gamma$. This property is used in particular to implement CPT invariance for the physical theories in \cite{MR2672801}.

\subsubsection{}

The surprising occurrence of Kontsevich-Soibelman symplectomorphisms in the twis\-tor space description of the Ooguri-Vafa metric led Gaiotto, Moore and Neitzke in \cite{MR2672801} to propose a generalization of that construction founded on the Kontsevich-Soibelman wall-crossing formulas (for a review, see \cite{MR3330790}). Specifically, they aim to construct complete hyperk\"ahler metrics on the total spaces of certain complex integrable systems. The integrable system data consists of the following items:
\begin{itemize}
\setlength{\itemsep}{3pt}

\item a complex manifold $\mathcal{B}$ (the ``Coulomb branch") containing a divisor $D$.

\item a local system of lattices $\Gamma$ over $\mathcal{B}' = \mathcal{B} \setminus D$ with fiber $\Gamma_z$ over a generic point $z \in \mathcal{B}'$ an even-rank lattice (the ``charge lattice") endowed with a non-degenerate symplectic integer-valued bilinear pairing~$\langle\cdot\pt,\cdot\rangle$, having non-trivial monodromy around~$D$. Let~us pick, for explicitness, a generic frame of $\Gamma_z$ induced by local sections $\gamma^a$ of $\Gamma$ and let $\varepsilon^{ab} = \langle \gamma^a, \gamma^b\pt\rangle$ be the antisymmetric lattice metric in this frame. The frame may be moreover chosen to be symplectic. Assuming the symplectic pairing has determinant 1, a symplectic frame is a frame \mbox{$\gamma^a = (\tilde\gamma_{\A},\gamma^{\A})_{\A =1,\dots,\text{rk}(\Gamma)/2}$} such that 
\begin{equation}
\langle\tilde\gamma_{\A},\tilde\gamma_{\B}\rangle = \langle\gamma^{\A},\gamma^{\B}\rangle = 0
\qquad\text{and}\qquad
\langle\gamma^{\A},\tilde\gamma_{\B}\rangle = \delta_{\B}^{\A} \rlap{.}
\end{equation}
Given a section $s_{\gamma}$ of $\Gamma^* \otimes_{\mathbb{Z}} \mathbb{C}$, we will typically denote its symplectic components $\smash{ s_{\smash{\gamma^{\A}}} = s^{\A} }$ and $\smash{ s_{\smash{\tilde{\gamma}_{\A}}} = \tilde{s}_{\A} }$. Given two such sections $s_{\gamma}$, $s^{\mathrlap{\prime}}{}_{\smash{\gamma}}$ and letting $\varepsilon_{ab}$ denote the symplectic metric on the dual lattice ($\pt =$ the inverse of $\varepsilon^{ab}$) we then have with these conventions $\smash{ \varepsilon_{ab} \pt s_{\gamma^{\smash a}}s^{\mathrlap{\prime}}{}_{\smash{\gamma^{\smash b}}} =  \tilde{s}_{\A}s^{\prime\A} - s^{\A}\tilde{s}^{\mathrlap{\hp \prime}}{}_{\A} }$.

\item a stability homomorphism $Z : \Gamma \rightarrow \mathbb{C}$ (the ``central charge") varying holomorphically around $\mathcal{B}'$. This is assumed to be locally derivable from a potential, meaning that one requires that
\begin{equation}
\varepsilon_{ab} \pt dZ_{\gamma^{\smash a}} \nhp \wedge dZ_{\gamma^{\smash b}} = 0 \rlap{.}
\end{equation}
If we let, in a slight departure from the rule stated above, $(\tilde{z}_{\A},z^{\A})$ be the symplectic components of $Z_{\gamma^a}$, then these form a system of local holomorphic functions on $\mathcal{B}'$, and we demand that the holomorphic cotangent space $\smash{ \mathcal{T}^*_z \mathcal{B}' }$ be spanned by the~$dz^{\A}$. The condition above may be written as $dz^{\A} \npt \wedge d\tilde{z}_{\A} = 0$, from which it follows that locally there exists a holomorphic function $F(z)$ such that $\smash{ \tilde{z}_{\A} = F_{\A} }$. Thus, $\mathcal{B}'$ is assumed to be locally identifiable around a point $z$ with a complex Lagrangian submanifold of $\Gamma^*_z \otimes_{\mathbb{Z}}\mathbb{C}$. 

\item a torus bundle $M'$ fibered over $\mathcal{B}'$ associated to the local system $\Gamma$, on the total space of which the hyperk\"ahler metric is to be constructed.
\end{itemize}

From our discussion so far it is clear that from this data one can construct rather naturally on appropriately considered local patches of $M'$ certain explicit hyperk\"ahler metrics, namely, the semi-flat metrics with prepotentials $F(z)$. These patches can then be glued together to give a smooth metric on $M'$. Such a metric, however, has a number of problems: it is incomplete,\footnote{\pt See \textit{e.g.} the closely-related considerations in \cite{MR1686939} regarding affine special K\"ahler manifolds.} and there is no clear way how it can be extended over $D$. Taking this metric as their starting point, Gaiotto, Moore and Neitzke proposed a class of hyperk\"ahler constructions which are believed to overcome these problems. Physically, they add instanton corrections to the semi-flat metric to construct metrics on $M'$ which can in principle be extended to complete metrics on a space $M$ obtained from $M'$ by adding nodal torus fibers over the points of D. In particular, one expects the metrics near singular locus points to be modeled after the Ooguri-Vafa metric. 

In the Gaiotto-Moore-Neitzke approach the instanton corrections are obtained by solving the following explicit functional integral equation generalizing the equation \eqref{GMN-int-OV} from the Ooguri-Vafa case:
\begin{equation} \label{GMN-int}
\mathcal{X}_{\gamma}(\zeta) = \mathcal{X}^{\text{sf}}_{\gamma}(\zeta) \exp \npt \bigg[ \frac{i}{4\pi}\sum_{\gamma'} \langle\gamma,\gamma'\rangle \pt\Omega(\gamma') \! \int_{\ell_{\gamma'}} \! \frac{d\zeta'}{\zeta'} \frac{\zeta'+\zeta}{\zeta'-\zeta} \ln (1 - \mathcal{X}_{\gamma'}(\zeta')) \bigg] \rlap{.}
\end{equation}
The complex twistor variables $\mathcal{X}_{\gamma}(\zeta)$ are assumed to be of the form 
\begin{equation} \label{Xcal-eta}
\text{$\mathcal{X}_{\gamma}(\zeta) = \sigma_{\gamma} \hp \exp \hp [\hp i \hp\eta_{\gamma}(\zeta)]$ \quad 
with \quad $\eta_{\gamma}(\zeta) = \sum_a q_a\hp \eta_{\gamma^a}(\zeta)$}
\end{equation}
for any sections $\gamma = \sum_a q_a\gamma^a$ of the local system $\Gamma$. The presence of the $\sigma_{\gamma}$ factor in this formula endows the space of $\mathcal{X}$-variables with a twisted torus algebra structure: 
\begin{equation}
\mathcal{X}_{\gamma} \mathcal{X}_{\gamma'} = (-)^{\langle\gamma,\gamma'\rangle}\mathcal{X}_{\gamma+ \gamma'}  \rlap{.}
\end{equation}
A corresponding exponential expression holds for the semi-flat piece $\smash{ \mathcal{X}^{\text{sf}}_{\gamma}(\zeta) }$, too, with functions $\smash{ \eta^{\hp\text{sf}}_{\gamma^a}(\zeta) }$ given as in the c-map formula \eqref{eta^sf}. Although these may look like tropical components of $\mathcal{O}(2)$ sections, the full $\eta_{\gamma}$ functions are not in general of this type.\footnote{\pt The Ooguri-Vafa case, where half of the $\eta_{\gamma^a}$ are sections of $\mathcal{O}(2)$, is rather exceptional in this sense.} The residues $Z_{\gamma^a}$ of the poles of $\smash{ \eta_{\gamma^a} }$ at $\zeta = 0$  define a stability function $Z$. The factors $\Omega(\gamma)$ are to be identified with the powers in the Kontsevich-Soibelman wall-crossing formulas, and as such are implicitly assumed to depend in a piecewise constant way on the stability function. Finally, one requires that $\eta_{\gamma}$ be real with respect to antipodal conjugation. The CPT property of $\Omega(\gamma)$ together with the choice of integral kernel ensure that this condition is compatible with the functional integral equation. 

\begin{remark}
In \cite{MR2672801} the authors introduce a positive scale $R$ (compactification radius) and devise an iterative procedure to solve the functional integral equation \eqref{GMN-int} by successive approximations in the large $R$ limit, which shows that at least in this limit a solution exists. In \cite{MR3003931} the same authors give an alternative construction of $\mathcal{X}_{\gamma}(\zeta)$ for a class of theories associated to meromorphic Hitchin systems which does not rely on a functional integral equation and is valid for all $R$. The hyperk\"ahler metrics of \cite{MR3003931} are expected to fall into the category of complete hyperk\"ahler metrics constructed rigorously by Biquard and Boalch in \cite{MR2004129}. 
\end{remark}

So then the claim is that if a solution $\mathcal{X}_{\gamma}$ of the equation \eqref{GMN-int} exists then 
\begin{equation} \label{GMN-om(zeta)}
\omega(\zeta) = - \pt \frac{1}{2} \hp \varepsilon_{ab} \pt d\ln \mathcal{X}_{\gamma^{\smash a}} \npt \wedge d\ln \mathcal{X}_{\gamma^{\smash b}}
\end{equation}
represents the tropical component of the $\mathcal{O}(2)$-twisted 2-form $\omega$ of a hyperk\"ahler metric having the properties specified above. Note that in terms of the symplectic components $\smash{ (\tilde\eta_{\A},\eta^{\A}) }$ of $\eta_{\gamma^a}$  this takes the usual canonical form 
\begin{equation} \label{omega_T-canon}
\omega(\zeta) = d\tilde{\eta}_{\A} \nhp \wedge d\eta^{\A} \rlap{.}
\end{equation}

Just as in the Ooguri-Vafa case, the functional integral equation is to be regarded as providing a solution to a certain Riemann-Hilbert problem. To formulate this, consider a covering of the twistor space $\mathcal{Z}$ consisting of the following elements:
\begin{itemize}
\setlength{\itemsep}{0pt}

\item[$-$] two open sets $V_N$ and $V_S$ whose every non-vanishing intersection with a horizontal twistor line projects down to an arctic respectively antarctic cap on the twistor sphere;
\item[$-$] for each pair of consecutive BPS rays choose a corresponding set $V_T$ in $\mathcal{Z}$ such that its every non-vanishing intersection with a horizontal twistor line projects down on the twistor sphere to the strict sector between the BPS rays, defined as the smaller angle sector excluding the first ray and including the second, when the rays are counted in a counterclockwise order. 
\end{itemize}

\begin{figure}[ht]
\begin{tikzpicture}[scale=0.9] % THE GLOBE

\def\R{2} % sphere radius
\def\angEl{15} % elevation angle
\filldraw[ball color=white] (0,0) circle (\R); % or lightgray!20

%\foreach \t in {-80,-60,...,80} { \DrawLatitudeCircle[\R]{\t} }
%\foreach \t in {-5,-35,...,-175} { \DrawLongitudeCircle[\R]{\t} }

\DrawLatitudeCircle[\R]{ 40}
\DrawLatitudeCircle[\R]{-40}

\DrawLongitudeCircle[\R]{-40}
\DrawLongitudeCircle[\R]{-80}

\node at (0.00, 2.50) {$\pi(V_N)$};
\node at (0.00,-2.50) {$\pi(V_S)$};
\node at (2.80,-0.10) {$\pi(V_T)$};

\draw (2.20,-0.10) edge[out= 160,in=-10,->] (0.90,-0.10);

\end{tikzpicture}
\caption{The twistor sphere for the Gaiotto-Moore-Neitzke hyperk\"ahler construction, with two consecutive BPS rays and their antipodal counterparts depicted as meridians.}
\end{figure}
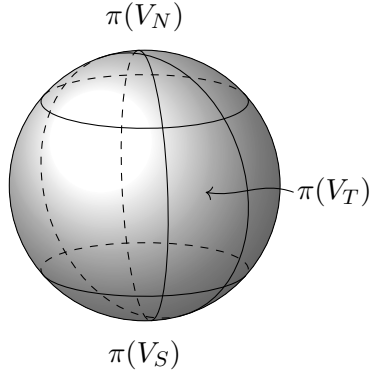

\noindent Then we seek a set of local functions $\eta_{\gamma,V}$ associated to each set $V$ above and analytic in $\zeta$ on it, obeying the following transition rules:
\begin{itemize}
\setlength{\itemsep}{3pt}

\item[1.] For each set $V_T$ the transition between $V_N$ and $V_T$ on one hand, and $V_S$ and $V_T$ on the other is given by the twisted symplectomorphisms
\begin{equation} \label{GMN-asympt-cond}
\begin{array}{r@{}lcr@{}l} 
\displaystyle \eta^{\A}_{\hp V_T} \hspace{2pt} \  & \displaystyle = \frac{\eta^{\A}_{\hp V_N}}{\zeta \, } & \multirow{2}{*}{\qquad and {\vrule width 0pt height 5ex depth 0ex} \qquad} 
& \displaystyle \eta^{\A}_{\hp V_T} \hspace{2pt} \  & \displaystyle = \zeta \hp \eta^{\A}_{\hp V_S} \\[5pt]
\displaystyle \tilde{\eta}_{\A,V_T} \hspace{2pt} \ & \displaystyle = \tilde{\eta}_{\A, V_N} + \frac{F_{\A}(\eta_{\hp V_N})}{\zeta} & & \displaystyle \tilde{\eta}_{\A,V_T} \hspace{2pt} \ & \displaystyle = \tilde{\eta}_{\A,V_S} - \zeta \hp \bar{F}_{\A}(-\eta_{\hp V_S})
\end{array}
\end{equation}
respectively. Note that in our previous notation the tropical functions~$\smash{ \eta^{\A}_{\hp \smash{ V_T }} }$ and~$\smash{\tilde\eta_{\smash{ \A,V_T }}^{\phantom{A}}}$ correspond simply to $\eta^{\A}$ and $\tilde\eta_{\A}$. 

\item[2.] Over each BPS ray $\ell$ (\textit{i.e.}, between the two sets of type $V_T$ whose projections on the twistor sphere border on the two sides of the ray $\ell$) there is a discontinuity jump given by the untwisted (because tropical) symplectomorphism
\begin{equation} \label{GMN-ray-jump}
(\mathcal{X}_{\gamma})_{\ell}^+ = T_{\ell}^{\phantom{j}} (\mathcal{X}_{\gamma})_{\ell}^-
\end{equation}
where $\smash{ (\mathcal{X}_{\gamma})_{\ell}^+ }$ and $\smash{ (\mathcal{X}_{\gamma})_{\ell}^- }$ denote the limits of $\mathcal{X}_{\gamma}$ when $\zeta$ approaches the ray $\ell$ from the clockwise respectively counterclockwise direction.
\end{itemize}

Going back now to the functional integral equation, observe that around $\zeta = 0$ it gives the Laurent series expansion
\begin{equation} \label{GMN-eta^N}
\eta_{\gamma}(\zeta) = \eta^{\text{sf}}_{\gamma}(\zeta) + i \Big(\hp \frac{1}{2} \pt\mathcal{I}_{\gamma,0} + \sum_{n=1}^{\infty} \mathcal{I}_{\gamma,-n} \zeta^n \Big)
\end{equation}
where, by definition,
\begin{equation} \label{I_g,n-int}
\mathcal{I}_{\gamma,n} = \frac{1}{2\pi i} \sum_{\gamma'} \langle\gamma,\gamma'\rangle \pt\Omega(\gamma') \!\int_{\ell_{\gamma'}} \! \frac{d\zeta}{\zeta} \pt \zeta^{n} \ln (1-\mathcal{X}_{\gamma'}(\zeta)) 
\rlap{.}
\end{equation}
These quantities satisfy the alternating reality property $\smash{ \bar{\mathcal{I}}_{\gamma,n} = (-)^n \mathcal{I}_{\gamma,-n} }$. Moreover, due to the functional identity \eqref{WC-id-log}  they vary smoothly across walls of marginal stability provided that the factors $\Omega(\gamma)$ jump across them in accordance with the corresponding Kontsevich-Soibelman wall-crossing formula \cite{Alexandrov:2014wca}. By way of the first set of formulas \eqref{GMN-asympt-cond}, this guarantees the important fact that the coordinates $\smash{ \eta_{\gamma,V_N} }$ are well-defined and smooth on $V_N$. Indeed, as we know, the first Taylor coefficients in their $\zeta$-expansions determine the hyperk\"ahler symplectic forms and, through them, the metric. Alternatively and more directly, one can obtain the hyperk\"ahler symplectic forms $\omega_+$ and $\omega_0$ by inserting the expansion \eqref{GMN-eta^N} into the formula \eqref{omega_T-canon} and collecting the terms of order $-1$ and $0$ in $\zeta$, respectively. We thus get
{\allowdisplaybreaks
\begin{equation}
\begin{aligned}
\omega_+^{\phantom{f}} & = \omega_+^{\text{sf}} + \frac{i}{2} \hp d\pt\tilde{\mathcal{I}}_{\A,0} \nhp \wedge dz^{\A} - \frac{i}{2} \hp d\pt\mathcal{I}^{\A}_{\hp 0} \wedge dF_{\A} \\
\omega_0^{\phantom{f}} \hspace{1.3pt} & = \omega_0^{\text{sf}} 
- \frac{1}{4}\hp d\pt\tilde{\mathcal{I}}_{\A,0}  \wedge d\pt\mathcal{I}^{\A}_{\hp 0} \\
& \hspace{6.1ex} + \hp \frac{i}{2}\hp (\pt d\pt\tilde{\mathcal{I}}_{\A,0} \wedge d\psi^{\A} \!-  d\pt\mathcal{I}^{\A}_{\hp 0} \wedge d\tilde{\psi}_{\A})
+  i\pt(\pt d\pt\tilde{\mathcal{I}}_{\A,-} \npt \wedge dz^{\A} \!- d\pt\mathcal{I}^{\A}_- \wedge dF_{\A}) 
\end{aligned}
\end{equation}
}%
with the semi-flat terms defined as in the formulas \eqref{HK_sym_sf}. The remaining hyperk\"ahler symplectic form $\omega_-$ can be obtained from $\omega_+$ by (alternating) complex conjugation.  The smoothness of the coefficients $\mathcal{I}^{\A}_{\hp 0}$, $\mathcal{I}^{\A}_-$ and their tilded counterparts guarantees then the smoothness of the hyperk\"ahler metric when walls of marginal stability are crossed. 

Observe, incidentally, that the above expressions for the hyperk\"ahler forms can be recast in the form \eqref{oaloo} if we take
{\allowdisplaybreaks
\begin{equation}
\begin{aligned}
v^{\A} & = \psi^{\A} + \frac{i}{2} \hp \mathcal{I}^{\A}_0 \\[-2pt]
u_{\A} & = \tilde{\psi}_{\A} - F_{\A\B}v^{\B} + \frac{i}{2} \hp \tilde{\mathcal{I}}_{\A,0} \\[-2pt]
w_{\A} & = \partial_{\smash{z^{\A}}}(\bar{z}^{\B}F_{\B} - z^{\B}\bar{F}_{\B}) - \frac{1}{2} F_{\A\B\C} v^{\B}v^{\C} +  i \hp (\tilde{\mathcal{I}}_{\A,-} \nhp - F_{\A\B} \pt \mathcal{I}^{\B}_-)
\rlap{.}
\end{aligned}
\end{equation}
}%
In particular, it is clear from this that we are in general in the situation with all variables $v^{\A}$ complex.\footnote{\pt The exception to this rule is once again the Ooguri-Vafa case, where \mbox{$\mathcal{I}^{\A}_n = 0$} for all $n$ and all the variables $v^{\A}$ are real.} But rather than apply right away the results from the corresponding part of section \ref{sec:SymmGens} we prefer in this case to choose a different set of special coordinates, one better adapted to the particularities of this problem, and then follow a similar route as there. 

So, first, let us remark that the condition that the above expression for $\omega_0$ be real can be equivalently phrased as the closure of the 1-form
\begin{equation}
\tilde{\mathcal{I}}_{\A,0} \hp d\psi^{\A}
- \mathcal{I}^{\A}_{\hp 0} \hp d\tilde{\psi}_{\A}
+ 2 \hp \Re (\pt \tilde{\mathcal{I}}_{\A,-} dz^{\A} - \mathcal{I}^{\A}_- \hp dF_{\A} )
\rlap{.}
\end{equation}
Therefore, locally there exists a real-valued function $\smash{ L^{\text{inst}} }$ on $M$ depending on the variables \mbox{$z^{\A}$, $\bar{z}^{\A}$, $\psi^{\A}$, $\tilde\psi_{\A}$} such that
\begin{equation}
\tilde{\mathcal{I}}_{\A,0} = L^{\text{inst}}_{\psi^{\A}} 
\qquad\quad
\mathcal{I}^{\A}_{\hp 0} = - \pt L^{\text{inst}}_{\psitildeindx_{\A}}
\qquad\quad
\tilde{\mathcal{I}}_{\A,-} \nhp - F_{\A\B} \pt \mathcal{I}^{\B}_- \npt = L^{\text{inst}}_{z^{\A}}
\end{equation}
where we indicate derivatives with indices in the usual way.  A simple exercise allows us to re-express these equations in the form
{\allowdisplaybreaks
\begin{align}
L^{\text{inst}}_{z^{\A}} & = - \pt \frac{1}{2\pi i} \sum_{\gamma} \Omega(\gamma) \frac{\partial Z_{\gamma}}{\partial z^{\A}} \int_{\ell_{\gamma}} \npt \frac{d\zeta}{\zeta^2} \ln (1 - \mathcal{X}_{\gamma}(\zeta)) \nonumber \\[0pt]
L^{\text{inst}}_{\psi_{\gamma^a}} & = - \pt \frac{1}{2\pi i} \sum_{\gamma} \Omega(\gamma) \frac{\partial \psi_{\gamma}}{\partial \psi_{\gamma^{\mathrlap{a}}}} \pt\pt \int_{\ell_{\gamma}} \! \frac{d\zeta}{\zeta} \ln (1 - \mathcal{X}_{\gamma}(\zeta)) \\[-2pt]
L^{\text{inst}}_{\bar{z}^{\A}} & = \phantom{+} \pt \frac{1}{2\pi i} \sum_{\gamma} \Omega(\gamma) \frac{\partial \bar{Z}_{\gamma}}{\partial \bar{z}^{\A}} \int_{\ell_{\gamma}}  d\zeta \hspace{2pt} \ln (1 - \mathcal{X}_{\gamma}(\zeta))
\rlap{.} \nonumber
\end{align}
}%
In terms of this new function the preceding formulas for the hyperk\"ahler symplectic forms become
\begin{equation}
\begin{aligned}
\omega_+^{\phantom{f}} & = \omega^{\text{sf}}_+ + \frac{i}{2} \hp dL^{\text{inst}}_{\psi^{\A}} \npt \wedge dz^{\A} + \frac{i}{2} \hp dL^{\text{inst}}_{\psitildeindx_{\A}} \! \wedge dF_{\A} \\
\omega_0^{\phantom{f}} \hspace{1.3pt} & =  \omega^{\text{sf}}_0 \hp + \frac{i}{2} \hp dL^{\text{inst}}_{z^{\A}} \npt \wedge dz^{\A} - \frac{i}{2} \hp dL^{\text{inst}}_{\bar{z}^{\A}} \npt \wedge d\bar{z}^{\A} +  \frac{1}{4} \hp dL^{\text{inst}}_{\psi^{\A}} \npt \wedge dL^{\text{inst}}_{\psitildeindx_{\A}} 
\end{aligned}
\end{equation}
with $\omega_0$ now manifestly real. The quaternionicity condition formulated in section \ref{sec:SymmGens} imposes further non-linear differential constraints on $\smash{ L^{\text{inst}} }$. 

Note in particular that the term in $\omega_0$ quadratic in the $d\psi_{\gamma^a}$ differentials is of the form $\frac{1}{2} \pt \hat{\varepsilon}_{ab} \pt d\psi_{\gamma^{\smash a}} \npt \wedge d\psi_{\gamma^{\smash b}}$, with
\begin{equation}
\hat{\varepsilon}_{ab} = \varepsilon_{ab} + \frac{1}{4} L^{\text{inst}}_{\psi_{\gamma^{\smash a}}\psi_{\gamma^{\smash c}}} \varepsilon^{cd} \pt L^{\text{inst}}_{\psi_{\gamma^{\smash d}}\psi_{\gamma^{\smash b}}}
\rlap{.}
\end{equation}
Let $\hat{\varepsilon}^{\hp ab}$ be the inverse of $\hat{\varepsilon}_{ab}$.  For any $C^1$-function $f$ on $M'$ let us define the symplectic gradient vector field 
\begin{equation} \label{GMN-Xf}
X_f = \hat{\varepsilon}^{\hp ab} \hp \frac{\partial f}{\partial \psi_{\smash{\gamma^{\smash{\mathrlap{a}}}}}} \pt\hp
\frac{\partial }{\partial \psi_{\smash{\gamma^{\mathrlap{b}}}}}
\rlap{.}
\end{equation}
Then the following Cauchy-Riemann type result similar to the one we have derived earlier in section \ref{sec:SymmGens} (\textit{i.e.}, Proposition \ref{gmm-hyperps}) holds:

\begin{proposition} \label{GMN-gmm-hyperps}
A sequence of functions $\{f_n\}_{n \in \mathbb{Z}}$ on $M'$ forms a chain of hyperpotentials with respect to the complex structure $I_0$ if and only if
\begin{equation}
\iota_{X_{f_{n-1}}}\omega_+ + \iota_{X_{f_n}}\omega_0 + \iota_{X_{f_{n+1}}}\omega_- = df_n
\end{equation}
for all $n \in \mathbb{Z}$.
\end{proposition}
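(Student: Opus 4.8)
The plan is to reduce Proposition \ref{GMN-gmm-hyperps} to the already-proven Proposition \ref{gmm-hyperps} by establishing the exact structural analogues that made the latter work. In the earlier setting, Proposition \ref{gmm-hyperps} followed from two ingredients: the ``triple symplectic gradient'' formula of Lemma \ref{tri-sym-formula}, and the Cauchy-Riemann characterization of adjacent hyperpotentials in Lemma \ref{YZ-hyperpot}. So my first step is to set up the corresponding $Y_{\A}$ and $Z_{\A}$ vector fields in the present coordinates. Here the natural choice is dictated by the symplectic-frame structure: I would split the index $a$ into its symplectic components, write $\psi_{\gamma^a} = (\tilde{\psi}_{\A}, \psi^{\A})$, and define $Y_{\A}$, $Z_{\A}$ so that they again annihilate a complete set of $I_0$-antiholomorphic coordinates (here $\bar{z}^{\A}$ and $\bar{u}_{\A}$, or equivalently the antiholomorphic parts of the $\eta$-data) and together with their conjugates furnish a local frame for $T^{1,0}_{I_0}M$.

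Next I would prove the two supporting lemmas in this new frame. For the analogue of Lemma \ref{tri-sym-formula}, I would compute $\iota_{X_{f_{n-1}}}\omega_+ + \iota_{X_{f_n}}\omega_0 + \iota_{X_{f_{n+1}}}\omega_-$ directly, using the explicit expressions for $\omega_+$ and $\omega_0$ in terms of $L^{\text{inst}}$ and the semi-flat data, together with the definition \eqref{GMN-Xf} of $X_f$ with the modified symplectic metric $\hat{\varepsilon}_{ab}$. The key algebraic point is that the quadratic-in-$d\psi$ term of $\omega_0$ is governed precisely by $\hat{\varepsilon}_{ab}$, which is what makes $X_f$ act as a genuine symplectic gradient for $\omega_0$ up to the horizontal $dz^{\A}$, $d\bar{z}^{\A}$ corrections. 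I expect the computation to reproduce, verbatim in form, the right-hand side $df_n - [Z_{\A}(f_n) - \bar{Y}_{\A}(f_{n-1})]\,dz^{\A} - [\bar{Z}_{\A}(f_n) + Y_{\A}(f_{n+1})]\,d\bar{z}^{\A}$ of Lemma \ref{tri-sym-formula}. Then the analogue of Lemma \ref{YZ-hyperpot} follows from the expressions for the complex structures $I_+$ and $P^{1,0}_{I_0}$ in this coframe (as in \eqref{HK-c-strs}) and the equivalence $1 \Leftrightarrow 3$ of Proposition \ref{criterion_2}, exactly as before.

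With both lemmas in hand, the proposition is immediate: the defining recursion $\iota_{X_{f_{n-1}}}\omega_+ + \iota_{X_{f_n}}\omega_0 + \iota_{X_{f_{n+1}}}\omega_- = df_n$ holds for all $n$ if and only if, by the triple-gradient formula, the bracketed coefficients of $dz^{\A}$ and $d\bar{z}^{\A}$ vanish for every $n$, which by the Cauchy-Riemann lemma is exactly the statement that consecutive $f_n$ are adjacent hyperpotentials, i.e. that $\{f_n\}_{n \in \mathbb{Z}}$ is a recursive chain of hyperpotentials with respect to $I_0$.

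The main obstacle I anticipate is verifying the quaternionicity-dependent constraints in this coordinate system, i.e. the analogues of the conditions \eqref{u-con} and of the $w_{\A}$-constraint \eqref{w-con}, which are needed for the triple-gradient computation to close into the claimed form. Unlike the first case treated in section \ref{sec:SymmGens}, here the $v^{\A}$ are complex and the coordinates are the $I_0$-adapted $z^{\A}, \bar{z}^{\A}, \psi^{\A}, \tilde{\psi}_{\A}$ together with the instanton potential $L^{\text{inst}}$, so I must confirm that the quaternionicity constraints stated abstractly in section \ref{sec:SymmGens} translate into workable differential identities on $L^{\text{inst}}$ and that the modified metric $\hat{\varepsilon}_{ab}$ is exactly what compensates for the non-semi-flat corrections. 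Once the bookkeeping matching $\hat{\varepsilon}_{ab}$ to the $d\psi \wedge d\psi$ part of $\omega_0$ is checked, the remainder is a direct transcription of the earlier argument and carries essentially no new conceptual content.
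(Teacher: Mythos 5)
Your proposal follows the paper's route exactly: the paper itself gives no separate proof of Proposition \ref{GMN-gmm-hyperps}, asserting it by direct analogy with Proposition \ref{gmm-hyperps}, i.e., by re-running the section \ref{sec:SymmGens} machinery (a $Y_{\A}$, $Z_{\A}$ frame annihilating the $I_0$-antiholomorphic coordinates $\bar{z}^{\A}$, $\bar{u}_{\A}$, the triple-gradient formula of Lemma \ref{tri-sym-formula}, and the Cauchy-Riemann Lemma \ref{YZ-hyperpot} via the equivalence $1 \Leftrightarrow 3$ of Proposition \ref{criterion_2}) in the instanton-corrected coordinates, with $\hat{\varepsilon}_{ab}$ matching the $d\psi_{\gamma^a}$-quadratic part of $\omega_0$ so that \eqref{GMN-Xf} is the correct fiberwise symplectic gradient. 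You also correctly identify the only substantive check\,---\,that the quaternionicity constraints, now nonlinear differential identities on $L^{\text{inst}}$, close the triple-gradient computation\,---\,which is precisely the content the paper leaves implicit in ``follow a similar route as there.''
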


Let us recall now a key argument from \cite{MR2672801} which shows that the first order partial linear differential equation for the tropical twistor canonical coordinates $\eta_{\gamma}$ encountered in the previous examples is a rather generic phenomenon. Consider the $\zeta$-dependent vector field defined, in local coordinates, by
\begin{equation}
X(\zeta) = i\hp \zeta \frac{\partial \hp \eta_{\hp \gamma^a}}{\partial \zeta\ } (\mathcal{J}^{-1})_a{}^b \frac{\partial}{\partial \psi_{\smash{\gamma^{\mathrlap{b}}}}}
\qquad\text{with}\qquad
\mathcal{J}_a{}^b = \frac{\partial \hp \eta_{\hp \gamma^b}}{\partial \psi_{\smash{\gamma^{\mathrlap{a}}}}}
\end{equation}
representing the Jacobian matrix of the partial mapping $\psi_{\gamma^a} \longmapsto \eta_{\gamma^a}$.  The discontinuities of the $\eta_{\gamma^a}$ functions along the BPS rays cancel out in $X(\zeta)$ but their poles do not, and so as a result $X(\zeta)$ is genuinely analytic in $\zeta$ except at \mbox{$\zeta = 0$} and $\infty$, where it has a pole of order one.
Thus, this is of the form
\begin{equation} \label{O2-twisted-X}
X(\zeta) = \frac{X_+}{\zeta \ } + X_0 + \zeta X_-
\end{equation}
with components $X_m$ vector fields on $M'$ acting in the directions spanned by $\smash{ \frac{\partial}{\partial \psi_{\smash{\mathrlap{\gamma^a}}}} \hspace{1ex} }$ and satisfying the alternating reality condition $\bar{X}_m = (-)^mX_{-m}$ (this follows from the reality properties of the functions $\eta_{\gamma^a}$ with respect to antipodal conjugation). At the same time, from the definition of $X(\zeta)$ it is clear that we have
\begin{equation} \label{GMN-diff-eq}
\Big(\! - i \hp \zeta \frac{\partial}{\partial \zeta} + X(\zeta) \Big) \eta_{\gamma^a} = 0 
\end{equation}
for all $a = 1, \dots \text{rk}(\Gamma)$.

Very importantly for us, this differential property is enough to guarantee the existence of a twisted rotational action. To see this, let us rewrite for a moment the Laurent expansion \eqref{GMN-eta^N} for $\eta_{\gamma}$ around $\zeta = 0$ in the following generic form
\begin{equation}
\eta_{\gamma} = \sum_{n=-1}^{\infty} \eta_{\gamma,-n}\zeta^n \rlap{.}
\end{equation}
Remark, on one hand, that the differential equation \eqref{GMN-diff-eq} is equivalent to the set of relations
\begin{equation}
X_+(\eta_{\gamma^a,n-1}) + X_0(\eta_{\gamma^a,n}) + X_-(\eta_{\gamma^a,n+1}) = - \hp i \hp n \pt \eta_{\gamma^a,n}
\end{equation}
for all applicable \mbox{$n \in \mathbb{Z}$}. On the other hand, inserting the Laurent expansion into the formula \eqref{GMN-om(zeta)} for $\omega(\zeta)$ yields for the hyperk\"ahler symplectic forms the expressions
\begin{equation}
\omega_m = \frac{1}{2} \hp \varepsilon_{ab} \sum_{k} d\eta_{\gamma^{\smash a},k} \nhp \wedge d\eta_{\gamma^{\smash b}, m-k}
\rlap{.}
\end{equation}
This formula is in fact valid for all $m \in \mathbb{Z}$, in which case we have of course $\omega_m = 0$ for all $m$ except $m=-1,0,1$. By using these two relations one can then prove explicitly that the equation \eqref{twist_ax_rot_2} with $j=2$ holds, which shows that we do have indeed in this case a twisted rotational action. 

In a similar way we obtain in particular that
\begin{equation} \label{mmap-mu_N}
\iota_{X_-}\omega_+ + \iota_{X_0}\omega_0 + \iota_{X_+}\omega_- = d\mu_N
\end{equation}
with the function
\begin{equation}
\mu_N = - \hp \frac{i}{2} \hp \varepsilon_{ab} \! \sum_{m=-1}^{1} \! m \, \eta_{\gamma^{\smash a},m} \hp \eta_{\gamma^{\smash b},-m} 
\rlap{.}
\end{equation}
Reintroducing the specific forms of the Laurent coefficients from equation \eqref{GMN-eta^N} gives us for this the expression $\smash{ \mu_N = \mu^{\text{sf}} + \varepsilon_{ab} Z_{\gamma^a} \mathcal{I}_{\gamma^{\smash b},-} }$. Note that while the left-hand side of equation \eqref{mmap-mu_N} is manifestly real\,---\,due to the (alternating) reality properties of its component fields, the function $\mu_N$ on the right-hand side is not. This can only be the case if the imaginary part of $\mu_N$ is constant. Another way to argue this, if we forget for instance the reality property of the system of vector fields $X_m$, is by repeating the argument but for the Laurent expansion around $\zeta = \infty$. On the right-hand side of  equation \eqref{mmap-mu_N} one now gets a function $\mu_S$ which, in view of the reality property of $\eta_{\gamma}$, turns out to be the complex conjugate of $\mu_N$. In either case, using also the integral representation \eqref{I_g,n-int}, we get the consistency condition
\begin{equation} \label{GMN-L3L}
\Im \mu_N = - \pt \frac{1}{4\pi} \sum_{\gamma} \Omega(\gamma) \! \int_{\ell_{\gamma}} \! \frac{d\zeta}{\zeta} \Big(\frac{Z_{\gamma}}{\zeta} + \bar{Z}_{\gamma}\zeta \Big) \ln (1 - \mathcal{X}_{\gamma}(\zeta)) = C
\end{equation}
for some real, at least locally defined and possibly vanishing constant $C$. This condition can be thought of as the twisted case equivalent of the invariance constraint of Lemma~\ref{L3-inv}, where only a simple rotating action was present. Letting then \mbox{$\mu = \Re \mu_N$} we have the moment map equation
\begin{equation}
\iota_{X_-}\omega_+ + \iota_{X_0}\omega_0 + \iota_{X_+}\omega_- = d\mu
\end{equation}
with the integral representation for the real-valued moment map function
\begin{equation}
\mu = \mu^{\text{sf}} + \frac{1}{4\pi i} \sum_{\gamma} \Omega(\gamma) \! \int_{\ell_{\gamma}} \! \frac{d\zeta}{\zeta} \Big(\frac{Z_{\gamma}}{\zeta} - \bar{Z}_{\gamma}\zeta \Big) \ln (1 - \mathcal{X}_{\gamma}(\zeta)) \rlap{.}
\end{equation}
This is an obvious generalization of the Ooguri-Vafa metric formula \eqref{OV-pot-2}. 

\begin{remark}
This function, which appears here in the guise of a moment map of a twisted rotational action, is the main object of interest in \cite{Alexandrov:2014wca} where it was put forward as a candidate for a Witten-type index of a certain class of supersymmetric quantum field theories. The instrumental property there was instead the one expressed by the $m=1$ component of  equation \eqref{sgm-omg}. 
\end{remark}

By the general results of section \ref{sec:Trans-rot}, the presence of $\mathcal{O}(2)$-twisted rotational actions on the Gaiotto-Moore-Neitzke hyperk\"ahler spaces implies the existence of hyperholomorphic line bundles over them, and of corresponding holomorphic line bundles over their twistor spaces equipped with meromorphic connections with poles of order two on the fibers over \mbox{$\zeta = 0$} and $\infty$. The existence of these bundles was discovered by Neitzke in \cite{Neitzke:2011za}, and our considerations here can be read as a geometric interpretation of his finds. 

For the arctic meromorphic potential associated to the twisted rotational action we consider again
\begin{equation}
\varphi_{V_N}(\zeta) = i \pt \frac{f(\eta_{\hp V_N})}{\zeta^2} + i \pt \frac{\tilde{\eta}^{\phantom{'}}_{\A,V_N}\eta^{\A}_{\hp V_N}}{\zeta} 
\end{equation}
where, recall, \mbox{$f(z) = z^{\A}F_{\A}(z) - 2F(z)$}. This is as required a meromorphic function defined on $V_N$ with a pole of order two at $\zeta = 0$. The crucial test it needs to pass is whether it satisfies the gauge condition of Lemma \ref{A_V-sp-gauge}. This follows from the consistency constraint \eqref{GMN-L3L}. Assuming that the constant $C$ vanishes (otherwise redefine $\varphi_{V_N}(\zeta)$ by subtracting from it the imaginary constant term $i \hp C$; this affects neither its meromorphicity nor its pole structure) we find indeed that its zero-order Laurent coefficient is of the form
\begin{equation}
\varphi_0 = \mu + i\hp u_{\A} v^{\A} \rlap{.}
\end{equation}
Notice, moreover, that the second-order residue is simply
\begin{equation}
\varphi_{++}^{\phantom{f}} = \varphi_{++}^{\hp \text{sf}}
\end{equation}
and that for the first-order one the formula yields $\smash{ \varphi_+^{\phantom{f}} = \varphi_+^{\hp \text{sf}} + \frac{1}{2} \varepsilon_{ab} Z_{\gamma^a} \mathcal{I}_{\gamma^{\smash b},0} }$, which can then be represented in the form
\begin{equation}
\varphi_+^{\phantom{f}} = \varphi_+^{\hp \text{sf}} + \frac{1}{4\pi i} \sum_{\gamma} \Omega(\gamma) \hp Z_{\gamma} \! \int_{\ell_{\gamma}} \! \frac{d\zeta}{\zeta} \ln (1 - \mathcal{X}_{\gamma}(\zeta))
\end{equation}
(compare with the formula \eqref{OV-pot-1} from the Ooguri-Vafa case).  Yet another way in which this can be written is $\smash[t]{ \varphi_+^{\phantom{f}} = \varphi_+^{\hp \text{sf}} - \frac{1}{2}Z_{\gamma^a} \npt L^{\text{inst}}_{\smash{\psi_{\gamma^a}}} }$.

So we can apply Lemma \ref{trapp}\,---\,only this time we want to do it for the symplectic gradient vector field \eqref{GMN-Xf}. This is possible due to Proposition \ref{GMN-gmm-hyperps}. Thus, if we take
\begin{equation} \label{GMN-X_pp}
X_{++}  = X_{\varphi_{++}} = 0
\qquad\quad
X_+ = X_{\varphi_+}
\qquad\quad
X_0 = X_{\mu} 
\end{equation}
with negative-indexed counterparts defined in the usual way by requiring alternating conjugation, then these vector fields satisfy the moment map equations
\begin{equation} 
\iota_{X_{n-1}}\omega_+ + \iota_{X_{n}}\omega_0 + \iota_{X_{n+1}}\omega_- =
\begin{cases}
d\varphi_{++}  & \text{if $n = 2$} \\
d\varphi_+ - i \hp \theta_+ & \text{if $n = 1$} \\
d\mu & \text{if $n=0$} 
\end{cases}
\end{equation}
where as usual we have $\theta_+ = u_{\A}dz^{\A}$. The formulas \eqref{GMN-X_pp} provide then a concrete representation for the component vector fields of the equation \eqref{O2-twisted-X}. 

We are now in a position to write down a set of explicit meromorphic connection 1-forms and holomorphic gluing functions corresponding to the cover of $\mathcal{Z}$ constructed above.\footnote{\pt Recall that the transition functions for the hyperholomorphic line bundle can be obtained from the gluing functions simply by exponentiation, as in the equation \eqref{hh-trans-fcts}.} Lemma \ref{A_V-sp-gauge} gives us for the polar elements of the cover the connection 1-forms
\begin{equation}
\begin{aligned}
\mathcal{A}^{\phantom{'}}_{V_N} & = \frac{1}{\zeta} \pt \tilde{\eta}^{\phantom{'}}_{\A,V_N}d^{\phantom{'}}_{\mathcal{Z}}\eta^{\A}_{\hp V_N} + i\hp d^{\phantom{'}}_{\mathcal{Z}}\varphi^{\phantom{'}}_{V_N}(\zeta) + i \pt \varphi^{\phantom{'}}_{V_N}(\zeta) \frac{d\zeta}{\zeta} \\
\mathcal{A}^{\phantom{'}}_{V_S} \, & = \, \zeta \pt \, \tilde{\eta}^{\phantom{'}}_{\A,V_S}\,d^{\phantom{'}}_{\mathcal{Z}}\eta^{\A}_{\hp V_S} + i\hp d^{\phantom{'}}_{\mathcal{Z}}\varphi^{\phantom{'}}_{V_S}(\zeta) \, - \, i \pt \varphi^{\phantom{'}}_{V_S}(\zeta) \frac{d\zeta}{\zeta} \rlap{.}
\end{aligned}
\end{equation}
From these, we can transition towards the tropical elements using the patching relations \eqref{GMN-asympt-cond}. We find in this way the connection 1-forms 
\begin{equation}
\mathcal{A}_{V_T} = \tilde{\eta}^{\phantom{'}}_{\A,V_T} d^{\phantom{'}}_{\mathcal{Z}}\eta^{\A}_{\hp V_T}
\end{equation}
and the gluing functions
\begin{equation}
\begin{aligned}
\phi_{V_TV_N}(\zeta) & = \varphi_{V_N}(\zeta) +  i \hp \frac{F(\eta_{\hp V_N})}{\zeta^2} \\
\phi_{V_TV_S}(\zeta) \hspace{1.5pt} & = \varphi_{V_S}(\zeta) \hspace{1.5pt} + i \hp \zeta^2 \bar{F}(-\eta_{\hp V_S}) \rlap{.}
\end{aligned}
\end{equation}
Finally, the gluing functions between tropical sets whose projections on the twistor sphere are adjacent to the same BPS line can be derived from the jump condition \eqref{GMN-ray-jump} by way of formula \eqref{KS/1-f-pot}. Thus, for any BPS line $\ell$ and sets $V_{T,\ell}^+ $ and $V_{T,\ell}^-$  whose projections on the twistor sphere are adjacent to it from the clockwise and counterclockwise directions, respectively, we get 
\begin{equation}
\phi_{V_{T,\ell}^+ V_{T,\ell}^-} (\zeta) =  i \!\!\!\!\! \sum_{\gamma \in Z^{-1}(\ell) } \!\! \Omega(\gamma)\hp L_{\sigma_{\gamma}}((\mathcal{X}_{\gamma})^-_{\ell}) + \frac{i}{2} \hp [  (\tilde{\eta}_{\A})^+_{\ell} (\eta^{\A})^+_{\ell} - (\tilde{\eta}_{\A})^-_{\ell} (\eta^{\A})^-_{\ell}] \rlap{.}
\end{equation}
The non-dilogarithmic term on the right stems from the discrepancy between the antisymmetrized symplectic 1-form potential considered in formula \eqref{KS/1-f-pot} versus the canonical one appearing in the expression above for the connection 1-forms $\mathcal{A}_{V_T}$.  

\bigskip\bigskip

\noindent \textbf{Acknowledgements.}
The author is deeply indebted to Martin Ro\v{c}ek for the insightful discussions which guided him over the years through many of the ideas on which these considerations rely.

\bibliographystyle{abbrv}
\bibliography{Hyperkahler}

\Address

\end{document}